\newread\testin
\newcommand\mi@kern[1]{%
  \settowidth\@tempdima{$\mi@obj^{#1}$}
  \kern-\@tempdima
  #1
  \settowidth\@tempdima{$\mi@obj$}
  \kern\@tempdima
}
\newtoks\mi@toksp
\newtoks\mi@toksb
\DeclareRobustCommand{\manyindices}[5]{
  \def\mi@obj{#5}
  \mi@toksp\expandafter{\mi@kern{#2}}
  \mi@toksb\expandafter{\mi@kern{#1}}
  \@mathmeasure4\textstyle{#5_{#1}^{#2}}
  \@mathmeasure6\textstyle{#5_{#3}^{#4}}
  \dimen0-\wd6 \advance\dimen0\wd4
  \@mathmeasure8\textstyle{\hphantom{{}_{#1}^{#2}}#5^{\the\mi@toksp#4}_{\the\mi@toksb#3}}
  \hbox to \dimen0{}{\kern-\dimen0\box8}
}
\newcommand{\RR}{\mathbb R}
\newcommand{\ZZ}{\mathbb Z}
\newcommand{\FF}{\mathbb F}
\newcommand{\Image}{\mathrm{Im}}
\newcommand{\bD}{\mathbb{D}}
\newcommand{\co}{\nobreak\mskip2mu\mathpunct{}\nonscript
  \mkern-\thinmuskip{:}\penalty300\mskip6muplus1mu\relax}
\newcommand{\bdy}{\partial}
\newcommand{\lbracket}{[}
\newcommand{\rbracket}{]}
\newcommand{\spinc}{\mathfrak s}
\newcommand{\spinct}{\mathfrak t}
\DeclareMathOperator{\divis}{div}
\DeclareMathOperator{\Hom}{Hom}
\DeclareMathOperator{\Ext}{Ext}
\DeclareMathOperator{\Tor}{Tor}
\DeclareMathOperator{\spin}{spin}
\newcommand{\SpinC}{\spin^c}
\DeclareMathOperator{\supp}{supp}
\DeclareMathOperator{\Fuk}{Fuk}
\newcommand{\Barop}{{\mathrm{Bar}}}
\newcommand\interior{\mathrm{int}}
\theoremstyle{plain}
\newtheorem{theorem}{Theorem}
\newtheorem{thmcor}[theorem]{Corollary} %Same as corollary, but numbered with theorems.
\numberwithin{equation}{section}
\newtheorem{citethm}[equation]{Theorem}
\newtheorem{proposition}[equation]{Proposition}
\newtheorem{lemma}[equation]{Lemma}
\newtheorem{corollary}[equation]{Corollary}
\newtheorem{conjecture}{Conjecture}
\newtheorem{definition}[equation]{Definition}
\newtheorem{citeconj}[equation]{Conjecture}
\theoremstyle{definition}
\newtheorem{question}[conjecture]{Question}
\theoremstyle{remark}
\newtheorem{example}[equation]{Example}
\newtheorem{remark}[equation]{Remark}
\newcommand{\HF}{\mathit{HF}}
\newcommand{\HFa}{\widehat {\HF}}
\newcommand{\CFa}{\widehat {\mathit{CF}}}
\newcommand{\HFKa}{\widehat{\mathit{HFK}}}
\newcommand{\x}{\mathbf x}
\newcommand\CH{\mathit{CH}}
\newcommand\HH{\mathit{HH}}
\newcommand\HC{\mathit{HC}}
\newcommand\Hochschild\HH
\newcommand{\bLam}{\boldsymbol{\Delta}C}
\newcommand{\TT}{\Pi}
\newcommand{\Ainf}{A_\infty}
\newcommand{\Alg}{\mathcal{A}}
\newcommand\Blg{\mathcal{B}}
\newcommand{\alphas}{{\boldsymbol{\alpha}}}
\newcommand{\betas}{{\boldsymbol{\beta}}}
\newcommand{\cM}{\mathcal{M}}
\newcommand{\cN}{\mathcal{N}}
\newcommand{\DD}{\textit{DD}}
\newcommand{\CFDA}{\mathit{CFDA}}
\newcommand{\CFDAa}{\widehat{\CFDA}}
\newcommand{\CFL}{\mathit{CFL}}
\newcommand{\CFLa}{\widehat{\CFL}}
\newcommand{\cZ}{\mathcal{Z}}
\newcommand{\PtdMatchCirc}{\cZ}
\newcommand{\PMC}{\PtdMatchCirc}
\newcommand{\dg}{dg }%added space to \dg on June 10
\newcommand\Id{\mathbb{I}}
\newcommand\Ground{\mathbf k}
\newcommand\DTP{\otimes^L}
\newcommand\DT{\boxtimes}
\newcommand{\Field}{{\FF_2}}
\DeclareMathOperator{\nbd}{nbd}
\newcommand{\Heegaard}{\mathcal{H}}
\newcommand{\HD}{\Heegaard}
\newcommand{\cE}{\mathcal{E}}
\newcommand{\cF}{\mathcal{F}}
\renewcommand{\th}{^\text{th}}
\newcommand{\ModCat}{\mathsf{Mod}}
\newcommand{\SmallBar}{\lsup{\Alg}\textit{bar}^{\Alg}}
\newcommand{\op}{\mathrm{op}}
\newcommand\PunctF{F^\circ}
\newcommand{\Chord}{\mathrm{Chord}}
\newcommand\honestalg[3]{\bigl\lbracket
\begin{smallmatrix} #1\@ifempty{#3}{}{&#3} \\ #2 \end{smallmatrix}
\bigr\rbracket}
\newcommand{\sos}[3]{\mathbin{{}_{#1}\mathord#2_{#3}}}
\newcommand{\lsub}[2]{{}_{#1}#2}
\newcommand{\lsup}[2]{{}^{#1}\mskip-.6\thinmuskip#2}
\newcommand{\Cyc}{\mathrm{Cyc}}
\newcommand{\vhE}{\lsup{vh}E}
\newcommand{\hvE}{\lsup{hv}E}
\newcommand{\hF}{\lsup{h}F}
\newcommand{\vF}{\lsup{v}F}
\newcommand{\vhd}{\lsup{vh}d}
\newcommand{\hvd}{\lsup{hv}d}
\newcommand{\arcz}{\mathbf{z}}
\newcommand{\TC}{\mathsf{TC}}
\newcommand{\TCc}{\TC_0}
\newcommand{\SFH}{\mathit{SFH}}
\newcommand{\BSD}{\mathit{BSD}}
\newcommand{\BSDa}{\widehat{\BSD}}
\newcommand{\BSA}{\mathit{BSA}}
\newcommand{\BSAa}{\widehat{\BSA}}
\newcommand{\BSDA}{\mathit{BSDA}}
\newcommand{\BSDAa}{\widehat{\BSDA}}
\newcommand{\BSAA}{\mathit{BSAA}}
\newcommand{\BSAAa}{\widehat{\BSAA}}
\newcommand{\BSDD}{\mathit{BSDD}}
\newcommand{\BSDDa}{\widehat{\BSDD}}
\newcommand{\Bimod}[1]{{}_{#1}\ModCat_{#1}}
\newcommand{\cO}{\mathcal{O}}
\newcommand{\Tate}{\mathrm{Tate}}
\newcommand{\Tot}{\mathrm{Tot}}
\newcommand{\HFLa}{\widehat{\mathit{HFL}}}
\newcommand{\bAlg}{\overline{\Alg}}
\newcommand{\resp}{resp.\ }
\newcommand{\aprime}{a'}
\newcommand{\bprime}{b'}
\newcommand{\cprime}{c'}
\newcommand{\fix}{\mathrm{fix}}
\newcommand{\Borel}{\mathrm{Borel}}
\newcommand{\QH}{\mathit{QH}}
\begin{document}
\title[Double Covers]{Noncommutative Hodge-to-de Rham spectral sequence and the Heegaard Floer homology of double covers}

\author{Robert Lipshitz}
\thanks{RL was partially supported by NSF Grant number DMS-0905796 and a Sloan
  Research Fellowship.}
\address{Department of Mathematics, Columbia University\\
  New York, NY 10027}
\email{lipshitz@math.columbia.edu}

\author{David Treumann}
\address{Department of Mathematics, Boston College\\
  Chestnut Hill, MA 02467}
\email{treumann@bc.edu}

\begin{abstract}
  Let $A$ be a \dg algebra over $\Field$ and let $M$ be a \dg
  $A$-bimodule. We show that under certain technical hypotheses on
  $A$, a noncommutative analog of the Hodge-to-de Rham spectral sequence starts at the
  Hochschild homology of the derived tensor product  $M\DTP_A M$ and converges to the
  Hochschild homology of $M$. We apply this result to bordered Heegaard
  Floer theory, giving spectral sequences associated to Heegaard Floer
  homology groups of certain branched and unbranched double covers.
\end{abstract}

\maketitle

\tableofcontents

\section{Introduction}
This paper is inspired by a theorem of Hendricks and a question of
Lidman. In turn, they are:
\begin{citethm}\label{thm:hendricks}\cite[Theorem 1.1]{Hendricks12:rank-inequal}
  Let $K\subset S^3$ be a knot and $\pi\co \Sigma(K)\to
  S^3$ the double cover of $S^3$ branched along $K$. For $n$
  sufficiently large there is a spectral sequence with $E^1$-page
  given by the knot Floer homology group $\HFKa(\Sigma(K),\pi^{-1}(K))\otimes H_*(T^n)$ converging
  to $\HFKa(S^3,K)\otimes H_*(T^n)$.
\end{citethm}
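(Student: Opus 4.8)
The plan is to bypass Hendricks's original argument --- which runs through a Seidel--Smith-type localization theorem for the deck involution on the symmetric product underlying the Heegaard Floer complex of $\Sigma(K)$ --- and instead to translate both sides of the desired spectral sequence into Hochschild homology, where it becomes an instance of the main theorem of this paper. First I would choose a Seifert surface $F$ for $K$ and cut the knot complement $S^3\setminus N(K)$, equipped with two meridional sutures, along $F$; this realizes it as a self-gluing of a bordered sutured manifold $Y$ having two parametrized copies $F_+,F_-$ of $F$ in its boundary. Bordered Heegaard Floer theory assigns to $Y$ a \dg bimodule $\cM$ over the bordered algebra $\cA$ of the parametrizing surface, and the pairing theorem for self-gluings identifies $\HH(\cM)$ with $\HFKa(S^3,K)$. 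To actually apply the main theorem one must first pass to a sufficiently stabilized parametrization of $F$ so that $\cA$ satisfies the requisite technical hypotheses; each stabilization tensors the Hochschild homology by $H_*(S^1)\cong H_*(T^1)$, so after $n$ of them one has $\HH(\cM)\cong\HFKa(S^3,K)\otimes H_*(T^n)$, and this is the meaning of ``$n$ sufficiently large''.

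Next I would identify the branch-locus complement. The unbranched double cover $\Sigma(K)\setminus\pi^{-1}(K)\to S^3\setminus K$ is the one classified by the mod-$2$ reduction of $H_1(S^3\setminus K;\ZZ)\cong\ZZ$; concretely it is built from two copies $Y_1,Y_2$ of $Y$ by gluing $F_+$ of $Y_1$ to $F_-$ of $Y_2$ --- producing a bordered manifold with invariant the composite bimodule $\cM\DTP_{\cA}\cM$ --- and then self-gluing $F_-$ of $Y_1$ to $F_+$ of $Y_2$. Filling in the preimage of $K$, pulling back the sutures, and applying the self-gluing pairing theorem exactly as before identifies $\HH(\cM\DTP_{\cA}\cM)$ with $\HFKa(\Sigma(K),\pi^{-1}(K))\otimes H_*(T^n)$, with the \emph{same} $n$, since the diagram for the double cover can be obtained by doubling the one downstairs and stabilizing compatibly. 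The main theorem now supplies a spectral sequence with $E^1$-page $\HH(\cM\DTP_{\cA}\cM)$ converging to $\HH(\cM)$; transporting it through the two identifications yields the spectral sequence of the statement, with the parasitic $H_*(T^n)$ appearing on both the $E^1$-page and the abutment.

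I expect the main obstacle to be this middle step: checking that ``passing to the double cover'' on the bordered side is literally the operation $\cM\mapsto\cM\DTP_{\cA}\cM$ --- with the Alexander and Maslov gradings, the basepoint data, and the orientation conventions all matching --- and synchronizing the stabilizations upstairs and downstairs so that the two copies of $H_*(T^n)$ genuinely coincide. A secondary point is the reduced-versus-unreduced normalization of Hochschild homology: the main theorem is proved for one of them, and one must verify it is the one compatible with the $\HFKa$ conventions. Everything else --- producing the self-gluing diagrams, checking admissibility, and assembling the spectral sequence from the main theorem --- should be formal.
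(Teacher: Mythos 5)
Your overall strategy --- cut the knot complement along a Seifert surface, realize $\HFKa(S^3,K)$ and $\HFKa(\Sigma(K),\pi^{-1}(K))$ as $\HH_*(\cM)$ and $\HH_*(\cM\DTP_{\cA}\cM)$ respectively, and feed the pair into Theorem~\ref{thm:hoch-local} --- is exactly the paper's approach in Theorem~\ref{thm:br-d-cov-sseq}. But the paper is careful to state that result as \emph{conditional} on $\pi$-formality of the bordered algebra, precisely because it cannot verify that hypothesis in general. Your proposal papers over this with the sentence about ``passing to a sufficiently stabilized parametrization of $F$ so that $\cA$ satisfies the requisite technical hypotheses,'' and this is where the argument genuinely breaks.

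There are two independent problems with that step. First, there is no known mechanism by which genus-stabilizing the parametrizing surface forces $\pi$-formality of the resulting bordered algebra. Conjecture~\ref{conj:HF-pi-formal} is stated for all arc diagrams and remains open; the paper only establishes it for the antipodal pointed matched circle in genus $\leq 2$ (Theorem~\ref{thm:HF-pi-formal-small}) and in the top-minus-one Alexander grading in arbitrary genus (Theorem~\ref{thm:HF-pi-formal-extreme}). Adding handles to $F$ increases the genus of the parametrizing surface, which makes the relevant $\Alg(\PMC,i)$ larger and harder, not easier; there is no reason to expect $\pi$-formality to improve under that operation. Second, your accounting for the $H_*(T^n)$ factor is wrong. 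If you stabilize the Seifert surface by adding a tube, the closed $3$-manifold $(S^3,K)$ (respectively $(\Sigma(K),\pi^{-1}(K))$) obtained by self-gluing is unchanged, so by Theorem~\ref{thm:bordered-hochschlid} the Hochschild homology is still $\HFKa(S^3,K)$ (respectively $\HFKa(\Sigma(K),\pi^{-1}(K))$) with no $H_*(S^1)$ factor. The $H_*(T^n)$ in Hendricks's statement is an artifact of the Seidel--Smith stable normal triviality condition, which forces one to cross the symplectic ambient with $T^*T^n$ before the localization theorem applies; it has no bordered-Floer counterpart, and indeed one advantage of the paper's approach (visible in Theorem~\ref{thm:br-d-cov-sseq} and in Theorems~\ref{thm:br-g2} and~\ref{thm:br-extreme}) is that the bordered route produces a \emph{cleaner} spectral sequence, with no extraneous torus factor and with $\SpinC$/Alexander refinements, whenever $\pi$-formality can be verified. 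So as written your proposal neither recovers the $H_*(T^n)$ nor establishes the crucial $\pi$-formality hypothesis; what it does correctly reproduce is the reduction, carried out in Theorem~\ref{thm:br-d-cov-sseq}, from Hendricks-type localization to Conjecture~\ref{conj:HF-pi-formal}.
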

(Here, $\HFKa(Y,K)$ denotes the knot Floer homology group of
$(Y,K)$~\cite{OS04:Knots,Rasmussen03:Knots} with coefficients in $\Field$, and $H_*(T^n)$ denotes
the singular homology of the $n$-torus.)

Hendricks deduces Theorem~\ref{thm:hendricks} from Seidel-Smith's
localization theorem for Lagrangian intersection Floer
homology~\cite{SeidelSmith10:localization}. In particular, the proof
is basically analytic. Lidman asked:
\begin{question}\label{question:lidman}
  (Lidman) Is it possible to recover Theorem~\ref{thm:hendricks} from
  cut-and-paste arguments?
\end{question}

In this paper we give a partial affirmative answer to
Question~\ref{question:lidman}; moreover, our
techniques can be used in situations where the hypotheses of
Seidel-Smith's theorem fail.
The idea is as follows. Bordered Floer
homology allows one to interpret the knot
Floer homology of $K$ as the Hochschild homology of a
bimodule~\cite[Theorem 14]{LOT2}. In characteristic 2 we show that
there is a spectral sequence which under certain technical hypotheses (see Theorem~\ref{thm:hoch-local}) has the form
\begin{equation}
  \label{eq:localization-intro}
  \HH_*(M\DTP_A M)\Rightarrow \HH_*(M),.
\end{equation}
where $M\DTP_A M$ denotes the derived tensor product (over $A$) of $M$ with itself. %
If the technical hypotheses are satisfied for the algebras in bordered
Floer theory, the spectral sequence~\eqref{eq:localization-intro}
gives another proof of Theorem~\ref{thm:hendricks}, as well as many
generalizations.

The technical hypotheses needed for~\eqref{eq:localization-intro} in
the case of bordered Floer homology boil down to a fairly concrete,
combinatorial problem. We have not been able to solve this problem in
general, but do give two partial results along these lines. Thus, we
obtain localization results for Heegaard Floer and knot Floer homology
groups, different from but overlapping with
Theorem~\ref{thm:hendricks}:

\begin{theorem}\label{thm:br-g2}
  Let $Y^3$ be a closed $3$-manifold, $K\subset Y$ a nullhomologous
  knot and $\spinc$ a torsion $\SpinC$-structure on $Y\setminus
  K$. Suppose that $K$ has a genus $2$ Seifert surface $F$. Then for
  each Alexander grading $i$ there is a spectral sequence
  \[
  \HFKa(\Sigma(K),\pi^{-1}(K);\pi^*\spinct,i)\Rightarrow
  \HFKa(Y,K;\spinct, i).
  \]
\end{theorem}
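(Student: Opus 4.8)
The plan is to deduce Theorem~\ref{thm:br-g2} from the algebraic spectral sequence of Theorem~\ref{thm:hoch-local}, by recognizing both knot Floer homology groups in the statement as Hochschild homology groups of bordered Floer bimodules. First I would use the genus-$2$ Seifert surface $F$ to cut $Y\setminus\nbd(K)$ open along $F$, obtaining a three-manifold $N$ with parametrized boundary whose two distinguished boundary pieces are copies of $F$. By the bordered pairing theorem the invariant of $N$ is a \dg bimodule $M$ over a \dg algebra $\cA=\cA(\cZ)$, where $\cZ$ is the arc diagram (pointed matched circle) determined by $F$. Since $Y\setminus\nbd(K)$ is recovered from $N$ by gluing the two copies of $F$ to each other, the description of knot Floer homology as Hochschild homology from \cite[Theorem 14]{LOT2} identifies $\HFKa(Y,K;\spinct,i)$ with the direct summand of $\HH_*(M)$ cut out by the Alexander grading $i$ and the torsion $\SpinC$-structure $\spinct$.

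Next I would handle the branched double cover. The cover $\pi\co\Sigma(K)\to Y$ branched along $K$ restricts over $Y\setminus K$ to the connected double cover classified by linking number mod $2$, which exists because $K$ is nullhomologous; this cover is trivial over $F$, so the complement $\Sigma(K)\setminus\pi^{-1}(K)$, cut open along $\pi^{-1}(F)$, is exactly two copies of $N$ glued cyclically (the positive side of each copy to the negative side of the other). By the pairing theorem this cyclic double gluing computes $\HH_*(M\DTP_{\cA}M)$, and, tracking gradings as before, it identifies $\HFKa(\Sigma(K),\pi^{-1}(K);\pi^*\spinct,i)$ with the corresponding summand of $\HH_*(M\DTP_{\cA}M)$.

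Granting that $\cA=\cA(\cZ)$ satisfies the technical hypotheses of Theorem~\ref{thm:hoch-local}, applying that theorem to the pair $(\cA,M)$ yields, compatibly with the Alexander grading and the $\SpinC$-decomposition, a spectral sequence $\HH_*(M\DTP_{\cA}M)\Rightarrow\HH_*(M)$; restricting to the summand indexed by $(\spinct,i)$ gives precisely the spectral sequence asserted in Theorem~\ref{thm:br-g2}.

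The remaining point, which is also where the genus hypothesis is used, is to verify that $\cA(\cZ)$ does satisfy the hypotheses of Theorem~\ref{thm:hoch-local}. As explained in the introduction, for bordered algebras these hypotheses reduce to a concrete combinatorial condition that we do not know how to establish for arc diagrams of arbitrary genus; for a genus-$2$ Seifert surface, however, $\cZ$ is small enough that the condition can be checked by an explicit finite computation. I expect the identifications above to amount to bookkeeping with the pairing theorems, while this genus-$2$ combinatorial verification is the main obstacle and carries the real content of the theorem.
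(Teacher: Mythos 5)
Your overall strategy matches the paper's: cut $Y\setminus\nbd(K)$ along the Seifert surface to produce a bimodule $M$ over the surface algebra, identify $\HFKa(Y,K)$ with $\HH_*(M)$ and $\HFKa(\Sigma(K),\pi^{-1}(K))$ with $\HH_*(M\DTP_{\cA}M)$ via the (self-)pairing theorem, and then invoke Theorem~\ref{thm:hoch-local}. The paper packages this step as Theorem~\ref{thm:br-d-cov-sseq-gen}, and it does so using Zarev's bordered-sutured framework and Theorem~\ref{thm:sutured-self-pairing}, not \cite[Theorem~14]{LOT2} directly: cutting a knot complement along a Seifert surface produces a bordered-sutured cobordism (with sutured annular boundary), not an arced bordered cobordism, so the citation you reach for is off; for $Y=S^3$ one can attach a $2$-handle to land back in the bordered setting (this is what Theorem~\ref{thm:br-d-cov-sseq} does), but the general statement genuinely needs the bordered-sutured theory.

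The substantive gap is that you do not actually establish $\pi$-formality. You note the point and say you ``expect'' the genus-$2$ combinatorial condition to be checkable, but the paper's proof of Theorem~\ref{thm:br-g2} simply reads ``immediate from Theorems~\ref{thm:HF-pi-formal-small} and~\ref{thm:br-d-cov-sseq-gen},'' where Theorem~\ref{thm:HF-pi-formal-small} is precisely the verification that $\Alg(\PMC,i)$ is $\pi$-formal for the \emph{antipodal} genus-$\le 2$ pointed matched circle, for every strand count $i$. That verification is nontrivial: the extremal $i$ is handled by the hand computation of Theorem~\ref{thm:HF-pi-formal-extreme} (using the Koszul model for the cobar bimodule), the other nontrivial $i$ follows by Koszul duality, and the middle strand count $(k,i)=(2,0)$ is done by a substantial computer calculation. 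Your sketch also omits that you must make a specific choice of parametrization (the antipodal pointed matched circle) so that the one case of Conjecture~\ref{conj:HF-pi-formal} that is actually known applies. So: right architecture, but the theorem's real content---Theorem~\ref{thm:HF-pi-formal-small}---is asserted rather than proved.
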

(This is proved in Section~\ref{sec:br-d-cov}. A simplified statement
in the special case of knots in $S^3$ is given as
Corollary~\ref{cor:knot-s-seq-g2-S3}.)

\begin{theorem}\label{thm:br-extreme}
  Let $Y^3$ be a closed $3$-manifold, $K\subset Y$ a nullhomologous
  knot and $\spinc$ a torsion $\SpinC$-structure on $Y\setminus
  K$. Let $F$ be a Seifert surface for $K$, of some genus $k$. Then
  there is a spectral sequence
  \[
  \HFKa(\Sigma(K),\pi^{-1}(K);\pi^*\spinct,k-1)\Rightarrow
  \HFKa(Y,K;\spinct, k-1).
  \]
\end{theorem}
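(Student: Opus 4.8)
The plan is to fit the pair $(Y,K)$ into the framework of~\eqref{eq:localization-intro} and then apply Theorem~\ref{thm:hoch-local} in Alexander grading $k-1$. Concretely, cutting the knot complement $Y\setminus\nbd(K)$ along the genus-$k$ Seifert surface $F$ exhibits it as a self-gluing of a bordered (sutured) piece $Z$ whose two parametrized boundary components are copies of $F$; since every loop on $F$ has linking number zero with $K$, the same decomposition exhibits the complement of the branch locus $\pi^{-1}(K)$ in the double branched cover $\Sigma(K)$ as the cyclic gluing of two copies of $Z$. Passing to bordered invariants, with $A=\bigoplus_i A_i$ the relevant bordered algebra of $F$ (graded by Alexander grading) and $M=\bigoplus_i M_i=\BSDAa(Z)$ the associated bimodule, the pairing theorems of bordered Floer homology in their self-gluing (Hochschild homology) form give, for each $i$,
\begin{gather*}
\HFKa(Y,K;\spinct,i)\;\simeq\;\HH_*(M_i),\\
\HFKa(\Sigma(K),\pi^{-1}(K);\pi^*\spinct,i)\;\simeq\;\HH_*\bigl(M_i\DTP_{A_i}M_i\bigr);
\end{gather*}
compare~\cite[Theorem 14]{LOT2} for the first identification and Section~\ref{sec:br-d-cov} for the general mechanism. (The torsion hypothesis on $\spinct$ ensures that the relevant chain-level objects are bounded, so that $M_i\DTP_{A_i}M_i$ and its Hochschild homology are well behaved and the spectral sequence converges.) It therefore suffices to produce, for $i=k-1$, the spectral sequence of Theorem~\ref{thm:hoch-local} with $E^1$-page $\HH_*(M_{k-1}\DTP_{A_{k-1}}M_{k-1})$ converging to $\HH_*(M_{k-1})$.

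The essential point is that the technical hypotheses of Theorem~\ref{thm:hoch-local} are hypotheses on the algebra alone, not on the bimodule, so one only has to verify them for $A_{k-1}$ — and $A_{k-1}$ does not depend on the knot, only on the genus-$k$ parametrized surface. The reason for singling out the grading $k-1$ is that $A_{k-1}$ is then very small: in the extreme grading the corresponding summand of the strands algebra is just the ground ring $\Field$, and in the next grading it is a finite direct sum of small quivers with (a short, explicit set of) relations, the number of summands growing with $k$ but each summand being of a fixed type. On such an algebra the combinatorial reformulation of the hypotheses of Theorem~\ref{thm:hoch-local} mentioned in the introduction can be checked by hand, generator by generator and relation by relation, uniformly in $k$. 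Granting this, Theorem~\ref{thm:hoch-local} applied to $A_{k-1}$ and $M_{k-1}$ produces the spectral sequence, and transporting it along the identifications above — using that $\spinct$ pulls back to $\pi^*\spinct$ and that the Alexander gradings of $K$ and of $\pi^{-1}(K)$, taken with respect to $F$ and its lift, correspond under $\pi$ — yields the theorem.

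I expect the main obstacle to be precisely this verification that the concrete combinatorial condition holds for $A_{k-1}$; this is the ``fairly concrete, combinatorial problem'' referred to in the introduction, and the fact that we can settle it for the grading $k-1$ but not in general is exactly why Theorem~\ref{thm:br-extreme} is restricted to that grading (just as Theorem~\ref{thm:br-g2} is restricted by the genus). Two further points require care. First, bookkeeping around the branch locus: one must confirm that it is the \emph{branched}, not the unbranched, double cover, together with the correct lifts of the two basepoints on $K$ and the correct $\SpinC$-decomposition, that is computed by the cyclic double gluing $M_{k-1}\DTP_{A_{k-1}}M_{k-1}$, which amounts to tracking the sutures in the cut-open picture. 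Second, the hypothesis allows $F$ to be a Seifert surface of \emph{some} genus $k$, not necessarily minimal; when $k-1$ exceeds the Seifert genus of $K$ the target $\HFKa(Y,K;\spinct,k-1)$ vanishes, so in that range the statement is a vanishing assertion rather than a genuine rank inequality, but the proof is unaffected.
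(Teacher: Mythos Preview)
Your outline is the same as the paper's: cut along the Seifert surface, realize the two knot Floer groups as $\HH_*(M_i)$ and $\HH_*(M_i\DTP_{A_i}M_i)$ via the self-pairing theorem, and invoke Theorem~\ref{thm:hoch-local} once the relevant algebra summand is shown to be $\pi$-formal. The deduction of Theorem~\ref{thm:br-extreme} from Theorem~\ref{thm:br-d-cov-sseq-gen} and the $\pi$-formality of the next-to-extremal summand is exactly what the paper does.

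The gap is in the part you flag yourself, and it is larger than you suggest. First, your structural picture of the next-to-extremal algebra is wrong. For the antipodal pointed matched circle (the one the paper uses, and one must fix a choice of $\PMC$ before talking about ``the'' algebra), $\Alg(\PMC,-k+1)$ is not ``a finite direct sum of small quivers \ldots\ each of a fixed type'': it is the path algebra of a single connected cyclic quiver on $2k$ vertices, with two arrows $a_i,b_i$ between consecutive vertices and one closing arrow $c$, modulo the relations $a_ib_{i+1}=b_ia_{i+1}=b_{2k-1}c=ca_1=0$. So the verification is not a matter of repeating a fixed small check on independent summands; the whole algebra is one piece whose size grows with $k$. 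Second, the actual $\pi$-formality computation (Theorem~\ref{thm:HF-pi-formal-extreme}) is the substance of the proof, and you have not done it. The paper's argument passes through Koszul duality to get a small model for $A^!$ and for $\HC_*(A^!\DTP A^!)$, then carries out an explicit, uniform-in-$k$ zig-zag computation showing that $d^{2i}(1|1|1|1)=0$ for all $i\geq 2$; this takes about a page of careful bookkeeping and repeatedly uses a ``vanishing phenomenon'' special to these relations. Saying it ``can be checked by hand'' does not substitute for that computation, and your mistaken description of the algebra would have sent the attempted check in the wrong direction.

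Two smaller remarks. Your claim that the hypotheses of Theorem~\ref{thm:hoch-local} are ``on the algebra alone'' is not literally true: condition (A-3), $\pi$-formality, is a condition on $M$. What is correct is that, by Theorem~\ref{thm:representable}, $\pi$-formality of the cobar bimodule $A^!$ implies $\pi$-formality of every $M$, which is how the paper reduces to a computation depending only on $A$. And finally, the indexing: the paper proves $\pi$-formality for $\Alg(\PMC,-k+1)$, not $k-1$; these agree up to quasi-isomorphism for the antipodal $\PMC$ because $\PMC=\PMC_*$ there, but you should make the choice of $\PMC$ and the use of this symmetry explicit.
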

(Again, this is proved in Section~\ref{sec:br-d-cov}.)

Our techniques also apply to certain unbranched double
covers. Specifically, let $Y$ be a closed $3$-manifold and
$\pi\co\tilde{Y}\to Y$ a $\ZZ/2$-cover. Viewing $\pi$ as an element of
$H^1(Y;\Field)$, assume $\pi$ is in the image of $H^1(Y;\ZZ)$. In this
case we say that $\pi$ is \emph{induced by a $\ZZ$-cover}
(Definition~\ref{def:from-Z-cover}). 

\begin{theorem}\label{thm:honest-dcov}
  Let $Y$ be a closed $3$-manifold and $\pi\co\tilde{Y}\to Y$ a
  $\ZZ/2$-cover which is induced by a $\ZZ$-cover. Let
  $\spinc\in\SpinC(Y)$ be a torsion $\SpinC$-structure.
  Then there is a spectral sequence
  \[
  \HFa(\tilde{Y};\pi^*\spinc)\otimes H_*(S^1)\Rightarrow \HFa(Y;\spinc).
  \]
\end{theorem}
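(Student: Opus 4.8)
The plan is to present both $\HFa(Y;\spinc)$ and $\HFa(\tilde{Y};\pi^*\spinc)$ as Hochschild homology groups of a single bimodule over a bordered algebra, and then to apply Theorem~\ref{thm:hoch-local}. To set up the bordered data: by hypothesis $\pi$ is the mod-$2$ reduction of some class in $H^1(Y;\ZZ)$, and scaling down any such class produces a \emph{primitive} integral lift $\phi\in H^1(Y;\ZZ)$ of $\pi$ (the divisibility of any integral lift is odd, since its reduction $\pi$ is nonzero). Represent the Poincar\'e dual of $\phi$ by a connected embedded surface $S\subset Y$; primitivity of $\phi$ forces $S$ to be non-separating, so $Y_0\coloneqq Y\setminus\nbd(S)$ is connected with $\partial Y_0 = S_+\sqcup S_-$ two parallel copies of $S$, and $Y$ is recovered by regluing $S_+$ to $S_-$. (If $S$ is a sphere the construction degenerates: both $Y$ and $\tilde{Y}$ then have an $S^1\times S^2$ summand and the result follows from the K\"unneth formula for $\widehat{\CF}$ together with a direct computation, so assume $S$ has positive genus.) Fix a pointed matched circle $\cZ$ of the genus of $S$, identify $\partial Y_0\cong F(\cZ)\sqcup -F(\cZ)$ so that the regluing map becomes the standard one, and set $A\coloneqq\cA(\cZ)$ and $M\coloneqq\CFDAa(Y_0)$, viewed as an $A$-bimodule. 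Restrict every invariant below to its summand in the relevant torsion $\SpinC$-structure; note $\pi^*\spinc$ is again torsion.

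I would then identify the two relevant Hochschild homologies. By the self-gluing (``trace'') form of the bordered pairing theorem, the Hochschild complex of $M$ is quasi-isomorphic to $\widehat{\CF}(Y)$, so $\HH_*(M;\spinc)\cong\HFa(Y;\spinc)$. On the other hand, $\pi$ restricts trivially to $\pi_1(Y_0)$ (every loop in $Y_0$ is disjoint from $S$), so the double cover is the ``cyclic double'' of $Y_0$: glue two copies of $Y_0$ along one copy of $S$ and then close up. Thus $\tilde{Y}$ is the self-gluing of the bordered three-manifold $Y_0\cup_S Y_0$, whose type-$\DA$ bimodule is $M\DT_A M$ by the pairing theorem, and since $\DT$ computes the derived tensor product of bordered bimodules, $M\DT_A M\simeq M\DTP_A M$. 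Closing up identifies $\HH_*(M\DTP_A M;\pi^*\spinc)$ with the Heegaard Floer homology of $\tilde{Y}$, but computed from a diagram carrying one extra basepoint --- a preimage under $\pi$ of the basepoint of $Y$, left over from the middle gluing --- so that $\HH_*(M\DTP_A M;\pi^*\spinc)\cong\HFa(\tilde{Y};\pi^*\spinc)\otimes H_*(S^1)$. Some care is needed to check that the $\SpinC$-decompositions match across each of these identifications.

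Granting this, I would apply Theorem~\ref{thm:hoch-local} to $(A,M)$ to get a spectral sequence with $E^1$-page $\HH_*(M\DTP_A M)$ converging to $\HH_*(M)$. Every differential respects the $\SpinC$-grading, so the spectral sequence restricts to each $\SpinC$-summand; substituting the identifications above gives the asserted spectral sequence $\HFa(\tilde{Y};\pi^*\spinc)\otimes H_*(S^1)\Rightarrow\HFa(Y;\spinc)$.

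The main obstacle is the input to Theorem~\ref{thm:hoch-local}: one must verify its technical hypotheses for $A=\cA(\cZ)$ together with the bimodule $M=\CFDAa(Y_0)$. This is an instance of the same combinatorial problem that obstructs the general case, but in the unbranched situation the only algebra that appears is the honest strands algebra $\cA(\cZ)$, paired with an honest type-$\DA$ bimodule --- and for this input the required condition should be checkable uniformly in $\cZ$, which is what allows the theorem to hold without any restriction on the genus of $S$, in contrast to the branched-cover statements (Theorems~\ref{thm:br-g2} and~\ref{thm:br-extreme}). The remaining points --- tracking $\SpinC$-structures through the gluings and identifying precisely the extra basepoint responsible for the $H_*(S^1)$ factor --- are expected to be routine.
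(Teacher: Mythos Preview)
Your proposal has a genuine gap at the identification of the Hochschild homology groups. The self-gluing theorem for bordered Floer homology (Theorem~\ref{thm:bordered-hochschlid}) does \emph{not} say that $\HH_*(\CFDAa(Y_0))\cong\HFa(Y)$: for an arced cobordism $Y_0$, the Hochschild homology of $\CFDAa(Y_0)$ is the \emph{knot} Floer homology $\HFKa(Y^\circ,K)$, where $K$ arises from the framed arc and $Y^\circ$ is obtained by a surgery on the self-gluing. So with $M=\CFDAa(Y_0)$ you would be computing $\HFKa$ of an auxiliary knot, not $\HFa(Y)$; and likewise $\HH_*(M\DTP M)$ would be $\HFKa$ of a knot in (a surgery on) $\tilde Y$, not $\HFa(\tilde Y)\otimes H_*(S^1)$. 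The paper introduces the tube-cutting piece $\TC(\PMC)$ (Section~\ref{sec:TC}) precisely to repair this: one takes $M=\BSDAa(\HD\cup_{F(\PMC)}\TC(\PMC))$, and Theorem~\ref{thm:hoch-TC} then gives $\HH_*(M)\cong\HFa(Y)$, while gluing two copies produces a sutured diagram for $\tilde Y\setminus(B^3\amalg B^3)$, whence the $H_*(S^1)$ factor via~\cite[Proposition 9.14]{Juhasz06:Sutured}.

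The second gap is the verification of $\pi$-formality. Your appeal to the condition being ``checkable uniformly in $\cZ$'' is not an argument: Conjecture~\ref{conj:HF-pi-formal} is open in general, and the branched-cover theorems are restricted precisely because it is verified only in special cases. What makes Theorem~\ref{thm:honest-dcov} work without such a restriction is again the tube-cutting piece: the specific bimodule $\BSDAa(\HD\cup\TC(\PMC))$ is shown to be \emph{neutral} (Corollary~\ref{cor:is-neutral}), using the exact triangle of Theorem~\ref{thm:bypass-tri}, the vanishing in Proposition~\ref{prop:DTP-vanish}, and the identification of the Serre functor with the boundary Dehn twist. Neutrality then yields $\pi$-formality of this particular bimodule via Corollary~\ref{cor:neutral-good-bord} (which rests on $\HH_*(\Alg(\PMC))$ being concentrated in a single grading, Proposition~\ref{prop:Borromean}), bypassing Conjecture~\ref{conj:HF-pi-formal} entirely. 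Without the tube-cutting piece you have no mechanism for establishing $\pi$-formality of $\CFDAa(Y_0)$.
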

(This is proved in Section~\ref{subsec:double-cover}.)

Theorems~\ref{thm:br-g2} and~\ref{thm:br-extreme} for knots in $S^3$
are, modulo the $H_*(T^n)$ factors and decomposition according to
Alexander gradings, special cases of Hendricks's
Theorem~\ref{thm:hendricks}. Theorems~\ref{thm:br-g2}
and~\ref{thm:br-extreme} for knots in other $3$-manifolds, as well as
Theorem~\ref{thm:honest-dcov}, seem not to be accessible via
Hendricks's techniques. Specifically, a Chern class computation shows
that the stable normal triviality condition required by Seidel-Smith always
fails in these cases; see~\cite[Remark 7.1]{Hendricks12:rank-inequal}.

The spectral sequence~\eqref{eq:localization-intro} is closely related to the
noncommutative
Hodge-to-de Rham spectral sequence (i.e.\ the Hochschild-to-cyclic spectral
sequence). For instance, when $A$ is Calabi-Yau, we show that the technical
condition on $A$ giving~\eqref{eq:localization-intro} is satisfied whenever
the Hodge-to-de Rham spectral sequence degenerates.
Also recall that the Hodge-to-de Rham spectral sequence comes by
analyzing an action of $\mathrm{U}(1)$ on the Hochschild chain complex
of $A$.  The full rotation group does not act on the Hochschild chain
complex of a bimodule, but the subgroup $\ZZ/2 \subset \mathrm{U}(1)$
does act on the Hochschild chain complex of the tensor square of a
bimodule.  The spectral sequence~\eqref{eq:localization-intro} comes
by analyzing this action.

\begin{remark}
\label{rem:augustus}
There is another resemblance between the algebra in this paper and the noncommutative Hodge-to-de Rham spectral sequence, about which we understand less. Whether or not our technical condition (``$\pi$-formality'') holds, we construct a spectral sequence starting at $\HH_*(M\DTP_A M)$, but  we cannot always identify its $E_{\infty}$-page.  When $M$ is $\pi$-formal, the identification $\HH_*(M) \stackrel{\sim}{\to} E_\infty$ is a kind of squaring map, but this map is not well-defined at the level of Hochschild chains.  There is (as has been pointed out to us independently by Yan Soibelman, Tyler Lawson, and the referee), a similar phenomenon at the heart of Kaledin's work \cite{Kaledin1} on the degeneration of the Hodge-to-de Rham spectral sequence: a squaring or more general Frobenius map defined on Hochschild homology of algebras (with values in a form of cyclic homology) that is not induced by a map of chain complexes.  An explanation in terms of stable homotopy is given in \cite{Kaledin2} --- it would be interesting to see if this explanation applies in our setup as well.
\end{remark}

Beyond bordered Floer homology, there
are a number of other cases in which one could try to apply the
spectral sequence~\eqref{eq:localization-intro} (i.e.,
Theorem~\ref{thm:hoch-local}). One obvious class of examples is
provided by Khovanov and Khovanov-Rozansky knot homologies. Another
comes from Fukaya categories. 
Let $(M,\omega)$ be a
symplectic manifold and $\phi\co M\to M$ a symplectomorphism. Then
$\phi$ induces an automorphism $\phi_*$ of the Fukaya category
$\Fuk(M)$ of $M$. According to the philosophy
of~\cite{Kontsevich95:ICM,Seidel09:SH-HH}, if $M$ contains enough
Lagrangians then $\Fuk(M)$ controls the Floer theory of $M$. A special
case of this is the following well-known folk conjecture:
\begin{citeconj}\label{conj:fp-Floer} Let $(M,\omega)$ be a symplectic
  manifold for which the Fukaya category $\Fuk(M)$ of $M$ and the
  quantum cohomology $\QH^*(M)$ of $M$ are defined over $\Field$.
  Suppose further that the natural map $\HH_*(\Fuk(M))\to \QH^*(M)$ is
  an isomorphism. Let $\phi\co M\to M$ be a symplectomorphism with
  fixed-point Floer homology $\HF(\phi)$. Then
  \begin{equation}\label{eq:seidel-conj}
    \HF(\phi)\cong \HH_*\bigl(\phi_*\co \Fuk(M)\to\Fuk(M)\bigr).
  \end{equation}
\end{citeconj}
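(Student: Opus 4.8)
The plan is to deduce Conjecture~\ref{conj:fp-Floer} from a reinterpretation of fixed-point Floer homology as a Lagrangian intersection Floer homology, a Künneth theorem for Fukaya categories, and an open--closed map; the hypothesis that $\HH_*(\Fuk(M)) \to \QH^*(M)$ is an isomorphism enters only at the end. First, the fixed points of $\phi$ are exactly the points of $\Gamma_\phi \cap \Delta$ in the product $M \times \widebar{M}$ (with the reversed symplectic form on the second factor), and for generic $\phi$ a standard comparison of perturbation data identifies the fixed-point Floer complex of $\phi$ with $\CF^*(\Gamma_\phi, \Delta)$, so $\HF(\phi) \cong \HF^*(\Gamma_\phi, \Delta)$. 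Next, by a Künneth theorem for Fukaya categories (e.g., in the form of Amorim), the embedding $\Fuk(M) \otimes \Fuk(M)^{\op} \hookrightarrow \Fuk(M \times \widebar{M})$ split-generates, and under it $\Delta$ represents the diagonal bimodule $A := \Fuk(M)$ while $\Gamma_\phi$ represents the graph bimodule ${}_{\phi_*}A$ (the bimodule $A$ with its left action precomposed by $\phi_*$). Morally this already says that $\HF(\phi)$ is the Hochschild homology $\HH_*(A; {}_{\phi_*}A) = \HH_*(\phi_*)$; the content of the conjecture is making ``morally'' into ``honestly.''

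To do so I would construct a $\phi$-twisted open--closed map $\mathcal{OC}_\phi \co \CH_*(A; {}_{\phi_*}A) \to \CF^*(\Gamma_\phi, \Delta)$, counting disks with one interior puncture and boundary on a finite split-generating family $L_1, \dots, L_n$ of Lagrangians, with the twist by $\phi$ entering through the boundary condition on the arc running along the graph side of the product. For $\phi = \id$ this is the ordinary open--closed map $\HH_*(A) \to \QH^*(M)$, an isomorphism by hypothesis. To see that $\mathcal{OC}_\phi$ is an isomorphism for arbitrary $\phi$, I would argue in the style of Abouzaid's generation criterion and Ganatra's work on the open--closed map: $\mathcal{OC}_\phi$ is a map of modules over $\QH^*(M)$ (via quantum cap product on the target and via $\HH^*(A) \to \QH^*(M)$ on the source), it is natural with respect to the invertible functor $\phi_*$, it is unital, and it reduces for $\phi = \id$ to an isomorphism; since $\phi_*$ is an equivalence and $L_1, \dots, L_n$ split-generate, a diagram chase over the generators forces $\mathcal{OC}_\phi$ to be an isomorphism. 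Composing the three identifications gives $\HH_*(\phi_*) \cong \HF(\phi)$, which is~\eqref{eq:seidel-conj}.

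The main obstacle --- and the reason the conjecture remains open --- is that the Künneth equivalence and the open--closed map are themselves available only under hypotheses (compactness or monotonicity of $M$, a suitable class of admissible Lagrangians, convergence of the relevant moduli counts) that must be matched to the generality in which the conjecture is stated, and, intertwined with this, one must carry out the gluing analysis identifying ``disks with one interior puncture and graph boundary condition'' with degenerating Floer strips for $(\Gamma_\phi, \Delta)$. Finally, I would expect the $\ZZ/2$-equivariant construction of the present paper to be precisely the shadow of this picture when $\phi$ is an involution: taking the bimodule $N := {}_{\phi_*}A$ one has $N \DTP_A N \simeq A$, so the $E_1$-page $\HH_*(N \DTP_A N) \cong \HH_*(A)$ of~\eqref{eq:localization-intro} is (conjecturally) $\QH^*(M)$, and the spectral sequence is the algebraic reflection of the $\ZZ/2 \subset \mathrm{U}(1)$ symmetry acting on the Hochschild complex of the bimodule square.
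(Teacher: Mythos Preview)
The statement you are trying to prove is not a theorem of the paper: it is explicitly labeled a \emph{conjecture} (the \texttt{citeconj} environment), introduced as ``the following well-known folk conjecture,'' and the paper provides no proof of it. The authors only use it conditionally, noting that \emph{if} it holds and \emph{if} the technical hypotheses of Theorem~\ref{thm:hoch-local} are satisfied, then one obtains inequality~\eqref{eq:fixed-pt-floer}. So there is no ``paper's own proof'' to compare against.

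That said, your sketch is a reasonable outline of how one expects such a statement to be established, and you correctly identify the main ingredients (graph-diagonal reformulation, K\"unneth for Fukaya categories, a twisted open--closed map) and the genuine obstacles (the analytic foundations for the K\"unneth embedding and the open--closed map in the stated generality, and the gluing analysis). You are also honest that these obstacles are why the conjecture remains open. But precisely because those steps are not available in the generality of the conjecture, your proposal is not a proof---it is a strategy, and one that is already well understood in the field. In particular, the step ``a diagram chase over the generators forces $\mathcal{OC}_\phi$ to be an isomorphism'' is doing real work that you have not supplied: naturality and unitality of $\mathcal{OC}_\phi$, together with $\mathcal{OC}_{\id}$ being an isomorphism, do not formally imply that $\mathcal{OC}_\phi$ is an isomorphism for all $\phi$ without an additional argument (e.g., a generation or split-generation criterion applied on the product, which itself requires hypotheses).

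Your closing paragraph about the $\ZZ/2$-equivariant picture is suggestive but slightly off target for this paper: the spectral sequence~\eqref{eq:localization-intro} concerns a bimodule $M$ over a dg algebra $A$ and compares $\HH_*(M \DTP_A M)$ to $\HH_*(M)$, not $\HH_*(A)$ to $\HH_*({}_{\phi_*}A)$ for an involution $\phi$. The connection to fixed-point Floer homology the authors have in mind is via~\eqref{eq:fixed-pt-floer}, comparing $\HF(\phi^2)$ to $\HF(\phi)$, which under the conjecture corresponds to $\HH_*({}_{\phi_*}A \DTP_A {}_{\phi_*}A) = \HH_*({}_{\phi^2_*}A)$ versus $\HH_*({}_{\phi_*}A)$.
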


Thus, for $M$ as in the statement of Conjecture~\ref{conj:fp-Floer},
when $\Fuk(M)$ satisfies (appropriate analogues of) the
technical hypotheses of Theorem~\ref{thm:hoch-local}, the spectral
sequence~\eqref{eq:localization-intro} implies that
\begin{equation}\label{eq:fixed-pt-floer}
\dim\HF(\phi^2)\geq \dim\HF(\phi).
\end{equation}
This inequality has nontrivial consequences. For example, for $\tau$
the hyperelliptic involution of a genus $g$ surface, it is easy to see
that $\HF(\tau)$ has dimension $2g+2$: the $2g+2$ fixed points of
$\tau$ lie in different Nielsen
classes. Formula~\eqref{eq:fixed-pt-floer} then implies that any
(non-degenerate) map Hamiltonian-isotopic to $\tau^2=\Id$ has at least
$2g+2$ fixed points, a statement which does not hold for arbitrary
smooth maps in the isotopy class. (Of course, this result also follows
from the Arnold conjecture.)

In the special case of area-preserving diffeomorphisms of a surface
with boundary $S^1$, it
should be possible to combine Theorem~\ref{thm:br-extreme} with the
isomorphisms between Heegaard Floer homology, embedded contact
homology, Seiberg-Witten Floer homology and periodic Floer
homology~\cite{Taubes10:SW-ECH-I,Taubes10:SW-ECH-II,Taubes10:SW-ECH-III,Taubes10:SW-ECH-IV,Taubes10:SW-ECH-V,LeeTaubes12:PFH-SW,KutluhanLeeTaubes:HFHMI,KutluhanLeeTaubes:HFHMII,KutluhanLeeTaubes:HFHMIII,KutluhanLeeTaubes:HFHMIV,KutluhanLeeTaubes:HFHMV,ColinGhigginiHonda11:HF-ECH-1,ColinGhigginiHonda11:HF-ECH-2,ColinGhigginiHonda11:HF-ECH-3}
to obtain the inequality~\eqref{eq:fixed-pt-floer} without using
Conjecture~\ref{conj:fp-Floer}.

This paper is organized as follows. Section~\ref{sec:review} gives a
brief review of $\ZZ/2$-localization for singular homology; this is
not needed for what follows, but should help elucidate the structure
of later arguments. Section~\ref{sec:hoch-local} is the algebraic part of the paper. We
start with a review of Hochschild homology
(Section~\ref{subsec:hoch-background}) and a short review of spectral
sequences associated to bicomplexes (Section~\ref{sec:ssfb}), partly
to fix notation. We then explain the basic algebraic condition, which
we call \emph{$\pi$-formality}, under which the spectral
sequence~\eqref{eq:localization-intro} holds
(Section~\ref{subsec:HHTate}). We then discuss when this condition
holds for all $A$-bimodules; this is \emph{$\pi$-formality of $A$}
(Section~\ref{sec:natural-formal}). For Theorems~\ref{thm:br-g2} and~\ref{thm:br-extreme}, this is all the algebra we need. For Theorem~\ref{thm:honest-dcov} we need one more notion, that of \emph{neutral bimodules}, bimodules on which the Serre functor acts trivially in a certain sense
(Section~\ref{sec:neutral}). (If $A$ is Calabi-Yau then every bimodule is neutral.) The last two subsections of Section~\ref{sec:hoch-local} do not (yet) have topological applications, but are included to help set $\pi$-formality in a broader context. Specifically, in Section~\ref{subsec:imapf} we discuss the case that $A$ admits an integral lift; in this case, $\pi$-formality is (in some sense) easier to verify. In Section~\ref{subsec:is-hodge-to-derham} we show that if $A$ is Calabi-Yau then
the condition of $\pi$-formality follows from collapse of the
Hodge-to-de Rham spectral sequence.

Section~\ref{sec:HF-applications} is devoted to applications of the
algebraic results to Heegaard
Floer homology. It starts by collecting background on bordered and
bordered-sutured Heegaard Floer homology
(Section~\ref{sec:bordered-background}); there, we also observe
homological smoothness for the relevant algebras. We discuss
$\pi$-formality of the bordered and bordered-sutured algebras
(Section~\ref{sec:loc-bord-cobar}). While $\pi$-formality in general
remains a conjecture, we verify this conjecture in several interesting
cases. The first application is to branched double covers of links,
giving Theorems~\ref{thm:br-g2} and~\ref{thm:br-extreme}
(Section~\ref{sec:br-d-cov}). We then discuss a particular
bordered-sutured $3$-manifold, the so-called \emph{tube-cutting piece}
(Section~\ref{sec:TC}) and, using this manifold, obtain a localization
result for ordinary double covers, Theorem~\ref{thm:honest-dcov}
(Section~\ref{subsec:double-cover}).

\subsection*{Acknowledgments}
We thank Mohammed Abouzaid, Tyler Lawson, Dan Lee, Ciprian Manolescu,
Junecue Suh and Yan Soibelman for helpful discussions. We especially
thank Tye Lidman for suggesting that bordered Floer homology might be
used to study covering spaces and for many corrections to a draft of
this paper, and Kristen Hendricks for her work inspiring these results
and for many helpful conversations. Finally, we thank the referee for
a careful reading and many helpful and interesting comments.

The ideas in Section~\ref{sec:TC} arose in discussions of
Peter~Ozsv\'ath, Dylan~Thurston and the first author, and were observed
independently by Rumen~Zarev.

\section{Review of \texorpdfstring{$\ZZ/2$}{Z/2}-localization for
  singular homology}\label{sec:review}
To ease into the algebra, we start by reviewing a particular
perspective on the localization theorem for $\ZZ/2$-equivariant singular
homology.

Consider a topological space $X$ with a $\ZZ/2$-action $\tau\co X\to
X$. The (Borel) \emph{equivariant cohomology} of $X$ is defined to be the
singular cohomology
\begin{equation}\label{eq:borel}
H^*_{\ZZ/2}(X;\ZZ)\coloneqq H^*(X\times_{\ZZ/2}E\ZZ/2;\ZZ),
\end{equation}
where $E\ZZ/2$ is a contractible space with a free
$\ZZ/2$-action (e.g., $E\ZZ/2=S^\infty$).

Equivalently, the $\ZZ/2$-action on $X$ induces a $\ZZ/2$-action on
the singular chains $C_*(X)$, i.e., makes $C_*(X)$ into a chain
complex over the group ring $\ZZ[\ZZ/2]$. So, we could define 
\begin{equation}
  \label{eq:ext-borel}
  H^*_{\ZZ/2}(X;\ZZ)\coloneqq \Ext_{\ZZ[\ZZ/2]}(C_*(X),\ZZ),
\end{equation}
where $\ZZ$ is given the trivial $\ZZ/2$-action. Since $C_*(X\times
E\ZZ/2)$ is a free resolution of $C_*(X)$ as a $\ZZ[\ZZ/2]$-module,
Equations~\eqref{eq:borel} and~\eqref{eq:ext-borel} are
equivalent. One advantage of Equation~\eqref{eq:ext-borel} is that it
allows one to define an equivariant homology for any chain complex over
$\ZZ[\ZZ/2]$. Another advantage is that it allows one to use other models for
$C_*(X)$, like the cellular chain complex for $X$ (if $X$ was a CW
complex and the $\ZZ/2$-action was cellular).

A particularly nice projective resolution of $\ZZ$ as a $\ZZ[\ZZ/2]$-module is
given by 
\[
0 \longleftarrow\ZZ[\ZZ/2]\stackrel{1-\tau}{\longleftarrow}\ZZ[\ZZ/2]\stackrel{1+\tau}{\longleftarrow}\ZZ[\ZZ/2]\stackrel{1-\tau}{\longleftarrow}\ZZ[\ZZ/2]\stackrel{1+\tau}{\longleftarrow}\cdots.
\]
(This resolution comes from thinking of the cellular chain complex for
the usual $\ZZ/2$-equivariant cell structure on $S^\infty$, say.)  
Tensoring over $\ZZ$ with $C_*(X)$ gives a projective resolution of $C_*(X)$ over $\ZZ[\ZZ/2]$
\begin{equation}\label{eq:borel-proj-res}
0 \longleftarrow C_*(X;\ZZ)\otimes \ZZ[\ZZ/2]\stackrel{1\otimes 1-1\otimes \tau}{\longleftarrow}C_*(X;\ZZ)\otimes\ZZ[\ZZ/2]\stackrel{1\otimes 1+1\otimes \tau}{\longleftarrow}C_*(X;\ZZ)\otimes\ZZ[\ZZ/2]\stackrel{1\otimes 1-1\otimes \tau}{\longleftarrow}\cdots
\end{equation}
where $\ZZ/2$ acts diagonally on each term.
So,
$H^*_{\ZZ/2}(X;\ZZ)$ is the homology of the total complex associated to
the bicomplex
\begin{equation}\label{eq:canonical-borel-cx}
C^*_{\Borel}(X;\ZZ)\coloneqq\bigl(0\longrightarrow C^*(X;\ZZ)\stackrel{1-\tau^*}{\longrightarrow} C^*(X;\ZZ)\stackrel{1+\tau^*}{\longrightarrow} C^*(X;\ZZ)\stackrel{1-\tau^*}{\longrightarrow} C^*(X;\ZZ)\stackrel{1+\tau^*}{\longrightarrow} \cdots\bigr)
\end{equation}
obtained from Formula~\eqref{eq:borel-proj-res} by taking $\Hom$ over
$\ZZ[\ZZ/2]$ to $\ZZ$.

The projection map $X\times_{\ZZ/2}E\ZZ/2\to
(E\ZZ/2)/(\ZZ/2)\eqqcolon B\ZZ/2\simeq \RR P^\infty$ endows
$H^*_{\ZZ/2}(X;\ZZ)$ with an action of $H^*(\RR P^\infty;\ZZ)$. Let $\theta\in
H^2(\RR P^\infty)\cong \ZZ/2$ be a generator. Multiplication by
$\theta$ annihilates $p^n$ torsion for any $p\neq 2$, so it is natural
to consider equivariant cohomology with $\Field$-coefficients. Over
$\Field$, $H^*(\RR P^\infty;\Field)\cong \Field[\eta]$, where $\eta\in
H^1(\RR P^\infty;\Field)$, and the localization
theorem states that under appropriate hypotheses,
\begin{equation}\label{eq:top-localization}
\eta^{-1}H^*_{\ZZ/2}(X;\Field)\coloneqq H^*_{\ZZ/2}(X;\Field)\otimes_{H^*(B\ZZ/2;\Field)}
\Field[\eta,\eta^{-1}] \cong H^*(X^{\fix};\Field)\otimes_\Field \Field[\eta,\eta^{-1}],
\end{equation}
where $X^\fix$ denotes the fixed set of $\tau$.

Inverting $\eta$ before taking cohomology allows us to give a
chain-level statement of the localization theorem.
That is, consider the \emph{Tate complex} of $(X,\tau)$
\[
C^*_{\Tate}(X;\Field)\coloneqq\bigl(\cdots
\stackrel{1+\tau}{\longrightarrow}C^*(X;\Field)\stackrel{1+\tau}{\longrightarrow}C^*(X;\Field)\stackrel{1+\tau}{\longrightarrow}C^*(X;\Field)\stackrel{1+\tau}{\longrightarrow}C^*(X;\Field)\stackrel{1+\tau}{\longrightarrow}\cdots\bigr),
\]
a periodic analogue of $C^*_{\Borel}$. The localization theorem is
then the statement that the Tate equivariant cohomology satisfies
$H^*_{\Tate}(X;\Field)\coloneqq h_*(C^*_{\Tate}(X;\Field))\cong H^*(X^{\fix};\Field)\otimes_\Field \Field[\eta,\eta^{-1}]$.

In the paper, we will actually work with $\ZZ/2$-equivariant homology,
i.e.,
\[
H_*^{\ZZ/2}(X;\Field)=H_*(X\times_{\ZZ/2}E\ZZ/2;\Field)=\Tor_{\Field[\ZZ/2]}(C_*(X),\Field).
\]
For homology, the localization theorem can be stated as follows:
\begin{citethm}\label{citethm:classical-localization}
  Let $X$ be a finite-dimensional CW complex, and let $\tau\co X\to X$
  be an involution with fixed set $X^{\fix}$. Consider the Tate
  complex
  \[
  C_*^{\Tate}(X;\Field)=\bigl(\cdots
  \stackrel{1+\tau}{\longleftarrow}C_*(X;\Field)\stackrel{1+\tau}{\longleftarrow}C_*(X;\Field)\stackrel{1+\tau}{\longleftarrow}C_*(X;\Field)\stackrel{1+\tau}{\longleftarrow}C_*(X;\Field)\stackrel{1+\tau}{\longleftarrow}\cdots\bigr).
  \]  
  Then the Tate
  equivariant homology $H_*^{\Tate}(X;\Field)\coloneqq h_*(C_*^{\Tate}(X;\Field))$ is
  isomorphic to the tensor product $H_*(X^{\fix};\Field)\otimes_\Field
  \Field[\eta,\eta^{-1}]$.
\end{citethm}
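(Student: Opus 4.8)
The plan is to prove this in the classical way, by showing that the Tate construction only ``sees'' an invariant neighborhood of the fixed set. First I would replace $X$ by a convenient equivariant model: passing to a simplicial complex with simplicial involution and subdividing barycentrically once or twice, we may assume $X$ is a $\ZZ/2$-CW complex. Then $X^{\fix}$ is a subcomplex and every cell not contained in $X^{\fix}$ lies in a free orbit of two cells; consequently $C_*(X,X^{\fix};\Field)$ is a \emph{bounded} complex of \emph{free} $\Field[\ZZ/2]$-modules. Since $H_*^{\Tate}$ depends only on $C_*(X;\Field)$ as a complex of $\Field[\ZZ/2]$-modules up to equivariant quasi-isomorphism, this replacement is harmless.

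The key vanishing input is: if $P_\bullet$ is a bounded complex of projective $\Field[\ZZ/2]$-modules, the total complex of the associated Tate bicomplex (each column a copy of $P_\bullet$, indexed by $p\in\ZZ$, horizontal differential $1+\tau$) is acyclic. To prove this I would filter the total complex by vertical ($P_\bullet$-) degree; since $P_\bullet$ is bounded the filtration is finite, so the resulting spectral sequence converges, and its $E^1$-page is the horizontal homology of the rows, i.e.\ the Tate homology $\widehat{H}_*(\ZZ/2;P_q)$ of $\ZZ/2$ with coefficients in a projective module. This vanishes: writing $\Field[\ZZ/2]=\Field[\epsilon]/(\epsilon^2)$ with $\epsilon=1+\tau$, the periodic complex $\cdots\xrightarrow{\epsilon}\Field[\ZZ/2]\xrightarrow{\epsilon}\Field[\ZZ/2]\xrightarrow{\epsilon}\cdots$ has homology $(\epsilon)/(\epsilon)=0$, and one passes to arbitrary direct sums. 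Hence $E^1=0$ and the total complex is acyclic. Applying this with $P_\bullet=C_*(X,X^{\fix};\Field)$ gives $H_*^{\Tate}(X,X^{\fix};\Field)=0$.

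Next I would run the long exact sequence in Tate homology coming from the short exact sequence of $\Field[\ZZ/2]$-chain complexes $0\to C_*(X^{\fix};\Field)\to C_*(X;\Field)\to C_*(X,X^{\fix};\Field)\to 0$ (this holds in each column, hence gives a short exact sequence of Tate total complexes). The vanishing just proved then forces the inclusion-induced map $H_*^{\Tate}(X^{\fix};\Field)\xrightarrow{\sim}H_*^{\Tate}(X;\Field)$ to be an isomorphism. Finally, $\tau$ acts as the identity on $X^{\fix}$, so $1+\tau$ acts as $1+1=0$ on $C_*(X^{\fix};\Field)$ in characteristic $2$; the horizontal differentials in the Tate bicomplex of $X^{\fix}$ therefore all vanish, and the total complex is simply $\bigoplus_{p\in\ZZ}C_*(X^{\fix};\Field)$ with its internal differential. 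Its homology is $\bigoplus_{p\in\ZZ}H_*(X^{\fix};\Field)\cong H_*(X^{\fix};\Field)\otimes_\Field\Field[\eta,\eta^{-1}]$, where $\eta$ records the one-step horizontal shift. Composing the two isomorphisms finishes the proof.

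The main difficulty is bookkeeping rather than conceptual. One must genuinely arrange that the cells off $X^{\fix}$ are permuted freely — a cellular involution can fix a cell setwise while acting nontrivially on it, which is exactly what the barycentric subdivision repairs — and one must be careful about convergence of the bicomplex spectral sequence. This is where the finite-dimensionality hypothesis is essential, and it is used twice: it makes the Tate total complex a direct sum (not a product) in each degree, so the complex in the statement is unambiguous, and it makes the vertical filtration finite, so the spectral sequence in the vanishing step converges. (For infinite-dimensional $X$ the theorem genuinely fails.) One could instead filter the bicomplex by horizontal degree, obtaining $E^2=\widehat{H}_*(\ZZ/2;H_*(X;\Field))$, but that filtration is infinite and does not exhibit the fixed set directly, so the reduction-to-a-neighborhood argument above is the cleaner route.
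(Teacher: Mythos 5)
Your proof is correct, but it takes a genuinely different route from the paper's. The paper works directly with singular chains and analyzes the $\hvE$ spectral sequence of the Tate bicomplex: it observes that for $C_*(X;\Field)$ the kernel of $1+\tau$ is spanned by simplices landing in $X^{\fix}$ together with sums $\sigma+\tau_*\sigma$, while the image of $1+\tau$ is spanned by the latter alone, so $\hvE^1 \cong C_*(X^{\fix};\Field)$; the $d^1$ differential is the usual boundary, and the spectral sequence collapses at $\hvE^2$ because each class has a simultaneous representative annihilated by both $1+\tau$ and $\bdy$. You instead pass to a $\ZZ/2$-CW model (via barycentric subdivision), isolate a reusable vanishing lemma --- the Tate total complex of a bounded complex of projective $\Field[\ZZ/2]$-modules is acyclic --- apply it to the free complex $C_*(X,X^{\fix};\Field)$, and run a long exact sequence in Tate homology to transfer the computation to $X^{\fix}$. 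Your route is the more standard textbook argument and has the virtue of making the vanishing-for-projectives lemma explicit (which generalizes readily to other groups); the paper's route is deliberately phrased in the bicomplex $\hvE$/$\vhE$ language because that is precisely the template reused for Theorem~\ref{thm:hoch-local}, where the $E^2$-collapse observation becomes the substantive condition of $\pi$-formality. One small remark in your favor: the paper cites Proposition~\ref{prop:demoted} for convergence, whose hypothesis (finitely many nonzero $C_{p,n-p}$ on each antidiagonal) does not literally hold for singular chains; your explicit replacement by a bounded cellular model is the clean way to supply the needed boundedness, and it is presumably what the paper intends by ``enough boundedness.''
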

\begin{proof}
  There are two obvious spectral sequences associated to the bicomplex
  $C_*^{\Tate}(X)$, depending on whether we take homology first with
  respect to the differential on $C_*(X;\Field)$ or first with respect to the
  $1+\tau$ differentials. Call these two spectral sequences
  $\vhE^r_{p,q}$ and $\hvE^r_{p,q}$, respectively. (For some details about our
conventions on spectral sequences, see Section~\ref{sec:ssfb}.)
  Consider first page of the $\hvE$ spectral sequence. The kernel of $1+\tau$
  has two kinds of generators:
  \begin{itemize}
  \item Generators $\sigma\co \Delta^n\to X^\fix$ contained in the
    fixed set of $\tau$. (These are exactly the generators with $\sigma=\tau_*\sigma$.)
  \item Sums $\sigma+\tau\circ\sigma$ where the image of $\sigma$ is not contained in $X^\fix$.
  \end{itemize}
  The image of $1+\tau$ is exactly the second set of generators. Thus,
  the $E^1$-page of the spectral sequence is identified with
  $C_*(X^\fix;\Field)$. By definition, the differential on the $\hvE^1$-page is
  exactly the simplicial cochain differential on
  $C_*(X^\fix;\Field)$. Moreover, the spectral sequence collapses at $E^2$,
  since any generator in the $\hvE^2$-page has a representative which is
  a cycle for both the differential on $C_*(X;\Field)$ and the differential
  $1+\tau$ (cf.~Remark~\ref{rem:hv}).

  Thus, $\hvE^\infty$ is $H_*(X^{\fix};\Field)\otimes_\Field
  \Field[\eta,\eta^{-1}]$. The hypothesis that $X$ is a
  finite-dimensional CW complex provides enough boundedness to ensure
  that this limit is, in fact, the homology of the original chain
  complex $C_*^{\Tate}(X;\Field)$.
\end{proof}

\begin{corollary}\label{cor:classical-loc-seq}
  There is a spectral sequence whose $E^1$-page is
  $H_*(X;\Field)\otimes \Field[\eta,\eta^{-1}]$ and whose
  $E^\infty$-page is $H_*(X^\fix;\Field)\otimes
  \Field[\eta,\eta^{-1}]$.
\end{corollary}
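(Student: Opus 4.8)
The plan is to obtain the asserted spectral sequence as one of the two spectral sequences attached to the bicomplex underlying the Tate complex $C_*^{\Tate}(X;\Field)$ in the proof of Theorem~\ref{citethm:classical-localization} — namely $\vhE$, the one in which one takes homology in the simplicial direction \emph{before} the $1+\tau$ direction (the proof of Theorem~\ref{citethm:classical-localization} used the other one, $\hvE$).

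First I would recall that $C_*^{\Tate}(X;\Field)$ is the total complex of a bicomplex all of whose columns are copies of $C_*(X;\Field)$ with the simplicial differential, the horizontal maps being $1+\tau$ (the alternating signs $1\pm\tau$ collapse to $1+\tau$ over $\Field$). Taking homology of each column first replaces it by $H_*(X;\Field)$, so the $E^1$-page of $\vhE$ is $H_*(X;\Field)$ in each column, i.e.\ $H_*(X;\Field)\otimes_\Field\Field[\eta,\eta^{-1}]$, with $\eta$ recording the column index. This matches the $E^1$-page claimed in Corollary~\ref{cor:classical-loc-seq}, so the remaining task is only to identify the limit.

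The one point that needs care is convergence: the bicomplex is doubly infinite in the horizontal direction, so convergence of its spectral sequences is not automatic. This is where the hypothesis that $X$ is finite-dimensional enters. Then $C_*(X;\Field)$ is concentrated in finitely many degrees, so in each fixed total degree $n$ the column filtration of $\Tot_n$ is exhaustive and is eventually zero (once $s$ is small enough that every nonzero chain group has been shifted past horizontal position $n$), hence bounded. A filtration that is bounded in each total degree guarantees that both spectral sequences of the bicomplex converge to $H_*(\Tot)=H_*^{\Tate}(X;\Field)$. I expect this boundedness/convergence bookkeeping to be the only real content; the rest is a rereading of the setup already in place in the proof of Theorem~\ref{citethm:classical-localization}.

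Finally, Theorem~\ref{citethm:classical-localization} identifies $H_*^{\Tate}(X;\Field)$ with $H_*(X^\fix;\Field)\otimes_\Field\Field[\eta,\eta^{-1}]$. Since we work over the field $\Field$, a filtered vector space is (noncanonically) isomorphic to its associated graded, and the column filtration is visibly compatible with the $\Field[\eta,\eta^{-1}]$-action; so the convergence of $\vhE$ yields $\vhE^\infty\cong H_*(X^\fix;\Field)\otimes_\Field\Field[\eta,\eta^{-1}]$, which is precisely the claim.
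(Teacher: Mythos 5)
Your proposal is correct and takes essentially the same approach as the paper: use the $\vhE$ spectral sequence of the Tate bicomplex (vertical/simplicial homology first, giving $E^1 = H_*(X;\Field)\otimes\Field[\eta,\eta^{-1}]$), invoke finite-dimensionality of $X$ for convergence, and identify the abutment via Theorem~\ref{citethm:classical-localization}. The only small quibble is your aside that ``the alternating signs $1\pm\tau$ collapse to $1+\tau$ over $\Field$''---in the paper's definition $C_*^{\Tate}(X;\Field)$ is already written with all horizontal maps equal to $1+\tau$, so there is nothing to collapse at this stage (the alternating signs appear in the integral Borel complex, not here)---but this does not affect the argument.
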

\begin{proof}
  This follows by considering the $\vhE$ spectral sequence. It is
  immediate from the definition that $\vhE^1$ is
  $H_*(X;\Field)\otimes \Field[\eta,\eta^{-1}]$.  The fact that
  $X$ is a finite-dimensional CW complex ensures that this spectral
  sequence converges to the homology of $C_*^{\Tate}(X;\Field)$ which, by
  Theorem~\ref{citethm:classical-localization}, is exactly
  $H_*(X^\fix;\Field)\otimes \Field[\eta,\eta^{-1}]$.
\end{proof}
The corollary implies the classical Smith inequality: $\dim
H_*(X^{\fix};\Field)\leq \dim H_*(X;\Field)$. 

In proving Theorem~\ref{citethm:classical-localization} and
Corollary~\ref{cor:classical-loc-seq} there were two key points:
\begin{enumerate}
\item\label{item:collapse} The $\hvE$ spectral sequences associated to
  the Tate bicomplex collapses at the $E^2$-page, allowing us to
  identify the limit. (By contrast, the $\vhE$ spectral sequence,
  appearing in Corollary~\ref{cor:classical-loc-seq}, can be
  arbitrarily complicated.)
\item\label{item:boundedness} A boundedness condition---here, that $X$
  is a finite-dimensional CW complex---allows us to identify the
  limits of the $\hvE$ and $\vhE$ spectral sequences with the homology
  of the Tate complex itself.
\end{enumerate}
In the discussion of Hochschild homology below, the boundedness
property~(\ref{item:boundedness}) will be replaced by the condition of
``homological smoothness'' (Definition~\ref{def:homol-smooth}). We
will be interested in conditions under which the spectral sequence
$\hvE$ collapses (at the $E^3$- rather than $E^2$-page, it turns out);
we call this collapse ``$\pi$-formality''
(Definition~\ref{def:pi-formal-bimod}). Like
Corollary~\ref{cor:classical-loc-seq},
Theorems~\ref{thm:br-g2},~\ref{thm:br-extreme},~\ref{thm:honest-dcov}
and their algebraic archetype, Theorem~\ref{thm:hoch-local}, will then
come from the other ($\vhE$) spectral sequence; and this spectral
sequence can in principle be arbitrarily complicated.

\section{\texorpdfstring{$\ZZ/2$}{Z/2}-Localization in Hochschild homology}
\label{sec:hoch-local}

Let $A$ be a dg algebra over $\Field$, let $M$ be a dg bimodule over
$A$, and let $\HH_*(A,M)$ denote the Hochschild homology of $M$.  In
this section, we construct a natural operation $d^4:\HH_k(A,M) \to
\HH_{k-2}(A,M)$, along with higher order operations $d^{2i}:\HH_k(A,M)
\dashrightarrow \HH_{k - i}(A,M)$ for $i > 2$, and investigate what we
call \emph{$\pi$-formality} (Definition~\ref{def:pi-formal-bimod}), the vanishing of all of these operations.  

We say that a bimodule $M$ is $\pi$-formal if $d^{2i}$ vanishes on $\HH_*(A,M)$ for every $i$.  We say that a dg algebra $A$ is $\pi$-formal if every $(A,A)$-bimodule is $\pi$-formal.  We will give several sufficient conditions for $\pi$-formality.  Our main result is the identification of the $E_\infty$-page of a ``localization'' spectral sequence for $\pi$-formal bimodules.

\begin{theorem}
\label{thm:hoch-local}
Let $A$ be a dg algebra over $\Field$, let $M$ be an $(A,A)$ dg bimodule, and let $M \DTP M$ denote the derived tensor product, over $A$, of $M$ with itself.  Suppose that:
\begin{enumerate}[label =(A-\arabic*),ref=(A-\arabic*)]
\item $A$ has finite dimensional homology over $\Field$, and is perfect as an $(A,A)$-bimodule.  In the language of \cite[Section 8]{KontsevichSoibelman06:KoSo06}, $A$ is \emph{homologically smooth and proper}.
\item $M$ is bounded, i.e., supported in finitely-many gradings.
\item $M$ is $\pi$-formal.
\end{enumerate}
Then there is a spectral sequence starting at $\HH_*(A,M \DTP M)$ and
converging to $\HH_*(A,M)$.  (Here, $\DTP$ denotes the derived
tensor product over $A$.)

More precisely, there is a spectral sequence $\vhE^r_{p,q}$ for which the following hold:
\begin{enumerate}
\item\label{item:hoch-local-E1} For all $p$ and $q$,
\[
\vhE^1_{p,q} = \HH_q(A,M \DTP M).
\]
\item\label{item:hoch-local-Einfty} There is an increasing filtration $\hF_i$ of $V \cong \bigoplus_j \HH_j(A,M)$ such that
\[
\vhE^\infty_{p,q} = \hF_{-q} V /\hF_{-q-1}V.
\]
\end{enumerate}
In particular, there is a rank inequality
\[
\sum_q \dim_{\Field}(\HH_q(A, M \DTP M)) \geq \sum_q \dim_\Field(\HH_q(A,M)).
\]
\end{theorem}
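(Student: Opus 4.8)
The plan is to run, in Hochschild homology, the same Smith-type localization argument reviewed in Section~\ref{sec:review}: the singular chain complex $C_*(X)$ of a space with involution is replaced by the Hochschild chain complex $\CH_*(A, M\DTP M)$ equipped with a natural $\ZZ/2$-action, and the ``finite-dimensional CW complex'' hypothesis is replaced by the homological smoothness and properness of hypothesis (A-1). First I would construct the involution: using a bar model for $M \DTP_A M$, a chain in $\CH_*(A, M\DTP M)$ is a sum of cyclic bar-words carrying two tensor factors of $M$ and some factors of $A$, and rotating such a necklace by half a turn interchanges the two $M$-beads and the two arcs of $A$-beads between them. Since we work over $\Field$ there are no signs to track, and one checks directly that this gives a chain map $\tau$ with $\tau^2 = \id$. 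From $\tau$ I form the \emph{Tate bicomplex}
\[
C^{\Tate}_* = \Bigl(\cdots \xrightarrow{\,1+\tau\,} \CH_*(A, M\DTP M) \xrightarrow{\,1+\tau\,} \CH_*(A, M\DTP M) \xrightarrow{\,1+\tau\,} \cdots\Bigr),
\]
a doubly-infinite, $2$-periodic bicomplex carrying a periodicity operator $\eta$. Hypotheses (A-1) and (A-2) guarantee that $M\DTP_A M$ is a perfect $(A,A)$-bimodule, so $\CH_*(A, M\DTP M)$ is quasi-isomorphic to a bounded complex of finite-dimensional $\Field$-vector spaces; this is the boundedness needed both to make sense of $\Tot C^{\Tate}$ and to guarantee convergence of the two spectral sequences below, playing the role of item~(\ref{item:boundedness}) of Section~\ref{sec:review}.

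Next I would analyze the two spectral sequences of $C^{\Tate}_*$. Taking Hochschild homology first yields the spectral sequence $\vhE^r_{p,q}$ of the theorem, with $\vhE^1_{p,q} = \HH_q(A, M\DTP M)$ for every $p$ and $q$ and $d^1$ induced by $1+\tau_*$; by the boundedness just noted it converges to $H_*(\Tot C^{\Tate})$. Taking $1+\tau$ homology first yields a spectral sequence $\hvE^r_{p,q}$ whose $E^2$-page is the Hochschild homology of the $\ZZ/2$-Tate construction of $\CH_*(A,M\DTP M)$. The crucial input --- the analog of the identification $\hvE^1_{p,q}\cong C_q(X^\fix)$ in Section~\ref{sec:review} --- is that this Tate construction recovers $M$ itself: there is an identification $\hvE^2_{p,q}\cong\HH_q(A,M)$ for all $p$, a Smith-type statement in which the ``diagonal'' $M \to M\DTP_A M$ becomes a quasi-isomorphism after passage to the Tate construction of the swap action, and this is precisely where boundedness of $M$ (hypothesis (A-2)) is used essentially, exactly as finite-dimensionality of $X$ is used in the topological Smith theorem. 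Under this identification the higher differentials of $\hvE$ are the operations $d^{2i}\co\HH_k(A,M)\dashrightarrow\HH_{k-i}(A,M)$ of Section~\ref{subsec:HHTate}, so hypothesis (A-3) ($\pi$-formality) says exactly that $\hvE$ collapses at $E^3$; hence $H_*(\Tot C^{\Tate})$ has, as associated graded, one copy of $\HH_*(A,M)$ for each power of $\eta$.

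Finally I would compare the two. Both spectral sequences converge to $H_*(\Tot C^{\Tate})$; transporting the $\hvE$-identification and the filtration induced by $\vhE$ onto $V \cong \bigoplus_j \HH_j(A,M)$ produces the increasing filtration $\hF_i$ with $\vhE^\infty_{p,q} = \hF_{-q}V/\hF_{-q-1}V$ of part~(\ref{item:hoch-local-Einfty}), the filtration being finite (exhaustive and Hausdorff) by the boundedness from (A-1) and (A-2). Fixing a single value of $p$ and using that each page's differential can only decrease the total dimension in that $p$-strand gives
\[
\sum_q \dim_\Field \HH_q(A, M\DTP M) \;=\; \sum_q \dim_\Field \vhE^1_{p,q} \;\geq\; \sum_q \dim_\Field \vhE^\infty_{p,q} \;=\; \dim_\Field \HH_*(A,M),
\]
which is the desired rank inequality.

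The step I expect to be the main obstacle is the identification $\hvE^2_{p,q}\cong\HH_q(A,M)$ together with the matching of the $\hvE$-differentials with the operations $d^{2i}$: the relevant ``squaring/diagonal'' map does not exist at the level of Hochschild chains (cf.\ Remark~\ref{rem:augustus}), so it must be produced on homology, and one must simultaneously control convergence of the doubly-infinite Tate complex --- the place where homological smoothness and properness substitute for the finite-dimensionality of $X$ in the classical picture.
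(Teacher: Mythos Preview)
Your overall architecture is right: form the Tate bicomplex for the involution on $\CH_*(A,M\DTP M)$, show both spectral sequences converge by boundedness, identify $\vhE^1$ with $\HH_*(A,M\DTP M)$, and use $\pi$-formality to collapse $\hvE$. But the heart of the argument --- the identification of the $\hvE$ pages with $\HH_*(A,M)$ --- is misdescribed in a way that would not go through as written.

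First, there is no ``diagonal'' bimodule map $M \to M\DTP_A M$, so the Smith-type statement you propose (that such a map becomes a quasi-isomorphism after passing to the Tate construction) has no candidate map to start from. What actually works is a Frobenius-type squaring on \emph{Hochschild chains}: the assignment $\xi \mapsto \xi^{\otimes 2}$, sending $r\otimes m$ to $r\otimes(m\otimes r\otimes m)$, is well-defined from $\HC_k(M)$ to $\ker(1+\tau)/\Image(1+\tau)$, and one checks (using a basis coming from smoothness and properness, not from boundedness of $M$) that it is a linear isomorphism onto $\hvE^1_{p,2k}$, while $\hvE^1_{p,2k+1}=0$. So the $\hvE^1$-page is the Hochschild \emph{chain complex} of $M$, not its homology. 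Remark~\ref{rem:augustus} is about the absence of such a squaring on Hochschild \emph{homology} as a map of complexes, not about its absence on chains modulo $\Image(1+\tau)$.

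Second, the page count is off by one because of the degree-doubling. Since the odd rows of $\hvE^1$ vanish, $\hvd^1=0$ and $\hvE^1=\hvE^2$; one then shows that $\hvd^2$ corresponds under the squaring map to $\partial_{\HC(M)}$, so that $\hvE^3_{p,2k}\cong\HH_k(A,M)$. The operations $d^{2i}$ are by definition the differentials on $\hvE^{2i}$ for $i\ge 2$, and $\pi$-formality is collapse at $E^3$, not $E^2$. Finally, hypothesis (A-2) is used only for convergence (so that the bicomplex lives in a bounded horizontal strip); it plays no role in the Frobenius identification itself.
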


\subsection{Background on dg algebras and Hochschild homology}\label{subsec:hoch-background}

By a \emph{chain complex} we will mean a complex with a differential of degree $-1$.  Write $h_i(C)$ for the $i\th$ homology of $C$.  We denote the shift of $C$ by $\Sigma C$, i.e.  $(\Sigma C)_k = C_{k-1}$.

We will usually work over $\Field$ or $\ZZ$.  Let $D(\Field)$ (\resp $D(\ZZ)$) denote the derived category of $\Field$-vector spaces (\resp abelian groups).

A \emph{dg algebra} is a chain complex $A = (A_*,\partial)$ of $\Field$- or $\ZZ$-modules equipped with an associative multiplication satisfying:
\begin{itemize}
\item $a \cdot b \in A_{i+j}$ whenever $a \in A_i$ and $b \in A_j$
\item $\partial(a \cdot b) = \partial(a) \cdot b + (-1)^{|a|} a \cdot \partial(b)$
\end{itemize}
When working over $\ZZ$, we will always assume $A$ is free as a $\ZZ$-module.  If $A$ is a dg algebra, an \emph{$(A,A)$-bimodule} is a chain complex $M = (M_*,\bdy)$
equipped with a graded $(A_*,A_*)$-bimodule structure on $M_*$ and such that 
$\bdy(a \cdot m \cdot b) = \bdy(a) \cdot m \cdot b + (-1)^{|a|} a
\cdot \bdy(m) \cdot b + (-1)^{|a||m|} a \cdot m \cdot \bdy(b)$.  
Let $D(\Bimod{A})$ denote the derived category of $(A,A)$ dg bimodules, obtained by inverting quasi-isomorphisms in the homotopy category of $(A,A)$-bimodules.

Unless otherwise noted, $\otimes$ will denote tensor product over the
ground ring $\Field$ or $\ZZ$.

\subsubsection{Resolutions and perfect bimodules}
\label{subsubsec:rapb}
For $A$ a dg algebra over $\Field$ or $\ZZ$, the total complex of the bicomplex $A \otimes A$ is equipped with an $(A,A)$-bimodule structure by setting $a \cdot (b \otimes c) \cdot d = (ab) \otimes (cd)$.  We denote this bimodule by $A^e$ and call it the ``free $(A,A)$-bimodule of rank $1$ in degree zero.''  In general we say that a dg bimodule is free if it is of the form $\bigoplus_{i \in I} \Sigma^{s_i} A^e$, and that it has finite rank if $I$ is finite.

A \emph{cell bimodule} is any bimodule $C$ that admits a filtration $C_1 \subset C_2 \subset \cdots$ such that $C_i/C_{i-1}$ is isomorphic (not just quasi-isomorphic) to a free bimodule.  We say $C$ is a \emph{finite cell bimodule} if the filtration can be chosen finite with each subquotient free of finite rank.

A \emph{cell retract} (\resp \emph{finite cell retract}) is subcomplex $R$ of a cell bimodule (\resp finite cell bimodule) $C$ such that the inclusion $R \to C$ admits an $(A,A)$-bimodule retract $r: C \to R$.  A \emph{resolution} of bimodule $M$ is a quasi-isomorphism $R \to M$ where $R$ is a cell retract.  An object of $\Bimod{A}$ is called \emph{perfect} if it admits a resolution by a finite cell retract.

\begin{definition}
\label{def:homol-smooth}
Let $A$ be a dg algebra over $\Field$ (\resp over $\ZZ$)
\begin{itemize}
\item $A$ is called \emph{homologically proper} if the homology $\bigoplus_{i \in \ZZ} h_i(A)$ is finite dimensional (\resp finitely generated).
\item $A$ is called \emph{homologically smooth} if it is perfect as an $(A,A)$-bimodule.
\end{itemize}
\end{definition}

\subsubsection{Tensor product}
If $M$ and $N$ are $(A,A)$-bimodules, we may define a naive tensor product bimodule $M \boxtimes_A N$ by endowing the graded tensor product $M_* \otimes_{A_*} N_*$ with the differential $\bdy(m \otimes n) = \bdy(m) \otimes n + (-1)^{|m|} m \otimes \bdy(n)$.  We may similarly define a naive tensor product $M_1 \boxtimes_A \cdots \boxtimes_A M_k$ of any number of dg bimodules.

The naive tensor product does not respect quasi-isomorphisms.  We define a corrected or derived version $\DTP$ of the tensor product by fixing a resolution $R \to A$ of the diagonal bimodule $A$ and setting
\[
M \DTP N := M \boxtimes_A R \boxtimes_A N
\]
This induces a bifunctor $\DTP:D(\Bimod{A}) \times D(\Bimod{A}) \to D(\Bimod{A})$.

\subsubsection{Hochschild homology}\label{sec:hoch-background}

\begin{definition}\label{def:Hochschild}
Let $A$ be a dg algebra over $\Field$ (\resp $\ZZ$) and let $R \to A$ be a resolution of $A$ as an $(A,A)$-bimodule.  Let $M$ be an $(A,A)$-bimodule.  The \emph{Hochschild chain complex} of $M$ is the quotient of the total complex of
$R_* \otimes_\Field M_*$ (\resp $R_* \otimes_{\ZZ} M_*$) by the equivalence relation generated by
\[
ra \otimes m \sim r \otimes am \qquad ar \otimes m \sim (-1)^{|a|(|r|+|m|)}r \otimes ma
\]
and with differential given by
\[
\bdy(r \otimes m) = \bdy(r) \otimes m + (-1)^{|r|} r \otimes \bdy(m)
\]
Let $\HC(M) = \HC(A,M)$ denote the Hochschild chain complex of $M$, and set $\HH_i(M) = h_i(\HC(M))$; $\HH_i(M)$ is the $i\th$ \emph{Hochschild homology} group of $M$. (More abstractly, $\HC(M)$ is the derived tensor product of $A$ and $M$ in the category of bimodules---or $A\otimes A^\op$-modules---and $\HH(M)=\Tor_{A\otimes A^\op}(A,M)$.)
\end{definition}

The assignment $M \mapsto \HC(M)$ is functorial, and carries quasi-isomorphisms to quasi-isomorphisms, thus $\HH_i(M)$ is a functor from $D(\Bimod{A})$ to $D(\Field)$ or $D(\ZZ)$.  When $A$ is smooth and proper, this functor is representable (see for instance \cite[Remark~8.2.4]{KontsevichSoibelman06:KoSo06})

\begin{proposition}
\label{prop:cobar}
Suppose $A$ is homologically smooth and proper.  Then there is an $(A,A)$ \dg bimodule $A^!$, unique up to quasi-isomorphism, and a natural isomorphism
\[
\Hom(\Sigma^k A^!, M) \cong \HH_k(M)
\]
where the $\Hom$ on the left-hand side indicates the group of homomorphisms in the derived category $D(\Bimod{A})$.
\end{proposition}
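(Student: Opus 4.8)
The plan is to realize $\HH_*(M)$ as a $\Tor$-group over the enveloping algebra $A\otimes A^\op$ and to produce the corepresenting bimodule as the \emph{bimodule dual} of the diagonal. First I would recall, following Definition~\ref{def:Hochschild} and the parenthetical remark there, that $\HH_*(M) = \Tor_{A\otimes A^\op}(A,M)$: concretely, choosing a resolution $R\to A$ of the diagonal bimodule by a finite cell retract — which exists precisely because $A$ is homologically smooth — the Hochschild complex is $\HC(A,M) = R\boxtimes_{A\otimes A^\op} M$ after the identifications of Definition~\ref{def:Hochschild}, and since $R$ is a cell retract this naive tensor product already computes the derived one, so $\HC(A,M)\simeq A\DTP_{A\otimes A^\op} M$ naturally in $M$.

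Next I would set $A^! := \Hom_{A\otimes A^\op}(R, A^e)$, the complex of $(A,A)$-bimodule maps from $R$ into the free rank-one bimodule $A^e$, with the bimodule structure coming from $A^e$. Because $R$ is a retract of a finite cell bimodule with free finite-rank subquotients, $A^!$ is again such a retract — in particular it is perfect, and being a cell retract the naive $\Hom$ out of it computes $\RHom$ — and $A^!$ represents the inverse dualizing bimodule $\RHom_{A\otimes A^\op}(A, A^e)$. The substantive content now reduces to two statements familiar from the theory of finitely generated projective modules, transported to the dg setting: (i) the biduality map $R\to\Hom_{A\otimes A^\op}(A^!, A^e)$ is a quasi-isomorphism; and (ii) for any finite cell retract $P$ and any bimodule $N$ the natural evaluation map
\[
\Hom_{A\otimes A^\op}(P, A^e)\boxtimes_{A\otimes A^\op} N \longrightarrow \Hom_{A\otimes A^\op}(P, N)
\]
is a quasi-isomorphism. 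Both are verified first when $P = A^e$ is free, where each side is canonically $N$, and then propagated up the finite filtration by free subquotients using the associated long exact sequences and the five lemma, the retract case reducing immediately to the cell case.

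Applying (ii) with $P = A^!$ and $N = M$, and then (i) to identify $\Hom_{A\otimes A^\op}(A^!, A^e)\simeq R\simeq A$, produces a natural quasi-isomorphism
\[
\HC(A,M)\;\simeq\; A\DTP_{A\otimes A^\op} M\;\simeq\;\Hom_{A\otimes A^\op}(A^!, M).
\]
Passing to the derived category $D(\Bimod A)$ — equivalently, of $A\otimes A^\op$-modules — and using that $A^!$ is a cell retract, this gives $\Hom(\Sigma^k A^!, M) = h_k\,\Hom_{A\otimes A^\op}(A^!, M)\cong h_k\,\HC(A,M) = \HH_k(M)$, the shift by $k$ being exactly what turns $h_0\RHom(\Sigma^k A^!, -)$ into $h_k\RHom(A^!, -)$; naturality in $M$ is visible at every step. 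Uniqueness of $A^!$ up to quasi-isomorphism is then the Yoneda lemma inside $D(\Bimod A)$: two objects corepresenting the functor $M\mapsto\HH_*(M)$ are canonically isomorphic there.

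The step I expect to be the main obstacle is (ii), the dg analogue of the fact that finitely generated projective modules are dualizable: one must keep careful track of the differentials and of the retraction data under the naive $\Hom$ and $\boxtimes$, and it is here that homological smoothness is genuinely used, since it supplies the finite cell retract $R$ in the first place. Homological properness, by contrast, plays no role in representability itself — it is needed only to conclude that $A^!$ and the groups $\HH_k(M)$ (for $M$ bounded) are finite dimensional.
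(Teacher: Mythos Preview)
Your argument is correct and is the standard one the paper invokes by citation to \cite[Remark~8.2.4]{KontsevichSoibelman06:KoSo06}; the paper does not give its own proof but describes $A^!$ immediately after the proposition exactly as you construct it, as $\Hom_{\Bimod{A}}(P,A^e)$ for a projective resolution $P$ of the diagonal. Your observation that homological properness plays no role in representability itself is also correct.
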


Because of this, any natural transformation $\HH_k(M) \to
\HH_{k+r}(M)$ comes from a map $\Hom(\Sigma^{k+r} A^!,\Sigma^{k} A^!)
\cong \HH_{r}(A^!)$.  
In \cite[Definition 8.1.6]{KontsevichSoibelman06:KoSo06} $A^!$ is
called the ``inverse dualizing bimodule.''  If $P$ is any complex of
projective $(A,A)$-bimodules resolving the diagonal bimodule $A$, then $A^!$ is quasi-isomorphic to $\Hom_{\Bimod{A}}(P,A^e)$.  Since $P$ can be taken to be the bar resolution of $A$, we will call $A^!$ the ``cobar bimodule'' for short.  A smaller Koszul resolution will be useful to us in our applications in Section \ref{sec:HF-applications}.

\subsection{Spectral sequences from bicomplexes}
\label{sec:ssfb}

For us, a bicomplex is either a bigraded free $\ZZ$-module or, more often, a bigraded $\Field$-vector space $C_{*,*}$, together with differentials, $d^h:C_{p,q} \to C_{p-1,q}$ and $d^v:C_{p,q} \to C_{p,q-1}$, such that $d^h \circ d^v + d^v \circ d^h=0$.

Write $\Tot(C)$ for the total complex of $C$, i.e.
\[
\Tot(C)_n = \bigoplus_{p+q = n} C_{p,q}
\]
with differential given by $d(x) = d^v(x) + d^h(x)$.

We will denote the two standard filtrations on a bicomplex by $\vF$ and $\hF$, namely
\[
\begin{array}{ccc}
(\vF_k C)_{p,q} & = & \bigg\{
\begin{array}{cl}
C_{p,q} & \text{if }q \leq k\\
0 & \text{otherwise}
\end{array}\\
(\hF_k C)_{p,q} & = & \bigg\{
\begin{array}{cl}
C_{p,q} & \text{if }p \leq k\\
0 & \text{otherwise}
\end{array}
\end{array}
\]
These filtrations induce spectral sequences which we will denote by $\hvE$ (attached to $\vF$) and $\vhE$ (attached to $\hF$).  By computing first the horizontal homology and then the vertical homology of the bicomplex, we obtain $\hvE^2$, and by computing the reverse we obtain $\vhE^2$.

\begin{remark}
\label{rem:hv}
We will compute differentials in these spectral sequences by the following standard device.  If $x \in C_{p,q}$ is an element that survives to $\vhE^r_{p,q}$, and $(x_1,\ldots,x_r)$ is a sequence of elements with $x = x_1$ and $d^h(x_i) = d^v(x_{i+1})$ for $i< r$, then $d^h(x_r)$ is a representative for $\vhd^r(x)$ in $\vhE^r_{p-r,q+r-1}$.  (We will call such a sequence a \emph{vh sequence}).  Similarly if $y\in C_{p,q}$ survives to $\hvE^r_{p,q}$ and $(x_1,\ldots,x_r)$ is a sequence of elements with $y = y_1$ and $d^v(y_i) = d^h(y_{i+1})$ for $i < r$ (an \emph{hv sequence}), then $d^v(y_r)$ is a representative for $\hvd^r(y)$.
\end{remark}

\begin{remark}\label{remark:vh-pictures}
Our grading conventions for $\hvE$ are transposed from the standard ones, that is we write $\hvE_{p,q}$ for what is more typically called $E_{q,p}$.  Here are $\hvE^0$,$\hvE^1$, and $\hvE^2$:

\begin{align*}
\xymatrix{
\bullet & \bullet \ar[l] & \bullet \ar[l] \\
\bullet & \bullet \ar[l] & \bullet \ar[l] \\
\bullet & \bullet \ar[l] & \bullet \ar[l] 
}
& &
\xymatrix{
\bullet \ar[d] & \bullet \ar[d] & \bullet \ar[d] \\
\bullet \ar[d] & \bullet \ar[d] & \bullet \ar[d]\\
\bullet & \bullet & \bullet 
}
& &
\xymatrix{
\bullet \ar[ddr] & \bullet \ar[ddr]  & \bullet  \\
\bullet & \bullet  & \bullet \\
\bullet & \bullet  & \bullet  
}
\end{align*}

Our grading conventions for $\vhE$ are standard.  Here is a diagram of the pages $\vhE^0$, $\vhE^1$, and $\vhE^2$:
\begin{align*}
\xymatrix{
\bullet \ar[d] & \bullet \ar[d] & \bullet \ar[d] \\
\bullet \ar[d] & \bullet \ar[d] & \bullet \ar[d]\\
\bullet & \bullet & \bullet 
}
& &
\xymatrix{
\bullet & \bullet \ar[l] & \bullet \ar[l] \\
\bullet & \bullet \ar[l] & \bullet \ar[l] \\
\bullet & \bullet \ar[l] & \bullet \ar[l] 
}
& &
\xymatrix{
\bullet & \bullet  & \bullet  \\
\bullet & \bullet  & \bullet \ar[llu] \\
\bullet & \bullet  & \bullet \ar[llu] 
}
\end{align*}
\end{remark}

Under suitable boundedness conditions, the final pages $\vhE^\infty$ and $\hvE^\infty$ are related to the homology of $\Tot(C)$.  Note that the homology of $\Tot(C)$ carries filtrations
\[
\begin{array}{c}
 \hF_p H_n(\Tot(C)) = \{z \in H_n(\Tot(C)) \mid z \text{ is represented by a cycle in $\bigoplus_{i \leq p} C_{i,n-i}$}\} \\
 \vF_p H_n(\Tot(C)) = \{z \in H_n(\Tot(C)) \mid z \text{ is represented by a cycle in $\bigoplus_{i \leq p} C_{n-i,i}$}\}
 \end{array}
 \]
 
 \begin{proposition}
\label{prop:demoted}
Suppose that, for each $n$, there are only finitely many $p$ such that $C_{p,n-p} \neq 0$.  Then 
\[
\begin{array}{c}
\vhE^\infty_{p,q} = \hF_p H_{p+q}(\Tot(C))/\hF_{p-1} H_{p+q}(\Tot(C))\\
\hvE^\infty_{p,q} = \vF_p H_{p+q}(\Tot(C))/\vF_{p-1} H_{p+q}(\Tot(C)).
\end{array}
\]
\end{proposition}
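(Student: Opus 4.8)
The plan is to recognize $\vhE$ and $\hvE$ as the two spectral sequences attached to the increasing filtrations $\hF$ and $\vF$ of the chain complex $\Tot(C)$, and to run the classical argument identifying the $E^\infty$-page of such a spectral sequence with the associated graded of the induced filtration on homology. First one checks the formal prerequisites: $\hF_k\Tot(C)$ and $\vF_k\Tot(C)$ are subcomplexes (because $d^v$ preserves the $p$-grading and lowers the $q$-grading, while $d^h$ lowers $p$ and preserves $q$), and both filtrations are exhaustive and separated in the strong form that, by the hypothesis of Proposition~\ref{prop:demoted}, they are \emph{finite in each total degree}: saying that only finitely many $C_{p,n-p}$ are nonzero for fixed $n$ is precisely saying that $(\hF_p\Tot(C))_n = 0$ for $p\ll 0$ and $(\hF_p\Tot(C))_n = \Tot(C)_n$ for $p \gg 0$, with the two bounds depending on $n$, and likewise for $\vF$. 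This degreewise finiteness is the only place the hypothesis is used, and it is exactly what is needed for strong convergence.

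I would carry out the identification for $\vhE$; the statement for $\hvE$ then follows by the same argument with $\vF$ in place of $\hF$, re-indexing the pages according to the transposed convention of Remark~\ref{remark:vh-pictures}. Recall (compare Remark~\ref{rem:hv}) that $\vhE^r_{p,q}$ is naturally a subquotient of $C_{p,q}$, namely $Z^r_{p,q}/(Z^{r-1}_{p-1,q+1}+\bdy Z^{r-1}_{p+r-1,q-r+2})$ with $Z^r_{p,q}=\{x\in (\hF_p\Tot(C))_{p+q} : \bdy x\in (\hF_{p-r}\Tot(C))_{p+q-1}\}$, and that $\vhd^r\colon \vhE^r_{p,q}\to\vhE^r_{p-r,q+r-1}$. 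The first step is to see that the pages stabilize termwise: for $r$ large enough the target $\vhE^r_{p-r,q+r-1}$ of the outgoing differential is a subquotient of $C_{p-r,q+r-1}$, whose first index $p-r$ lies below the (finite) support in total degree $p+q-1$, so it vanishes; similarly the source $\vhE^r_{p+r,q-r+1}$ of the incoming differential vanishes once $p+r$ exceeds the support in total degree $p+q+1$. Hence $\vhE^r_{p,q}$ is independent of $r$ for $r\gg 0$, and we write $\vhE^\infty_{p,q}$ for this common value. The second step is to pass to the limit in the subquotient description: separatedness of $\hF$ gives $\bigcap_r Z^r_{p,q}=(\hF_p\Tot(C))_{p+q}\cap\Ker\bdy$, exhaustiveness gives $\bigcup_r \bdy Z^{r-1}_{p+r-1,q-r+2}=(\hF_p\Tot(C))_{p+q}\cap\Image\bdy$, and each of these unions and intersections is attained at a finite stage by the degreewise finiteness, so
\[
\vhE^\infty_{p,q}=\bigl((\hF_p\Tot(C))_{p+q}\cap\Ker\bdy\bigr)\Big/\bigl((\hF_{p-1}\Tot(C))_{p+q}\cap\Ker\bdy+(\hF_p\Tot(C))_{p+q}\cap\Image\bdy\bigr).
\]
Finally, by the definition of the filtration on homology stated just before Proposition~\ref{prop:demoted}, $\hF_pH_{p+q}(\Tot(C))$ is the image of $(\hF_p\Tot(C))_{p+q}\cap\Ker\bdy$ in $H_{p+q}(\Tot(C))$, i.e.\ $\bigl((\hF_p\Tot(C))_{p+q}\cap\Ker\bdy\bigr)/\bigl((\hF_p\Tot(C))_{p+q}\cap\Image\bdy\bigr)$; quotienting this in turn by the image of $\hF_{p-1}H_{p+q}(\Tot(C))$ gives exactly the right-hand side above, so $\vhE^\infty_{p,q}=\hF_pH_{p+q}(\Tot(C))/\hF_{p-1}H_{p+q}(\Tot(C))$, as claimed.

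I do not expect a genuine obstacle here: this is the standard strong-convergence theorem for the spectral sequence of a filtered complex, and the proposition is essentially a bookkeeping exercise. The two points that want a little care are (i) that the hypothesis supplies boundedness only \emph{in each total degree}, so the stabilization of pages and the passage to the limit must be argued one total degree at a time rather than by invoking global boundedness of the filtration of $\Tot(C)$; and (ii) the bidegree and sign bookkeeping --- the sign in $d^h\circ d^v+d^v\circ d^h=0$, the exact indices in the $Z^r$'s, and the transposed conventions of Remark~\ref{remark:vh-pictures} --- so that both displayed formulas come out with indices precisely as stated.
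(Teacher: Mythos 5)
Your proof is correct and is exactly the standard strong‑convergence argument for the spectral sequence of a filtered complex whose filtration is finite in each total degree; the paper does not spell this out but simply cites McCleary, \emph{A User's Guide to Spectral Sequences}, Theorem~3.2, which proves the same statement by the same route. Your care in noting that the hypothesis gives boundedness only degree‑by‑degree (not globally) and in handling the transposed indexing of $\hvE$ from Remark~\ref{remark:vh-pictures} are precisely the two points one needs to get right, and you have them.
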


\begin{proof}
  This is standard; see, for instance,~\cite[Theorem
  3.2]{McCleary01:UsersGuide}.
\end{proof}

\subsection{The Hochschild-Tate bicomplex and the operations \texorpdfstring{$d^{2i}$}{d-2i}}
\label{subsec:HHTate}

We construct operations $d^{2i}$ on $\HH_*(M)$ by considering the
bimodule $M \DTP M$ and its Hochschild chains $\HC(M \DTP M)$.  In
this section we work over $\Field$.  The following proposition is key:

\begin{proposition}
\label{prop:Z2action}
The map $\tau:\HC(M \DTP M) \to \HC(M \DTP M)$ that sends $r \otimes (m \otimes r' \otimes m')$ to $r' \otimes (m' \otimes r \otimes m)$ is a map of chain complexes, and satisfies $\tau \circ \tau(x) = x$.  Moreover, if $A$ is homologically smooth and proper then we may choose an $\Field$-basis of $\HC(M)$ of the form $\{r_i \otimes m_i\}_{i \in I}$ such that $\{r_i \otimes (m_i \otimes r_j \otimes m_j)\}_{i,j \in I}$ is an $\Field$-basis for the chain complex $\HC(M \DTP M)$.
\end{proposition}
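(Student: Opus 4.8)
The plan is to argue entirely at the level of chain complexes. The crucial first move is to fix, once and for all, a single resolution $R \to A$ of the diagonal bimodule and to use it \emph{both} as the resolution computing Hochschild chains and as the correction term in $M \DTP M = M \boxtimes_A R \boxtimes_A M$; this is what makes it meaningful to speak of ``swapping $r$ with $r'$'' in $\HC(M \DTP M)$ at all. With this convention $\HC(M \DTP M)$ is spanned by ``cyclic words'' $r \otimes m \otimes r' \otimes m'$ (with $r, r' \in R$ and $m, m' \in M$) modulo exactly four families of relations: the two Hochschild identifications coming from the outer $R$, together with the two $\boxtimes_A$ identifications internal to $M \boxtimes_A R \boxtimes_A M$. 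Arranging the four factors $r, m, r', m'$ around a circle, these are precisely the relations allowing one to slide an element of $A$ across each of the four edges of the resulting ``necklace.'' Since we work over $\Field$ there are no Koszul signs to track; this is the only place characteristic two enters.

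In this picture $\tau$ is simply rotation of the necklace by two beads, $(r,m,r',m') \mapsto (r',m',r,m)$, and the three assertions about it become immediate. First, $\tau$ is well defined on the quotient because rotation by two beads permutes the four edge-relations among themselves (the $r$--$m$ edge is carried to the $r'$--$m'$ edge, the $m$--$r'$ edge to the $m'$--$r$ edge, and so on). Second, $\tau$ is a chain map because the total differential acts by the Leibniz rule, that is, as a sum of one term ``at each bead,'' which is manifestly invariant under relabelling the beads by a rotation. Third, $\tau^2 = \id$ because rotating a four-bead necklace by two beads twice is the trivial permutation of the beads.

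For the ``moreover'' part I would choose $R$ to be \emph{free as a graded $(A,A)$-bimodule}; the bar resolution $\Barop(A)$ is the standard such choice. Writing $R \cong A \otimes W \otimes A$ as a graded bimodule for a graded $\Field$-vector space $W$ (its bar generators), the identities $(A \otimes W \otimes A) \otimes_{A^e} N \cong W \otimes_\Field N$ and $M \boxtimes_A A \cong M$ produce graded $\Field$-vector space isomorphisms (ignoring differentials, which is all that is needed here)
\[
\HC(M) \;\cong\; W \otimes_\Field M,
\qquad
\HC(M \DTP M) \;\cong\; W \otimes_\Field M \otimes_\Field W \otimes_\Field M,
\]
under which the Hochschild chain $r \otimes m$ with $r = 1 \otimes w \otimes 1$ corresponds to $w \otimes m$, and the chain $r \otimes (m \otimes r' \otimes m')$ with $r' = 1 \otimes w' \otimes 1$ corresponds to $w \otimes m \otimes w' \otimes m'$. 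Fixing $\Field$-bases of $W$ and of $M$ and letting $\{r_i \otimes m_i\}_{i \in I}$ be the resulting product basis of $W \otimes_\Field M = \HC(M)$, the set $\{r_i \otimes m_i \otimes r_j \otimes m_j\}_{(i,j) \in I \times I}$ is visibly the product basis of $W \otimes_\Field M \otimes_\Field W \otimes_\Field M = \HC(M \DTP M)$, which is the assertion.

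The step I expect to require the most care is not a calculation but the bookkeeping around the choice of resolution. The product-basis statement is genuinely sensitive to it --- it encodes the numerical identity $\dim_\Field \HC(M \DTP M) = \bigl(\dim_\Field \HC(M)\bigr)^2$, which can fail for resolutions that are not free as graded bimodules --- so one must commit to a cell-bimodule resolution such as $\Barop(A)$ and carry it consistently through both of its roles. Homological smoothness and properness of $A$ are the standing hypotheses of this section and supply the finiteness exploited elsewhere (and the representability of Proposition~\ref{prop:cobar}); for the present proposition the only feature actually used is that the diagonal bimodule admits a resolution that is free as a graded bimodule, which the bar resolution always provides.
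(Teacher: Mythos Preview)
Your proof is correct and takes a genuinely different route from the paper's. For the first assertion the paper simply declares it ``easy to see'' that $\tau$ commutes with $\partial_{\HC(M\DTP M)}$; your necklace picture makes this explicit and organizes the well-definedness, chain-map, and involutivity checks cleanly.

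For the ``moreover'' clause the approaches diverge. The paper actually uses the smooth and proper hypotheses: it replaces $A$ by a finite-dimensional model, takes $R_*$ finite-dimensional and \emph{projective} as a graded $(A_*,A_*)$-bimodule, and then reduces to indecomposable projectives $R_* = eA_*\otimes_{\Field}A_*f$ for principal idempotents $e,f$, where the assertion becomes the elementary identification $\HC(M)\cong eM_*f$ and $\HC(M\DTP M)\cong eM_*f\otimes_{\Field}eM_*f$. You instead choose $R$ to be \emph{free} as a graded bimodule (the bar resolution), so that $R_*\cong A\otimes W\otimes A$ and the identifications $\HC(M)\cong W\otimes M$ and $\HC(M\DTP M)\cong W\otimes M\otimes W\otimes M$ drop out immediately. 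Your route is shorter and, as you correctly observe, does not need smoothness or properness at all for this particular statement. The price is that with the bar resolution $\HC_*(M\DTP M)$ is not bounded even when $M$ is, whereas the paper's finite projective resolution gives a bounded Hochschild complex; this boundedness is exactly what is invoked in the convergence argument of Proposition~\ref{prop:converge}. So the paper's choice of resolution is doing double duty, and your simplification here would have to be paid back later.
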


Note that $\tau$ is not induced by a bimodule homomorphism $M \DTP M \to M \DTP M$.

\begin{proof}
It is easy to see that the map $\tau$ commutes $\partial_{\HC(M \DTP M)}$.  Let us prove the second assertion.

Since $A = (A_*,\bdy_A)$ is homologically proper, we may assume that $A_*$ is finite-dimensional over $\Field$.  Since $A$ is homologically smooth, we may assume that $R_*$ is finite-dimensional and projective as an $(A_*,A_*)$-bimodule.  We will show that, if $A_*$ is any finite-dimensional algebra and $R_*$ is a finite-dimensional projective $(A_*,A_*)$-bimodule, then 
$R_* \otimes M_*/\sim$ has a basis $B = \{r_i \otimes m_i\}$ such that $\{r_i \otimes m_i \otimes r_j \otimes m_j\}$ is a basis for $R_* \otimes (M_* \otimes_{A_*} R_* \otimes_{A_*} M_*)/\sim$.

It suffices to prove the claim for indecomposable projective bimodules, i.e. we may assume $R_* = e A_* \otimes_\Field A_* f$ where $e$ and $f$ are principal idempotents in $A_*$.  In that case it is easy to verify the following:
\begin{enumerate}
\item $(R_* \otimes M_*)/\sim$ is naturally identified with $e M_* f$
\item $R_* \otimes (M_* \otimes_{A_*} R_* \otimes_{A_*} M_*)/\sim$ is naturally identified with $eM_* f \otimes_\Field eM_* f$.
\end{enumerate}
Under the identification (1), any basis for $e M_* f$ determines a basis $B = \{r_i \otimes m_i\}$ for $R_* \otimes M_*$ with the required property.
\end{proof}

Since $\tau^2 = 1$, and we are working over $\Field$, $(1+\tau)^2 = 0$.  We may therefore consider the bicomplex
\[
\xymatrix{
\dots & \HC_*(M\DTP M)\ar[l]_(.7){1+\tau}\ar@(ul,ur)^{\bdy_{\HC}} & \HC_*(M\DTP M)\ar[l]_{1+\tau}\ar@(ul,ur)^{\bdy_{\HC}}
& \HC_*(M\DTP M)\ar[l]_{1+\tau} \ar@(ul,ur)^{\bdy_{\HC}}&\dots\ar[l]_(.3){1+\tau}
}
\]
We denote this bicomplex by $\HC_{*,*}^\Tate(M\DTP M)$.  That is, $\HC_{p,q}^\Tate = \HC_q(M \DTP M)$, the vertical differential is $\bdy_{\HC(M \DTP M)}$, and the horizontal differential is $(1+ \tau)$.  We have two spectral sequences associated to $\HC^\Tate$, which we denote by $\hvE$ and $\vhE$.

\begin{proposition}
\label{prop:converge}
Suppose that $A$ is homologically smooth and $M$ is bounded.  The spectral sequences $\hvE^r_{p,q}$ and $\vhE^r_{p,q}$ attached to the bicomplex $\HC^\Tate(A, M \DTP M)$ converge to the homology of the total complex of $\HC^\Tate$.
\end{proposition}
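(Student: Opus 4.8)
The plan is to reduce the statement to Proposition~\ref{prop:demoted}. The point requiring care is that $\HC^\Tate(A, M\DTP M)$ is periodic and unbounded in the horizontal ($p$-) direction --- it is a full-plane, not a half-plane, bicomplex --- so the hypothesis of Proposition~\ref{prop:demoted}, that for each $n$ only finitely many $p$ have $\HC^\Tate_{p,\,n-p}\neq 0$, is not automatic and must be verified. I would verify it by showing that the hypotheses force the complex $\HC(M\DTP M)$ to be \emph{bounded}, i.e.\ concentrated in finitely many homological degrees.

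To establish this, I would appeal to Proposition~\ref{prop:Z2action}: under homological smoothness and properness (in force throughout this section), it provides an $\Field$-basis $\{r_i\otimes m_i\}_{i\in I}$ of $\HC(M)$ for which $\{r_i\otimes(m_i\otimes r_j\otimes m_j)\}_{i,j\in I}$ is a basis of $\HC(M\DTP M)$. The proof of that proposition identifies $\HC(M)=(R_*\otimes M_*)/{\sim}$ with a finite direct sum $\bigoplus_k e_k M_* f_k$ of subspaces of $M_*$; since $M$ is bounded, so is $\HC(M)$, i.e.\ the degrees $|r_i|+|m_i|$ take only finitely many values. Hence the degrees $|r_i|+|m_i|+|r_j|+|m_j|$ of the basis of $\HC(M\DTP M)$ also take only finitely many values, so $\HC(M\DTP M)$ is bounded, say concentrated in degrees $a\le q\le b$. (One can also argue without bases: homological smoothness lets one take the resolution of the diagonal to be a finite cell retract, whence $M\boxtimes_A R\boxtimes_A M$ and then $\HC(M\DTP M)$ are finite cell retracts of bounded complexes, hence bounded.)

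Given this, $\HC^\Tate_{p,q}=\HC_q(M\DTP M)$ vanishes unless $q\in[a,b]$, so for each total degree $n$ the only possibly-nonzero entries on the antidiagonal $p+q=n$ have $p\in[n-b,\,n-a]$, which is a finite set; equivalently, the filtrations $\hF$ and $\vF$ of $\Tot(\HC^\Tate)$ are finite in each homological degree. Proposition~\ref{prop:demoted} therefore applies directly and shows that $\vhE^r_{p,q}$ and $\hvE^r_{p,q}$ converge to $H_*\bigl(\Tot(\HC^\Tate(A, M\DTP M))\bigr)$, with $\vhE^\infty$ and $\hvE^\infty$ the associated graded pieces of the $\hF$- and $\vF$-filtrations respectively.

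The only load-bearing step is the boundedness of $\HC(M\DTP M)$; the rest is the standard convergence criterion for bicomplexes. The moral is that the Hochschild--Tate bicomplex, though doubly infinite in the $p$-direction, is bounded in the $q$-direction, so that fixing the total degree confines $p$ to a finite window --- and this vertical boundedness is precisely what ``$A$ homologically smooth (and proper)'' together with ``$M$ bounded'' provide.
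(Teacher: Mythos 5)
Your proof is correct and takes essentially the same route as the paper's: the paper also observes that $\HC_*(M\DTP M)$ is bounded (so that each antidiagonal of the Tate bicomplex meets only finitely many nonzero entries) and then invokes Proposition~\ref{prop:demoted}. You simply spell out the justification for that boundedness, which the paper states without comment; your parenthetical basis-free version (finite cell retract for the diagonal resolution, hence bounded) is the one that uses only the hypotheses literally stated, since properness is not assumed in this particular proposition even though it is in force elsewhere in the section.
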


\begin{proof}
As $\HC_*(M\DTP M)$ is bounded, the Hochschild-Tate bicomplex has $\HC_{p,q}^{\Tate} = 0$ for all but finitely many $q$.  The proposition therefore follows from Proposition~\ref{prop:demoted}.
\end{proof}

In the rest of this section we focus on the spectral sequence $\hvE$.  We will see that the differentials in $\hvE$ are natural operations on $\HH_*(M)$.  

Suppose $\xi \in \HC_k(M)$.  
Then we can write $\xi$ as a linear combination of pure tensors $r \otimes m$, i.e.
\[
\xi = \sum_\ell c_\ell r_\ell \otimes m_\ell
\]
with $c_\ell \in \Field$, $r_\ell \in R_{i_\ell}$, $m_\ell \in M_{j_\ell}$, and $i_\ell + j_\ell = k$.  The sum
\[
\xi^{\otimes 2} = \sum_\ell c_\ell^2 r_\ell \otimes (m_\ell \otimes r_\ell \otimes m_\ell)
\]
is not well-defined (it depends on $c_\ell$, $r_\ell$, $m_\ell$).  However, 

\begin{proposition}
The sum $\xi^{\otimes 2}$ is well-defined modulo the image of $(1+\tau)$.
\end{proposition}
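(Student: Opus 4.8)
The plan is to reduce the statement to the ``freshman's dream'' over $\Field$: when one squares a sum, all the off-diagonal cross terms get interchanged in pairs by $\tau$, hence lie in the image of $1+\tau$. To turn this into a clean argument I would first fix $\Field$-bases of $R$ and of $M$ and observe that, since $\HC_k(M)$ is the quotient of $(R\otimes M)_k$ by the subspace spanned by the relations $ra\otimes m - r\otimes am$ and $ar\otimes m - r\otimes ma$ (over $\Field$ the Koszul signs are irrelevant), and since --- these expressions being $\Field$-trilinear in $(r,a,m)$ --- that subspace is already spanned by the instances with $r$, $a$, $m$ ranging over basis elements, any two presentations $\xi = \sum_\ell c_\ell\, r_\ell\otimes m_\ell$ of a fixed class $\xi\in\HC_k(M)$ are connected by a finite chain of \emph{elementary moves}: (i) reordering terms; (ii) inserting or deleting a cancelling pair $r\otimes m + r\otimes m$; (iii) splitting $c(r'+r'')\otimes m$ into $cr'\otimes m + cr''\otimes m$, or $cr\otimes(m'+m'')$ into $cr\otimes m' + cr\otimes m''$, together with the reverse moves; and (iv) replacing a single term $ra\otimes m$ by $r\otimes am$, or $ar\otimes m$ by $r\otimes ma$, together with the reverse moves. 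It therefore suffices to check that each elementary move alters $\xi^{\otimes 2} = \sum_\ell c_\ell^2\, r_\ell\otimes(m_\ell\otimes r_\ell\otimes m_\ell)$ only by an element of $\Image(1+\tau)$.

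Checking the moves is routine. Over $\Field$ we may take $c_\ell\in\{0,1\}$, so $c_\ell^2 = c_\ell$; moves (i) and (ii) then change $\xi^{\otimes 2}$ by $0$. For move (iii) with $r_\ell = r'+r''$ (the $M$-slot case is identical),
\[
(r'+r'')\otimes\bigl(m\otimes(r'+r'')\otimes m\bigr) = r'\otimes(m\otimes r'\otimes m) + r''\otimes(m\otimes r''\otimes m) + (1+\tau)\bigl(r'\otimes(m\otimes r''\otimes m)\bigr),
\]
since $\tau$ sends $r'\otimes(m\otimes r''\otimes m)$ to $r''\otimes(m\otimes r'\otimes m)$; thus the change lies in $\Image(1+\tau)$. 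For move (iv) I would compute directly in $\HC(M\DTP M)$, using only the defining relations of the Hochschild complex and the balancing of tensor products over $A$, that $ra\otimes(m\otimes ra\otimes m) = r\otimes(am\otimes r\otimes am)$ and likewise $ar\otimes(m\otimes ar\otimes m) = r\otimes(ma\otimes r\otimes ma)$; so move (iv) in fact leaves $\xi^{\otimes 2}$ unchanged. This proves the proposition.

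The only input needed from Proposition~\ref{prop:Z2action} is its first assertion --- that $\tau$ is a well-defined involution of the chain complex $\HC(M\DTP M)$, which is what makes $\Image(1+\tau)$, and hence the statement itself, meaningful; when in addition $A$ is homologically smooth and proper, the resulting class is of course the image of $\xi\otimes\xi$ under the identification $\HC(M)\otimes_\Field\HC(M)\cong\HC(M\DTP M)$, but that is not required here. I do not expect a genuine obstacle: the one point requiring care is bookkeeping --- verifying that any two presentations are linked by the elementary moves above, and organizing the computation in move (iv) --- while the actual content is the single cross-term cancellation in move (iii).
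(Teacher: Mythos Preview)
Your proposal is correct and takes essentially the same approach as the paper: the paper's proof consists precisely of the three identities underlying your moves (iii) and (iv), namely the cross-term identity $(r_1+r_2)\otimes(m\otimes(r_1+r_2)\otimes m) = \sum r_i\otimes(m\otimes r_i\otimes m) + (1+\tau)(\ldots)$ and its $M$-slot analogue, together with $(ar)\otimes(m\otimes(ar)\otimes m) = r\otimes(ma\otimes r\otimes ma)$. Your version is simply a more careful packaging of the same computations, making explicit the reduction to elementary moves that the paper leaves implicit.
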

\begin{proof}
This follows from the following computations:
\begin{eqnarray*}
(a r) \otimes (m \otimes (ar) \otimes m) & = & r \otimes (ma \otimes r \otimes ma) \text{   in $\HC(A,M \DTP M)$} \\
(r_1 + r_2) \otimes (m \otimes (r_1 + r_2) \otimes m) & = & r_1 \otimes (m \otimes r_1 \otimes m) + r_2 \otimes (m \otimes r_2 \otimes m) \\
& & + (1+\tau)(r_1 \otimes (m \otimes r_2 \otimes m))\\
r \otimes ((m_1 + m_2) \otimes r \otimes (m_1 + m_2)) & = & r \otimes (m_1 \otimes r \otimes m_1) + r \otimes (m_2 \otimes r \otimes m_2) \\
& & + (1+\tau)(r \otimes (m_1 \otimes r \otimes m_2))
\end{eqnarray*}
\end{proof}

We will use the operation $\xi \mapsto \xi^{\otimes 2} + \Image(1+\tau)$ to study $\hvE$:

\begin{proposition}
  \label{prop:Frob}
  Let $A$ be a \dg algebra and let $M$ be a \dg bimodule for $A$.  For each $k$, the assignment $\xi \mapsto \xi^{\otimes 2} + \Image(1+\tau)$ is a $\Field$-linear isomorphism of $\HC_k(M)$ onto $\hvE^1_{p,2k}$.
  Moreover, $\hvE^1_{p,2k+1} = 0$.
\end{proposition}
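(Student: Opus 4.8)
The plan is to compute the $E^1$-page of $\hvE$ directly from the bicomplex $\HC^\Tate(M\DTP M)$, whose horizontal differential is $1+\tau$ with $\tau$ the involution of Proposition~\ref{prop:Z2action}. Since we are working over $\Field$ and $\tau^2=1$, each column is the chain complex $(\HC_*(M\DTP M), 1+\tau)$, so $\hvE^1_{p,q}$ is just the homology of $1+\tau$ acting on $\HC_q(M\DTP M)$, which (being $2$-periodic in $p$) is independent of $p$. Concretely, $\hvE^1_{p,q} = \Ker(1+\tau)/\Image(1+\tau)$ on $\HC_q(M\DTP M)$. So the whole proposition reduces to an algebraic computation of the $\ZZ/2$-Tate cohomology of the $\ZZ/2$-module $\HC_q(M\DTP M)$ with involution $\tau$.

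First I would invoke the basis produced by Proposition~\ref{prop:Z2action}: choose an $\Field$-basis $\{r_i\otimes m_i\}_{i\in I}$ of $\HC(M)$ so that $\{r_i\otimes(m_i\otimes r_j\otimes m_j)\}_{i,j\in I}$ is a basis of $\HC(M\DTP M)$, with $\tau$ swapping the basis element indexed by $(i,j)$ with the one indexed by $(j,i)$. This exhibits $\HC(M\DTP M)$ as a direct sum of $\ZZ/2$-representations: one free rank-one summand $\Field[\ZZ/2]\cdot\{(i,j),(j,i)\}$ for each unordered pair $i\neq j$, and one trivial rank-one summand $\Field\cdot\{(i,i)\}$ for each diagonal index $i$. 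The Tate cohomology (= homology of $1+\tau$) of $\Field[\ZZ/2]$ vanishes, while that of the trivial module $\Field$ is $\Field$ in each degree. Hence $\hvE^1_{p,q}$, as an $\Field$-vector space, has a basis indexed by the diagonal indices $i$ with $\deg(r_i\otimes(m_i\otimes r_i\otimes m_i)) = q$; since $\deg(r_i\otimes m_i)$ contributes once for the first $\HC(M)$ factor and once for the inner one, this degree is $2\deg(r_i\otimes m_i)$. This immediately gives $\hvE^1_{p,2k+1}=0$ and a bijection between a basis of $\HC_k(M)$ and a basis of $\hvE^1_{p,2k}$.

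The remaining point — and the only place requiring care — is to check that this bijection is exactly the asserted map $\xi\mapsto \xi^{\otimes 2}+\Image(1+\tau)$, and in particular that this map is $\Field$-linear and well-defined on all of $\HC_k(M)$ (not just on basis elements). Well-definedness modulo $\Image(1+\tau)$ is already the content of the Proposition preceding this one, so I would just cite it. Linearity is the subtle part, since $\xi\mapsto\xi^{\otimes2}$ is visibly quadratic on the nose; the resolution is that over $\Field=\FF_2$ the cross terms $(1+\tau)(r\otimes(m_1\otimes r\otimes m_2))$ appearing in the expansion of $(\xi_1+\xi_2)^{\otimes2}$ die modulo $\Image(1+\tau)$ — this is precisely the second and third displayed computations in the previous proof — so the induced map on the quotient is additive (and $\Field$-homogeneity is automatic in characteristic $2$). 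Finally, on the chosen basis $\{r_i\otimes m_i\}$ the map sends $r_i\otimes m_i$ to the class of $r_i\otimes(m_i\otimes r_i\otimes m_i)$, which is exactly the generator of the trivial summand indexed by $i$; so the map agrees with the bijection identified above and is therefore an isomorphism onto $\hvE^1_{p,2k}$. I expect the bookkeeping of degrees and the linearity-in-characteristic-$2$ argument to be the main (though routine) obstacle; the structural computation of Tate cohomology of a permutation module is immediate once Proposition~\ref{prop:Z2action} is in hand.
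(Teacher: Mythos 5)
Your proposal is correct and follows essentially the same route as the paper: use the equivariant basis from Proposition~\ref{prop:Z2action} to split $\HC(M\DTP M)$ into a sum of free $\Field[\ZZ/2]$-summands (off-diagonal pairs) and trivial summands (diagonal elements), observe that only the trivial summands contribute to $\ker(1+\tau)/\Image(1+\tau)$, and then identify the induced bijection on bases with $\xi\mapsto\xi^{\otimes2}$, checking linearity via the cross-term calculation in characteristic~$2$. Your phrasing in terms of Tate cohomology of a permutation module is a clean conceptual reframing of what the paper does by explicitly listing the basis elements of kernel and image, but the decomposition, the degree bookkeeping (diagonal elements have even degree $2\deg(r_i\otimes m_i)$, hence $\hvE^1_{p,2k+1}=0$), and the linearity argument are all the same.
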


\begin{proof}
It is clear that $\xi^{\otimes 2} \in \ker(1+\tau)$, so that we do
have a well-defined map from $\HC_k(M)$ to $\hvE^1_{p,2k} :=
\ker(1+\tau)/\Image(1+\tau)$.  

Let us show that the map is linear.
Roughly speaking, we show that for $\phi$ and $\psi$ in $\HC_k$, $(\phi + \psi)^{\otimes 2} - \phi^{\otimes 2} - \psi^{\otimes 2} =  \phi \otimes \psi + \psi \otimes \phi$, where the right hand side is the image of $(1+\tau)$ under $\phi \otimes \psi$.  More precisely, if $\phi =  \sum_\ell c_\ell r_{\ell} \otimes m_{\ell}$ and $\psi = \sum_{\lambda} b_{\lambda} s_{\lambda} \otimes n_{\lambda}$, then one computes
\[
(\phi + \psi)^{\otimes 2} - \phi^{\otimes 2} - \psi^{\otimes 2} = 
(1+\tau)\Bigl( \sum_{\ell_1,\lambda_2} c_{\ell_1}b_{\lambda_2} r_{\ell_1} \otimes (m_{\ell_1}\otimes r_{\lambda_2} \otimes m_{\lambda_2})\Bigr).
\]
To show that the map $\HC_k(M) \to \hvE^1_{p,2k}$ is an isomorphism,
choose a basis $\{r_\ell \otimes m_\ell\}_{\ell \in L}$ and
$\{r_\ell \otimes (m_\ell \otimes r_\lambda \otimes
m_\lambda)\}_{(\ell,\lambda) \in L \times L}$ for $\HC(M)$ and $\HC(M
\DTP M)$ as in Proposition~\ref{prop:Z2action}.  As the basis of $\HC(M\DTP M)$ is stable for the
$\ZZ/2$-action, we may use it to construct a basis for $\ker(1+\tau)$
and for $\Image(1+\tau)$.  A basis element for $\ker(1+\tau)$ has one
of the following two forms:
\begin{enumerate}
\item $r_\ell \otimes (m_\ell \otimes r_\ell \otimes m_\ell)$, i.e.\ the image of $\xi = r_\ell \otimes m_\ell$ under $\xi \mapsto \xi^{\otimes 2}$
\item $r_\ell \otimes (m_\ell \otimes r_\lambda \otimes m_\lambda) + r_\lambda \otimes (m_\lambda \otimes r_\ell \otimes m_\ell)$ for $\ell \neq \lambda$.
\end{enumerate}
Just the elements of form (2) are a basis for $\Image(1+\tau)$.  Thus the images of the elements of form (1) in $\ker(1+\tau)/\Image(1+\tau) = \hvE^1$ form a basis.  The map $\xi \mapsto \xi^{\otimes 2} + \Image(1+\tau)$ is a bijection on these bases, and is therefore an isomorphism. 

Finally, note that in odd gradings, there are no elements of the form (1), so elements of the form (2) span. Since these are in the image of $1+\tau$, it follows that $\hvE^1_{p,2k+1} = 0$.
\end{proof}

\begin{remark}
\label{rem:Frob}
If we were working not with $\Field$ but with a larger field of characteristic 2, the map of Proposition \ref{prop:Frob} would be ``Frobenius-linear,'' i.e. $(c \xi)^{\otimes 2} = c^2 (\xi)^{\otimes 2}$.  As $c \mapsto c^2$ is a field homomorphism (\resp isomorphism) for any field (\resp perfect field) of characteristic 2, another way to express this is to say that the map induces a linear isomorphism from the \emph{Frobenius twist} of $\HC_k(M)$ to $\hvE^1_{p,2k}$.  
\end{remark}

Since $\hvE^1_{p,q} = 0$ for $q$ odd, the differential on $\hvE^1_{p,q}$ must vanish and we have $\hvE^1_{p,q} = \hvE^2_{p,q}$.

\begin{proposition}
\label{prop:TateE2}
Let $d^2:\hvE^2_{p,q} \to \hvE^2_{p+1,q-2}$ denote the differential on the second page of the spectral sequence.  For each $p$ and each $k$, the following diagram commutes:
\[
\xymatrix{
\ar[rr]^{\xi \mapsto \xi^{\otimes 2} + \Image(1+\tau)} \HC_k(M) \ar[d]_{\partial_{\HC}} & & \hvE^1_{p,2k} \ar@{=}[r] & \hvE^2_{p,2k} \ar[d]^{\hvd^2}  \\
\ar[rr]_{\xi \mapsto \xi^{\otimes 2} + \Image(1+\tau)}
\HC_{k-1}(M) & &  \hvE^1_{p+1,2k-1}  \ar@{=}[r] & \hvE^2_{p+1,2k-2}.
}
\]
\end{proposition}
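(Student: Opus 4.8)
The plan is to compute the differential $\hvd^2$ directly via the ``hv sequence'' device of Remark~\ref{rem:hv} and match it, under the isomorphisms of Proposition~\ref{prop:Frob}, with $\partial_{\HC}$. Since $\hvE^1_{p,q}=0$ for $q$ odd, the differential $\hvd^1$ vanishes identically, so $\hvE^2=\hvE^1$ and every $(1+\tau)$-cycle determines a class surviving to the $E^2$-page. Thus, given $\xi\in\HC_k(M)$ written as a sum of pure tensors $\xi=\sum_\ell c_\ell\, r_\ell\otimes m_\ell$, I take $y_1=\xi^{\otimes 2}=\sum_\ell c_\ell\, r_\ell\otimes(m_\ell\otimes r_\ell\otimes m_\ell)$ (this lies in $\ker(1+\tau)$ by Proposition~\ref{prop:Frob}, each diagonal summand being $\tau$-fixed), and the problem reduces to two chain-level computations: first, produce $y_2=w$ with $(1+\tau)(w)=\partial_{\HC(M\DTP M)}(\xi^{\otimes 2})$, so that $(\xi^{\otimes 2},w)$ is a legitimate hv sequence and $\hvd^2[\xi^{\otimes 2}]=[\partial_{\HC(M\DTP M)}(w)]$; second, identify the class of $\partial_{\HC(M\DTP M)}(w)$ in $\hvE^2_{p+1,2k-2}$ with $(\partial_{\HC}\xi)^{\otimes 2}+\Image(1+\tau)$, which is exactly $\partial_{\HC}\xi$ under Proposition~\ref{prop:Frob}.

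For the first step the candidate is $w:=\sum_\ell c_\ell\bigl(\partial r_\ell\otimes(m_\ell\otimes r_\ell\otimes m_\ell)+r_\ell\otimes(\partial m_\ell\otimes r_\ell\otimes m_\ell)\bigr)$; informally this is ``$\partial_{\HC}\xi$ in the first pair of slots, $\xi$ in the second,'' where $\partial_{\HC}\xi=\sum_\ell c_\ell(\partial r_\ell\otimes m_\ell+r_\ell\otimes\partial m_\ell)$ (no signs, since $\Field=\FF_2$). One expands $\partial_{\HC(M\DTP M)}$ on each diagonal summand $r_\ell\otimes(m_\ell\otimes r_\ell\otimes m_\ell)$ by the Leibniz rule into four pure tensors; the summand where $\partial$ hits the outer $R$-factor and the one where it hits the inner $R$-factor form a single $\tau$-orbit, as do the two summands where $\partial$ hits an $M$-factor, so the four terms assemble into $(1+\tau)$ applied to the two summands of $w$ indexed by $\ell$. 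Summing over $\ell$ gives $\partial_{\HC(M\DTP M)}(\xi^{\otimes 2})=(1+\tau)(w)$, as needed.

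For the second step one expands $\partial_{\HC(M\DTP M)}(w)$ on each summand of $w$, using $\partial\circ\partial=0$. The pure tensor $\partial r_\ell\otimes(\partial m_\ell\otimes r_\ell\otimes m_\ell)$ is produced twice and cancels over $\FF_2$; of the four surviving pure tensors, two are precisely $(\partial r_\ell\otimes m_\ell)^{\otimes 2}$ and $(r_\ell\otimes\partial m_\ell)^{\otimes 2}$, while the remaining two, namely $\partial r_\ell\otimes(m_\ell\otimes r_\ell\otimes\partial m_\ell)$ and $r_\ell\otimes(\partial m_\ell\otimes\partial r_\ell\otimes m_\ell)$, form a single $\tau$-orbit and hence lie in $\Image(1+\tau)$. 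Therefore $\partial_{\HC(M\DTP M)}(w)\equiv\sum_\ell c_\ell\bigl((\partial r_\ell\otimes m_\ell)^{\otimes 2}+(r_\ell\otimes\partial m_\ell)^{\otimes 2}\bigr)\pmod{\Image(1+\tau)}$. By the additivity of $\eta\mapsto\eta^{\otimes 2}$ modulo $\Image(1+\tau)$ established inside the proof of Proposition~\ref{prop:Frob}, the right-hand side is congruent to $(\partial_{\HC}\xi)^{\otimes 2}$, and under the identification $\HC_{k-1}(M)\stackrel{\sim}{\to}\hvE^2_{p+1,2k-2}$ of Proposition~\ref{prop:Frob} this corresponds to $\partial_{\HC}\xi$ --- which is the asserted commutativity.

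The substance here is bookkeeping: organizing the handful of pure tensors produced by the two Leibniz expansions and recognizing which cancel in characteristic $2$ and which are $\tau$-orbits. Two points deserve care rather than difficulty. First, $w$ is not canonical --- it depends on the chosen pure-tensor presentation of $\xi$ --- but this is harmless, since the hv-sequence device only needs \emph{some} preimage of $\partial_{\HC(M\DTP M)}(\xi^{\otimes 2})$ under $1+\tau$, and the resulting class in $\hvE^2$ is independent of the choice by the general spectral-sequence formalism. Second, the clean ``$1+\tau$'' cancellations genuinely use $\Field=\FF_2$: in other characteristics one would have to work with the alternating pair $1\pm\tau$ and track signs, and the squaring map would require reinterpretation. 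I do not expect any obstacle beyond carrying out these expansions carefully.
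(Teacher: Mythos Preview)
Your proof is correct and follows essentially the same approach as the paper: construct the same hv sequence $(\xi^{\otimes 2}, w)$ with $w$ obtained by applying $\partial$ in the first pair of slots, then expand $\partial_{\HC(M\DTP M)}(w)$ and recognize it as $(\partial_{\HC}\xi)^{\otimes 2}$ modulo $\Image(1+\tau)$. The only cosmetic difference is that the paper reduces immediately to a single pure tensor $\xi = r\otimes m$ (by linearity of all maps involved) and leaves the final expansion to the reader, whereas you carry the sum notation throughout and spell out the cancellations explicitly.
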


\begin{proof}
It suffices to prove that  
\begin{equation}
\label{eq:TateE2.1}
\hvd^2\left(\xi^{\otimes 2} + \Image\left(1+\tau\right)\right) = \left(\partial_{\HC(M)}\left(\xi\right)\right)^{\otimes 2} + \Image\left(1+\tau\right)
\end{equation}
when $\xi$ is of the form $r \otimes m$, as these terms generate $\HC_k(M)$.  In that case $\xi^{\otimes 2} = r \otimes (m \otimes r \otimes m)$, and
\[
\partial_{\HC(M \DTP M)} \left(r \otimes \left(m \otimes r \otimes m\right)\right)= \left(1+\tau\right) \left( \partial\left(r\right) \otimes \left(m \otimes r \otimes m\right) + r \otimes \left(\partial(m) \otimes r \otimes m\right)\right)
\]
It follows that $\bigl(\xi^{\otimes 2}, \partial(r) \otimes (m \otimes r \otimes m)+ r \otimes (\partial(m) \otimes r \otimes m)\bigr)$ is an hv sequence (Remark \ref{rem:hv}) of length 1, so that 
\begin{equation}
\label{eq:TateE2.3}
\hvd^2\left(\xi^{\otimes 2}\right) = \partial_{\HC(M \DTP M)} \left(\partial\left(r\right) \otimes \left(m \otimes r \otimes m\right)+ r \otimes \left(\partial\left(m\right) \otimes r \otimes m\right)\right)
\end{equation}
Expanding the right hand sides of \eqref{eq:TateE2.1} and \eqref{eq:TateE2.3} completes the proof.
\end{proof}
 
It follows that $\hvE^3_{p,2k}$ is naturally identified with $\HH_k(M)$.  Since $\hvE^3_{p,2k+1} = 0$, we have $d^3 = 0$ and in fact $d^{2i+1} = 0$ for every $i$.

\begin{definition}\label{def:pi-formal-bimod}
  The bimodule $M$ is \emph{$\pi$-formal} if the operation $d^{2i}$ induced
  by the spectral sequence $\hvE$ vanishes for each $i \geq
  2$. (Equivalently, $M$ is $\pi$-formal if the spectral sequence
  $\hvE$ collapses at the $E^3$ page.)
\end{definition}

Now that Theorem~\ref{thm:hoch-local} has been formulated precisely,
we can also prove it.

\begin{proof}[Proof of Theorem~\ref{thm:hoch-local}]
  Suppose $A$ is homologically smooth and proper and that $M$ is a
  $\pi$-formal $(A,A)$-bimodule.  By Proposition \ref{prop:converge},
  the two spectral sequences $\vhE$ and $\hvE$ attached to the
  Hochschild-Tate bicomplex for $M$ converge to the same group $V$.
  Since the vertical differentials in the bicomplex are the Hochschild
  differentials for $M \DTP M$, we have $\vhE^1_{p,q} = \HH_q(A,M \DTP
  M)$, verifying assertion~(\ref{item:hoch-local-E1}) of the theorem.
  By the definition of $\pi$-formality, the spectral sequence $\hvE$
  degenerates at $\hvE^3$, i.e. $\hvE^3 = \hvE^\infty$ is the
  associated graded of a filtration $\vF$ on $V$.  By Proposition
  \ref{prop:TateE2} we have $\hvE^3_{p,2q} = \HH_{q}(A,M)$ and
  $\hvE^3_{p,2q+1} = 0$, verifying
  assertion~(\ref{item:hoch-local-Einfty}) of the theorem.
\end{proof}

\subsection{Naturality and \texorpdfstring{$\pi$}{pi}-formality}\label{sec:natural-formal}
\begin{theorem}
\label{thm:representable}
Suppose that $A$ is homologically smooth and proper, and let $A^!$ be the bimodule of Proposition \ref{prop:cobar}.  The following are equivalent:
\begin{enumerate}
\item\label{item:rep-1} Every \dg bimodule over $A$ is $\pi$-formal
  (Definition~\ref{def:pi-formal-bimod}).
\item\label{item:rep-2} The \dg bimodule $A^!$ is $\pi$-formal.
\item\label{item:rep-3} For each $i \geq 2$, the element $1 \in \Hom(A^!,A^!) \cong \HH_0(A,A^!)$ is killed by $d^{2i}:\HH_0(A,A^!) \dashrightarrow \HH_{- i}(A,A^!)$.
\end{enumerate}
\end{theorem}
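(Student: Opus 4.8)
The plan is to establish the cyclic chain of implications $(1)\Rightarrow(2)\Rightarrow(3)\Rightarrow(1)$, of which only the last is substantial. The implication $(1)\Rightarrow(2)$ is immediate from Definition~\ref{def:pi-formal-bimod} applied to the single bimodule $A^!$. For $(2)\Rightarrow(3)$: if $A^!$ is $\pi$-formal then by definition every operation $d^{2i}$, $i\geq 2$, vanishes identically on $\HH_*(A,A^!)$, so in particular each $d^{2i}$ kills $1\in\Hom(A^!,A^!)\cong\HH_0(A,A^!)$, with the usual understanding that ``$d^{2i}(1)=0$'' is read as the list of statements $d^4(1)=0$, $d^6(1)=0$, \dots, each of which presupposes the previous ones so that $1$ lies in the domain of definition. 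So the content is $(3)\Rightarrow(1)$.

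For $(3)\Rightarrow(1)$ I would first observe that the Hochschild--Tate bicomplex $\HC^\Tate(A,M\DTP M)$ is functorial in $M$: a morphism $f\colon M\to N$ of bimodules induces $f\DTP f\colon M\DTP M\to N\DTP N$, this map is equivariant for the swap $\tau$, and hence it induces a map of bicomplexes; choosing functorial resolutions (e.g.\ bar resolutions) makes this work at the chain level, and since the construction carries quasi-isomorphisms to quasi-isomorphisms it descends to a functor on $D(\Bimod{A})$. Consequently the spectral sequence $\hvE$, all of its pages, and the operations $d^{2i}$ are natural in $M$. Under the identifications $\hvE^1_{p,2k}\cong\HC_k(M)$ of Proposition~\ref{prop:Frob} and $\hvE^3_{p,2k}\cong\HH_k(M)$ of Proposition~\ref{prop:TateE2}, this naturality becomes the evident functoriality of $\HH_k$: for a bimodule map $f$ one has $f_*(\xi^{\otimes 2})=(f_*\xi)^{\otimes 2}$ because $f$ acts diagonally on $M\DTP M$ and commutes with $\tau$. (Over $\Field=\FF_2$ the map $\xi\mapsto\xi^{\otimes 2}$ is $\FF_2$-linear, cf.\ Remark~\ref{rem:Frob}, so no semilinearity issue arises.) The upshot is: whenever $d^4,\dots,d^{2i-2}$ vanish on $\HH_*(A,N)$ for \emph{every} bimodule $N$, then for each such $N$ one has $\hvE^{2i-1}=\hvE^3$, which is concentrated in even rows with $\hvE^3_{p,2k}\cong\HH_k(N)$; hence $\hvd^{2i}$ is everywhere defined, with domain all of $\hvE^{2i}_{p,2k}\cong\HH_k(N)$ and target all of $\hvE^{2i}_{p+2i-1,2k-2i}\cong\HH_{k-i}(N)$, so $d^{2i}$ is a genuine natural transformation $\HH_k(A,-)\Rightarrow\HH_{k-i}(A,-)$ of functors on $D(\Bimod{A})$.

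Next I would invoke representability. By Proposition~\ref{prop:cobar} the functor $\HH_k(A,-)$ on $D(\Bimod{A})$ is corepresented by $\Sigma^k A^!$, so by the Yoneda lemma any natural transformation $\HH_k(A,-)\Rightarrow\HH_{k-i}(A,-)$ is determined by the image of $\id_{\Sigma^k A^!}\in\Hom(\Sigma^k A^!,\Sigma^k A^!)\cong\HH_k(A,\Sigma^k A^!)$; under the shift autoequivalence, $\HH_k(A,\Sigma^k A^!)\cong\HH_0(A,A^!)$ and this identity corresponds to $1$. Hence $d^{2i}(1)=0$ forces $d^{2i}\equiv 0$ on $\HH_*(A,M)$ for all $M$. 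This is run as an induction on $i$: for $i=2$ the operation $d^4$ is everywhere defined and natural with no hypothesis (since $\hvd^3=0$ gives $\hvE^4=\hvE^3$, which sits in even rows), and $d^4(1)=0$ by $(3)$, so $d^4\equiv 0$ on all bimodules; given $d^4=\dots=d^{2i-2}\equiv 0$ on all bimodules, the previous paragraph makes $d^{2i}$ an everywhere-defined natural transformation, $d^{2i}(1)=0$ is meaningful (its domain contains $1$, by the inductive hypothesis applied to $A^!$), and $(3)$ supplies $d^{2i}(1)=0$, whence $d^{2i}\equiv 0$ on all bimodules. Thus every bimodule is $\pi$-formal, which is $(1)$.

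I expect the main obstacle to be the bookkeeping around naturality of $\hvE$ on the derived category and its compatibility with the identifications of Propositions~\ref{prop:Frob} and~\ref{prop:TateE2}: verifying $f_*(\xi^{\otimes 2})=(f_*\xi)^{\otimes 2}$ at the chain level and checking the construction is well behaved under replacing $M$ by a quasi-isomorphic bimodule, so that the $d^{2i}$ genuinely assemble into natural transformations of functors on $D(\Bimod{A})$ to which Yoneda applies. Once that is in place, the Yoneda/representability step and the induction are formal.
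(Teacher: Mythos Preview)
Your proof is correct and follows essentially the same approach as the paper: establish naturality of the $\hvE$ spectral sequence in $M$ (first for honest bimodule maps, then for maps in $D(\Bimod{A})$ via quasi-isomorphism invariance from Proposition~\ref{prop:TateE2}), invoke the representability of $\HH_k(A,-)$ by $\Sigma^k A^!$ together with Yoneda to reduce $d^{2i}\equiv 0$ to $d^{2i}(1)=0$, and run the induction on $i$. Your treatment is somewhat more careful than the paper's about the domain-of-definition bookkeeping for the partially defined $d^{2i}$, but the substance is the same.
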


\begin{proof}
It is clear that (\ref{item:rep-1}) implies (\ref{item:rep-2}) and that (\ref{item:rep-2}) implies (\ref{item:rep-3}).  Let us show that (\ref{item:rep-3}) implies (\ref{item:rep-1}).

A map $f\co M \to N$ of \dg bimodules induces a map $f \DTP f:M \DTP M \to N \DTP N$, which in turn induces a map $f\co \HC^\Tate(M
\DTP M) \to \HC^\Tate(N \DTP N)$ of Hochschild-Tate bicomplexes, so
that the differentials in $\hvE$ are natural with respect to maps in
$\Bimod{A}$.
If $f$ is a quasi-isomorphism, then by Proposition \ref{prop:TateE2},
$f$ induces an isomorphism $\hvE^r_{p,q}(M) \to \hvE^r_{p,q}(N)$ for
$r \geq 3$.  Thus for $r \geq 3$, the differentials in $\hvE^r$ are
natural with respect to maps in $D(\Bimod{A})$.

By Proposition \ref{prop:cobar} and the Yoneda lemma, $d^4:\HH_i(M) \to \HH_{i - 2}(M)$ is given by precomposition with an element of $\Hom(\Sigma^{i-2} A^!,\Sigma^i A^!) \cong \Hom(A^!,\Sigma^2 A^!)$---in fact this element is $d^4(1)$.  Thus if $d^4(1) = 0$, $d^4 = 0$ for every bimodule $M$.  In that case $\hvE^6 = \hvE^5 = \hvE^4 = \HH_i(M)$ and an identical argument shows that $d^6:\HH_i(M) \to \HH_{i-3}(M)$ vanishes so long as $d^6(1)$ vanishes.  The evident induction completes the proof.
\end{proof}

\begin{definition}\label{def:pi-formal}
  If $A$ satisfies the (equivalent) conditions of
  Theorem~\ref{thm:representable} then we say that $A$ is
  \emph{$\pi$-formal}.
\end{definition}

\subsection{\texorpdfstring{$\pi$}{pi}-formal and neutral bimodules}\label{sec:neutral}

In this section, $A$ is a homologically smooth and proper \dg algebra over $\Field$.  Let $A^!$ be the bimodule of Proposition~\ref{prop:cobar}, so that for every \dg bimodule $M$ we have an identification
\[
\Hom(\Sigma^j A^!,M) \cong \HH_j(M),
\]
where $\Hom$ denotes the morphisms in the derived category of $(A,A)$ bimodules.
Let us define Hochschild cohomology as usual by $\HH^j(M) =
\Hom(\Sigma^{-j} A, M)$.  Then any map $f:\Sigma^{d} A^! \to A$,
i.e.\ any element of $\HH_{d}(A)$, induces a map 
\[
f^*\co \HH^k(M) \to \HH_{d-k}(M)
\]
by precomposition.

\begin{definition}\label{def:neutral}
  We call a bimodule $M$ \emph{$d$-neutral} if there is class $f
  \in \HH_d(A)$ such that the induced map $f^*\co \HH^k(M) \to
  \HH_{d-k}(M)$ is an isomorphism for every $k$.  We say that $M$ is
  \emph{neutral} if $M$ is $d$-neutral for some $d$.  We call $f$ the
  \emph{neutralizing element}.
\end{definition}

\begin{remark}
Suppose that there is an isomorphism of bimodules $A^! \otimes_A M \cong M$, and that this isomorphism is witnessed by a map $f:A^! \to A$.  In other words suppose that the composite map
\[
A^! \DTP M \stackrel{f\otimes \Id}{\longrightarrow} A \DTP M = M
\]
is an isomorphism.  Then the induced map $\Hom(\Sigma^k A^!, A^! \DTP
M)\to \Hom(\Sigma^k A^!,M) $ is also an isomorphism.  Furthermore, we
may identify $\Hom(\Sigma^k A^!,A^! \DTP M)$ with $\Hom(\Sigma^k
A,M)$, and the map $\Hom(\Sigma^k A,M)\to \Hom(\Sigma^k A^!,M)$
coincides with the map induced by $f:A^! \to A$.  Using the
identification of Proposition \ref{prop:cobar}, we see that $M$ is
$0$-neutral and $f$ is a neutralizing element.
\end{remark}

The relevance of neutrality to this paper is the following:
\begin{proposition}\label{prop:neutral-d2i}
  Suppose that the operations $d^{2i}$ on $\HH_*(A)$ vanish for all
  $i\geq 2$. Then any neutral bimodule is $\pi$-formal.
\end{proposition}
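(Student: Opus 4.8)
The plan is to fix a $d$-neutral bimodule $M$ with neutralizing element $f \in \HH_d(A)$ and to prove by induction on $i$ that $d^{2j}$ vanishes on $\HH_*(M)$ for all $2 \le j \le i$; taken over all $i$, this is exactly $\pi$-formality of $M$. Two preliminary observations are used throughout. First, by Proposition~\ref{prop:cobar} we may identify $\HH_j(N) = \Hom(\Sigma^j A^!, N)$ for every bimodule $N$, while $\HH^k(N) = \Hom(\Sigma^{-k}A, N)$ by definition; under these identifications $f$ is a morphism $\Sigma^d A^! \to A$ and the neutralizing isomorphism $f^*\colon \HH^k(M) \to \HH_{d-k}(M)$ is precomposition with $\Sigma^{-k}f \colon \Sigma^{d-k}A^! \to \Sigma^{-k}A$. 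Second, the hypothesis is precisely the statement that the spectral sequence $\hvE$ attached to the diagonal bimodule $A$ collapses at its third page (Definition~\ref{def:pi-formal-bimod}); hence $\hvE^r(A) = \HH_*(A)$, and after shifting $\hvE^r(\Sigma^{-k}A) = \HH_*(\Sigma^{-k}A)$, for all $k$ and all $r \ge 3$. This is the only place the hypothesis enters.

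For the inductive step, assume $d^4, \dots, d^{2i-2}$ vanish on $\HH_*(M)$. Since the odd differentials always vanish, the pages stabilize and $\hvE^{2i}(M) = \hvE^3(M) = \HH_*(M)$, so $d^{2i}$ is a genuine operation $\HH_k(M) \to \HH_{k-i}(M)$, and I want to show it vanishes. Given $\xi \in \HH_{d-k}(M)$, use surjectivity of $f^*$ to write $\xi = f^*\eta = \eta \circ (\Sigma^{-k}f)$ with $\eta \in \HH^k(M) = \Hom(\Sigma^{-k}A, M)$, and regard $\eta$ as a morphism $\Sigma^{-k}A \to M$ in $D(\Bimod{A})$. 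Recall from the proof of Theorem~\ref{thm:representable} that for $r \ge 3$ the pages $\hvE^r$ and their differentials are natural with respect to morphisms in $D(\Bimod{A})$. Applying this naturality to $\eta$, and using the two observations above to identify the relevant pages with Hochschild homology, the induced map $\eta_*$ (postcomposition with $\eta$ on $\Hom(\Sigma^j A^!, -)$) intertwines $d^{2i}$ on $\HH_*(\Sigma^{-k}A)$ with $d^{2i}$ on $\HH_*(M)$. Since $\Sigma^{-k}f$, viewed as an element of $\HH_{d-k}(\Sigma^{-k}A) = \Sigma^{-k}\HH_d(A)$, corresponds to $f \in \HH_d(A)$ — on which $d^{2i}$ vanishes by hypothesis — we get
\[
d^{2i}_M(\xi) = d^{2i}_M\bigl(\eta_*(\Sigma^{-k}f)\bigr) = \eta_*\bigl(d^{2i}_{\Sigma^{-k}A}(\Sigma^{-k}f)\bigr) = \eta_*(0) = 0.
\]
As $\xi$ and $k$ were arbitrary, $d^{2i}$ vanishes on all of $\HH_*(M)$, completing the induction. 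The base case $i = 2$ is identical, using only that $\hvE^4 = \hvE^3$ unconditionally.

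The step needing the most care, and the reason the argument is run as an induction on $i$ rather than packaged in one line, is ensuring that each $d^{2i}$ is genuinely \emph{defined} as an operation on the Hochschild homology of the objects in play, i.e.\ that $\hvE^{2i}$ agrees with $\HH_*$ there: for shifts of $A$ this is exactly what the hypothesis supplies, and for $M$ it comes from the inductive hypothesis. A tempting shortcut would be to invoke Theorem~\ref{thm:representable} to write $d^{2i}$ on \emph{every} bimodule as precomposition with a single universal class in $\HH_{-i}(A^!)$ and then kill that class; but the hypothesis controls $\HH_*(A)$, not $\HH_*(A^!)$, and $A^!$ need not be neutral, so this does not work directly. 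Routing the computation through the neutralizing element $f$, which transports everything into $\HH_*(A)$ where the hypothesis applies, is the device that makes it go through.
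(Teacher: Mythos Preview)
Your proof is correct and follows essentially the same Yoneda-style argument as the paper: write an arbitrary Hochschild class on $M$ as $\eta_*(f)$ using surjectivity of the neutralizing map, then use naturality of $d^{2i}$ under the bimodule map $\eta$ together with the hypothesis $d^{2i}(f)=0$. The paper's version is terser and leaves the induction on $i$ implicit, whereas you spell out carefully why one must know $\hvE^{2i}(M)=\HH_*(M)$ before applying naturality at that page; this is a genuine point worth making explicit, and your handling of it is clean.
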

\begin{proof}
  This follows from a short Yoneda-style argument.
  Fix a neutral bimodule $M$ with neutralizing element $f\in
  \Hom(\Sigma^dA^!,A)=\HH_d(A)$. 
  Suppose
  $\alpha\in \HH_k(M)=\Hom(\Sigma^kA^!,M)$.  Let
  $\beta\in\Hom(\Sigma^{k-d}A,M)$ be $(f^*)^{-1}(\alpha)$. Then
  $\alpha = \beta_*(f)$ (where $\beta_*$ denotes post-composition by
  $\beta$). By naturality of $d^{2i}$,
  $d^{2i}(\alpha)=\beta_*(d^{2i}(f))$. But by hypothesis,
  $d^{2i}(f)=0$.
\end{proof}

\begin{corollary}\label{cor:neutral-formal}
  If $\HH_*(A)$ is supported in a single grading then any neutral
  $(A,A)$-bimodule is $\pi$-formal.
\end{corollary}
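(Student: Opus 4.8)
The plan is to reduce the corollary immediately to Proposition~\ref{prop:neutral-d2i}. That proposition already shows that if the operations $d^{2i}$ vanish on $\HH_*(A)$ for all $i \geq 2$, then every neutral $(A,A)$-bimodule is $\pi$-formal. So the only thing to prove is that the hypothesis ``$\HH_*(A)$ is supported in a single grading'' forces $d^{2i} = 0$ on $\HH_*(A)$ for all $i \geq 2$.

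For this I would recall the grading behavior of the operations. Applying the constructions of Section~\ref{subsec:HHTate} to the diagonal bimodule $M = A$, the even page $\hvE^{2i}_{p,2k}$ of the Hochschild--Tate spectral sequence for $A \DTP A$ is naturally identified with $\HH_k(A,A)$ (by Proposition~\ref{prop:TateE2} for $i=1$, and by the fact that no further differentials have acted in odd gradings for larger $i$), and the bicomplex differential $\hvd^{2i}$ drops the vertical degree by $2i$; hence the induced operation is $d^{2i} \co \HH_k(A,A) \dashrightarrow \HH_{k-i}(A,A)$, strictly lowering the Hochschild grading by $i$ whenever $i \geq 1$. If $\HH_*(A)$ is concentrated in the single Hochschild degree $k = d_0$, then $\HH_{d_0 - i}(A) = 0$ for every $i \geq 1$, so the target of $d^{2i}$ vanishes for every $i \geq 2$; therefore all the operations $d^{2i}$, $i \geq 2$, vanish on $\HH_*(A)$ for trivial degree reasons. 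Proposition~\ref{prop:neutral-d2i} then applies and gives that every neutral $(A,A)$-bimodule is $\pi$-formal, which is the assertion of the corollary.

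There is essentially no obstacle here; the only point requiring care is the bookkeeping of the grading convention, namely that the vertical drop of $2i$ in the Hochschild--Tate bicomplex corresponds to a drop of only $i$ in the Hochschild grading (since the even pages carry $\HH_k$ in vertical degree $2k$). Once that is pinned down, the vanishing of $d^{2i}$ on $\HH_*(A)$ is automatic, and the corollary follows.
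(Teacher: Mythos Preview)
Your proposal is correct and takes essentially the same approach as the paper. The paper does not even write out a proof for this corollary, since it follows immediately from Proposition~\ref{prop:neutral-d2i} together with the grading observation you make: $d^{2i}$ shifts the Hochschild degree by $-i$, so when $\HH_*(A)$ is concentrated in a single degree every $d^{2i}$ with $i\geq 2$ has zero target and hence vanishes.
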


\begin{remark}
If $A$ is Calabi-Yau of dimension $d$ (that is, if there is a quasi-isomorphism $\Sigma^d A^! \cong A$), then every bimodule is $d$-neutral.
A partial converse holds: if the diagonal bimodule is $d$-neutral, then by definition for every $M$, there is a map $f_M:\Sigma^d A^! \to A$ inducing an isomorphism $f_M^*:\Hom(A,M) \to \Hom(\Sigma^d A^!,M)$.  If $f_M$ can be chosen independent of $M$, then Yoneda's lemma implies that the map $\Sigma^d A^! \to A$ is also a quasi-isomorphism.  
\end{remark}

\begin{remark}
  Suppose that $X$ is a smooth, projective, $d$-dimensional algebraic variety with canonical bundle $\omega_X$.  An argument due to van den Bergh and Bondal  (cf.~\cite[Example 8.1.4]{KontsevichSoibelman06:KoSo06}) shows that the derived category of coherent sheaves on $X$ is equivalent to the derived category of left dg modules over a homologically smooth and proper dg algebra $A$.  Under this dictionary, $\Bimod{A}$ is identified with the derived category of coherent sheaves on $X \times X$, and $\HH_d(A)$ is identified with $H^0(X,\omega_X)$.  If $\cF$ is an object of this derived category corresponding to a bimodule $M$, the map $\HH^k(M) \to \HH_{d-k}(M)$ induced by an element  $f \in \HH_d(A)$ is identified with the map
\begin{equation}
\label{eq:claudius}
\Hom(\Delta_* \cO_X,\cF)   \to \Hom(\Delta_* \omega_X^{-1},\cF)
\end{equation}
induced by a section of $\omega_X$.  Here $\Delta:X \to X \times X$ denotes the diagonal map.  Using the right adjoint $\Delta^!$ to $\Delta_*$, one may rewrite \eqref{eq:claudius} as 
\begin{equation}
\mathbf{R}\Gamma(X;\Delta^! \cF) \to \mathbf{R}\Gamma(X,\omega_X \otimes \Delta^! \cF)
\end{equation}
In particular, if $X$ has an effective canonical divisor $D$ (for instance, if $X$ is of general type), a sufficient condition for $\cF$ to be $d$-neutral is for the restriction of $\cF$ to the diagonal copy of $X$ to be supported away from $D$.
\end{remark}

\subsection{Integral models and \texorpdfstring{$\pi$}{pi}-formality.}
\label{subsec:imapf}
In this section we show that the existence of an integral lift of $A$
implies vanishing of the operations $d_{2i}$ for $i$ even. While we
will not use this result in the rest of the paper, it seems
likely that the bordered algebras do have integral lifts.

Let $A_\ZZ$ be a homologically smooth and proper \dg algebra over $\ZZ$, with resolution $R_\ZZ \to A_\ZZ$. 
We make the following additional assumptions:
\begin{enumerate}
\item The underlying graded group $A_{\ZZ,*}$ of $A_\ZZ$ is free abelian
\item The underlying $(A_{\ZZ,*},A_{\ZZ,*})$-bimodule $R_{\ZZ,*}$ of $R_{\ZZ}$ is a direct sum of bimodules of the form $e A_{\ZZ,*} \otimes_{\ZZ} A_{\ZZ,*} f$, where $e$ and $f$ are idempotents in $A_{\ZZ,*}$
\end{enumerate}
Let $A^!$ be the cobar bimodule of Proposition \ref{prop:cobar}.  Let $A_\Field$ denote the reduction of $A_\ZZ$ mod 2, and $A^!_\Field = A_\Field \otimes_{A_\ZZ} A^!$.  We will study the Hochschild complex $\HC(A_{\ZZ},A^! \DTP A^!)$ and its relation to $\HC(A_\Field,A^!_\Field \DTP A^!_\Field)$.

\begin{proposition}
\label{prop:Z2actionZZ}
The map $\tau:\HC(A_\ZZ,A^!\DTP A^!) \to \HC(A_\ZZ, A^!\DTP A^!)$ that sends $r \otimes (m \otimes r' \otimes m')$ to $(-1)^{(|r| + |m|)(|r'|+|m'|)} r' \otimes (m' \otimes r \otimes m)$ is a map of chain complexes and satisfies $\tau \circ \tau(x) = x$.  Moreover, there is a $\ZZ$-basis of $\HC(A_\ZZ,A^!)$ of the form $\{r_i \otimes x_i\}$ such that $\{r_i \otimes x_i \otimes r_j \otimes x_j\}$ is a $\ZZ$-basis for $\HC(A_\ZZ,A^! \DTP A^!)$. 
\end{proposition}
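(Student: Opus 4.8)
The plan is to run the proof of Proposition~\ref{prop:Z2action} over $\ZZ$ in place of $\Field$; assumptions (1) and (2) on $A_\ZZ$ and $R_\ZZ$ are exactly what is needed for that argument to go through, and the one genuinely new ingredient is the bookkeeping of Koszul signs.

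First I would reduce, as in the proof of Proposition~\ref{prop:Z2action}, to the case that $R_{\ZZ,*}$ is a single indecomposable projective bimodule summand of the form appearing in assumption (2). Since the \emph{same} resolution $R_\ZZ$ plays the role both of the Hochschild resolution (the outer factor ``$r$'') and of the resolution used to form $\DTP$ (the inner factor ``$r'$''), the complex $\HC(A_\ZZ,A^!\DTP A^!)$ splits as a direct sum indexed by \emph{pairs} of summands of $R_\ZZ$, and $\tau$ interchanges the two members of a pair. On a summand, identifications (1) and (2) from the proof of Proposition~\ref{prop:Z2action} give isomorphisms of graded abelian groups
\[
(R_\ZZ\otimes_\ZZ A^!)/{\sim}\;\cong\;eA^!_*f,
\qquad
\bigl(R_\ZZ\otimes_\ZZ(A^!\boxtimes_{A_\ZZ}R_\ZZ\boxtimes_{A_\ZZ}A^!)\bigr)/{\sim}\;\cong\;eA^!_*f\otimes_\ZZ eA^!_*f,
\]
the only change from the $\Field$ case being that the Hochschild equivalence relation ``$\sim$'' now carries signs (Definition~\ref{def:Hochschild}); this affects neither the quotient groups nor their identification. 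Under these identifications $\tau$ becomes the symmetry isomorphism $c\co N\otimes_\ZZ N\to N\otimes_\ZZ N$, $n\otimes n'\mapsto(-1)^{|n||n'|}n'\otimes n$, of the symmetric monoidal category $\sfCh(\ZZ)$, where $N=eA^!_*f$ carries the differential induced from $A^!$ and $|n|=|r|+|m|$ for $n$ represented by $r\otimes m$. In particular the sign $(-1)^{(|r|+|m|)(|r'|+|m'|)}$ in the statement is precisely the Koszul sign of $c$: the first assertion then holds because $c$ is a chain map, $\tau\circ\tau=\id$ holds because applying $c$ twice multiplies by $(-1)^{2|n||n'|}=1$, and well-definedness of $\tau$ on the quotient is automatic since $c$ is an intrinsic operation.

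For the last sentence of the proposition, note that by assumption (1) the graded group $A_{\ZZ,*}$ is free abelian; hence $A^!_*$, being $\Hom_{\Bimod{A_\ZZ}}(R_\ZZ,A_\ZZ^e)$ and thus a direct sum of groups of the form $eA_{\ZZ,*}\otimes_\ZZ A_{\ZZ,*}f$, is free abelian, and so is each summand $eA^!_*f$. Choosing a $\ZZ$-basis of every $eA^!_*f$ and transporting it through the first identification above yields a $\ZZ$-basis $\{r_i\otimes x_i\}$ of $\HC(A_\ZZ,A^!)$; by the second identification the set $\{r_i\otimes x_i\otimes r_j\otimes x_j\}$ is then a $\ZZ$-basis of $\HC(A_\ZZ,A^!\DTP A^!)$, as desired.

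The only step requiring real care is the sign bookkeeping: checking that, under the identifications above, the Hochschild differential on $\HC(A_\ZZ,A^!\DTP A^!)$ goes over to the standard tensor-product differential on $N\otimes_\ZZ N$, so that $\tau$ really is the categorical symmetry $c$ and the structural content is literally that of Proposition~\ref{prop:Z2action}. Once the signs are pinned down this is the standard fact that $(\sfCh(\ZZ),\otimes_\ZZ,c)$ is symmetric monoidal; no flatness or projectivity hypotheses beyond (1) and (2) are needed, since everything in sight is already free over $\ZZ$ --- which is also what makes the $\ZZ$-basis assertion meaningful in the first place.
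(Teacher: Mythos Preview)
Your proposal is correct and takes essentially the same approach as the paper, whose entire proof is the single sentence ``The proof, which uses our assumption~(2) above, is the same as the proof of Proposition~\ref{prop:Z2action}.'' You have simply unpacked what that means over~$\ZZ$: the reduction to an indecomposable projective summand $eA_{\ZZ,*}\otimes A_{\ZZ,*}f$ via assumption~(2), the identifications~(1) and~(2) from that earlier proof, and the observation that under them $\tau$ becomes the Koszul symmetry of $\sfCh(\ZZ)$, which handles the sign claim and $\tau^2=\id$ at once.
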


The proof, which uses our assumption (2) above, is the same as the proof of Proposition~\ref{prop:Z2action}.

We have the following variant of the Hochschild-Tate bicomplex of Section \ref{subsec:HHTate}:
\[
\xymatrix{
\dots & \HC_*(A^!\DTP A^!)\ar[l]_(.7){1+\tau}\ar@(ul,ur)^{-\bdy_{\HC}} & \HC_*(A^!\DTP A^!)\ar[l]_{1-\tau}\ar@(ul,ur)^{\bdy_{\HC}}
& \HC_*(A^!\DTP A^!)\ar[l]_{1+\tau} \ar@(ul,ur)^{-\bdy_{\HC}}&\dots\ar[l]_(.3){1-\tau}
}
\]
  The groups have $\HC^\Tate_{p,q} = \HC_q(A^! \DTP A^!)$, but the differentials depend on the parity of $p$.  (The alternating signs in front of $\bdy_{\HC}$ give us $d^h d^v + d^v d^h = 0$.)  We denote this bicomplex by $\HC^\Tate(A_\ZZ,A^! \DTP A^!)$. The integral Hochschild-Tate complex is a bicomplex of free abelian groups; note that reducing it mod 2 gives the definition of $\HC^\Tate(A_\Field,A^!_\Field \DTP A^!_\Field)$ of the previous section.

The horizontal homology of this integral Hochschild-Tate complex has the following vanishing pattern:

\begin{proposition}
We have $\hvE^1_{p,q} = 0$ in the following cases:
\begin{enumerate}
\item\label{item:vanish-odd} $q$ is odd
\item\label{item:vanish-0} $q = 0$ mod 4 and $p$ is odd.
\item\label{item:vanish-2} $q = 2$ mod 4 and $p$ is even.
\end{enumerate}
\end{proposition}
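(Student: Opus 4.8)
The plan is to compute the horizontal homology $\hvE^1$ of the integral Hochschild-Tate bicomplex directly, using the $\ZZ$-basis furnished by Proposition~\ref{prop:Z2actionZZ}. Fix such a basis $\{r_i \otimes x_i\}_{i \in I}$ of $\HC(A_\ZZ, A^!)$, let $d_i$ be the (homogeneous) degree of $r_i \otimes x_i$, and set $e_{ij} := r_i \otimes x_i \otimes r_j \otimes x_j$, so that $\{e_{ij}\}_{i,j \in I}$ is a homogeneous $\ZZ$-basis of $\HC(A_\ZZ, A^! \DTP A^!)$ with $e_{ij}$ in degree $d_i + d_j$ and $\tau(e_{ij}) = (-1)^{d_i d_j} e_{ji}$. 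Consequently $\HC_q(A^! \DTP A^!)$ splits as a direct sum of $\tau$-stable pieces: one copy of $\ZZ\langle e_{ii}\rangle$ for each $i$ with $2d_i = q$, on which $\tau$ acts by the scalar $(-1)^{d_i} = (-1)^{q/2}$; and one copy of $\ZZ\langle e_{ij}, e_{ji}\rangle$ for each unordered pair $\{i,j\}$ with $i \neq j$ and $d_i + d_j = q$, on which $\tau$ acts by $(-1)^{d_i d_j}$ times the coordinate swap. Each horizontal differential $1 \pm \tau$ commutes with $\tau$, hence respects this decomposition, so $\hvE^1_{p,q}$ may be computed summand by summand.

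Next I would dispose of the two-dimensional summands. On $\ZZ\langle e_{ij}, e_{ji}\rangle$ the operator $\tau$ is $\pm$ the coordinate swap, so as a $\ZZ[\ZZ/2]$-module this summand is the regular representation $\ZZ[\ZZ/2]$ (\resp its sign twist), and a one-line computation shows that the Tate complex $\cdots \xrightarrow{1-\tau} \ZZ[\ZZ/2] \xrightarrow{1+\tau} \ZZ[\ZZ/2] \xrightarrow{1-\tau} \cdots$ of a free $\ZZ[\ZZ/2]$-module (or its sign twist) is acyclic, since $\ker(1\pm\tau) = \Image(1\mp\tau)$ in each case. Hence every two-dimensional summand contributes $0$ to $\hvE^1$. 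In particular, when $q$ is odd every basis element $e_{ij}$ has $d_i \neq d_j$, so in particular $i \neq j$; thus $\HC_q(A^! \DTP A^!)$ is a sum of such summands and $\hvE^1_{p,q} = 0$ for all $p$, which is case~(\ref{item:vanish-odd}).

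For $q = 2m$ even, only the one-dimensional summands $\ZZ\langle e_{ii}\rangle$ with $d_i = m$ survive, and on each of them $\tau$ is the scalar $(-1)^m$. The associated Tate complex is therefore, up to the overall signs in the bicomplex, $\cdots \xrightarrow{\alpha} \ZZ \xrightarrow{\beta} \ZZ \xrightarrow{\alpha} \cdots$, where $\{\alpha,\beta\} = \{1+\tau,1-\tau\} = \{2,0\}$ alternate along the columns: $1+\tau = 2,\ 1-\tau = 0$ when $m$ is even, and $1+\tau = 0,\ 1-\tau = 2$ when $m$ is odd. The homology at column $p$ is $\ZZ/2$ exactly when the differential leaving column $p$ is the zero map, and is $0$ otherwise; so $\hvE^1_{p,q}$ is a sum of copies of $\ZZ/2$, one for each $i$ with $d_i = m$, when $p$ lies in the appropriate parity class, and vanishes otherwise. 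Matching this parity class against the convention fixed by the displayed bicomplex (in which the columns carrying $1+\tau$ are those of one fixed parity) yields the remaining two cases: when $m$ is even, i.e.\ $q \equiv 0 \bmod 4$, the surviving $\ZZ/2$'s sit in the even columns, so $\hvE^1_{p,q} = 0$ for $p$ odd, which is case~(\ref{item:vanish-0}); and when $m$ is odd, i.e.\ $q \equiv 2 \bmod 4$, the roles of $1+\tau$ and $1-\tau$ are exchanged, so the surviving $\ZZ/2$'s sit in the odd columns and $\hvE^1_{p,q} = 0$ for $p$ even, which is case~(\ref{item:vanish-2}).

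The homological input here is entirely elementary — acyclicity of the Tate complex of a free $\ZZ[\ZZ/2]$-module, and the $\ZZ/2$ that appears at every other spot in the Tate complex of a trivial or sign $\ZZ$-module. The only real hazard is the bookkeeping of the two families of signs, the $(-1)^{d_i d_j}$ appearing in the formula for $\tau$ and the alternating $\pm\bdy_{\HC}$, $1\pm\tau$ laid out in the bicomplex, and I expect the one step needing genuine care to be confirming from the displayed picture that "$p$ odd" is indeed the parity for which the surviving $\ZZ/2$ is absent in case~(\ref{item:vanish-0}), and dually for case~(\ref{item:vanish-2}).
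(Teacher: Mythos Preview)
Your proposal is correct and follows essentially the same line as the paper's own proof: both use the basis of Proposition~\ref{prop:Z2actionZZ} to split $\HC_q(A^! \DTP A^!)$ into a free $\ZZ[\ZZ/2]$-summand (the off-diagonal pairs, contributing nothing to $\hvE^1$) and rank-one summands on which $\tau$ acts by $(-1)^{q/2}$, and then read off which of $\ker(1\pm\tau)/\Image(1\mp\tau)$ vanishes. The paper is slightly terser at the final step (it just states which quotient vanishes and leaves the match with the parity of $p$ implicit), whereas you spell out the alternating $0$--$2$ pattern explicitly; your caveat about checking the column-parity convention against the displayed bicomplex is exactly the right place to be careful, but there is no missing idea.
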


\begin{remark}
The possible nonvanishing groups in $\hvE^1_{p,q}$ are the dots in the following diagram:
\[
\begin{array}{cccccc}
\bullet & 0 & \bullet & 0 & \bullet & 0 \\
0 & 0 & 0 & 0 & 0 & 0 \\
0 & \bullet & 0 & \bullet & 0 & \bullet \\
0 & 0 & 0 & 0 & 0 & 0 \\
\bullet & 0 & \bullet & 0 & \bullet & 0 \\
0 & 0 & 0 & 0 & 0 & 0 \\
0 & \bullet & 0 & \bullet & 0 & \bullet 
\end{array}
\]
\end{remark}

\begin{proof}
Let $\{r_i \otimes (x_i \otimes r_j \otimes x_j)\}_{(i,j) \in I \times I}$ be a basis for $\HC(A,A^! \DTP A^!)$ as in Proposition \ref{prop:Z2actionZZ}.  Then $\HC_q(A^! \DTP A^!)$ is spanned by those basis elements with $|r_i| + |x_i| + |r_j| + |x_j| = q$.  

If $q$ is odd, then this subset of basis elements contains nothing of the form $r_i \otimes (x_i \otimes r_i \otimes x_i)$.  It follows that $\HC_q(A^! \DTP A^!)$ is a free $\ZZ[\ZZ/2]$-module.  Because of this, $\ker(1-\tau)/\Image(1+\tau)$ and $\ker(1+\tau)/\Image(1-\tau)$ both vanish---this proves assertion~(\ref{item:vanish-odd}).

Suppose now that $q$ is even.  Then we may write $\HC_q(A^! \DTP A^!)$ as a sum of a free $\ZZ[\ZZ/2]$-module (spanned by basis elements of the form $r_i \otimes (x_i \otimes r_j \otimes x_j)$ for $i \neq j$) and the module spanned by elements of the form $r_i \otimes (x_i \otimes r_i \otimes x_i)$.  We have
\begin{align*}
\tau(r_i \otimes (x_i \otimes r_i \otimes x_i)) &= (r_i \otimes (x_i \otimes r_i \otimes x_i)) & \text{if $q/2 = |r_i| + |x_i|$ is even} \\
\tau(r_i \otimes (x_i \otimes r_i \otimes x_i)) &= -(r_i \otimes (x_i \otimes r_i \otimes x_i)) & \text{if $q/2 = |r_i| + |x_i|$ is odd}.
\end{align*}
In other words, if $q$ is divisible by 4, then $\HC_q(A^! \DTP A^!)$ is a sum of a free $\ZZ[\ZZ/2]$-module and a trivial module on which $\tau$ acts by the scalar $1$.  On the other hand if $q$ is congruent to 2 mod 4 then $\HC_q(A^! \DTP A^!)$ is a sum of a free module and a module on which $\tau$ acts by the scalar $-1$.  In the former case $\ker(1+\tau)/\Image(1-\tau)$ vanishes and in the latter case $\ker(1-\tau)/\Image(1+\tau)$ vanishes.
\end{proof}

\begin{corollary}
Let $A$ be an $\Field$ \dg algebra that is homologically smooth and
proper, and suppose that $A$ arises as the mod 2 reduction of a \dg
algebra $A_\ZZ$ satisfying the conditions (1) and (2) above.  Then the
operations $d^{2r}$ vanish for $r \equiv 0\pmod{2}$.
\end{corollary}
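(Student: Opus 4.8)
The plan is to transfer the vanishing from the \emph{integral} Hochschild--Tate spectral sequence, where it is forced by the sparsity pattern of the Proposition just proved, down to the $\Field$ one, using the mod~$2$ reduction and the associated Bockstein. First I would extract two formal consequences of that Proposition. Since $\hvE^1_{p,q}(\ZZ)$ vanishes for $q$ odd, both $\hvE(\ZZ)$ and $\hvE(\Field)$ are concentrated in even $q$, so the only possibly nonzero differentials are $\hvd^{2i}\co\hvE^{2i}_{p,q}\to\hvE^{2i}_{p+2i-1,q-2i}$. Moreover $\hvE^1_{p,q}(\ZZ)\ne 0$ only when $q$ is even and $p+q/2$ is even, and since $(p+2i-1)+(q-2i)/2=(p+q/2)+(i-1)$, when $i$ is even the target bidegree of $\hvd^{2i}$ has $p'+q'/2$ odd, so $\hvE^{2i}_{p',q'}(\ZZ)=0$. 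Hence $\hvd^{2i}(\ZZ)=0$ for every even $i$, unconditionally.

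Next I would build the comparison. Because the integral Hochschild--Tate bicomplex is a bicomplex of free abelian groups whose mod~$2$ reduction is $\HC^\Tate(A_\Field,A^!_\Field\DTP A^!_\Field)$, there is a short exact sequence of bicomplexes $0\to\HC^\Tate(A_\ZZ,A^!\DTP A^!)\xrightarrow{\,2\,}\HC^\Tate(A_\ZZ,A^!\DTP A^!)\to\HC^\Tate(A_\Field,A^!_\Field\DTP A^!_\Field)\to 0$, hence a reduction map of spectral sequences $\rho\co\hvE(\ZZ)\to\hvE(\Field)$ and a Bockstein $\beta$, both compatible with the vertical differential and therefore with every $\hvd^r$. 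Applying the universal coefficient theorem in the horizontal direction to each row $q=2k$ --- legitimate since each $\hvE^1_{p,2k}(\ZZ)$ is killed by $2$, being a quotient by $2$ of the ``$\tau=\pm1$'' summand of $\HC_{2k}(A^!\DTP A^!)$ --- together with the fact that in each such row exactly one of $\hvE^1_{p,2k}(\ZZ)$, $\hvE^1_{p-1,2k}(\ZZ)$ is nonzero, I would deduce that $\rho$ is an isomorphism on $\hvE^1$ in the bidegrees with $p+q/2$ even, while $\beta$ identifies $\hvE^1_\Field$ in each bidegree with $p+q/2$ odd with $\hvE^1_\ZZ$ in the adjacent even bidegree.

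Then I would run a simultaneous induction on the page $j$, establishing: $(\ast)$ $\rho$ is an isomorphism on $\hvE^j$ in even bidegrees and $\beta$ identifies $\hvE^j_\Field$ in odd bidegrees with $\hvE^j_\ZZ$ in the neighboring even bidegree; and $(\ast\ast)$ $\hvd^{j}_\Field=0$ when $j=2i$ with $i$ even. Odd pages, and pages $2i$ with $i$ odd, are transparent: there $\hvd^j$ either vanishes or connects bidegrees of the same parity, and $\rho,\beta$ commute with it, so $(\ast)$ propagates. For a page $2i$ with $i$ even, $\hvd^{2i}_\Field$ connects bidegrees of opposite parity; to see it vanishes I would take a class at an even bidegree, lift it along $\rho$ to an integral class (using $(\ast)$ through page $2i$), lift the defining $\hvd$--witnessing sequence of Remark~\ref{rem:hv} integrally --- possible because the intervening odd-$q$ columns impose no condition and the integral class already survives to page $2i$ --- and reduce mod~$2$; the integral answer lands in a zero group by the first paragraph, so $\hvd^{2i}_\Field$ vanishes on even bidegrees, and the odd-bidegree case follows by writing the class as $\beta$ of an even-bidegree one and invoking naturality of $\beta$. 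With $\hvd^{2i}_\Field$ vanishing everywhere, $\hvE^{2i+1}_\Field=\hvE^{2i}_\Field$ and likewise integrally, so $(\ast)$ passes to the next page.

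This yields $\hvd^{2r}_\Field=0$ for all even $r$; by Proposition~\ref{prop:TateE2} this is precisely the vanishing of the operation $d^{2r}$ on $\HH_*(A,A^!)$, and the naturality of these operations established in (the proof of) Theorem~\ref{thm:representable} carries it to every $(A,A)$-bimodule. The main obstacle is the bookkeeping in the simultaneous induction: one must arrange that each opposite-parity differential $\hvd^{2i}$ with $i$ even is killed \emph{before} it can perturb the identifications $\rho,\beta$ on the next page, and one must check carefully that $\beta$ commutes with all the $\hvd^r$, so that the Bockstein-shifted copy of the integral even-bidegree spectral sequence sitting inside $\hvE(\Field)$ is genuinely stable under the induction.
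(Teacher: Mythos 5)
Your underlying strategy --- compare the integral and mod-$2$ Hochschild--Tate spectral sequences via the reduction map $\rho$, and transport the forced vanishing of the integral $\hvd^{2i}$ (for $i$ even) across --- is correct and is, I believe, essentially what the paper has in mind, though the paper records no proof. Your treatment of the \emph{even} bidegrees (those with $p+q/2$ even, where $\hvE^1(\ZZ)$ is supported) is sound: $\rho$ is a genuine morphism of spectral sequences, so once one checks that it is an isomorphism there on $\hvE^1$ and that the $\hvd^{2i}$ with $i$ odd connect even-parity bidegrees to even-parity bidegrees, the isomorphism persists to every page, and then $\hvd^{2i}_\Field = \rho \circ \hvd^{2i}_\ZZ \circ \rho^{-1} = 0$ there whenever the integral target group vanishes. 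The place you yourself flag --- passing to the \emph{odd}-parity bidegrees via the Bockstein $\beta$ --- is where you are overcomplicating and where the argument as written is shaky: $\beta$ is a connecting map, not a map of bicomplexes, and its compatibility with the higher differentials $\hvd^r$ is not automatic. You do not need it. The $\Field$-bicomplex is $1$-periodic in $p$ (every horizontal arrow is $1+\tau$ and every vertical arrow is $\bdy_{\HC}$, with no alternating signs), so the shift $(p,q)\mapsto(p+1,q)$ is an automorphism of the bicomplex, hence of the spectral sequence $\hvE^r_{*,*}(\Field)$, commuting with all $\hvd^r$. This automorphism carries the even-parity bidegrees isomorphically onto the odd-parity ones, so the vanishing of $\hvd^{2i}_\Field$ you established at even parity is \emph{forced} at odd parity with no Bockstein, no UCT, and no delicate ``simultaneous induction'' bookkeeping beyond tracking $\rho$ itself. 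With this replacement the argument is complete and matches the evident intent of the paper.
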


\subsection{Relation with the Hochschild-to-cyclic spectral sequence}\label{subsec:is-hodge-to-derham}

\subsubsection{Cyclic modules and the Hodge-to-de Rham spectral sequence}
Let $\bLam$ be Connes's cyclic category, and let $\cM:\bLam^{\op} \to \Field\text{\emph{-vector spaces}}$ be a cyclic module over $\Field$.  Thus, $\cM$ is given by the following data:
\begin{enumerate}
\item A sequence of vector spaces $\cM_n$, $n \in \ZZ_{\geq 0}$
\item Face and degeneracy maps $d_i:\cM_n \to \cM_{n-1}$ and $s_i:\cM_n \to \cM_{n+1}$ for $i = 0,\ldots,n$.
\item A morphism $t_n:\cM_n \to \cM_n$ that generates an action of $\ZZ/(n+1)$ on $\cM$.
\end{enumerate}
These maps are subject to additional relations. 
See for instance \cite[Section 2.5]{LodayBook} for details.  We let $\Cyc(\Field)$ denote the category of cyclic $\Field$-modules.  A cyclic module $\cM$ has an underlying simplicial module, from which we may extract a chain complex in the usual way.  We denote this chain complex by $(\HC(\cM),\bdy_{\HC(\cM)})$ and its homology by $\HH(\cM)$.  Thus, $\HC_n(\cM) = \cM_n$ and the differential is given by
\[
\bdy(x) = \sum_{i = 0}^n d_i(x) \text{ for $x \in \HC_n(\cM)$}.
\]
A map $\cM \to \cN$ of complexes of cyclic modules is called a quasi-isomorphism if it induces a quasi-isomorphism $\HC(\cM) \to \HC(\cN)$.  We let $h\Cyc(\Field)$ denote the localization of $\Cyc(\Field)$ with respect to quasi-isomorphisms.

\begin{remark}
Our usage of $\HC$ does not agree with that of \cite{LodayBook}, where it is used to denote cyclic homology.  We will denote cyclic homology by $\CH$ instead.
\end{remark}

We may also attach to $\cM_*$ the ``cyclic bicomplex'' $\mathit{CC}(\cM)$, which looks like this
\[
\xymatrix{
& \vdots \ar[d] & \vdots \ar[d] & \vdots \ar[d] \\
0 & \ar[l] \HC_2(\cM)\ar[d]_{\bdy_{\HC}} & \ar[l]_{1 - t_2} \HC_2(\cM) \ar[d]_{b'} & \ar[l]_{N} \HC_2(\cM) \ar[d]_{\bdy_{\HC}} & \cdots \ar[l]_{\quad 1-t_2} \\
 0 & \ar[l] \HC_1(\cM) \ar[d]_{\bdy_{\HC}} & \ar[l]_{1-t_1} \HC_1(\cM) \ar[d]_{b'} & \ar[l]_N \HC_1(\cM) \ar[d]_{\bdy_{\HC}} & \cdots \ar[l]_{\quad 1-t_1} 
\\0 & \ar[l] \HC_0(\cM) \ar[d] & \ar[l]_{1-t_0} \HC_0(\cM)\ar[d] &\ar[d] \ar[l]_N \HC_0(\cM) &\cdots \ar[l]_{\quad 1-t_0} \\
& 0 & 0 & 0
}
\]
where for $x \in \HC_n(\cM)$, the maps $b'$ and $N$ are given by
\[
b'(x)  =  \sum_{i = 0}^{n-1} d_i(x)  \qquad\qquad
N(x)  =  \sum_{i = 0}^n t_n^i(x).
\]
The odd columns of this complex are acyclic.

\begin{remark}
The nerve of the category $\bLam$ is homotopy equivalent to the classifying space of the circle group, and because of this cyclic modules are good models for homotopy local systems on the classifying space of the circle $B\mathrm{U}(1)$ \cite{DwyerHopkinsKan}.  The complex $\mathit{CC}(\cM)$ computes the homology of $B\mathrm{U}(1)$ with coefficients in this local system.
\end{remark}

\begin{remark}
\label{rem:cyclicspheres}
An example of the previous remark is the following construction of
\cite[Section 7.1--7.2, Exercise 7.2.2]{LodayBook}.  If $X$ is a pointed space with a $\mathrm{U}(1)$-action then there is a cyclic module $\Field[X]$ with the following properties:
\begin{enumerate}
\item $\HH_*(\Field[X])$ is naturally isomorphic to the reduced homology $\widetilde{H}_*(X,\Field)$.
\item $\CH_*(\Field[X])$ is naturally isomorphic to the reduced equivariant homology $\widetilde{H}_*^{\mathrm{U}(1)}(X,\Field)$.
\end{enumerate}
(What we call $\Field[X]$, Loday denotes by $S_.(X)$, reflecting its construction as a variant of the singular chain complex.)  In particular if $X = S^n$ is an $n$-dimensional sphere carrying the trivial action of $\mathrm{U}(1)$, then $\HH_*(\Field[X])$ is concentrated in degree $n$, and $\CH_m(\Field[X]) = H_{m-n}(B\mathrm{U}(1),\Field)$.  The object $\Field[S^n]$ represents the functor $\CH_n(\cM)$ in the homotopy category $h\Cyc(\Field)$: we have $\Hom_{h\Cyc(\Field)}(\Field[S^n],\cM) \cong \CH_n(\cM)$.
\end{remark}

The \emph{Hochschild-to-cyclic spectral sequence}, also called the \emph{Hodge-to-de Rham spectral sequence}, is the spectral sequence $\vhE$ corresponding to this bicomplex.  We have
\[
\vhE^1_{pq}(\cM) =\vhE^2_{pq}(\cM) =  
\begin{cases}
\HH_q(\cM) & \text{if $p$ is even and $\geq 0$} \\
0 & \text{otherwise}
\end{cases}
\]
It is a first-quadrant spectral sequence converging to $\CH_{p+q}(\cM)$, the total homology of the bicomplex $\mathit{CC}(\cM)$.  A map $f:\cM \to \cN$ of cyclic modules induces a map $\vhE^r_{pq}(\cM) \to \vhE^r_{pq}(\cM)$ of spectral sequences, and if $f$ is a quasi-isomorphism then the induced map is an isomorphism for $r \geq 1$.  Thus the Hodge-to-de Rham spectral sequence is functorial for maps in $h\Cyc(\Field)$.

\begin{proposition}
\label{prop:cyclicformality}
Let $\cM$ be a bounded cyclic module, i.e.\ a cyclic module with $\HH_n(\cM) = 0$ for all but finitely many $n$.  Then the following are equivalent:
\begin{enumerate}
\item The Hodge-to-de Rham spectral sequence for $\cM$ collapses at $E^1$.
\item There is a quasi-isomorphism $\cM \cong \bigoplus_{j = 0}^k \cN_j$ where each $\cN_j$ has $\HH_n(\cN_j) = 0$ for all but one value of $n$.
\end{enumerate}
\end{proposition}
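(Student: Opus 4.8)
The plan is to prove the two implications separately. The main tool is the representability of $\CH_n$ by the cyclic spheres $\Field[S^n]$ from Remark~\ref{rem:cyclicspheres}, together with the following observation about the edge homomorphism of the Hodge-to-de Rham spectral sequence. For any cyclic module $\mathcal{P}$, the zeroth column of $\mathit{CC}(\mathcal{P})$ is the Hochschild complex of $\mathcal{P}$, so its inclusion induces an edge map $e^{\mathcal{P}}_n\colon\HH_n(\mathcal{P})\to\CH_n(\mathcal{P})$. Since $\mathit{CC}(\mathcal{P})$ is a first-quadrant bicomplex, Proposition~\ref{prop:demoted} applies and identifies $\vhE^\infty_{0,n}(\mathcal{P})=\hF_0\CH_n(\mathcal{P})=\im(e^{\mathcal{P}}_n)$; moreover the canonical surjection $\HH_n(\mathcal{P})=\vhE^1_{0,n}(\mathcal{P})\twoheadrightarrow\vhE^\infty_{0,n}(\mathcal{P})$ is $e^{\mathcal{P}}_n$ corestricted onto its image, so its kernel is $\ker(e^{\mathcal{P}}_n)$. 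Consequently, \emph{if the Hodge-to-de Rham spectral sequence of $\mathcal{P}$ collapses at $E^1$ then $e^{\mathcal{P}}_n$ is injective for all $n$; and if in addition $\HH_*(\mathcal{P})$ is concentrated in a single degree $n_0$, then $e^{\mathcal{P}}_{n_0}$ is an isomorphism}, since collapse then also forces $\vhE^\infty_{p,q}(\mathcal{P})=0$ for $q\neq n_0$, whence $\CH_{n_0}(\mathcal{P})=\hF_0\CH_{n_0}(\mathcal{P})=\im(e^{\mathcal{P}}_{n_0})$.

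To prove $(2)\Rightarrow(1)$: the construction $\mathit{CC}(-)$, its column filtration $\hF$, and hence $\vhE^r(-)$ are additive, and $\vhE^r(-)$ is invariant under quasi-isomorphism for $r\geq1$; so it suffices to treat a single cyclic module $\cN$ with $\HH_m(\cN)=0$ for $m\neq n$. For such $\cN$ the page $\vhE^1_{p,q}(\cN)=\vhE^2_{p,q}(\cN)$ equals $\HH_q(\cN)$ for $p$ even and $\geq0$ and vanishes otherwise, so it is concentrated in the single row $q=n$. Since $d^r\colon\vhE^r_{p,q}\to\vhE^r_{p-r,q+r-1}$ changes the row index by $r-1\geq1$ when $r\geq2$, every higher differential vanishes and the sequence collapses at $E^1$.

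To prove $(1)\Rightarrow(2)$: because $\cM$ is bounded the set $S=\{n:\HH_n(\cM)\neq0\}$ is finite. For each $n\in S$ set $\cN_n=\Field[S^n]\otimes_\Field\HH_n(\cM)$, a cyclic module with $\HH_m(\cN_n)=0$ for $m\neq n$ and $\HH_n(\cN_n)$ canonically isomorphic to $\HH_n(\cM)$. By Remark~\ref{rem:cyclicspheres} (applied to the copies of $\Field[S^n]$ making up $\cN_n$), there is a natural identification $\Hom_{h\Cyc(\Field)}(\cN_n,\cM)\cong\Hom_\Field\bigl(\HH_n(\cM),\CH_n(\cM)\bigr)$; let $\Phi_n\colon\cN_n\to\cM$ be the morphism in $h\Cyc(\Field)$ corresponding to the edge map $e^{\cM}_n$, which is injective by the observation applied to $\cM$. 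Unwinding the identification, $\CH_n(\Phi_n)=e^{\cM}_n\circ j$, where $j\colon\CH_n(\cN_n)\to\HH_n(\cM)$ is the canonical isomorphism induced by $\CH_n(\Field[S^n])\cong\Field$. By the already-proved implication $(2)\Rightarrow(1)$ the Hodge-to-de Rham spectral sequence of $\cN_n$ collapses at $E^1$, so by the observation $e^{\cN_n}_n\colon\HH_n(\cN_n)\to\CH_n(\cN_n)$ is an isomorphism. Naturality of the edge homomorphism for $\Phi_n$ gives $\CH_n(\Phi_n)\circ e^{\cN_n}_n=e^{\cM}_n\circ\HH_n(\Phi_n)$, i.e.\ $e^{\cM}_n\circ j\circ e^{\cN_n}_n=e^{\cM}_n\circ\HH_n(\Phi_n)$; cancelling the injection $e^{\cM}_n$ gives $\HH_n(\Phi_n)=j\circ e^{\cN_n}_n$, a composite of isomorphisms, hence an isomorphism. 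Since $S$ is finite, $\Phi:=\bigoplus_{n\in S}\Phi_n\colon\bigoplus_{n\in S}\cN_n\to\cM$ is a well-defined morphism in $h\Cyc(\Field)$, and on $\HH_m$ it reduces to $\HH_m(\Phi_m)$ (the only summand of $\bigoplus_{n}\cN_n$ with nonzero homology in degree $m$ is $\cN_m$), which is an isomorphism. Hence $\Phi$ is a quasi-isomorphism, which is the decomposition asserted in (2).

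The hard part will be $(1)\Rightarrow(2)$: producing the comparison map $\Phi$ and verifying it is a quasi-isomorphism. This is exactly where representability of $\CH_n$ by $\Field[S^n]$, naturality of the edge map $\HH_n\to\CH_n$, and the injectivity of that edge map (which is precisely what collapse at $E^1$ supplies) have to be combined. The remaining steps --- the implication $(2)\Rightarrow(1)$ and the bookkeeping with the finite direct sum --- are formal, using only the additivity and the quasi-isomorphism invariance of the Hodge-to-de Rham spectral sequence recalled above.
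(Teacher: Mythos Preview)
Your proof is correct and takes a different route from the paper's. The paper proves $(1)\Rightarrow(2)$ by induction on $\dim_\Field\bigoplus_k\HH_k(\cM)$ (so it tacitly assumes this dimension is finite): it peels off the bottom degree by mapping in copies of $\Field[S^j]$, passes to the cofibre $\cM''$, checks that the Hodge-to-de Rham spectral sequence for $\cM''$ still degenerates, applies the inductive hypothesis to $\cM''$, and then argues separately that an extension of $\Field[S^k]$ by $\Field[S^j]$ with degenerate spectral sequence must split in $h\Cyc(\Field)$. Your argument instead builds the comparison map $\bigoplus_n\cN_n\to\cM$ in one stroke, using the edge homomorphism $e^{\cM}_n\colon\HH_n(\cM)\to\CH_n(\cM)$ both to \emph{produce} $\Phi_n$ (via representability of $\CH_n$ by $\Field[S^n]$) and to \emph{verify} that $\HH_n(\Phi_n)$ is an isomorphism (via naturality of the edge map together with the injectivity of $e^{\cM}_n$ that collapse supplies). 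This is cleaner: no induction, no separate extension-splitting lemma, and no implicit finite-dimensionality hypothesis on $\HH_*(\cM)$. The paper's approach, by contrast, makes the triangulated structure of $h\Cyc(\Field)$ more visible and identifies the obstruction to splitting a two-step extension concretely as a higher differential in the spectral sequence, which is a fact of some independent interest.
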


\begin{proof}
Let us show that (2) is a consequence of (1)---the reverse implication
is trivial.

We will prove that if the Hodge-to-de Rham spectral sequence for $\cM$ collapses at $E^1$ then $\cM$ is a direct sum (in $h\Cyc(\Field)$) of copies of the cyclic modules $\Field[S^\ell]$ of Remark \ref{rem:cyclicspheres}.  We will induct on the dimension $d$ of $\bigoplus_k \HH_k(\cM)$.  If $d = 1$ and $\HH_k(\cM) = \Field$, then $\CH_k(\cM) = \Field$ as well and the representing map $\Field[S^k] \to \cM$ is a quasi-isomorphism.  Suppose now that the assertion has been proved for all $\cM''$ with $\dim(\bigoplus_k \HH_k(\cM'')) < d$.

For the inductive step we need the following claim: the obstructions to splitting a short exact sequence of cyclic modules  $\Field[S^j] \to \cE \to \Field[S^k]$ are the nontrivial differentials in the Hodge-to-de Rham spectral sequence of $\cE$.  More precisely, let $\cE$ be a cyclic module and suppose we have maps $\Field[S^{j}] \to \cE \to \Field[S^{k}]$ that induce short exact sequences of (bi)complexes
\begin{eqnarray*}
0 \to \HC(\Field[S^{j}]) \to \HC(\cE) \to \HC(\Field[S^{k}]) \to 0 \\
0 \to \mathit{CC}(\Field[S^j]) \to \mathit{CC}(\cE) \to \mathit{CC}(\Field[S^k]) \to 0
\end{eqnarray*}
The $\vhE^r$ spectral sequence attached to the bicomplex $\cE$ is supported in rows $j$ and $k$.  The differential $d^{j - k + 1}:\HH_k(\cE) \to \HH_j(\cE)$ determines the connecting homomorphism in the long exact sequence
\[
\CH_k(\Field[S^j]) \to \CH_k(\cE) \to \CH_k(\Field[S^k]) \stackrel{\delta}{\to} \CH_{k-1}(\Field[S^j])
\]
In particular, if $\vhE^r$ degenerates at $r = 1$, then this connecting homomorphism is zero.  It follows that under this degeneration hypothesis the map
\[
\Hom_{h\Cyc(\Field)}(\Field[S^k],\cE) \to \Hom_{h\Cyc(\Field)}(\Field[S^k],\Field[S^k])
\]
is surjective, or in other words that $\cE = \Field[S^j] \oplus \Field[S^k]$ in $h\Cyc$.

Now let us return to $\cM$.  Let $j$ denote the smallest number for which $\CH_j(\cM)$ is nonzero.
Let $\cM'$ denote the direct sum of $\dim(\CH_j(\cM))$ many copies of
$\Field[S^j]$.  Then after replacing $\cM$ with a quasi-isomorphic cyclic module if
necessary there is a short exact sequence of cyclic modules
$\cM' \to \cM \to \cM''$ such that $\HH_j(\cM') \to \HH_j(\cM)$ is an
isomorphism.  From the long exact sequence attached to $0 \to
\HC(\cM') \to \HC(\cM) \to \HC(\cM'') \to 0$, it follows that
$\HH_{k}(\cM) \to \HH_k(\cM'')$ is an isomorphism for $k > j$.  The
associated map of spectral sequences $\vhE^r_{pq}(\cM) \to
\vhE^r_{pq}(\cM'')$ is an isomorphism for $q > j$, which is where
$\vhE^r_{pq}(\cM'')$ is supported, and the differentials in
$\vhE^r_{pq}(\cM'')$ must vanish.  By the inductive hypothesis $\cM''$
is quasi-isomorphic to a direct sum of copies of $\Field[S^{\ell_k}]$,
$\ell_k > j$.  The Proposition is now a consequence of the claim above.
\end{proof}

\subsubsection{The Hochschild-Tate bicomplex of a cyclic module}

There is an operation of restriction from local systems on $B\mathrm{U}(1)$ to local systems on $B\ZZ/2$.  In this section we model this operation at the level of cyclic modules.  Suppose that $\cM$ is a cyclic module.  Then define 
\[
\TT_n(\cM) = \bigoplus_{p = 0}^n \HC_{n+1}(\cM)
\]
and define $\bdy_{\TT_n(\cM)}$ as follows.  If $x \in \TT_n(\cM)$ belongs to the copy of $\HC_{n+1}(\cM)$ indexed by $p$, then set $q = n - p$.  If $p \neq 0$ and $q \neq 0$, then 
\[
\bdy_{\TT_n(\cM)} = \left(\sum_{i = 0}^{p} d_i\left(x\right) \right)_{(p-1,q)}+ \left(\sum_{j = p+1}^{p+q+1}d_j\left(x\right)\right)_{(p,q-1)}
\]
where the first sum belongs to the copy of $\HC_{n}(\cM)$ indexed by $(p-1,q)$ and the second sum belongs to the copy of $\HC_n(\cM)$ indexed by $(p,q-1)$.  If $p = 0$ then we omit the first sum from the definition of $\bdy_{\TT_n(\cM)}(x)$ and if $q = 0$ we omit the second sum.  (If $p = q = 0$, then $n=0$ and $\bdy_{\TT_0} = 0$.)

\begin{proposition}
\label{prop:TT}
$(\TT_*(\cM),\bdy_{\TT})$ is a chain complex (that is, $\bdy_\TT^2 = 0$), and it is naturally quasi-isomorphic to $\HC(\cM)$.
\end{proposition}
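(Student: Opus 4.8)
The plan is to realize $\TT(\cM)$ as the total complex of a bicomplex, check $\bdy_\TT^{2}=0$ by hand, and compare with $\HC(\cM)$ via an explicit natural chain map whose being a quasi-isomorphism I would establish by an augmentation/acyclicity argument.

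For the first part, introduce the bicomplex $C$ with $C_{p,q}=\cM_{p+q+1}$ ($p,q\ge 0$), horizontal differential $d^{h}=\sum_{i=0}^{p}d_{i}\colon C_{p,q}\to C_{p-1,q}$ (the ``first block'' of faces) and vertical differential $d^{v}=\sum_{j=p+1}^{p+q+1}d_{j}\colon C_{p,q}\to C_{p,q-1}$ (the ``last block''). These two blocks partition all $p+q+2$ faces of $\cM_{p+q+1}$, and by inspection $\Tot(C)=\TT(\cM)$, the ``omit one sum'' clauses at $p=0$ and $q=0$ being exactly the statement that the missing block maps into a zero group. So it suffices to prove $(d^{h})^{2}=0$, $(d^{v})^{2}=0$, and $d^{h}d^{v}=d^{v}d^{h}$ (we are over $\Field$). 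The mixed identity is immediate: every index in the first block is strictly below every index in the second, so $d^{h}d^{v}=\sum_{i\le p<j}d_{i}d_{j}=\sum d_{j-1}d_{i}$ by $d_{i}d_{j}=d_{j-1}d_{i}$, and reindexing $j\mapsto j-1$ produces exactly $d^{v}d^{h}$. The two ``pure'' identities are the standard argument that $\sum_{i}d_{i}$ squares to zero, applied now to nested initial (resp.\ terminal) segments of consecutive faces; the only bookkeeping is that moving one step along a differential shifts the relevant segment by one, which is what $d_{i}d_{j}=d_{j-1}d_{i}$ records. The edge cases $p=0,q=0$ need no separate treatment.

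For the comparison, define $\iota\colon\HC(\cM)\to\TT(\cM)$ in degree $n$ by $x\mapsto(s_{0}x,s_{1}x,\dots,s_{n}x)$ (the $p$-th coordinate $s_{p}x\in\cM_{n+1}$ lands in $C_{p,n-p}$). A computation with the face--degeneracy identities $d_{i}s_{j}=s_{j-1}d_{i}$ ($i<j$), $d_{i}s_{j}=s_{j}d_{i-1}$ ($i>j+1$), $d_{j}s_{j}=d_{j+1}s_{j}=\id$ shows $\iota$ is a natural chain map: the $(p',q')$-coordinate of $\bdy_{\TT}\iota(x)$ collects $\sum_{j=p'+1}^{n+1}d_{j}s_{p'}x+\sum_{i=0}^{p'+1}d_{i}s_{p'+1}x$, and these identities collapse this to $s_{p'}\big(\sum_{i=0}^{n}d_{i}x\big)$, i.e.\ the $(p',q')$-coordinate of $\iota(\bdy_{\HC}x)$. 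To see $\iota$ is a quasi-isomorphism, augment $C$ to a bicomplex $C^{+}$ by a $(-1)$st column $C^{+}_{-1,q}=\cM_{q}=\HC_{q}(\cM)$ with vertical differential the Hochschild differential and connecting horizontal map $C^{+}_{0,q}=\cM_{q+1}\xrightarrow{\,d_{0}\,}\cM_{q}$ (a one-line check with $d_{0}d_{j}=d_{j-1}d_{0}$ gives $d^{h}d^{v}=d^{v}d^{h}$ at the seam, so $C^{+}$ is a bicomplex). Each row of $C^{+}$ is contractible: in row-degree $p$ the entry is $\cM_{p+q+1}$ and $s_{p+1}$ (and $s_{0}$ in row-degree $-1$) is a contracting homotopy for $d^{h}$, since $d_{i}s_{p+1}=s_{p}d_{i}$ for $i\le p$ cancels against $s_{p}d^{h}$ mod $2$ while $d_{p+1}s_{p+1}=\id$ survives (with the analogous checks at the edges $p=-1,0$). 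As $C^{+}$ is finite in each total degree, the spectral sequence of its filtration by $q$ (whose $E^{0}$-page is these rows with their $d^{h}$-differentials) has $E^{1}=0$, so $\Tot(C^{+})$ is acyclic; the $(-1)$st column is a subcomplex isomorphic to $\Sigma^{-1}\HC(\cM)$ with quotient $\Tot(C)=\TT(\cM)$, so the connecting map $\delta\colon\HH_{*}(\TT(\cM))\xrightarrow{\sim}\HH_{*}(\cM)$ is an isomorphism. Finally $\delta\circ\iota_{*}=\id$ (unwinding $\delta$, it applies $d_{0}$ to the $p=0$ coordinate, and $d_{0}s_{0}=\id$), so $\iota_{*}$ is an isomorphism; everything in sight is natural in $\cM$.

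The step I expect to require the most care is the contractibility of the rows of $C^{+}$: one has to choose the homotopy degree-dependently, $s_{p+1}$ in row-degree $p$ with the special value $s_{0}$ at the $(-1)$st entry, and then verify that every term produced by $d_{i}s_{j}=s_{j-1}d_{i}$, $d_{i}s_{j}=s_{j}d_{i-1}$, $d_{j}s_{j}=d_{j+1}s_{j}=\id$ pairs off mod $2$ except the surviving identity, including at the two low-degree edges. The remaining ingredients---the bicomplex relations, convergence of the spectral sequence (which holds because $C^{+}$ meets each total degree in finitely many terms, so that Proposition~\ref{prop:demoted} applies), identification of the subquotient, and naturality---are routine.
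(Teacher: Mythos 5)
Your proof is correct, and it rests on the same two ideas as the paper's: realize $\TT(\cM)$ as the total complex of the face bicomplex $C_{p,q}=\cM_{p+q+1}$, then augment by a copy of $\HC(\cM)$ and contract the resulting rows/columns with degeneracies. The difference is a transposition. The paper adjoins a new \emph{bottom row} $C^+_{p,-1}=\HC_p(\cM)$ (augmentation map $d_p$) and contracts each augmented \emph{column}, using the \emph{position-independent} homotopy $s_p$ in the $p$th column; this exhibits $\TT(\cM)\to\HC(\cM)$, $(x_0,\ldots,x_n)\mapsto d_n(x_n)$, directly as a quasi-isomorphism, since acyclicity of the augmented total complex is exactly the statement that this map has an acyclic cone. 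You instead adjoin a new \emph{leftmost column} $C^+_{-1,q}=\HC_q(\cM)$ (connecting map $d_0$), contract each augmented \emph{row} with the \emph{position-varying} homotopy $s_{p+1}$ (and $s_0$ at the seam), and then produce a map in the other direction, $\iota\colon x\mapsto(s_0x,\ldots,s_nx)$, which requires an additional chain-map verification and a short-exact-sequence/connecting-homomorphism step to conclude. The payoff of your direction is that it proves, en route, the explicit section of $\epsilon$ that the paper records afterwards as Remark~\ref{rem:auginv}. The paper's direction is a bit leaner because the quasi-isomorphism is immediate from acyclicity without constructing or checking a section.
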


\begin{proof}
The complex
$\TT(\cM)$ is just the total complex of the double complex
\[
\xymatrix{
& \vdots \ar[d] & \vdots \ar[d] & \vdots \ar[d] \\
0 & \ar[l] \HC_3(\cM)\ar[d] & \ar[l] \HC_4(\cM) \ar[d] & \ar[l] \HC_5(\cM) \ar[d] & \cdots \ar[l] \\
 0 & \ar[l] \HC_2(\cM) \ar[d] & \ar[l] \HC_3(\cM) \ar[d]& \ar[l] \HC_4(\cM) \ar[d] & \cdots \ar[l]
\\0 & \ar[l] \HC_1(\cM)\ar[d] & \ar[l] \HC_2(\cM) \ar[d] & \ar[l] \HC_3(\cM) \ar[d] &\cdots \ar[l] \\
& 0 & 0 & 0 
}
\]
where the horizontal differential $\HC_{n+1}(\cM) \to \HC_n(\cM)$ in
the $q\th$ row is given by $\sum_{i = 0}^{n-q} d_i$ and the vertical
differential $\HC_{n+1}(\cM) \to \HC_n(\cM)$ in the $p\th$ column is
given by $\sum_{i = p+1}^{n+1} d_i$.  The standard simplicial
identities for the face maps imply that the horizontal and vertical
differentials commute and square to zero.  There is an augmentation
map from the bottom row of this bicomplex to $(\HC(\cM),\bdy_{\HC(\cM)})$ whose $n\th$ term  $\HC_{n+1}(\cM) \to \HC_n(\cM)$ is given by $d_n$.  To prove that this augmentation map induces a quasi-isomorphism from the total complex of $\TT(\cM)$ to $\HC(\cM)$, it suffices to show that the augmented columns are exact.  Indeed, the degeneracy map $s_p:\cM_{n+p} \to \cM_{n+p+1}$, regarded as a map $\HC_{n+p}(\cM) \to \HC_{n+p+1}(\cM)$, is a contracting chain homotopy.   
\end{proof}

\begin{remark}
\label{rem:auginv}
Let us denote the quasi-isomorphism $\TT(\cM) \to \HC(\cM)$ of the Proposition by $\epsilon$.  Thus,
\[
\epsilon(x_0,\ldots,x_n) = d_n(x_n).
\]
Suppose $z \in \HC_n(\cM)$ is a Hochschild cycle, i.e. $d_0(z)+ \cdots + d_n(z) = 0$.  Then the element 
\[
(s_0(z),s_1(z),\ldots,s_n(z)) \in \bigoplus_{p = 0}^n \HC_{n+1} = \TT_n(\cM)
\]
is a cycle in $\TT$ that maps to $z$ under $\epsilon$.
\end{remark}

The chain complex $\TT(\cM)$ has a $\ZZ/2$-action.  We will denote the generator of this action by $\tau$.  Namely, if $x = (x_0,x_1,\ldots,x_n) \in \TT(\cM)$ then we define 
\[
\tau(x) = (t_{n+1} x_n, t^2_{n+1} x_{n-1},\ldots,t^{j+1}_{n+1}(x_{n-j}),\ldots,t^{n+1}_{n+1}(x_0)).
\]

Since $t_{n+1} \circ \stackrel{n+2}{\cdots} \circ t_{n+1} (x) = x$ and $p+q = n$, we have $\tau^2(x) = x$.  We may therefore form the first quadrant bicomplex

\[
\TT^{\ZZ/2} := 
\left(
\xymatrix{
0 & \TT_*(\cM)\ar[l] \ar@(ul,ur)^{\bdy_{\TT}} & \TT_*(\cM)\ar[l]_{1+\tau}\ar@(ul,ur)^{\bdy_{\TT}}
& \TT_*(\cM)\ar[l]_{1+\tau} \ar@(ul,ur)^{\bdy_{\TT}}&\dots\ar[l]_(.3){1+\tau}
}
\right)
\]
and its periodic version
\[
\TT^{\Tate} := 
\left(
\xymatrix{
\cdots & \TT_*(\cM)\ar[l]_{1+\tau \quad} \ar@(ul,ur)^{\bdy_{\TT}} & \TT_*(\cM)\ar[l]_{1+\tau}\ar@(ul,ur)^{\bdy_{\TT}}
& \TT_*(\cM)\ar[l]_{1+\tau} \ar@(ul,ur)^{\bdy_{\TT}}&\dots.\ar[l]_(.3){1+\tau}
}
\right).
\]

\begin{proposition}
\label{prop:h2dr} Let $\cM$ be a bounded cyclic module, and
suppose that the Hodge-to-de Rham spectral sequence for $\cM$
degenerates at the first page.  Then the spectral sequence $\vhE$
attached to each of the bicomplexes $\TT^{\ZZ/2}$ and $\TT^{\Tate}$
also degenerates at the first page.
\end{proposition}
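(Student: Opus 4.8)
The plan is to reduce to the elementary cyclic modules $\Field[S^\ell]$ of Remark~\ref{rem:cyclicspheres} and then compute directly. First I would record that $\cM\mapsto\TT(\cM)$, and hence $\cM\mapsto\TT^{\ZZ/2}(\cM)$ and $\cM\mapsto\TT^{\Tate}(\cM)$, are additive functors of the cyclic module $\cM$: the face and degeneracy maps and the cyclic operators $t$ act componentwise on a direct sum, hence so do the differential $\bdy_\TT$ and the involution $\tau$, and therefore $\TT^{\ZZ/2}$ and $\TT^{\Tate}$ commute with finite direct sums. Consequently the $\vhE$ spectral sequence of each of these bicomplexes is additive in $\cM$. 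Second, by Proposition~\ref{prop:TT} every column of $\TT^{\ZZ/2}(\cM)$ (\resp $\TT^{\Tate}(\cM)$) is the complex $\TT(\cM)$, which is naturally quasi-isomorphic to $\HC(\cM)$; hence $\vhE^1_{p,q}=\HH_q(\cM)$ for $p\geq 0$ (\resp for all $p\in\ZZ$). In particular a quasi-isomorphism of cyclic modules induces a map of bicomplexes that is a quasi-isomorphism on every column, hence an isomorphism on $\vhE^r$ for all $r\geq 1$. Thus the property ``$\vhE$ degenerates at the first page'' is additive in $\cM$ and invariant under quasi-isomorphism.

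Next I would invoke Proposition~\ref{prop:cyclicformality}: degeneration of the Hodge-to-de Rham spectral sequence of the bounded cyclic module $\cM$ means that $\cM$ is, in $h\Cyc(\Field)$, a finite direct sum of copies of the cyclic modules $\Field[S^\ell]$. By the additivity and quasi-isomorphism invariance just noted, it therefore suffices to prove the statement for $\cM=\Field[S^\ell]$.

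For $\cM=\Field[S^\ell]$ we have $\HH_*(\cM)\cong\Field$ concentrated in degree $\ell$, so the page $\vhE^1_{p,q}$ is supported in the single row $q=\ell$. The differential $\vhd^r$ has bidegree $(-r,\,r-1)$, so for $r\geq 2$ it changes the row index and therefore vanishes. The one remaining differential $\vhd^1\co\vhE^1_{p,\ell}\to\vhE^1_{p-1,\ell}$ is the map induced on $\HH_\ell(\cM)$ by the horizontal differential $1+\tau$ of the bicomplex, i.e.\ it is $1+\tau_*$, where $\tau_*$ is the scalar by which $\tau$ acts on $\HH_\ell(\cM)\cong\Field$. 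Since $\tau^2=\id$ we have $\tau_*^2=1$ in $\Field$, so $\tau_*=1$ and $\vhd^1=1+1=0$. Hence $\vhE$ degenerates at $E^1$ for $\Field[S^\ell]$, and by the reduction above it degenerates for every $\cM$ satisfying the hypothesis. (Only the vanishing of the differentials is asserted here; no convergence statement is needed.)

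\textbf{Where the difficulty lies.}
Once the reduction is in place the computation is immediate, so the substance is Proposition~\ref{prop:cyclicformality} --- already available --- together with the observation that one must descend all the way to a \emph{single} copy of $\Field[S^\ell]$: for a summand whose $\HH_\ell$ has dimension greater than $1$, the operator $1+\tau_*$ need not vanish, so it is the explicit splitting into spheres, rather than merely the fact that $\HH_*(\cM)$ is concentrated in one degree, that does the work. An alternative would be to construct a direct comparison between $\TT^{\ZZ/2}$ and the cyclic bicomplex $\mathit{CC}(\cM)$ from the inclusion $\ZZ/2\hookrightarrow\mathrm{U}(1)$ and deduce that the higher $\vhE$-differentials are governed by those of Hodge--de Rham; that route seems to require a careful analysis of the comparison, so the argument via the decomposition is cleaner.
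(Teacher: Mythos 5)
Your proof is correct and follows the same strategy as the paper: use Proposition~\ref{prop:cyclicformality} to split $\cM$ (in $h\Cyc(\Field)$) into pieces with homology concentrated in a single degree, note that $\TT^{\ZZ/2}$, $\TT^{\Tate}$ and their $\vhE$ spectral sequences are additive and quasi-isomorphism invariant, and conclude from the resulting row-concentration of $\vhE^1$. The one point where you are more careful than the paper is the treatment of $\vhd^1$. The paper's proof says only that ``$\vhE^1_{pq}=0$ for $q\neq n$ and they therefore collapse,'' which disposes of $\vhd^r$ for $r\geq 2$ (bidegree $(-r,r-1)$ shifts the row), but is silent on $\vhd^1\co\vhE^1_{p,n}\to\vhE^1_{p-1,n}$, which stays inside the row $q=n$. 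As you correctly observe, $\vhd^1=1+\tau_*$ where $\tau_*$ is the action of $\tau$ on $H_n(\TT(\cM))\cong\HH_n(\cM)$, and to kill it over $\Field$ one must know $\tau_*=\id$; this is not obvious from single-degree concentration alone but does follow once one descends, via the decomposition in Proposition~\ref{prop:cyclicformality}, to a direct sum of individual spheres $\Field[S^\ell]$, on each of which $\HH_\ell\cong\Field$ forces $\tau_*=1$. So your argument is a slightly more explicit version of the paper's, and the extra detail you supply is exactly the step the paper's wording glosses over.
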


\begin{proof}
By Proposition \ref{prop:cyclicformality}, we may assume that there is an integer $n$ such that $\HH_i(\cM) = 0$ for $i \neq n$.  By Proposition \ref{prop:TT}, the homology groups $H_i(\TT(\cM))$ also vanish for $i \neq n$.  But then the spectral sequences attached to $\TT^{\ZZ/2}$ and to $\TT^{\Tate}$ have
\[
\vhE^1_{pq} = 0 \text{ for $q \neq n$}
\]
and they therefore collapse.
\end{proof}

\begin{remark}
\label{rem:cyclic}
Suppose $\cM$ is the cyclic module coming from an $\Field$-algebra
$A$~\cite[Proposition 2.5.4]{LodayBook}. In the definition of
$\HC(A)$ from Section~\ref{sec:hoch-background}, if we take $R$ to
be the bar complex $\Barop(A)$ of $A$~\cite[Section 1.1.11]{LodayBook} then
$\HC(\cM)=\HC(A)$.
Moreover, $\TT(\cM)$
is naturally identified with $\HC(\Barop(A))$, i.e., with
$(R_*\otimes_\Field M_*)/\sim$, where $R_*=M_*=\Barop(A)$ and $\sim$
is as in Definition~\ref{def:Hochschild}.
This identification respects the $\ZZ/2$ actions, so 
the spectral sequence $\vhE^r_{pq}$ attached to $\TT^\Tate(\cM)$ agrees
with the spectral sequence $\vhE^r_{pq}$ attached to $\HC^\Tate(A \DTP A)$ for $r \geq 1$.
\end{remark}

\subsubsection{Hodge-to-de Rham formality implies \texorpdfstring{$\pi$}{pi}-formality for Calabi-Yau algebras}
\label{subsubsec:caligula}
In this section, we treat algebras rather than \dg algebras for
simplicity, and for easy reference to~\cite{LodayBook}.

\begin{theorem}
Let $A$ be a finite-dimensional algebra over $\Field$ (regarded as a dg algebra with trivial differential), satisfying the following conditions:
\begin{enumerate}
\item $A$ is homologically smooth.  
\item The Hodge-to-de Rham spectral sequence for $A$ degenerates at $E^1$.  
\item For some integer $d$, there is a quasi-isomorphism of bimodules $\Sigma^d A \cong A^!$.  In other words, $A$ is \emph{Calabi-Yau}.
\end{enumerate}
Then the algebra $A$ is $\pi$-formal.
\end{theorem}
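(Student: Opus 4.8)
The plan is to deduce $\pi$-formality of $A$ by comparing the two spectral sequences attached to the Hochschild--Tate bicomplex of the diagonal bimodule: the Hodge--to--de Rham hypothesis will force one of them ($\vhE$) to collapse at $E^1$, and since both converge to the same total homology this will force the other ($\hvE$) to collapse at $E^3$. By Theorem~\ref{thm:representable} it suffices to show that $A^!$ is $\pi$-formal as a bimodule. The Calabi--Yau hypothesis gives a quasi-isomorphism $\Sigma^d A \cong A^!$, and $\pi$-formality of a bimodule is invariant under quasi-isomorphism and under shifts: for $r \geq 3$ the pages and differentials of $\hvE$ are natural in $D(\Bimod{A})$ (established in the proof of Theorem~\ref{thm:representable}), and replacing $M$ by $\Sigma^d M$ only reindexes $\HC^\Tate(M \DTP M)$ in the $q$-direction (no signs appear since we work over $\Field$). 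So it is enough to prove that the diagonal bimodule $M = A$ is $\pi$-formal, i.e.\ that the spectral sequence $\hvE$ attached to $\HC^\Tate(A, A\DTP A)$ collapses at $E^3$.

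Next I would set up convergence and record two dimension counts. As $A$ is finite-dimensional it is homologically proper, so with hypothesis~(1) it is homologically smooth and proper; thus $\HH_*(A)$ is finite-dimensional and $M = A$ is bounded, and Proposition~\ref{prop:converge} shows that both $\vhE$ and $\hvE$ attached to $\HC^\Tate(A, A \DTP A)$ converge to $H_*(\Tot \HC^\Tate)$. Since $\HC^\Tate_{p,q}$ is supported in finitely many values of $q$, on each total-degree diagonal $p+q = n$ only finitely many groups of any page are nonzero, so the relevant dimensions are finite. By Propositions~\ref{prop:Frob} and~\ref{prop:TateE2}, $\hvE^3_{p,2k} = \HH_k(A)$ and $\hvE^3_{p,2k+1} = 0$ for all $p$, whence $\sum_{p+q=n}\dim \hvE^3_{p,q} = \dim \HH_*(A)$ on each diagonal; likewise $\vhE^1_{p,q} = \HH_q(A \DTP A) \cong \HH_q(A)$ for all $p$, whence $\sum_{p+q=n}\dim \vhE^1_{p,q} = \dim \HH_*(A)$.

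Then I would feed in the Hodge--to--de Rham hypothesis. Let $\cM$ be the cyclic module of $A$; by hypothesis~(2) its Hodge--to--de Rham spectral sequence degenerates at $E^1$, so by Proposition~\ref{prop:h2dr} the spectral sequence $\vhE$ of $\TT^\Tate(\cM)$ degenerates at $E^1$. By Remark~\ref{rem:cyclic}, $\TT^\Tate(\cM)$ is identified $\ZZ/2$-equivariantly with $\HC^\Tate(A \DTP A)$ built from the bar resolution, with agreeing $\vhE$ pages for $r \geq 1$; as these pages do not depend on the chosen resolution for $r \geq 1$, the spectral sequence $\vhE$ of $\HC^\Tate(A, A\DTP A)$ collapses at $E^1$. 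Hence, on every diagonal,
\[
\dim \HH_*(A) = \sum_{p+q=n}\dim \vhE^\infty_{p,q} = \dim H_n(\Tot \HC^\Tate) = \sum_{p+q=n}\dim \hvE^\infty_{p,q} \leq \sum_{p+q=n}\dim \hvE^3_{p,q} = \dim \HH_*(A),
\]
forcing equality throughout. A nonzero differential $\hvd^r$ with $r \geq 3$ would strictly decrease the total dimension on the diagonals of its source and target, contradicting this; so all higher differentials vanish, $\hvE$ collapses at $E^3$, the diagonal bimodule $A$ (hence $A^!\simeq\Sigma^dA$, hence by Theorem~\ref{thm:representable} the algebra $A$) is $\pi$-formal.

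The step I expect to take the most care is reconciling the two models of $\HC^\Tate(A \DTP A)$: the Hodge--to--de Rham input lives most naturally on $\TT^\Tate(\cM)$, i.e.\ on the version built from the infinite bar resolution, whereas the identification $\hvE^3 \cong \HH_*(A)$ rests on Proposition~\ref{prop:Z2action}, which requires a finite projective resolution (available by homological smoothness). One must check that the evident $\ZZ/2$-equivariant quasi-isomorphism between these two versions induces isomorphisms on both the $\vhE$ and $\hvE$ pages from $E^1$ onward, and in particular that both compute the same $H_*(\Tot \HC^\Tate)$. Granting that, the remainder is just bookkeeping with the convergence already packaged in Propositions~\ref{prop:demoted} and~\ref{prop:converge}.
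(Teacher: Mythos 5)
Your proof is correct and follows essentially the same route as the paper: reduce to the diagonal bimodule via the Calabi--Yau hypothesis, invoke Remark~\ref{rem:cyclic} and Proposition~\ref{prop:h2dr} to get $\vhE$-degeneration from Hodge-to-de Rham degeneration, and then run the dimension count on each total-degree diagonal to force $\hvE$ to collapse at $E^3$. You are somewhat more explicit than the paper at two points -- spelling out that $\pi$-formality is invariant under shift and quasi-isomorphism (the paper just asserts that it suffices to prove the diagonal is $\pi$-formal), and flagging the need to reconcile the bar-resolution model $\TT^\Tate(\cM)$ with the finite-resolution model $\HC^\Tate(A \DTP A)$ when transferring the $\vhE$-collapse -- but these are refinements of, not departures from, the paper's argument, and the reconciliation you sketch (resolution-independence of $\vhE^{\geq 1}$ together with boundedness from homological smoothness) is exactly what the paper relies on implicitly.
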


\begin{proof}
  Since condition (3) states that the cobar bimodule $A^!$ is
  quasi-isomorphic to a shift of the diagonal bimodule $A$, it will
  suffice to show that conditions (1) and (2) imply that the diagonal
  bimodule is $\pi$-formal.

By Remark \ref{rem:cyclic}, the Hochschild-Tate spectral sequence of $A \DTP A$ coincides with the $\vhE$ spectral sequence attached to $\TT^\Tate(A^{\otimes (\bullet + 1)})$, and by Proposition \ref{prop:h2dr} if condition (2) holds then this spectral sequence collapses at the first page.  Thus $
\vhE^1_{p,q} = \HH_q(A,A \DTP A)$ degenerates: $\vhE^1_{p,q} = \vhE^\infty_{p,q}$.  Since $A \DTP A \cong A$, we in particular have the equation
\begin{align*}
\sum_{p+q = n} \dim_\Field \vhE^\infty_{p,q} &= \sum_{p+ q = n}
\dim_{\Field} \HH_q(A,A).\\
\shortintertext{We claim that if $A$ is homologically smooth then}
\sum_{p+q = n} \dim_\Field\hvE^\infty_{p,q} &= \sum_{p+q = n}
\dim_{\Field} \HH_q(A,A)\qquad \text{and}\\
\sum_{p+q = n} \dim_\Field\hvE^3_{p,q} &= \sum_{p+q = n} \dim_\Field
\HH_q(A,A).\\
\shortintertext{In particular $\hvE^3 = \hvE^\infty$ so the diagonal bimodule is $\pi$-formal.  The first part of the claim holds because if $A$ is homologically smooth then the Hochschild-Tate bicomplex is acyclic outside of a bounded horizontal strip, so that we also have}
\sum_{p+q = n} \dim_\Field \hvE^\infty_{p,q} &= \sum_{p+q = n} \dim_\Field\vhE^{\infty}_{p,q}.
\end{align*}
The second part of the claim is a consequence of Proposition
\ref{prop:TateE2}.  This completes the proof.
\end{proof}

\begin{remark}
\label{rem:dontknow}
We do not know whether the converse to this theorem holds --- that is, we do not know whether the $\pi$-formality of $A$ implies the degeneration of the Hochschild-to-cyclic spectral sequence for $A$.
\end{remark}

\section{Applications to Heegaard Floer homology}\label{sec:HF-applications}
This section contains the topological applications of the paper. We
start with a selective review of bordered Heegaard Floer homology in
Section~\ref{sec:bordered-background}. In
Section~\ref{sec:loc-bord-cobar} we prove that certain of the bordered
algebras are $\pi$-formal. Using these results,
Section~\ref{sec:br-d-cov} proves Theorems~\ref{thm:br-g2}
and~\ref{thm:br-extreme}. The model for these proofs is
Theorem~\ref{thm:br-d-cov-sseq}, where we show that $\pi$-formality of
the bordered algebras implies Hendricks's localization result
(Theorem~\ref{thm:hendricks}). (The reader may want to skip directly
to Theorem~\ref{thm:br-d-cov-sseq}, to understand the structure of
this argument, and refer back to
Sections~\ref{sec:bordered-background} and~\ref{sec:loc-bord-cobar} as
needed.) Sections~\ref{sec:TC} and~\ref{subsec:double-cover} are
devoted to proving Theorem~\ref{thm:honest-dcov}. In
Section~\ref{sec:TC} we explain how to obtain $\HFa(Y)$ as the
Hochschild homology of a bimodule (if $b_1(Y)>0$) and prove that these
bimodules are neutral (in the sense of
Definition~\ref{def:neutral}). Theorem~\ref{thm:honest-dcov} follows
easily, as is shown in Section~\ref{subsec:double-cover}.

Throughout this section, Heegaard Floer homology groups will have
coefficients in $\Field$.

\subsection{Background on Bordered Floer
  homology}\label{sec:bordered-background}
Bordered (Heegaard) Floer homology is an extension of the Heegaard
Floer $3$-manifold invariant $\HFa(Y)$ to $3$-manifolds with
boundary. It, and Zarev's further extension, bordered-sutured
Floer homology, will allow us to apply Theorem~\ref{thm:hoch-local} to
Heegaard Floer theory. In this section, we briefly review the relevant
aspects of these theories; for more details the reader is referred to
\cite{LOT1,LOT2,Zarev09:BorSut}.

\subsubsection{The algebra associated to a surface}
A \emph{strongly based surface} is a closed, connected, oriented
surface $F$, together with a distinguished disk $D\subset F$.
Morally, bordered Floer homology associates to a strongly based
surface $(F,D)$ a \dg algebra $\Alg(F)$. More precisely, bordered
Floer theory associates a \dg algebra $\Alg(\PMC)$ to a combinatorial
representation $\PMC$ for $(F,D)$ called a \emph{pointed matched
  circle}. We will write $F(\PMC)$ for the strongly based surface
associated to a pointed matched circle $\PMC$.

We will not need the explicit form of the algebra $\Alg(\PMC)$ (except
briefly in the proof of Proposition~\ref{prop:Alg-is-hom-smooth} and,
in a special case, in Section~\ref{sec:loc-bord-cobar}); but three
points will be relevant below. First, if $\PMC$ represents $S^2$
(there is a unique such pointed matched circle) then
$\Alg(\PMC)=\Field$. Second, the algebra $\Alg(\PMC)$ decomposes as a
direct sum: if $F(\PMC)$ has genus $k$ then
\[
\Alg(\PMC)=\bigoplus_{i=-k}^k \Alg(\PMC,i);
\]
the integer $i$ corresponds to a choice of $\SpinC$-structure on $F$.
Third, the bordered algebras are homologically smooth (see
Definition~\ref{def:homol-smooth}):
\begin{proposition}\label{prop:Alg-is-hom-smooth}
  For any pointed matched circle $\PMC$ and integer $i$, the algebra
  $\Alg(\PMC,i)$ is homologically smooth and proper.
\end{proposition}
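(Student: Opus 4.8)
The plan is to check the two halves of Definition~\ref{def:homol-smooth} separately, with properness essentially free and all the work in smoothness. \emph{Properness} is immediate: for a pointed matched circle $\PMC$ representing a genus-$k$ surface, $\Alg(\PMC)$ has a finite $\Field$-basis of strand diagrams by construction, so $\Alg(\PMC,i)$ is finite-dimensional over $\Field$ and hence has finite-dimensional homology. (The extreme cases are degenerate: if $\PMC$ represents $S^2$ then $\Alg(\PMC,i)=\Field$, and the outermost summands $\Alg(\PMC,\pm k)$ are likewise copies of $\Field$, which are trivially smooth and proper.)

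For \emph{smoothness} we must show the diagonal bimodule $\Alg(\PMC,i)$ is perfect as an $(\Alg(\PMC,i),\Alg(\PMC,i))$-bimodule. Here I would invoke the explicit strands description of $\Alg(\PMC)$ from \cite{LOT1,LOT2}: it is a graded algebra (with a differential, in general) over the semisimple idempotent ring $\Idem$---a finite product of copies of $\Field$---generated by the Reeb chords, with all generators of length bounded in terms of $k$. The key step is to produce a finite-length resolution
\[
0 \longrightarrow P_N \longrightarrow \cdots \longrightarrow P_1 \longrightarrow P_0 \longrightarrow \Alg(\PMC,i) \longrightarrow 0
\]
of the diagonal by finite-rank free (equivalently, finitely generated projective) bimodules. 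This is exactly the ``Koszul resolution'' promised in the introduction for use in Section~\ref{sec:HF-applications}: one writes down the Koszul-type complex attached to the quadratic data of the strands algebra and checks that it resolves the diagonal, the boundedness of chord lengths forcing the length $N$ to be finite. Equivalently, one may argue that $\Alg(\PMC,i)$ has finite global dimension---that each simple $\Idem$-module admits a finite projective resolution, via a recursion on chord length that terminates because lengths are bounded---whence perfection of the diagonal bimodule follows formally. Either route shows $\Alg(\PMC,i)$ is homologically smooth, and with properness this gives the proposition.

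The main obstacle is this smoothness step, and within it its \emph{uniformity}: one needs exactness of the candidate Koszul complex (equivalently, finiteness of global dimension) for every pointed matched circle and every $i$, not merely in small examples. This is a combinatorial statement about how strand diagrams factor in $\Alg(\PMC)$, and it is where the detailed structure of the bordered algebras---and the minimal resolution of their diagonal bimodules developed in the bordered Floer literature---does the work; properness, by contrast, uses nothing beyond finite-dimensionality over $\Field$.
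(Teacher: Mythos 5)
Your properness observation is exactly the paper's. But the smoothness half as you've sketched it has two genuine gaps, one conceptual and one structural, and the paper's actual route is different.

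First, the Koszul resolution ``attached to the quadratic data'' does not exist in the generality you need. The algebra $\Alg(\PMC,i)$ is quadratic with vanishing differential only for the extreme $\SpinC$-structures $i=\pm(k-1)$ (and trivially for $i = \pm k$); for the middle strands the differential is nonzero and the relations are not quadratic, so the classical Koszul complex you're gesturing at isn't available. The paper is careful about this: the Koszul resolution appears only later, in Corollary~\ref{cor:Hoch-coh-small-models}, and only for the extreme summand. For the proposition at hand, the paper instead takes $M = \Alg(\PMC,i)\otimes_\Ground\bAlg\otimes_\Ground\Alg(\PMC,i)$ where $\bAlg = \Hom_\Ground(\Alg(\PMC,-i),\Ground)$, equipped with a chord-twisted differential; this is the modulification of the type $\DD$ structure $\SmallBar$ from \cite[Section 5.4]{LOT11:HomPair}, and the quasi-isomorphism $M \simeq \Alg(\PMC,i)$ is \cite[Proposition 5.13]{LOT11:HomPair}. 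Your proposal says one ``checks'' that the complex resolves the diagonal, but this check is exactly the nontrivial combinatorics that the citation encapsulates; neither of your two routes (Koszul exactness, nor a recursion on chord length to bound global dimension) supplies it, and the second is in any case not well posed for a dg algebra with differential.

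Second, even granting a small model $M$ quasi-isomorphic to the diagonal, the paper's definition of perfection requires exhibiting a \emph{finite cell retract}, not merely a bounded complex of finitely generated projectives. Because $M$ is a tensor product over the idempotent ring $\Ground$, its underlying bimodule is projective but not free, so the paper must do real work here: it shows $M$ is a retract of the free bimodule $N = \Alg(\PMC,i)\otimes_\Field\bAlg\otimes_\Field\Alg(\PMC,i)$ via the ``consistent triple'' projection, and then builds a finite filtration of $N$ with free subquotients using a partial order on basis elements determined by support in $(\ZZ_{\geq 0})^{4k-1}$ and crossing number. That filtration argument---which is where the differential-decreasing structure of the chord action gets used---is entirely absent from your sketch and is the substantive content of the proof.
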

\begin{proof}
  It is obvious that $\Alg(\PMC,i)$ is homologically proper, since the algebra
  $\Alg(\PMC,i)$ is itself finite-dimensional. The fact that it is
  homologically smooth
  follows from~\cite[Proposition 5.13]{LOT11:HomPair}. Fix a pointed
  matched circle $\PMC$ and let $\Ground$ be the subalgebra of
  idempotents in $\Alg(\PMC,i)$. Let
  $\bAlg=\Hom_\Ground(\Alg(\PMC,-i),\Ground)$ and let 
  \[
  M=\Alg(\PMC,i)\otimes_\Ground\bAlg\otimes_\Ground\Alg(\PMC,i).
  \]
  View $M$ as an $(\Alg(\PMC,i),\Alg(\PMC,i))$-bimodule in the obvious
  way. 
  Let $\Chord(\PMC)$ denote the set of connected chords in
  $\PMC$. Given a chord $\xi\in\Chord(\PMC)$ there is an associated
  algebra element $a(\xi)\in\Alg(\PMC)$.
  Endow $M$ with a differential defined by
  \begin{multline*}
    d(x\otimes\phi\otimes y)= \sum_{\xi\in\Chord(\PMC)} \bigl(x\cdot
    a(\xi)\bigr)\otimes \bigl(a(\xi)\cdot \phi\bigr)\otimes y
    +\sum_{\xi\in\Chord(\PMC)} x\otimes \bigl(\phi\cdot a(\xi)\bigr)
    \otimes \bigl(a(\xi)\cdot y\bigr)\\
    +d(x)\otimes\phi\otimes y+x\otimes \bar{d}(\phi) \otimes
    y+x\otimes\phi\otimes d(y).
  \end{multline*}
  (The module $M$ is the modulification of the type \DD\ structure
  $\SmallBar$ from \cite[Section 5.4]{LOT11:HomPair}.)

  It follows from \cite[Proposition 5.13]{LOT11:HomPair} that $M$ is
  quasi-isomorphic to $\Alg(\PMC,i)$. It remains to verify that $M$ is a
  finite cell retract. Let 
  \[
  N=\Alg(\PMC,i)\otimes_\Field\bAlg\otimes_\Field\Alg(\PMC,i),
  \]
  with differential defined by the same formula as the differential on
  $M$. 

  We verify that $M$ is a retract of $N$.  Let $\{a_i\}$ be the
  standard basis for $\Alg(\PMC,i)$, and let $\{a_j^*\}$ be the dual
  basis for $\bAlg$. Each $a_i$ has a left idempotent and a right
  idempotent, i.e., indecomposable idempotents $I$ and $J$
  (respectively) so that $I\cdot a_i\cdot J=a_i$.  Call an element
  $a_i\otimes_\Field a_j^*\otimes_\Field a_k$ of $N$ \emph{consistent}
  if the right idempotent of $a_i$ is the same as the left idempotent
  of $a_j^*$ and the right idempotent of $a_j^*$ is the same as the
  left idempotent of $a_k$.  The span (over $\Field$) of the set of
  consistent elements of $N$ is a submodule of $N$, and is isomorphic
  to $M$. There is an obvious retraction $r\co N\to M$ which sends any
  inconsistent basic element to zero; equivalently, $r$ is defined by
  \[
  r(x\otimes_\Field \phi\otimes_\Field y)=x\otimes_\Ground
  \phi\otimes_\Ground y.
  \]

  Finally, we verify that $N$ is a finite cell bimodule. Recall that
  each basic algebra element $a_i$ of $\Alg(\PMC,i)$ has a support
  $\supp(a_i)$ in $(\ZZ_{\geq 0})^{4k-1}$.  Note that if
  $a(\xi)a_i=a_j$ or $a_ia(\xi)=a_j$ for some nontrivial chord $\xi$
  then $\supp(a_i)<\supp(a_j)$. Consequently, if $a(\xi)a_i^*=a_j^*$
  or $a_i^*a(\xi)=a_j^*$ for some nontrivial chord $\xi$ then
  $\supp(a_i)>\supp(a_j)$.

  Define a partial order on $\{a_i\}$ by declaring that $a_i<a_j$ if
  either:
  \begin{itemize}
  \item $\supp(a_i)<\supp(a_j)$ or
  \item $\supp(a_i)=\supp(a_j)$ and $a_i$ has more crossings then
    $a_j$.
  \end{itemize}
  There is a corresponding partial order on $\bAlg$ defined by
  $a_i^*<a_j^*$ if and only if $a_i<a_j$. From the observations of
  the previous paragraph, it is immediate that:
  \begin{itemize}
  \item If $a(\xi)a_i^*=a_j^*$ or $a_i^*a(\xi)=a_j^*$ then $a_i^*>a_j^*$.
  \item If $\overline{d}(a_i^*)=a_j^*$ then $a_i^*>a_j^*$.
  \end{itemize}
  Choose a total ordering of the $a_i$ compatible with the partial
  ordering $<$; re-indexing, we may assume this ordering is
  $a_1,a_2,\dots,a_\ell$. Let $N_n$ be the sub-bimodule of $N$ generated
  by $a_1,\dots,a_n$. It follows that $d(N_n)\subset N_n$;
  $N_{n-1}\subset N_{n}$; and $N_n/N_{n-1}=\Alg(\PMC,i)\otimes_\Field
  a_n\otimes_\Field\Alg(\PMC,i)$. Thus, the sequence of submodules
  $0\subset N_1\subset N_2\subset\dots\subset N_\ell=N$ present $N$ as a
  finite cell bimodule. The result follows.
\end{proof}

\begin{remark}
  It is not hard to show that the modulification of any
  finite-dimensional, bounded type \DD\ bimodule is a finite cell
  retract.
\end{remark}

\subsubsection{Bimodules associated to \texorpdfstring{$3$}{3}-dimensional cobordisms}
By an \emph{arced cobordism from $F(\PMC_1)$ to $F(\PMC_2)$ } we mean a
$3$-dimensional cobordism $Y$ from $F(\PMC_1)$ to $F(\PMC_2)$ together
with a framed arc (or $[0,1]\times \bD^2$) connecting the
distinguished disks in $F(\PMC_1)$ and $F(\PMC_2)$.  Bordered Floer
homology associates an $\Ainf$ $(\Alg(\PMC_1),\Alg(\PMC_2))$ bimodule
$\CFDAa(Y)$ to an arced cobordism from $F(\PMC_1)$ to $F(\PMC_2)$. As
with the algebra, the definition of $\CFDAa(Y)$ will be largely
unimportant for us; but we will need the following properties of it.
\begin{enumerate}
\item In the case that both boundary components of $Y$ are copies of
  $S^2$, $\CFDAa(Y)$, which is a bimodule over $\Alg(S^2)=\Field$, is
  quasi-isomorphic to $\CFa(Y\cup_\bdy(B^3\amalg B^3))$, the chain
  complex computing the (ordinary, closed) Heegaard Floer invariant $\HFa$ of
  the $3$-manifold obtained by capping off the boundary components of
  $Y$.
\item The invariant $\CFDAa(Y)$ is not associated directly to $Y$, but
  rather to a combinatorial representation for $Y$ called an
  \emph{arced, bordered Heegaard diagram} (see~\cite[Definition
  5.4]{LOT2}). $\CFDAa(Y)$ is an $\Ainf$ bimodule, and is well-defined
  up to $\Ainf$ homotopy equivalence~\cite[Theorem 10]{LOT2}.
\item Although $\CFDAa(Y)$ is an $\Ainf$-bimodule, it is $\Ainf$
  homotopy equivalent to an honest \dg bimodule. (This can be proved
  either topologically or algebraically. For the topological proof,
  one can choose a Heegaard diagram for $Y$ so that computing $\CFDAa$
  with respect to this diagram gives an honest \dg bimodule;
  compare~\cite[Chapter 8]{LOT1}. The algebraic proof holds for
  $\Ainf$ bimodules quite generally; see, for instance,~\cite[Section
  2.4.1]{LOT2}.)

  In particular, this point allows us to apply
  Theorem~\ref{thm:hoch-local}, which was proved in the context of \dg
  modules, to $\CFDAa(Y)$.
\item Gluing $3$-dimensional cobordisms corresponds to tensoring
  bimodules:
  \begin{citethm}\label{thm:pairing}\cite[Theorem 12]{LOT2}
    Let $Y_{12}$ be an arced cobordism from $F(\PMC_1)$ to $F(\PMC_2)$
    and $Y_{23}$ an arced cobordism from $F(\PMC_2)$ to
    $F(\PMC_3)$. Then
    \[
    \CFDAa(Y_1\cup_{F(\PMC_2)}Y_2)\simeq
    \CFDAa(Y_1)\DTP_{\Alg(\PMC_2)}\CFDAa(Y_2).
    \]
  \end{citethm}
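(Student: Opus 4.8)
The plan is to reduce to a statement about bordered Heegaard diagrams and then run a neck-stretching argument, in the spirit of the pairing theorem for type $A$ modules and type $D$ structures in \cite{LOT1}. First I would represent $Y_{12}$ and $Y_{23}$ by arced bordered Heegaard diagrams $\Heegaard_{12}$ and $\Heegaard_{23}$, arranged so that gluing them along their common $\PMC_2$-boundary yields a diagram $\Heegaard_{12}\cup_{\PMC_2}\Heegaard_{23}$ representing $Y_{12}\cup_{F(\PMC_2)}Y_{23}$. By invariance of $\CFDAa$ under change of arced Heegaard diagram (property~(2) in the list above), it then suffices to prove
\[
\CFDAa(\Heegaard_{12}\cup_{\PMC_2}\Heegaard_{23})\simeq \CFDAa(\Heegaard_{12})\boxtimes_{\Alg(\PMC_2)}\CFDAa(\Heegaard_{23}),
\]
where the right-hand side is the box tensor product pairing the type $A$ side of $\CFDAa(\Heegaard_{12})$ with the type $D$ side of $\CFDAa(\Heegaard_{23})$; this box tensor product is a standard model for the derived tensor product $\DTP_{\Alg(\PMC_2)}$, and property~(3) lets one then replace everything by honest \dg bimodules. (One could instead try to bootstrap from the module pairing theorem of \cite{LOT1} by capping off the outer boundary components with standard pieces, but some form of the analytic input seems unavoidable.)

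Both sides have the same underlying module: generators are pairs $(\x_1,\x_2)$ of generators of the two diagrams with complementary idempotents along $\PMC_2$. The $\Ainf$ structure maps on the left count rigid holomorphic curves in $\Heegaard_{12}\cup_{\PMC_2}\Heegaard_{23}$, whereas the box tensor product counts \emph{matched pairs} of curves---a rigid curve in the completion of $\Heegaard_{12}$ and one in $\Heegaard_{23}$, asymptotic along their $\PMC_2$-cylindrical ends to the same sequence of Reeb chords at matching heights. To compare them I would introduce a one-parameter family of admissible almost complex structures $J_T$ on $\Heegaard_{12}\cup_{\PMC_2}\Heegaard_{23}$ that stretch the neck along the circle separating the two pieces. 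A compactness argument of Symplectic Field Theory type shows that, as $T\to\infty$, the moduli spaces of $J_T$-holomorphic curves converge to moduli spaces of matched pairs; a gluing theorem shows that every rigid matched pair arises from a unique $J_T$-curve once $T$ is large. Together these give, for $T\gg 0$, a bijection between the moduli spaces counted on the two sides, compatible with homology classes, Maslov indices, the Reeb-chord asymptotics at the outer boundaries $\PMC_1$ and $\PMC_3$, and the codimension-one degenerations of one-dimensional moduli spaces. Hence the two $\Ainf$ bimodule structures literally coincide for $J_T$ with $T$ large, and invariance of $\CFDAa$ under the choices of diagram, almost complex structure, and neck length promotes this to the asserted $\Ainf$ homotopy equivalence.

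The main obstacle is the analytic heart of the middle step: making the ``$J_T$-curves $\leftrightarrow$ matched pairs'' correspondence precise in both directions. On the compactness side one must rule out unwanted limits (bubbling of closed curves or boundary degenerations, and components sinking entirely into the stretching region) and verify that a limiting broken configuration is exactly a matched pair carrying the correct combinatorial data; on the gluing side one needs transversality for the broken configurations together with an index count showing the gluing is rigid. There is also genuine bookkeeping: propagating idempotent constraints across the gluing, distinguishing provincial Reeb chords from those reaching the outer boundaries, and arranging the Reeb chords to be generic (distinct heights) so that the asymptotic matching condition is unambiguous. Once this analytic dictionary is in hand, identifying the algebraic structures on the two sides is essentially formal.
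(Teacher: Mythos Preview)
This statement is a \emph{cited} theorem in the paper: it is recorded as \cite[Theorem~12]{LOT2} and no proof is given here. The paper treats it as a black box input to the applications in Section~\ref{sec:HF-applications}. So there is no ``paper's own proof'' to compare against.

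That said, your sketch is a reasonable outline of how one could attack such a pairing theorem, but it does not match the approach actually taken in \cite{LOT1,LOT2}. The key analytic idea there is \emph{time dilation}, not neck stretching: one introduces a parameter $T$ that rescales the $\RR$-coordinate of the holomorphic-curve problem on one side relative to the other, and studies the limit $T\to\infty$. In that limit the curves on the type~$D$ side become trivial strips punctuated by Reeb chords, which is what produces the box-tensor-product description. A direct neck-stretching/SFT compactness argument of the kind you describe runs into difficulties because the relevant curves have boundary on Lagrangians that meet the separating hypersurface, so the standard SFT compactness package does not apply cleanly; the time-dilation trick sidesteps this. Your remark that ``some form of the analytic input seems unavoidable'' is correct, but the specific form you propose is not the one that is known to work.
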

\item Roughly, self-gluing a $3$-dimensional cobordism corresponds to
  Hochschild homology. More accurately, when one self-glues an arced
  cobordism, the arc gives rise to a knot, and the Hochschild homology
  takes this knot into account:
  \begin{citethm}\label{thm:bordered-hochschlid}\cite[Theorem
    14]{LOT2} Let $Y$ be an arced cobordism from $F(\PMC)$ to itself. Let
    $Y_0$ be the result of gluing the two boundary components of $Y$
    together (via the identity map) and let $\gamma$ be the framed knot in
    $Y_0$ coming from the arc in $Y$.  Let $(Y^\circ,K)$ be the open
    book obtained by performing surgery on $\gamma\subset
    Y_0$. Then 
    \[
    \HFKa(Y^\circ,K)\cong\HH_*(\CFDAa(Y)).
    \]
  \end{citethm}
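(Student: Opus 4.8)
The plan is to deduce this from the pairing structure of bordered Heegaard Floer homology; since the statement is \cite[Theorem~14]{LOT2} I only sketch the argument. First I would fix an arced bordered Heegaard diagram $\Heegaard$ representing the arced cobordism $Y$ from $F(\PMC)$ to itself, with both boundary components carrying the pointed matched circle $\PMC$. Gluing the two ends of $\Heegaard$ together along $\PMC$ via the identity produces a closed Heegaard surface; the basepoint of $\PMC$ becomes a pair of basepoints $w,z$, so the glued $\alpha$- and $\beta$-curves together with $w,z$ form a doubly-pointed Heegaard diagram $\Heegaard^\circ$, representing a knot $K'$ in a closed $3$-manifold $Y'$.

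The second step is topological: one must identify $(Y',K')$ with the open book $(Y^\circ,K)$. Analyzing the handle decomposition underlying the self-gluing, self-gluing $Y$ produces $Y_0$ together with the distinguished framed knot $\gamma$ coming from the arc, and the passage to the closed doubly-pointed diagram $\Heegaard^\circ$ implements the surgery on $\gamma$ defining the open book and produces its binding $K$. Consequently $\CFKa(\Heegaard^\circ)$ computes $\HFKa(Y^\circ,K)$.

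The third, and essential, step identifies the Floer complex with a Hochschild complex. Choosing the standard bordered diagram for the identity cobordism $F(\PMC)\times[0,1]$ --- or, more conveniently, a bordered diagram realizing a finite cell resolution of the diagonal $(\Alg(\PMC),\Alg(\PMC))$-bimodule, as constructed in the proof of Proposition~\ref{prop:Alg-is-hom-smooth} --- and stretching the neck of $\Heegaard^\circ$ along the separating circle $\PMC$, one expresses $\CFKa(\Heegaard^\circ)$ as the box tensor product of $\CFDAa(Y)$ with this resolution of the diagonal, with the two outer $\Alg(\PMC)$-factors identified cyclically because the gluing is a self-gluing. By Definition~\ref{def:Hochschild} this is precisely the Hochschild chain complex $\HC(\CFDAa(Y))$, and passing to homology yields $\HFKa(Y^\circ,K)\cong\HH_*(\CFDAa(Y))$.

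I expect the main obstacle to be the analytic content of the last step: one runs the holomorphic-curve degeneration argument behind the pairing theorem (Theorem~\ref{thm:pairing}) in the self-gluing situation, where the two pieces being matched along $\PMC$ coincide, so that the matching of Reeb chords across the neck carries exactly the cyclic symmetry built into the Hochschild complex. One must also track the basepoint on $\PMC$ carefully through the neck-stretching, since it is this basepoint that records the knot $K$ --- and hence forces $\HFKa$ rather than $\HFa$ to appear --- and that pins down the open-book description of $(Y^\circ,K)$.
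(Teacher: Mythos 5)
Your proposal sketches the analytic route to \cite[Theorem~14]{LOT2}: self-glue the arced bordered Heegaard diagram, identify the resulting closed doubly-pointed diagram with the open book $(Y^\circ,K)$, and neck-stretch along the separating circle $\PMC$ to realize $\CFKa$ as a Hochschild complex. That is essentially LOT2's original argument, and your outline of it is sound, including your observation that the cyclic matching of Reeb chords across the self-glued neck is where the real analytic work lies. The present paper, however, does not prove Theorem~\ref{thm:bordered-hochschlid} this way: it cites the statement from \cite{LOT2} and separately re-derives it as a special case of the bordered-sutured self-gluing result (Theorem~\ref{thm:sutured-self-pairing}), whose proof is entirely formal. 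There one rewrites $\HH_*(\BSDAa(Y))$ as $H_*\bigl(\BSDAa(\Id)\DTP_{\Alg(\PMC)\otimes\Alg(\PMC)^{\op}}\BSDAa(Y)\bigr)$ and manipulates derived tensor products using the pairing theorem (Theorem~\ref{thm:sutured-pairing}) together with the identities $\Alg(-\PMC)=\Alg(\PMC)^{\op}$ and $\Alg(\PMC_1\amalg\PMC_2)=\Alg(\PMC_1)\otimes\Alg(\PMC_2)$, arriving at $\SFH(Y^\circ)$; Theorem~\ref{thm:bordered-hochschlid} then drops out via Examples~\ref{eg:sutured-from-arced} and~\ref{eg:bsda-of-arced}. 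The formal route avoids any new holomorphic-curve analysis (the pairing theorem is a black box) and works at the bordered-sutured level of generality the paper needs for its later applications; your route is more self-contained but carries the analytic burden you flag. One small inaccuracy: the finite cell resolution in Proposition~\ref{prop:Alg-is-hom-smooth} is constructed algebraically as the modulification of a type $\DD$ bimodule, not literally as a bordered diagram, though it does correspond to the identity cobordism.
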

\item The grading on $\CFDAa(Y)$ is fairly subtle: it is graded by a
  $G$-set, where $G$ is a non-commutative group. Therefore, the
  Hochschild complex $\HC_*(\CFDAa(Y))$ is not necessarily
  $\ZZ$-graded. To apply Theorem~\ref{thm:hoch-local}, we must
  restrict to cases in which the Hochschild complex is $\ZZ$-graded.
\end{enumerate}

\subsubsection{The bordered-sutured setting}
In \cite{Zarev09:BorSut}, Zarev put bordered Floer homology in a more
general framework, called \emph{bordered sutured Floer homology}. As we will
use this setting below, we recall it now.

\begin{definition}\cite[Definition 1.2]{Zarev09:BorSut}
  A \emph{sutured surface} is a tuple $(F,S_+,S_-)$ where $F$ is a
  surface with boundary and $S_+,S_-$ are codimension-$0$ submanifolds
  of $\bdy F$ so that $S_+\cap S_-=\bdy S_+=\bdy S_-$ and $S_+\cup
  S_-=\bdy F$. We write $\Gamma$ for $S_+\cap S_-$. We require that
  $S_+$ and $S_-$ have no closed components (i.e., circles) and that
  $F$ have no closed components (i.e., closed sub-surfaces).
\end{definition}

There are combinatorial representations, called \emph{arc diagrams},
for sutured surfaces; this is a generalization of the notion of a
pointed matched circle. Given an arc diagram $\PMC$ we write
$\PunctF(\PMC)=(\PunctF(\PMC),S_+(\PMC),S_-(\PMC))$ for the associated
sutured surface.

Pointed matched circles are special cases of arc diagrams. 

\begin{example}
  Given a pointed matched circle $\PMC$, let $D$ denote the
  distinguished disk in $F(\PMC)$. Then
  $\PunctF(\PMC)=F(\PMC)\setminus \interior(D)$.  $S_+(\PMC)$ and
  $S_-(\PMC)$ are connected arcs in $\bdy D$ intersecting at their
  endpoints.
\end{example}

Associated to any arc diagram $\PMC$ is a \dg algebra $\Alg(\PMC)$.
In the special case that $\PMC$ is a pointed matched circle the
bordered Floer algebra $\Alg(\PMC)$ and the bordered-sutured Floer
algebra $\Alg(\PMC)$ are the same.

\begin{definition}\cite[Definition 1.3]{Zarev09:BorSut}
  A \emph{$3$-dimensional sutured cobordism} from $\PunctF(\PMC_L)$ to
  $\PunctF(\PMC_R)$ consists of the following
  data:
  \begin{itemize}
  \item A $3$-manifold with boundary $Y$.
  \item Codimension-$0$ subsets $R_\pm\subset \bdy Y$.
  \item A homeomorphism 
    \[
    (\phi_L\amalg \phi_R)\co
    \bigl(-\PunctF(\PMC_L)\amalg \PunctF(\PMC_R)\bigr)\to
    Y\setminus\bigl(\interior(R_+\cup R_-)\bigr).
    \]
  \end{itemize}
  These data are required to satisfy the following properties:
  \begin{itemize}
  \item $\phi_L(S_+(\PMC_L))\subset R_+$, $\phi_L(S_-(\PMC_L))\subset
    R_-$ $\phi_R(S_+(\PMC_R))\subset R_+$ and
    $\phi_R(S_-(\PMC_R))\subset R_-$.
  \item Neither $R_+$ nor $R_-$ has any closed components.
  \end{itemize}
\end{definition}

Given a sutured cobordism $(Y,R_\pm,\phi_L,\phi_R)$, let $\Gamma$
denote the one-manifold with boundary $R_+\cap R_-$. The curves in
$\Gamma$ are called \emph{sutures}. Orient $\Gamma$ as the boundary of
$R_+$. Then we can reconstruct $R_\pm$ from $\Gamma$ (and vice-versa).

\begin{example}\label{eg:sutured-from-arced}
  Let $Y$ be a $3$-dimensional arced cobordism from $F(\PMC_1)$ to
  $F(\PMC_2)$, with arc $\gamma$. Then $Y\setminus \nbd(\gamma)$ is
  naturally a sutured cobordism as follows. The identification of
  $(-F(\PMC_1))\amalg F(\PMC_2)$ with $\bdy Y$ induces an identification
  of $(-\PunctF(\PMC_1))\amalg \PunctF(\PMC_2)$ with $(\bdy
  Y)\setminus(\nbd(\bdy\gamma))$. Write $\bdy\nbd(\gamma)\cong
  \bD^2\cup[0,1]\times S^1\cup\bD^2$. Regarding $S_\pm(\PMC_i)$ as
  subsets of $\bdy\bD^2=\bdy \PunctF(\PMC_i)$, we may choose the
  identification of $\bdy\nbd(\gamma)$ in such a way that
  $S_+(\PMC_1)$ and $S_+(\PMC_2)$ are the same subset of $\bdy\bD^2$
  (and so $S_-(\PMC_1)$ and $S_-(\PMC_2)$ are also the same subset of
  $\bdy\bD^2$).  Then $R_\pm$ is given by $[0,1]\times S_\pm$.
\end{example}

To each $3$-dimensional sutured cobordism $Y$ from
$\PunctF(\PMC_L)$ to $\PunctF(\PMC_R)$ Zarev associates an
$(\Alg(\PMC_L),\Alg(\PMC_R))$-bimodule $\BSDAa(Y)$.

\begin{example}\label{eg:bsda-of-arced}
  If $Y$ is an arced cobordism and $Y'$ is the associated sutured
  cobordism (see Example~\ref{eg:sutured-from-arced}) then
  $\BSDAa(Y')\cong \CFDAa(Y)$.
\end{example}

\begin{example}
  If $Y$ is a sutured cobordism from $\emptyset$ to $\emptyset$ then
  $Y$ is an ordinary sutured manifold. If moreover
  $\chi(R_+)=\chi(R_-)$ (i.e., $Y$ is \emph{balanced}) then
  $\BSDAa(Y)\cong \SFH(Y)$, Juh\'asz's \emph{sutured Floer homology}
  (see \cite{Juhasz06:Sutured}).
\end{example}

These bimodules satisfy a pairing theorem, analogous to
Theorem~\ref{thm:pairing}:
\begin{citethm}\label{thm:sutured-pairing}\cite[Theorem 8.7]{Zarev09:BorSut}
  Let $Y_{12}$ be a sutured cobordism from $\PunctF(\PMC_1)$ to $\PunctF(\PMC_2)$
  and $Y_{23}$ a sutured cobordism from $\PunctF(\PMC_2)$ to
  $\PunctF(\PMC_3)$. Then
  \[
  \BSDAa(Y_1\cup_{\PunctF(\PMC_2)}Y_2)\simeq
  \BSDAa(Y_1)\DTP_{\Alg(\PMC_2)}\BSDAa(Y_2).
  \]
\end{citethm}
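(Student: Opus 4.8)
The plan is to prove this following the template for pairing theorems in bordered Floer homology, now in Zarev's bordered-sutured framework (this is \cite[Theorem 8.7]{Zarev09:BorSut}; the sketch below is only for orientation). First I would fix bordered-sutured Heegaard diagrams $\HD_{12}$ and $\HD_{23}$ for $Y_{12}$ and $Y_{23}$ whose parametrizations of the common surface $\PunctF(\PMC_2)$ agree, up to the orientation reversal built into gluing, and arrange that $\HD_{12}$ and $\HD_{23}$ are provincially admissible. Gluing these diagrams along $\PunctF(\PMC_2)$ produces a bordered-sutured Heegaard diagram $\HD$ for $Y_1 \cup_{\PunctF(\PMC_2)} Y_2$, so that $\BSDAa(\HD)$ computes $\BSDAa(Y_1\cup_{\PunctF(\PMC_2)}Y_2)$ up to homotopy equivalence; one also checks $\HD$ itself is admissible.

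The main step is a neck-stretching degeneration of $\HD$ along the separating hypersurface coming from $\PunctF(\PMC_2)$. As the neck length tends to infinity, a compactness-and-gluing argument (the ``time dilation'' argument of Lipshitz--Ozsv\'ath--Thurston) shows that the holomorphic curves counted by $\BSDAa(\HD)$ converge to matched pairs: a curve on the $Y_{12}$ side with asymptotics along $\PMC_2$ recorded by a sequence of Reeb chords, a curve on the $Y_{23}$ side whose asymptotics match, and collar pieces in $\PunctF(\PMC_2)\times[0,1]\times\RR$ whose combinatorics are exactly the algebra elements of $\Alg(\PMC_2)$. Since matching these asymptotics is precisely the definition of the box tensor product $\DT$, this identifies the differential on $\BSDAa(\HD)$ with that on $\BSDAa(Y_{12})\DT_{\Alg(\PMC_2)}\BSDAa(Y_{23})$; provincial admissibility of the two halves guarantees that this box tensor product is defined.

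Finally I would pass from $\DT$ to $\DTP$. Replacing $\BSDAa(Y_{12})$ (or $\BSDAa(Y_{23})$) by a homotopy equivalent bounded type-$DA$ bimodule --- possible by homological perturbation, or by choosing a suitable diagram --- makes the box tensor product a model for the derived tensor product over $\Alg(\PMC_2)$, by the standard comparison between box and derived tensor products for type-$D$/$DA$ structures. Chaining the three steps gives the claimed quasi-isomorphism.

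The hard part is the analytic degeneration in the second step: one must show the neck-stretched limit produces \emph{exactly} the matched configurations, with nothing escaping into the collar, and the accompanying gluing theorem must recover every curve in the stretched diagram. This relies on the moduli-space compactness theory for bordered-sutured diagrams together with the Reeb-chord grading on $\Alg(\PMC_2)$, which organizes the limiting broken configurations; by comparison, the admissibility and boundedness bookkeeping in the first and third steps is routine.
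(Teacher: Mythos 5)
The paper does not prove this statement: it appears as a \verb|citethm| environment, i.e.\ a result quoted from Zarev's thesis \cite[Theorem 8.7]{Zarev09:BorSut} without proof, so there is no internal argument to compare yours against. Your sketch is a reasonable high-level account of how the bordered-sutured pairing theorem is established in that reference, following the Lipshitz--Ozsv\'ath--Thurston template (glue provincially admissible diagrams, degenerate the neck along the separating surface via time dilation to identify the limiting curves with matched pairs governed by $\Alg(\PMC_2)$, then pass from the box tensor product to the derived tensor product using boundedness of one factor). One small point worth sharpening in your step three: the reason $\DT$ computes $\DTP$ here is not merely that one can make the box tensor product converge, but that a type-$D$ or type-$DA$ structure over $\Alg(\PMC_2)$ is, on passing to $\Ainf$-modules, already a cofibrant (bar-type) replacement, so once boundedness guarantees convergence the box tensor product is automatically a model for the derived tensor product; the homotopy-equivalence-to-a-bounded-model move you invoke is the standard way of arranging that convergence, not an additional derived-vs-naive comparison. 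In any case, since the paper only cites the result, the correct assessment is that your proof attempt is attempting something the paper does not do, and so cannot be checked against it here.
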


The self-gluing theorem is conceptually clearer in this language. Let
$(Y,R_\pm)$ be a sutured cobordism from $\PunctF(\PMC)$ to itself. Assume
that $\chi(R_+)=\chi(R_-)$. Let $Y^\circ$ be the result of gluing the
two boundary components of $Y$ together (via the identity map) and
$R_\pm^\circ$ the image of $R_\pm$ in $Y^\circ$. Then
$(Y^\circ,R_\pm^\circ)$ is a balanced sutured manifold; the balanced
condition comes from the condition on the Euler characteristic of
$R_\pm$.
\begin{citethm}\label{thm:sutured-self-pairing}
  With notation as above, the sutured Floer homology of
  $(Y^\circ,R_\pm^\circ)$ is given by
  \[
  \SFH(Y^\circ)\cong \HH_*(\BSDAa(Y)).
  \]
\end{citethm}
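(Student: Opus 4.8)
The plan is to prove this exactly as in the bordered case (Theorem~\ref{thm:bordered-hochschlid}, i.e.\ \cite[Theorem~14]{LOT2}), transported into Zarev's bordered-sutured framework. First I would reduce to the case that $\BSDAa(Y)$ is represented by an honest \dg bimodule rather than a genuine $\Ainf$ bimodule; this is harmless since $\SFH$ is a homotopy invariant and $\HH_*$ carries quasi-isomorphisms to isomorphisms, and the reduction is either algebraic (a bar-type construction replacing an $\Ainf$ bimodule by a quasi-isomorphic \dg one, as in \cite[Section~2.4.1]{LOT2}) or geometric (choosing an admissible bordered-sutured Heegaard diagram for $Y$ with respect to which $\BSDAa$ is computed as an honest \dg bimodule, compare \cite[Chapter~8]{LOT1}). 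Having done this, I would fix a bordered-sutured Heegaard diagram $\HD$ for $Y$ whose two parametrized boundary pieces are each identified with the given arc diagram $\PMC$.

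Next I would build a closed-up sutured Heegaard diagram $\HD^\circ$ for $(Y^\circ,R_\pm^\circ)$ by gluing the two parametrized boundary regions of $\HD$ together after inserting a standard "cylindrical" piece, namely the sutured Heegaard diagram for $\PunctF(\PMC)\times[0,1]$ with its two ends identified. The role of the cylindrical piece is that its generators and domains reproduce the bar resolution $\Barop(\Alg(\PMC))$ of the diagonal bimodule; consequently the generators of $\HD^\circ$ are precisely the generators of the Hochschild chain complex $\HC_*(\BSDAa(Y))$ computed with respect to the bar resolution, and, by the degeneration/neck-stretching argument underlying the pairing theorem (Theorem~\ref{thm:sutured-pairing}, \cite[Theorem~8.7]{Zarev09:BorSut}), the differential counting holomorphic curves in $\HD^\circ$ matches the Hochschild differential $\bdy_{\HC}$, including all of the higher $\Ainf$-bar terms. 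On the one hand the resulting chain complex computes $\SFH(Y^\circ)$: the balanced condition $\chi(R_+)=\chi(R_-)$, which self-gluing preserves, is exactly what makes $(Y^\circ,R_\pm^\circ)$ a balanced sutured manifold, so $\SFH$ is defined. On the other hand, by construction the complex is $\HC(\BSDAa(Y))$. Taking homology gives the asserted isomorphism. I would also record that in the special case where $\PMC$ is a pointed matched circle and $Y$ is the sutured cobordism associated, as in Example~\ref{eg:sutured-from-arced}, to an arced cobordism $Y_{\mathrm{arced}}$, the statement can alternatively be deduced from Theorem~\ref{thm:bordered-hochschlid} together with Example~\ref{eg:bsda-of-arced} and Juh\'asz's identification of the sutured Floer homology of a knot complement with knot Floer homology \cite{Juhasz06:Sutured}.

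The main obstacle I expect is the bookkeeping required to identify $\HD^\circ$ with the Hochschild complex on the nose: one must verify admissibility of $\HD^\circ$, check that self-gluing the parametrized boundary regions genuinely produces a sutured Heegaard diagram for $Y^\circ$ with the correct sutures $R_\pm^\circ$, and --- the genuinely delicate point --- match the count of holomorphic curves degenerating along the inserted cylindrical neck with the full Hochschild differential, i.e.\ show that the closed-up moduli spaces see exactly the bar-resolution operations and no spurious boundary degenerations. All of this runs parallel to \cite[Section~2.4.1 and the proof of Theorem~14]{LOT2} and to Zarev's gluing results, so the work lies in checking that those arguments go through verbatim in the sutured setting rather than in any essentially new idea.
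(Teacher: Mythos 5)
Your argument is correct in outline but takes a genuinely different route from the one the paper uses. You propose to rebuild, essentially from scratch, a self-pairing theorem at the level of Heegaard diagrams: insert a ``cylindrical'' piece between the two parametrized boundaries of a diagram for $Y$, argue that the curves crossing this neck realize the bar resolution of the diagonal bimodule, and match the resulting holomorphic curve count with the Hochschild differential by a degeneration argument. This is morally the kind of analysis behind Theorem~\ref{thm:bordered-hochschlid} in~\cite{LOT2}, and as you note yourself the delicate point is getting the neck-stretching to produce \emph{exactly} the bar operations with no spurious boundary degenerations --- that is a significant amount of analytic work that must be carried out carefully, not a verbatim transport.

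The paper's proof avoids all of this. It treats the pairing theorem (Theorem~\ref{thm:sutured-pairing}, \cite[Theorem~8.7]{Zarev09:BorSut}) as a black box and finishes with a short chain of purely algebraic identifications: start from $\HH_*(\BSDAa(Y)) \cong H_*\bigl(\BSDAa(\Id)\DTP_{\Alg(\PMC)\otimes\Alg(\PMC)^{\op}}\BSDAa(Y)\bigr)$ (the definition of Hochschild homology, using that $\BSDAa(\Id)$ is the diagonal bimodule $\Alg(\PMC)$), then use $\Alg(-\PMC)=\Alg(\PMC)^{\op}$ and $\Alg(\PMC_1\amalg\PMC_2)=\Alg(\PMC_1)\otimes_\Field\Alg(\PMC_2)$ to reinterpret this as $H_*\bigl(\BSAa(\Id)\DTP_{\Alg(\PMC\amalg(-\PMC))}\BSDa(Y)\bigr)$, which the pairing theorem identifies with $\SFH(\Id\cup_\bdy Y)=\SFH(Y^\circ)$. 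No new moduli space analysis, no new admissibility checks, no bar-resolution Heegaard diagram. What your approach buys is self-containedness and geometric transparency; what the paper's approach buys is that all the analytic content is outsourced to a single application of the already-proved pairing theorem, which is shorter, more modular, and less error-prone. If you were to pursue your route I would encourage you to at least note that a much quicker deduction from the pairing theorem is available.
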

\begin{proof}
  Let $\Id$ denote the identity sutured cobordism from $\PunctF(\PMC)$ to
  itself. Then $\BSDAa(\Id)$ is the $(\Alg(\PMC),\Alg(\PMC))$-bimodule
  $\Alg(\PMC)$. Let $\BSAa(\Id)$ denote the bordered-sutured invariant
  of $\Id$ viewed as a cobordism from $\emptyset$ to $\PunctF(-\PMC)\amalg
  \PunctF(\PMC)$ and let $\BSDa(Y)$ denote the bordered-sutured invariant of
  $Y$ viewed as a cobordism from $\PunctF(-\PMC)\amalg \PunctF(\PMC)$ to
  $\emptyset$. Recall that $\Alg(-\PMC)=\Alg(\PMC)^\op$ and
  $\Alg(\PMC_1\amalg\PMC_2)=\Alg(\PMC_1)\otimes_\Field\Alg(\PMC_2)$.
  We have
  \begin{align*}
    \HH_*(\BSDAa(Y))&\cong
    H_*(\BSDAa(\Id)\DTP_{\Alg(\PMC)\otimes\Alg(\PMC)^\op}\BSDAa(Y))\\
    &\cong
    H_*(\BSAAa(\Id)\DTP_{\Alg(\PMC)\otimes\Alg(-\PMC)}(\BSDDa(\Id)\otimes \BSDAa(\Id)) \DTP_{\Alg(\PMC)\otimes\Alg(\PMC)^\op} \BSDAa(Y))\\
    &\cong
    H_*\bigl(\BSAa(\Id)\DTP_{\Alg(\PMC\amalg (-\PMC))}\BSDa(Y)\bigr)\\
    &\cong 
    \SFH(\Id\cup_\bdy Y)=\SFH(Y^\circ).
  \end{align*}
  Here, the first isomorphism is the definition of Hochschild homology.
  The remaining isomorphisms use Theorem~\ref{thm:sutured-pairing};
  the second also uses 
  the fact that in bordered-sutured Floer homology, disjoint
  union corresponds to tensor product over $\Field$, and the third
  uses the fact that $\BSAa(M)$ is simply $\BSAAa(M)$ viewed as a
  module over $\Alg(\PMC\amalg (-\PMC))$.
\end{proof}

\begin{example}
  Suppose that $Y$ is an arced cobordism inducing a sutured manifold
  $Y'$ as in Example~\ref{eg:sutured-from-arced}. Then
  $\SFH((Y')^\circ)\cong \HFKa(Y^\circ,K)$, and $\BSDAa(Y')\cong
  \CFDAa(Y)$ (Example~\ref{eg:bsda-of-arced}), so
  Theorem~\ref{thm:sutured-self-pairing} recovers
  Theorem~\ref{thm:bordered-hochschlid}.
\end{example}

\begin{proposition}\label{prop:Alg-is-hom-smooth-borsut}
  For any arc diagram $\PMC$ the algebra $\Alg(\PMC)$ is homologically
  smooth.
\end{proposition}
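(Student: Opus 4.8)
The plan is to adapt the proof of Proposition~\ref{prop:Alg-is-hom-smooth} to an arbitrary arc diagram $\PMC$; only the Koszul-duality input requires genuinely new attention. Homological properness is again automatic, since $\Alg(\PMC)$ is finite-dimensional over $\Field$. For homological smoothness (Definition~\ref{def:homol-smooth}) I must exhibit a resolution of the diagonal bimodule $\Alg(\PMC)$ by a finite cell retract.

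First I would produce the Koszul-type (``cobar'') model of the diagonal. Writing $\Ground$ for the ring of idempotents of $\Alg(\PMC)$ and $\bAlg = \Hom_\Ground(\Alg(-\PMC),\Ground)$, set $M = \Alg(\PMC)\otimes_\Ground \bAlg \otimes_\Ground \Alg(\PMC)$, equipped with the chord-counting differential written out in the proof of Proposition~\ref{prop:Alg-is-hom-smooth} (i.e.\ $M$ is the modulification of the type \DD\ structure $\SmallBar$). I would then invoke the analog of \cite[Proposition 5.13]{LOT11:HomPair} for arc diagrams: the natural map $M \to \Alg(\PMC)$ is a quasi-isomorphism. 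If that statement is not available in the literature in this generality, I would note that the argument of \cite{LOT11:HomPair} is a filtration computation internal to the strands complex using only the combinatorics of strands, idempotents, and the support grading---none of which uses that the underlying one-manifold is a circle---so it carries over verbatim.

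Next I would verify that $M$ is a finite cell retract, following the two sub-steps of the earlier proof word for word. (i) $M$ is a retract of $N = \Alg(\PMC)\otimes_\Field \bAlg \otimes_\Field \Alg(\PMC)$: the ``consistency'' condition on idempotents (the right idempotent of each tensor factor equals the left idempotent of the next) spans a sub-bimodule of $N$ isomorphic to $M$, with retraction killing inconsistent basis elements. (ii) $N$ is a finite cell bimodule: the strands algebra of an arc diagram still carries a support function valued in a finite product of copies of $\ZZ_{\geq 0}$, together with a crossing count, with the property that left or right multiplication by a nontrivial chord strictly increases support; hence on $\bAlg$ it strictly decreases support. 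Choosing a total order on a basis $\{a^*_j\}$ of $\bAlg$ refining the resulting partial order exhibits $N$ as an iterated extension $0 \subset N_1 \subset \cdots \subset N_\ell = N$ with $N_n/N_{n-1} \cong \Alg(\PMC)\otimes_\Field a^*_n \otimes_\Field \Alg(\PMC)$ free of rank $1$. Thus $N$ is a finite cell bimodule and $M$, its retract, is a finite cell retract quasi-isomorphic to $\Alg(\PMC)$, so $\Alg(\PMC)$ is homologically smooth.

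The main obstacle is the Koszul-duality input: knowing that the diagonal bimodule of $\Alg(\PMC)$ is quasi-isomorphic to the explicit cobar bimodule $M$ for general arc diagrams, not just for pointed matched circles. I expect this to be routine, since the proof of \cite[Proposition 5.13]{LOT11:HomPair} never uses circularity of the one-manifold, but it is the one place where a cited or re-proved fact beyond pure bookkeeping is needed. As an alternative that avoids the explicit model entirely, one could take a sufficiently long truncation of the bar resolution $\Barop(\Alg(\PMC))$ and use boundedness of the support grading to see that the truncation is, up to homotopy, a finite cell retract.
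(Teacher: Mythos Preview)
Your proposal is correct and takes essentially the same approach as the paper, which simply states that the proof is the same as the proof of Proposition~\ref{prop:Alg-is-hom-smooth}. You have correctly identified and spelled out the one point requiring attention---that the quasi-isomorphism $M\simeq\Alg(\PMC)$ from \cite[Proposition 5.13]{LOT11:HomPair} carries over to general arc diagrams---and your justification that the argument uses only the strands combinatorics and support grading is on target.
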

\begin{proof}
  The proof is the same as the proof of
  Proposition~\ref{prop:Alg-is-hom-smooth}.
\end{proof}

\subsection{Localization for the cobar complex}\label{sec:loc-bord-cobar}
In order to obtain localization results, we will use special cases of
the following:
\begin{conjecture}\label{conj:HF-pi-formal}
  For any arc diagram $\PMC$ and integer $i$, the algebra
  $\Alg(\PMC,i)$ is $\pi$-formal (Definition~\ref{def:pi-formal}).
\end{conjecture}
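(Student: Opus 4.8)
The plan is to turn $\pi$-formality of $\Alg(\PMC,i)$ --- a statement about \emph{every} bimodule --- into a single concrete computation, and then to attack that computation combinatorially. First, since $\Alg(\PMC,i)$ is homologically smooth and proper (Proposition~\ref{prop:Alg-is-hom-smooth}), Theorem~\ref{thm:representable} says it suffices to check that the distinguished element $1\in\Hom(A^!,A^!)\cong\HH_0(\Alg(\PMC,i),A^!)$ is killed by $d^{2i}$ for every $i\ge 2$, where $A^!$ is the cobar (inverse dualizing) bimodule. So the whole problem reduces to understanding the operations $d^{2i}$ on the single bimodule $A^!$ and showing they annihilate $1$.

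Second, to make $A^!$ and the Hochschild--Tate bicomplex $\HC^\Tate(A^!\DTP A^!)$ tractable, I would replace the bar resolution of the diagonal bimodule by the small Koszul-type resolution $P\to\Alg(\PMC,i)$ foreshadowed at the end of Section~\ref{subsubsec:rapb}, so that $A^!\simeq\Hom_{\Bimod{\Alg(\PMC,i)}}(P,A^e)$ acquires a finite, combinatorially indexed basis (generators labelled by sets of chords in $\PMC$). With such a model the squaring operation $\xi\mapsto\xi^{\otimes 2}$ of Proposition~\ref{prop:Frob} is completely explicit, and each $d^{2i}(1)$ is computed by an hv-sequence in the sense of Remark~\ref{rem:hv}. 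Concretely, $d^4(1)=0$ amounts to producing an element of $\HC(A^!\DTP A^!)$ whose boundary equals $\partial_{\HC(A^!)}$ applied to a chosen lift of $1$, squared, modulo the image of $1+\tau$; and $d^{2i}(1)=0$ for larger $i$ amounts to continuing this into a coherent chain of such primitives. This is the ``fairly concrete, combinatorial problem'' the introduction refers to.

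Third, I would try to dispose of half of these operations cheaply. The strands algebras and the Koszul resolution both visibly lift to $\ZZ$ --- all structure constants lie in $\{0,\pm1\}$ and the resolution generators are idempotent sub-bimodules --- so hypotheses (1) and (2) of Section~\ref{subsec:imapf} hold, and the corollary there gives $d^{2r}=0$ for $r$ even. That leaves only $d^6,d^{10},d^{14},\dots$ to deal with.

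The main obstacle --- and precisely the reason this stays a conjecture rather than a theorem --- is the residual combinatorial problem: building, uniformly in $\PMC$ and $i$, the chain of primitives witnessing $d^{2r}(1)=0$ for odd $r$. Equivalently one wants the Frobenius-type squaring map to lift coherently to a chain map, and (as Remark~\ref{rem:augustus} emphasizes) this cannot be arranged naively and is genuinely delicate, echoing the subtlety in Kaledin's proof of Hodge-to-de Rham degeneration. The realistic targets are the cases where $\Alg(\PMC,i)$ is especially small or where $\HH_*(\Alg(\PMC,i))$ is concentrated in a single degree: $i=\pm k$ (where $\Alg(\PMC,i)=\Field$ and $\pi$-formality is automatic), $i=k-1$, and genus-$2$ surfaces, for which Corollary~\ref{cor:neutral-formal} together with direct inspection should suffice. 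For the general statement I would expect one needs either a strong formality or Koszulity input for these algebras, or an honest stable-homotopy-theoretic argument of the kind hinted at in Remark~\ref{rem:augustus}.
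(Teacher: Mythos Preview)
This statement is a conjecture, not a theorem; the paper does not prove it in general but instead establishes the special cases in Theorems~\ref{thm:HF-pi-formal-extreme} and~\ref{thm:HF-pi-formal-small}. Your strategy outline matches the paper's approach to those special cases almost exactly: reduce via Theorem~\ref{thm:representable} to checking that $d^{2i}(1)=0$ in $\HH_*(A^!)$, replace the bar resolution by the small Koszul resolution (this is Corollary~\ref{cor:Hoch-coh-small-models} in the paper), and then compute the hv-sequence explicitly. The paper carries this out by hand for the extreme weight $i=-k+1$ on the antipodal pointed matched circle, and by computer for the middle weight in genus~$2$.

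Two points in your outline deserve correction. First, your assertion that the strands algebras and their Koszul resolutions ``visibly lift to $\ZZ$'' is stronger than anything the paper establishes: Section~\ref{subsec:imapf} says only that ``it seems likely that the bordered algebras do have integral lifts,'' and the paper never invokes the integral-lift corollary in proving the special cases. So the shortcut killing $d^{2r}$ for even $r$ is plausible but not yet justified. Second, your appeal to Corollary~\ref{cor:neutral-formal} for the small cases is misplaced: that corollary says that \emph{neutral} bimodules over $\Alg(\PMC,i)$ are $\pi$-formal (which is how the paper treats the tube-cutting bimodules in Section~\ref{sec:TC}), but it does not show that $A^!$ is $\pi$-formal, and hence does not show that $\Alg(\PMC,i)$ itself is $\pi$-formal. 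The paper's genus-$2$ argument is instead a direct computation of the hv-sequence, cut off by the observation that $\HH_*(A^!)$ is supported in degrees $0$ through $-3$, so that only $d^4$ and $d^6$ need to be checked by hand (or machine).
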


Any case of Conjecture~\ref{conj:HF-pi-formal} gives a family of
localization results. Note that this conjecture is entirely
combinatorial. Since $\Alg(\PMC,i)$ is homologically smooth
(Proposition~\ref{prop:Alg-is-hom-smooth-borsut}), verifying the
conjecture in any particular case is a finite problem.

\begin{figure}
  \centering
  \includegraphics{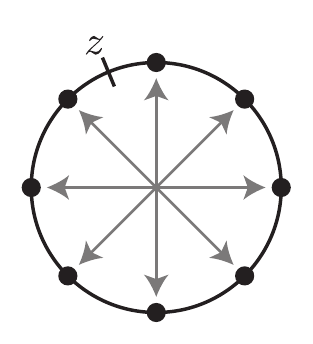}
  \caption{\textbf{The antipodal pointed matched circle.} The genus
    $2$ case is shown; the matching is indicated with gray arrows. See
    also~\cite[Example 3.20]{LOT1}.}
  \label{fig:antipodal-pmc}
\end{figure}

We will prove two special cases of Conjecture~\ref{conj:HF-pi-formal}:
\begin{theorem}
  \label{thm:HF-pi-formal-extreme} Let $\PMC$ be the antipodal pointed
  matched circle (Figure~\ref{fig:antipodal-pmc}) for a surface of
  genus $k$. Then $\Alg(\PMC,-k+1)$ is $\pi$-formal.
\end{theorem}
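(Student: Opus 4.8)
The plan is to verify the criterion of Theorem~\ref{thm:representable}. Since $A := \Alg(\PMC,-k+1)$ is homologically smooth by Proposition~\ref{prop:Alg-is-hom-smooth-borsut} and proper (it is finite-dimensional), that theorem reduces $\pi$-formality of $A$ to a single statement: the class $1 \in \Hom(A^!,A^!) \cong \HH_0(A,A^!)$ must be killed by $d^{2i}$ for every $i \geq 2$, where $A^!$ is the cobar bimodule of Proposition~\ref{prop:cobar}.

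The first step is to make $A$ completely explicit. Label the $4k$ points of the antipodal pointed matched circle as $1,\dots,4k$ cyclically, with the basepoint lying between $4k$ and $1$ and the matched pairs $P_j = \{j,\,j+2k\}$ for $j=1,\dots,2k$; then $\Alg(\PMC,-k+1)$ is the ``one-strand'' summand. Its idempotents are $\iota_{P_j} = [j\to j] + [j+2k \to j+2k]$, its other basic elements are the single strands $[p\to q]$ with $1 \leq p < q \leq 4k$, and multiplication is $[p\to q]\cdot[q\to r] = [p\to r]$, all other products of basic elements vanishing. From this one reads off that the indecomposable generators are exactly the length-one strands $[p\to p+1]$, that $A$ is the path algebra of an explicit quiver --- a $2k$-cycle with every edge doubled except the one meeting the basepoint --- modulo a monomial quadratic ideal, and hence that $A$ is a quadratic monomial (in fact gentle) algebra, in particular Koszul.

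Koszulness makes it possible to resolve the diagonal bimodule by the small Koszul bimodule resolution, whose terms are built over the idempotent ring $\Ground$ from the quadratic dual of $A$; this resolution is finite precisely because $A$ is homologically smooth, and applying $\RHom_{\Bimod{A}}(-,A^e)$ to it produces an equally explicit finite complex representing $A^!$. Since $A^!$ is thus a bounded (perfect) complex, $\HH_{-i}(A,A^!) = \Hom_{D(\Bimod{A})}(\Sigma^{-i}A^!,A^!)$ vanishes as soon as $i$ exceeds the amplitude of $A^!$, so $d^{2i}(1) = 0$ automatically for all but finitely many $i$. For the remaining operations $d^4, d^6, \dots$ I would work directly on the Koszul model, identifying $d^{2i}(1)$ with an explicit class in a low-degree self-extension group of $A^!$ and showing that this class vanishes --- ideally by producing a self-duality of the Koszul data of the antipodal matched circle that annihilates all of them uniformly in $k$.

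The hard part is precisely this last step. The ``tail'' $i \gg 0$ is automatic, but $A$ is not hereditary (it has genuine relations for every $k \geq 1$), so $A^!$ has amplitude at least two and there is always at least $d^4$ to check; moreover the number of nontrivial operations grows with $k$, so a bare-hands calculation becomes unwieldy and one really wants a structural reason --- some symmetry or self-duality special to the antipodal configuration --- forcing every $d^{2i}(1)$ to vanish. Supplying that reason is the crux of the argument.
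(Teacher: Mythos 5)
Your setup is exactly the paper's: reduce via Theorem~\ref{thm:representable} to showing $d^{2i}(1)=0$, use the explicit path-algebra description of $\Alg(\PMC,-k+1)$ as a monomial quadratic (gentle) algebra, observe it is Koszul, and pass to the small Koszul bimodule model of $A^!$ (this is precisely Corollary~\ref{cor:Hoch-coh-small-models}). Your remark that $\HH_{-i}(A,A^!)$ vanishes for $i$ past the amplitude of $A^!$ is correct, though the paper does not need it. But you stop at exactly the point where the proof begins: you say ``supplying that reason is the crux of the argument'' and do not supply it, so the proposal has a genuine gap.

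The missing structural ingredient is what the paper calls the \emph{vanishing phenomenon}: because $\Alg$ and its quadratic dual $\Blg$ are cut out by complementary monomial quadratic relations, for any generators $\xi,\eta\in\{a_i,b_i,c\}$ one has $\xi\eta\neq 0 \Rightarrow \eta'\xi'=0$. With the Koszul model in hand, one then carries out the hv-sequence computation of Remark~\ref{rem:hv} directly on the element $(1|1|1|1)$ in $\HC_*(\Alg^!\DTP\Alg^!)$. The vanishing phenomenon kills the terms that would obstruct factoring $\bdy$ through $(1+\tau)$, and a short bookkeeping argument shows the sequence reaches $0$ after four applications of $\bdy$ --- independently of $k$. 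So your worry that ``the number of nontrivial operations grows with $k$'' and that one therefore needs a self-duality is misplaced: the same fixed-length, uniformly-in-$k$ hand computation, enabled by the monomial structure, does the whole job. (A symmetry of the antipodal matched circle, $\PMC\cong\PMC_*$, does appear in the paper, but in the proof of Theorem~\ref{thm:HF-pi-formal-small} to handle $(k,i)=(2,1)$, not here.)
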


\begin{theorem}
  \label{thm:HF-pi-formal-small} Let $\PMC$ be the antipodal pointed
  matched circle for a surface of genus $k\leq 2$. Then for any $i$,
  $\Alg(\PMC,i)$ is $\pi$-formal.
\end{theorem}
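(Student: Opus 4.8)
The plan is to reduce the theorem to a single finite computation and then carry that computation out using the Koszul resolution. First I would dispose of all but one summand $\Alg(\PMC,i)$. For $\PMC$ of genus $k \le 2$ the summands with $|i| = k$ have $\Alg(\PMC,i) \cong \Field$, which is trivially $\pi$-formal. For genus $k = 1$ the only remaining summand is $\Alg(\PMC,0) = \Alg(\PMC,-k+1)$, which is $\pi$-formal by Theorem~\ref{thm:HF-pi-formal-extreme}. For genus $k = 2$ the summand with $i = -1$ is again covered by Theorem~\ref{thm:HF-pi-formal-extreme}, and the summand with $i = 1$ follows from the reflection symmetry of the antipodal matched circle (which identifies $\Alg(\PMC,1)$ with $\Alg(\PMC,-1)^{\op}$), together with the observation that the Hochschild--Tate bicomplex of $A$ with an $(A,A)$-bimodule is isomorphic to that of $A^{\op}$ with the side-swapped bimodule, so that $\pi$-formality is insensitive to replacing an algebra by its opposite. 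Thus everything reduces to showing that $A := \Alg(\PMC,0)$ is $\pi$-formal, where $\PMC$ is the genus-$2$ antipodal pointed matched circle.

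For this, $A$ is homologically smooth and proper (Proposition~\ref{prop:Alg-is-hom-smooth}), so Theorem~\ref{thm:representable} applies: it suffices to show that $d^{2i}(1) = 0$ in $\HH_{-i}(A,A^!)$ for every $i \ge 2$, where $A^!$ is the cobar bimodule of Proposition~\ref{prop:cobar} and $1 \in \Hom(A^!,A^!) \cong \HH_0(A,A^!)$. I would make this concrete by writing $A$ out from the combinatorial definition --- for genus $2$ and $i = 0$ it has six minimal idempotents, one for each two-element subset of the four matched pairs --- constructing the small Koszul resolution $R \to A$ of the diagonal bimodule mentioned after Proposition~\ref{prop:cobar}, setting $A^! \simeq \Hom_{\Bimod{A}}(R,A^e)$, and computing the Hochschild complex $\HC(A,A^!)$ from the resulting finite reduced complex built from $A^!$ and $R$ as in Definition~\ref{def:Hochschild}.

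The aim of the computation is to establish that $\HH_j(A,A^!) = 0$ for all $j \le -2$. Granting this, there is no nonzero target for any operation $d^{2i}\co \HH_0(A,A^!) \to \HH_{-i}(A,A^!)$ with $i \ge 2$, so $d^{2i}(1) = 0$ for degree reasons and $A$ is $\pi$-formal by Theorem~\ref{thm:representable}. Should the negative Hochschild homology of $A^!$ fail to vanish outright, the fallback is to first check that the operations $d^{2i}$ vanish on $\HH_*(A)$ itself and then exhibit $A^!$ as a neutral bimodule (Definition~\ref{def:neutral}), so that $\pi$-formality follows from Proposition~\ref{prop:neutral-d2i}; but for genus $\le 2$ I expect the cleaner vanishing statement to hold.

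The main obstacle is the explicit computation: building and verifying the Koszul resolution for the genus-$2$ algebra $\Alg(\PMC,0)$, computing the cobar bimodule $A^!$ and the Hochschild differential on the associated finite complex, and keeping track of the internal $\ZZ$-grading carefully enough to pin down the support of $\HH_*(A,A^!)$. One must also confirm at the outset that in the $i = 0$ summand the relevant Hochschild complexes are genuinely $\ZZ$-graded (rather than graded by a noncommutative $G$-set), so that the degree-counting argument above is meaningful; this holds for the antipodal matched circle but should be checked.
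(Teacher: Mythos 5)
Your reductions are largely sound, though you take a slightly different route for $(k,i)=(2,1)$ than the paper (you go through the opposite algebra via $\Alg(-\PMC,-i)\cong\Alg(\PMC,i)^{\op}$ and $\PMC\cong-\PMC$, whereas the paper quotes $\Alg(\PMC,i)\simeq\Alg(\PMC_*,-i)$ from \cite[Theorem~13]{LOT11:HomPair} together with $\PMC=\PMC_*$ for the antipodal matched circle); both reduce $(2,1)$ to $(2,-1)$, and both require a short check that $\pi$-formality is preserved, so this is fine. The real problem is with your plan for the one remaining case $A=\Alg(\PMC,0)$ at genus $2$.

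Your primary strategy --- show $\HH_j(A,A^!)=0$ for all $j\le -2$ so that every $d^{2i}$, $i\ge 2$, vanishes for degree reasons --- does not work, because the predicted vanishing is simply false. The paper's computation of $\HH_*(A,A^!)$ from exactly the Koszul model you describe gives $\HH_0=\Field$, $\HH_{-1}=\Field^4$, $\HH_{-2}=\Field^{10}$, $\HH_{-3}=\Field$, and only then zero. The grading argument therefore disposes of $d^{2i}$ only for $i>3$; the operations $d^4\co\HH_0(A,A^!)\to\HH_{-2}(A,A^!)$ and $d^6\co\HH_0(A,A^!)\to\HH_{-3}(A,A^!)$ have nonzero targets and have to be shown to kill $1$ directly. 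This is the genuinely hard part of the theorem, and the paper handles it by an explicit computer search for the chain-level lifts $(e_1,e_2,\dots)$ in the Hochschild--Tate bicomplex (including corrections $e_2=e_2'+x$ and $e_4=e_4'+y$ to keep the iterated lift in the image of $1+\tau$). You anticipated the possibility that the vanishing might fail, which is good, but the fallback you offer --- exhibit $A^!$ as a neutral bimodule and invoke Proposition~\ref{prop:neutral-d2i} --- is not something the paper verifies and is not obviously true; with $\HH_*(A^!)$ supported across four adjacent degrees there is no evident neutralizing class, and proving neutrality would likely be no easier than the direct computation of $d^4(1)$ and $d^6(1)$. (Note also that the first half of your fallback, the vanishing of $d^{2i}$ on $\HH_*(A)$ itself, comes for free from Proposition~\ref{prop:Borromean} since $\HH_*(\Alg(\PMC,i))$ is concentrated in a single degree --- you don't need to "check" it --- but the neutrality of $A^!$ remains the gap.)
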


We start by proving Theorem~\ref{thm:HF-pi-formal-extreme}, but first
recall some facts about the algebra $\Alg=\Alg(\PMC,-k+1)$. The
differential on $\Alg$ vanishes; and $\Alg$ has a simple description
as a path algebra with relations:
\[
\Alg=\Biggl(
  \xymatrix{ \iota_1\ar@/^/[r]^{a_1}\ar@/_/[r]_{b_1} &
    \iota_2\ar@/^/[r]^{a_2}\ar@/_/[r]_{b_2} &
    \cdots\ar@/^/[r]^{a_{2k-1}}\ar@/_/[r]_{b_{2k-1}} &
    \iota_{2k}\ar@/^3pc/[lll]_c
  }\bigg/a_ib_{i+1}=b_ia_{i+1}=b_{2k-1}c=ca_1=0\Biggr).
\]
The algebra $\Alg$ is quadratic. Its quadratic dual is given by
\[
\Blg=\Biggl(\xymatrix{
\iota_1\ar@/_3pc/[rrr]_\cprime &
\iota_2\ar@/_/[l]_{\aprime_1}\ar@/^/[l]^{\bprime_1} &
\cdots\ar@/_/[l]_{\aprime_{2}}\ar@/^/[l]^{\bprime_{2}} & 
\iota_{2k}
\ar@/_/[l]_{\aprime_{2k-1}}\ar@/^/[l]^{\bprime_{2k-1}}
}\bigg/\aprime_i\aprime_{i+1}=\bprime_i\bprime_{i+1}=\cprime\aprime_{2k-1}=\bprime_1\cprime=0\Biggr).
\]
(In fact, $\Alg$ and $\Blg$ are isomorphic, but it will be clearer to
view them as distinct.)

The following is essentially a special case of results from~\cite{LOT11:HomPair}:
\begin{proposition}
  The algebra $\Alg$ is Koszul (over its subalgebra $\Ground$ of idempotents).
\end{proposition}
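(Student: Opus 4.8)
The plan is to recognize $\Alg$ as a quadratic \emph{monomial} algebra over the semisimple ring $\Ground$ and to deduce Koszulity from the classical criterion for monomial algebras. First I would make the path-length grading explicit: assign degree $1$ to each arrow $a_i$, $b_i$, $c$, so that $\Alg$ becomes a nonnegatively graded algebra with $\Alg_0 = \Ground$, generated over $\Ground$ in degree $1$. Since the differential on $\Alg$ vanishes, this grading is automatically compatible with all the structure, and $\Ground$, being a product of $2k$ copies of $\Field$, is semisimple, so ``Koszul over $\Ground$'' has its usual meaning. The key observation is that each defining relation $a_ib_{i+1}$, $b_ia_{i+1}$, $b_{2k-1}c$, $ca_1$ is a single path of length $2$; writing $Q$ for the underlying quiver, we have presented $\Alg = \Ground Q/I$ with $I$ generated by a set of paths, each of length $2$. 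In other words $\Alg$ is a monomial algebra whose relation ideal is generated in degree $2$, and its quadratic dual is the monomial algebra $\Blg$ displayed above, whose relations are precisely the length-$2$ paths of $Q$ complementary to the generators of $I$.

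By a theorem of Fröberg, a monomial algebra $\Ground Q/I$ with $I$ generated by paths of length $2$ is Koszul. Concretely, the minimal graded free resolution of $\Ground$ as a left $\Alg$-module has $n$th term $\Alg\otimes_\Ground\Lambda_n$, where $\Lambda_n$ is spanned by the length-$n$ paths in $Q$ every length-$2$ subpath of which is a generator of $I$; because every relation has length exactly $2$, there are no overlap ambiguities beyond these subpaths, the resolution is linear, and linearity of the minimal resolution of $\Ground$ is exactly the assertion that $\Alg$ is Koszul. Alternatively, one can simply note that $\Alg = \Alg(\PMC,-k+1)$ is one of the algebras whose Koszulity is established in \cite{LOT11:HomPair} and invoke that result; this is the sense in which the proposition is ``essentially a special case'' of \emph{loc.\ cit.}

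The only step that requires genuine care is the combinatorial bookkeeping: one must confirm that the displayed presentation is correct---that the bordered algebra $\Alg(\PMC,-k+1)$ really is this path algebra with exactly these relations, which is where \cite{LOT11:HomPair} enters---and that the relation ideal is honestly generated by the four families of length-$2$ monomials listed, with no hidden higher-degree relations and no collapse of the path-length grading. Granting that, the Koszulity is formal, and the isomorphism $\Alg\cong\Blg$ noted in the excerpt is the expected self-duality of a monomial quadratic algebra under Koszul duality.
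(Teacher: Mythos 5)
Your proof is correct, but it takes a genuinely different route from the paper's. You observe that $\Alg$ is a quadratic \emph{monomial} algebra over the semisimple ring $\Ground$ (the relations $a_ib_{i+1}$, $b_ia_{i+1}$, $b_{2k-1}c$, $ca_1$ are all length-$2$ paths), and then invoke Fr\"oberg's classical theorem that such algebras are automatically Koszul. This is a clean, elementary argument that uses nothing beyond the explicit quiver presentation. The paper instead stays entirely within the bordered Floer machinery: it considers the type \DD\ bimodule $\lsup{\Alg}\DD(\frac{\Id}{2})^\Blg$ associated to the diagram $\mathcal{G}(\PMC)$ of \cite[Construction 8.18]{LOT11:HomPair}, cites \cite[Proposition 8.13 and Theorem 13]{LOT11:HomPair} to conclude that $\lsub{\Alg}\Alg_\Alg\DT\lsup{\Alg}\DD(\frac{\Id}{2})^\Blg\DT \lsub{\Blg}\Blg$ resolves $\Ground$, and then appeals to the explicit computation of this bimodule in \cite[Proposition 3.22]{LOT13:faith} to identify the resolution as the Koszul complex. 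The paper's route is heavier, but it earns its keep: the proof goes through exactly the resolution $\Alg\otimes\Blg^*\otimes\Alg$ (with the specific differential displayed right after the proposition) that is then used in Corollary~\ref{cor:Hoch-coh-small-models} and in the long computation proving Theorem~\ref{thm:HF-pi-formal-extreme}, and it situates the Koszulity inside the Koszul-duality package for the bordered algebras established in~\cite[Section 8.2]{LOT11:HomPair}. Your argument proves the same statement with less machinery, but you would still need to separately verify that the paper's preferred small model of the resolution is what Koszul theory predicts before proceeding to the computations that follow.
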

\begin{proof}
  Given a pointed matched circle $\PMC$, we can form $-\PMC$, the
  orientation-reverse of $\PMC$. We can also form the dual pointed
  matched circle $\PMC_*$: if we think of $\PMC$ as a handle
  decomposition coming from a Morse function $f\co F(\PMC)\to\RR$ then
  $\PMC_*$ corresponds to $-f$. The algebra $\Blg$ is simply
  $\Alg(\PMC_*,-k+1)$. It is explained in~\cite[Section
  8.2]{LOT11:HomPair} that $\Alg(\PMC,i)$ is Koszul dual (in a
  particular sense) to both $\Alg(-\PMC,-i)$ and $\Alg(\PMC_*,i)$. So,
  the work in proving the present proposition is simply translating
  that result into the language of this paper.

  As in the proof
  of~\cite[Theorem 13]{LOT11:HomPair}, consider the type \DD\ bimodule
  $\lsup{\Alg}\DD(\frac{\Id}{2})^\Blg$ associated to the diagram
  $\mathcal{G}(\PMC)$ of~\cite[Construction
  8.18]{LOT11:HomPair}. By~\cite[Proposition 8.13]{LOT11:HomPair} and the
  proof of~\cite[Theorem 13]{LOT11:HomPair},
  $\lsub{\Alg}\Alg_\Alg\DT\lsup{\Alg}\DD(\frac{\Id}{2})^\Blg\DT \lsub{\Blg}\Blg$ is a resolution
  of $\Ground$. But the bimodule $\lsup{\Alg}\DD(\frac{\Id}{2})^\Blg$
  is computed explicitly in~\cite[Proposition 3.22]{LOT13:faith}; in
  particular, it follows from that description that
  $\lsub{\Alg}\Alg_\Alg\DT\lsup{\Alg}\DD(\frac{\Id}{2})^\Blg\DT \Blg$ is the Koszul
  complex.
\end{proof}

In particular, the Koszul resolution of $\Alg$ is given by
$\Alg\otimes\Blg^*\otimes\Alg$, with differential
\begin{align*}
\bdy (x\otimes f\otimes z)&=\sum_{i=1}^{2k-1} (x a_i\otimes \aprime_i f\otimes z+x
 b_i\otimes \bprime_i f\otimes z+x\otimes f \aprime_i\otimes a_i z+x\otimes
f \bprime_i\otimes b_i  z)\\
&\qquad\qquad+ x  c\otimes \cprime  f\otimes z + x\otimes
f \cprime\otimes c z.
\end{align*}
(Here, in Corollary~\ref{cor:Hoch-coh-small-models}, and in the proof
of Theorem~\ref{thm:HF-pi-formal-extreme}, $\otimes$ means the tensor
product over $\Ground$, the subalgebra of idempotents. In particular,
we are using the identification between the idempotents of $\Alg$ and
$\Blg$ given by the labeling of vertices in the path algebra
description above.)

Using this Koszul resolution, we get a model for $A^!$:
\[
A^! = \Hom_{\Bimod{A}}((\Alg\otimes\Blg^*\otimes\Alg,\bdy),\Alg^e) = (\Alg\otimes\Blg\otimes\Alg,\bdy^T)
\]
(see Section~\ref{sec:hoch-background}), where $\bdy^T$ denotes map
induced by $\bdy$. Using this model, we have:

\begin{corollary}\label{cor:Hoch-coh-small-models}
  The Hochschild homology of $\Alg^!$ is the homology of the chain
  complex $\Alg\otimes\Blg/\sim$, where $x\otimes y\iota \sim \iota
  x\otimes y$ for each idempotent $\iota$, with differential
  \begin{equation}\label{eq:small-ext-A}
  \bdy(x\otimes y)=\sum_{i=1}^{2k-1} (xa_i\otimes \aprime_iy
  +a_ix\otimes y\aprime_i 
  + xb_i\otimes \bprime_iy
  +b_ix\otimes y\bprime_i
  )
  +xc\otimes \cprime y+cx\otimes y\cprime.
  \end{equation}
  
  Similarly, $\HH_*(\Alg^!\DTP_{\Alg}\Alg^!)$ is given by
  $\Alg\otimes\Blg\otimes\Alg\otimes\Blg/\sim$, where $x_1\otimes
  y_1\otimes x_2\otimes y_2\iota \sim \iota x_1\otimes y_1\otimes
  x_2\otimes y_2$ for each idempotent $\iota$, with differential
  \begin{equation}\label{eq:small-ext-bar}
  \begin{split}
  \bdy(x_1\otimes y_1\otimes x_2\otimes
  y_2)&=\hspace{-2em}\sum_{(\xi,\xi')\in
    \{(a_i,\aprime_i),(b_i,\bprime_i),(c,\cprime)\}}\hspace{-2em}
  (x_1\xi\otimes \xi'y_1\otimes x_2\otimes y_2+
  x_1\otimes y_1\xi'\otimes \xi x_2\otimes y_2\\
  &\qquad\qquad\qquad +
  x_1\otimes y_1\otimes x_2\xi \otimes \xi'y_2+
  \xi x_1\otimes y_1\otimes x_2\otimes y_2\xi').
  \end{split}
  \end{equation}
\end{corollary}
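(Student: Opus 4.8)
The plan is to read both groups off the small model $\Alg^!=(\Alg\otimes\Blg\otimes\Alg,\bdy^T)$ just exhibited. Two observations drive this. First, that model is a \emph{bounded complex of finite-rank free $(\Alg,\Alg)$-bimodules}: it has only finitely many terms because $\Blg$, being isomorphic to $\Alg$, is finite-dimensional, so the Koszul complex $\Alg\otimes\Blg^*\otimes\Alg$ terminates. Writing $\Alg^\op$ for the opposite algebra, so that $(\Alg,\Alg)$-bimodules are modules over the enveloping algebra $\Alg\otimes\Alg^\op$, and recalling from Definition~\ref{def:Hochschild} that $\HH_*(N)=\Tor^{\Alg\otimes\Alg^\op}_*(\Alg,N)$, we may therefore compute $\HH_*(N)$ by resolving the \emph{second} argument: $\HH_*(N)=H_*\bigl(\Alg\otimes_{\Alg\otimes\Alg^\op}P_N\bigr)$ for any quasi-isomorphism $P_N\xrightarrow{\sim}N$ with $P_N$ a bounded complex of free bimodules --- no resolution of the diagonal is needed. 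Second, for a $\Ground$-bimodule $W$ there is an elementary ``cyclic reduction'' $\Alg\otimes_{\Alg\otimes\Alg^\op}(\Alg\otimes W\otimes\Alg)\xrightarrow{\ \sim\ }(W\otimes\Alg)/\!\sim$, $n\otimes(p\otimes w\otimes q)\mapsto w\otimes(qnp)$, with $\sim$ exactly the idempotent relation of the statement; under it the two ``halves'' of an $\Alg$-bilinear differential on $\Alg\otimes W\otimes\Alg$ --- the parts acting through the left and through the right $\Alg$-module structures --- get folded onto the single surviving $\Alg$-factor.

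For the first assertion, apply this with $N=\Alg^!$, $P_N=(\Alg\otimes\Blg\otimes\Alg,\bdy^T)$, and $W=\Blg$. The cyclic reduction identifies the underlying complex with $(\Blg\otimes\Alg)/\!\sim$, equivalently $(\Alg\otimes\Blg)/\!\sim$ after cyclically swapping the two factors; pushing the $\Alg$-bilinear differential $\bdy^T$ through it --- one half yielding the terms $xa_i\otimes\aprime_iy$, $xb_i\otimes\bprime_iy$, $xc\otimes\cprime y$ and the other half the terms $a_ix\otimes y\aprime_i$, $b_ix\otimes y\bprime_i$, $cx\otimes y\cprime$ --- recovers \eqref{eq:small-ext-A}. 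We are over $\Field$, so there are no signs to worry about.

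For the second assertion, the bimodules $\Alg\otimes\Blg\otimes\Alg$ occurring in $P_N$ are projective both as left and as right $\Alg$-modules, so the derived tensor product can be formed naively: $\Alg^!\DTP_\Alg\Alg^!\simeq(\Alg\otimes\Blg\otimes\Alg)\boxtimes_\Alg(\Alg\otimes\Blg\otimes\Alg)=\Alg\otimes\Blg\otimes\Alg\otimes\Blg\otimes\Alg$, again a bounded complex of free bimodules, free over the $\Ground$-bimodule $\Blg\otimes\Alg\otimes\Blg$. Running the same reduction argument with $W=\Blg\otimes\Alg\otimes\Blg$ gives $\HH_*(\Alg^!\DTP_\Alg\Alg^!)\cong H_*\bigl((\Alg\otimes\Blg\otimes\Alg\otimes\Blg)/\!\sim\bigr)$, and tracking the differential $\bdy^T\boxtimes\id+\id\boxtimes\bdy^T$ --- together with the $\Alg$-actions bridging the $\boxtimes_\Alg$, which the cyclic quotient wraps around --- through the reduction produces the four families of terms in \eqref{eq:small-ext-bar}. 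I expect the main obstacle to be precisely this last bookkeeping: identifying each of the four terms of \eqref{eq:small-ext-bar} with its source among the two internal copies of $\bdy^T$ acting via the four $\Alg$-module structures on $\Alg\otimes\Blg\otimes\Alg\otimes\Blg\otimes\Alg$, after the outer two $\Alg$-factors are absorbed cyclically. The only other point to check --- that $\Blg$ is finite-dimensional, so that $P_N$ is a \emph{bounded} complex of projectives --- is what licenses computing $\HH_*$ and $\DTP_\Alg$ without auxiliary resolutions in the first place.
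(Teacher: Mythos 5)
Your proposal is correct and reconstructs what the paper leaves implicit: the paper states the corollary immediately after exhibiting $A^! = (\Alg\otimes\Blg\otimes\Alg,\bdy^T)$ with only the phrase ``using this model, we have,'' and the intended computation is exactly the one you give — since $\Alg^!$ is itself a bounded complex of finitely-generated projective bimodules (because $\Blg$ is finite-dimensional), one computes $\HH_*(\Alg^!)=\Tor_{\Alg\otimes\Alg^{\op}}(\Alg,\Alg^!)$ as $H_*(\Alg\otimes_{\Alg\otimes\Alg^{\op}}\Alg^!)$, and the cyclic reduction $\Alg\otimes_{\Alg\otimes\Alg^{\op}}(\Alg\otimes_\Ground W\otimes_\Ground\Alg)\cong(\Alg\otimes_\Ground W)/\!\sim$ produces the stated complexes and differentials in both cases. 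One small terminological quibble: $\Alg\otimes_\Ground\Blg\otimes_\Ground\Alg$ is a finite direct sum of bimodules $\Alg\iota\otimes\iota'\Alg$, so ``finite-rank projective'' (or ``finite cell retract'' in the paper's language) is more accurate than ``free''; this does not affect the argument.
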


\begin{proof}[Proof of Theorem~\ref{thm:HF-pi-formal-extreme}]
  This is a somewhat long, concrete computation. To keep notation
  shorter, we will replace the symbol $\otimes$ with a vertical bar
  $|$. Similarly, let $\ell=2k-1$.

  In the computation, we will frequently use the following phenomenon:
  
  \emph{Vanishing phenomenon.} If $\xi,\eta\in\{a_i,b_i,c\}$ then
  $\xi\eta\neq 0$ implies that $\eta'\xi'=0$. So, $\xi\eta |\eta'\xi'$
  always vanishes, as does $\eta'\xi'|\xi\eta$.

  The element $(1|1)$ in the model for $\HC_*(\Alg^!)$ given in
  Formula~\eqref{eq:small-ext-A} corresponds to the element $1\in
  \Hom(A^!,A^!)$, and so we want to show that the elements
  $d^{2i}(1|1)$ vanish for all $i\geq 2$.  To this end, consider the
  element $(1|1|1|1)$ in the model for $\HC_*(\Alg^!\DTP\Alg^!)$ given
  in Formula~\eqref{eq:small-ext-bar}; note that $(1|1|1|1)$
  corresponds to $(1|1)$ under the isomorphism of
  Proposition~\ref{prop:Frob}. We will compute the differentials in
  the spectral sequence as in Remark~\ref{rem:hv}.
  
  We have
  \begin{align}
    \bdy(1|1|1|1)&= \sum_{\xi\in \{a_i,b_i,c\}}
    (\xi|\xi'|1|1)+(1|\xi'|\xi|1)+(1|1|\xi|\xi')+(\xi|1|1|\xi')\\
    &=(1+\tau)\bigl(\sum_{\xi\in \{a_i,b_i,c\}}(\xi|\xi'|1|1)+(1|\xi'|\xi|1)\bigr)\label{eq:d1}.
  \end{align}
  Let $(1+\tau)^{-1}\eqref{eq:d1}$ denote the result of dropping the
  $(1+\tau)$ from Formula~\eqref{eq:d1}. Then
  \begin{align}
    \bdy\circ (1+\tau)^{-1}\eqref{eq:d1}
    &= \sum_{\xi,\eta\in \{a_i,b_i,c\}}
    (\xi\eta|\eta'\xi'|1|1)+(\xi|\xi'\eta'|\eta|1)+(\xi|\xi'|\eta|\eta')+(\eta\xi|\xi'|1|\eta')
    \nonumber\\
    & \qquad\qquad\qquad\qquad+(\eta|\xi'|\xi|\eta')+(\eta|\eta'\xi'|\xi|1)+(1|\xi'\eta'|\eta\xi|1)+(1|\xi'|\xi\eta|\eta').
    \label{eq:d1bdy}
  \end{align}
  In Expression~\eqref{eq:d1bdy}, the first and seventh terms are
  identically zero, by the vanishing phenomenon above. When summing over $\xi$ and $\eta$, the second and sixth cancel.  The sum over $\xi$ and $\eta$ of the eighth term is equal to tau applied to the sum over $\xi$ and $\eta$ of the fourth term.
   Further:
  \begin{align*}
    \sum_{\xi,\eta}(\xi|\xi'|\eta|\eta')&=\sum_{i=1}^\ell (a_i|\aprime_i|a_i|\aprime_i)+(a_i|\aprime_i|b_i|\bprime_i)+(b_i|\bprime_i|a_i|\aprime_i)+(b_i|\bprime_i|b_i|\bprime_i)
    \\
    \sum_{\xi,\eta}(\eta|\xi'|\xi|\eta')&=\sum_{i=1}^\ell(a_i|\aprime_i|a_i|\aprime_i)+(a_i|\bprime_i|b_i|\aprime_i)+(b_i|\aprime_i|a_i|\bprime_i)+(b_i|\bprime_i|b_i|\bprime_i)\\
    \sum_{\xi,\eta}(\xi|\xi'|\eta|\eta')+(\eta|\xi'|\xi|\eta')&=\sum_{i=1}^\ell(a_i|\aprime_i|b_i|\bprime_i)+(b_i|\bprime_i|a_i|\aprime_i)+(a_i|\bprime_i|b_i|\aprime_i)+(b_i|\aprime_i|a_i|\bprime_i).
  \end{align*}
  (In verifying these equations, keep in mind that we are tensoring over the idempotents.)
  Substituting in, we have:
  \begin{equation}
    \eqref{eq:d1bdy}=(1+\tau)\bigl(\sum_{\xi,\eta}(\eta\xi|\xi'|1|\eta')+\sum_{i=1}^\ell(a_i|\aprime_i|b_i|\bprime_i)+(b_i|\aprime_i|a_i|\bprime_i)\bigr).\label{eq:d2}
\end{equation}
Differentiating again,
\begin{equation}\label{eq:d2bdy}
  \begin{split}
    \bdy \circ (&1+\tau)^{-1}\eqref{eq:d2}\\
    &=\sum_{\eta,\xi,\nu\in\{a_i,b_i,c\}}(\eta\xi\nu|\nu'\xi'|1|\eta')+(\eta\xi|\xi'\nu'|\nu|\eta')+(\eta\xi|\xi'|\nu|\nu'\eta')+(\nu\eta\xi|\xi'|1|\eta'\nu')\\
    &+\sum_{\nu\in\{a_i,b_i,c\}}\sum_{i=1}^\ell
    (a_i|\aprime_i\nu|\nu b_i|\bprime_i)+(\nu
    a_i|\aprime_i|b_i|\bprime_i\nu) + (a_i\nu|\nu\bprime_i|b_i|\aprime_i)+(\aprime_i|\bprime_i|b_i\nu|\nu\aprime_i).
    \end{split}
  \end{equation}
  Here, we have omitted some terms from the second sum which are zero
  according to the vanishing principle above (e.g.,
  $(a_i\nu|\nu\aprime_i|b_i|\bprime_i)$). In
  Formula~\eqref{eq:d2bdy}, the first and fourth terms vanish
  identically, by the vanishing principle. Next:
  \begin{align*}
    \sum_{\eta,\xi,\nu} (\eta\xi|\xi'\nu'|\nu|\eta')&=(a_{\ell}c|\cprime\bprime_{\ell}|b_{\ell}|\aprime_{\ell})+\sum_{i=1}^{\ell-1} (a_ia_{i+1}|\aprime_{i+1}\bprime_i|b_i|\aprime_i)+(b_ib_{i+1}|\bprime_{i+1}\aprime_i|a_i|\bprime_i)\\
    \sum_{\eta,\xi,\nu}
    (\eta\xi|\xi'|\nu|\nu'\eta')&=(cb_1|\bprime_1|a_1|\aprime_1\cprime)+\sum_{i=1}^{\ell-1}(a_ia_{i+1}|\aprime_{i+1}|b_{i+1}|\bprime_{i+1}\aprime_i)+(b_ib_{i+1}|\bprime_{i+1}|a_{i+1}|\aprime_{i+1}\bprime_i)    
  \end{align*}
  and
  \begin{align*}
    \sum_{\nu}\sum_{i=1}^\ell (a_i|\aprime_i\nu|\nu b_i|\bprime_i)&+(\nu
    a_i|\aprime_i|b_i|\bprime_i\nu)\\
    &\quad= (a_1|\aprime_1\cprime|cb_1|\bprime_1)+\sum_{i=1}^{\ell-1}(a_{i+1}|\aprime_{i+1}\bprime_{i}|b_{i}b_{i+1}|\bprime_{i+1})+(a_{i}a_{i+1}|\aprime_{i+1}|b_{i+1}|\bprime_{i+1}\aprime_{i})\\
    \sum_{\nu}\sum_{i=1}^{\ell}(a_i\nu|\nu\bprime_i|b_i|\aprime_i)&+(\aprime_i|\bprime_i|b_i\nu|\nu\aprime_i)\\
    &\quad=(a_{\ell}c|\cprime\bprime_{\ell}|b_{\ell}|\aprime_{\ell})+\sum_{i=1}^{\ell-1}(a_ia_{i+1}|\aprime_{i+1}\bprime_i|b_i|\aprime_i)+(a_i|\bprime_i|b_ib_{i+1}|\bprime_{i+1}\aprime_i).
  \end{align*}
  So, 
  \begin{equation}
    \label{eq:d2bdy-v2}
    \eqref{eq:d2bdy}=(1+\tau)\bigl((a_1|\aprime_1\cprime|cb_1|\bprime_1)+\sum_{i=1}^{\ell-1}(a_{i+1}|\aprime_{i+1}\bprime_{i}|b_{i}b_{i+1}|\bprime_{i+1})+(a_i|\bprime_i|b_ib_{i+1}|\bprime_{i+1}\aprime_i)\bigr).
    \end{equation}
    Finally,
    \begin{align*}
    \bdy\circ
    (1+\tau)^{-1}\eqref{eq:d2bdy-v2}&=\sum_{i=1}^{\ell-1}(a_{i}a_{i+1}|\aprime_{i+1}\bprime_{i}|b_{i}b_{i+1}|\bprime_{i+1}\aprime_{i})+(a_ia_{i+1}|\aprime_{i+1}\bprime_i|b_ib_{i+1}|\bprime_{i+1}\aprime_i)\nonumber\\
    &=0.\nonumber\qedhere
  \end{align*}
\end{proof}

\begin{proof}[Proof of Theorem~\ref{thm:HF-pi-formal-small}]
  The cases $k=0$, $k=1, i\neq 0$, and $k=1,i\not\in\{-1,0,1\}$ are
  trivial (the algebras are either $0$ or $\Field$). The cases
  $(k,i)=(1,0)$ and $(2,-1)$ follow from
  Theorem~\ref{thm:HF-pi-formal-small}. The case $(k,i)=(2,1)$
  follows from Theorem~\ref{thm:HF-pi-formal-small} and the fact that
  $\Alg(\PMC,i)$ is quasi-isomorphic to $\Alg(\PMC_*,-i)$, which in
  turn is a special case of~\cite[Theorem 13]{LOT11:HomPair} and the fact
  that for the antipodal pointed matched circle $\PMC$, $\PMC=\PMC_*$. So, only
  the case $k=2$, $i=0$ remains. This can be checked by computer,
  as follows. The proof of Proposition~\ref{prop:Alg-is-hom-smooth}
  gives a small model for the bar complex (first appearing in~\cite[Section
  5.4]{LOT11:HomPair}), which in
  turn gives a model for the Hochschild cochain complex of
  $\Alg(\PMC,0)$. Explicitly, this cochain complex is
  $\Alg(\PMC,0)\otimes \Alg(-\PMC,0)$, with differential given by 
  \[
  \bdy (x\otimes y)=\sum_{\text{chords }\xi}x a(\xi)\otimes a(\xi)
  y+a(\xi) x\otimes y a(\xi).
  \]
  There is an analogous model for
  $\HC_*(\Alg(\PMC,0)^!\DTP\Alg(\PMC,0)^!)$.  We are then interested in
  repeatedly applying $\bdy$ and $(1+\tau)^{-1}$ to the element
  $e_0 := (1|1|1|1)$, as in the proof of Theorem \ref{thm:HF-pi-formal-extreme}.
  A computer calculation then gives the following:
  \begin{itemize}
  \item $\bdy e_0 \in \HC_{-1}(\Alg(\PMC,0)^!\DTP\Alg(\PMC,0)^!)$ 
            is supported on 192 basis elements, and $\bdy e_0 = (1+\tau)(e_1)$
            for an element $e_1 \in \HC_{-1}$ supported on 96 basis elements.
  \item $\bdy(e_1) \in \HC_{-2}$ is supported on $1176$ basis elements,
           and $\bdy(e_1) = (1+\tau)(e_2')$ for an element $e_2' \in \HC_{-2}$
           supported on 588 basis elements.  (We eventually
           have to modify this lift of $\bdy(e_1)$, which is why we call it $e_2'$.)
  \item $\bdy(e_2') \in \HC_{-3}$ is supported on $2106$ elements, and
            $\bdy(e_2') = (1+\tau)(e_3')$ for an element $e_3' \in \HC_{-3}$
            supported on $1053$ basis elements.  However $\bdy(e_3')$ is not
            in the image of $(1+\tau)$.
  \item There is an element $x \in \HC_{-2}$ which is supported on 16
            ``square'' basis elements (elements of the form $(a|b|a|b)$), and
            $e_2 := e_2' + x$ has $(1+\tau)(e_2) = \bdy(e_1)$ and $\bdy(e_2) \in \HC_{-3}$
            is supported on 2250 elements.  Moreover $\bdy(e_2) = (1+\tau)(e_3)$
            for an element $e_3$ supported on $1125$ basis elements.
  \item $\bdy e_3 \in \HC_{-4}$ is supported on $3092$ basis elements.  Moreover
            $\bdy e_3 = (1+\tau)(e_4')$ for an element $e_4' \in \HC_{-4}$ supported on
            $1546$ basis elements.  This shows that the differential $d^4$ vanishes
            on $(1|1|1|1)$. 
  \item $\bdy e_4' \in \HC_{-5}$ is supported on $1944$ basis elements, and
            $\bdy e_4' = (1+\tau)(e_5')$ for $e_5' \in \HC_{-5}$ supported on $972$
            basis elements.  However, $\bdy e_5'$ is not in the image of $(1+\tau)$.
  \item There is an element $y \in \HC_{-4}$ supported on $24$ square basis elements,
            and $e_4 = e_4' + y$ has $(1+\tau)(e_4) = \bdy(e_3)$ and $\bdy(e_4) \in \HC_{-5}$
            is supported on $2048$ basis elements.  Moreover $\bdy(e_4) = (1+\tau)(e_5)$
            for an element $e_5$ supported on $1024$ basis elements.
  \item $\bdy e_5$ is supported on $788$ basis elements, and $\bdy e_5 = (1+\tau)(e_6)$
            for an element $e_6$ supported on $394$ basis elements.  This shows
            that $d^6$ vanishes on $(1|1|1|1)$.
 \end{itemize}
 The same computer code can be used to find $\HH_j(\Alg(\PMC,0)^!)$, in fact
 \[
 \HH_0 = \Field \quad  \HH_{-1} = \mathbb{F}_2^4 \quad \HH_{-2} = \mathbb{F}_2^{10} \quad \HH_{-3} = \Field
 \]
 and all other groups vanish.  By Proposition \ref{prop:TateE2}, it follows that $d^{2i}$ vanishes for $i > 3$ and the Theorem is proved.
  Computer code is available from 

\noindent
\verb|http://math.columbia.edu/~lipshitz/BordHochLoc.tar|.
\end{proof}

We conclude this section by observing that to obtain localization
results, it suffices to show that the relevant bimodules are neutral
(Definition~\ref{def:neutral}):
\begin{proposition}\label{prop:Borromean}
  For any pointed matched circle $\PMC$ and any integer $i$, the
  Hochschild homology $\HH_*(\Alg(\PMC,i))$ is supported in a single
  grading.
\end{proposition}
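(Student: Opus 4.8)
The plan is to realize $\HH_*(\Alg(\PMC,i))$ topologically, as a single summand of the knot Floer homology of an explicit fibered knot, and then to invoke a standard computation. Since the diagonal bimodule ${}_{\Alg(\PMC)}\Alg(\PMC)_{\Alg(\PMC)}$ is quasi-isomorphic to $\CFDAa(\mathbb{I})$ for $\mathbb{I}$ the identity arced cobordism of $F(\PMC)$ (with its canonical framed arc), Theorem~\ref{thm:bordered-hochschlid} identifies $\HH_*(\Alg(\PMC))$ with $\HFKa(Y^\circ,K)$, where $(Y^\circ,K)$ is obtained by self-gluing $\mathbb{I}$ and surgering along the circle coming from the arc. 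The first step is to pin down this pair: self-gluing $\mathbb{I}$ yields $Y_0=F(\PMC)\times S^1$, the arc becomes $\gamma=\{p\}\times S^1$, and surgery on $\gamma$ produces precisely the open book with page $F(\PMC)\setminus\interior(D)$ --- a genus-$k$ surface with one boundary circle --- and identity monodromy, with $K$ its binding. Hence $Y^\circ\cong\#^{2k}(S^1\times S^2)$ and $K$ is the Borromean knot $B_k$.

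Next I would match up the gradings. The splitting $\Alg(\PMC)=\bigoplus_{i=-k}^{k}\Alg(\PMC,i)$ is a direct-sum decomposition of algebras, so $\HH_*(\Alg(\PMC))=\bigoplus_i\HH_*(\Alg(\PMC,i))$, and under the isomorphism with $\HFKa(\#^{2k}(S^1\times S^2),B_k)$ I expect the index $i$ to correspond to the Alexander grading (both ranging over $\{-k,\dots,k\}$) and the homological grading on $\HC_*$ to correspond to the Maslov grading; this is also where one checks that the Hochschild complex really is $\ZZ$-graded in this case, as it must be, since $\HFKa$ of a nullhomologous knot in a manifold with torsion $\SpinC$-structure carries a $\ZZ$-valued Maslov grading. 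Finally I would quote Ozsv\'ath--Szab\'o's computation of $\HFKa$ for the Borromean knots~\cite{OS04:Knots}: in Alexander grading $j$, the group $\HFKa(\#^{2k}(S^1\times S^2),B_k;j)$ is isomorphic to $\Lambda^{k-j}H^1(F(\PMC);\Field)$ and is concentrated in a single Maslov grading. Specializing to $j=i$ gives the proposition.

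The main obstacle here is bookkeeping rather than content: verifying carefully that the $\SpinC$-label $i$ on $\Alg(\PMC)$ is the one that matches the Alexander grading on $B_k$, and that the a priori $G$-set-valued grading on $\HC_*(\Alg(\PMC,i))$ genuinely refines to the $\ZZ$-valued Maslov grading, so that ``supported in a single grading'' is unambiguous. For the extreme case $i=-k+1$ there is also a purely algebraic route, since $\Alg(\PMC,-k+1)$ is Koszul and its Hochschild homology can be computed by a small Koszul-type complex much like the one in Corollary~\ref{cor:Hoch-coh-small-models}; but the open-book description has the advantage of handling all $i$ at once.
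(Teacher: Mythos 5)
Your proposal is correct and takes essentially the same route as the paper's proof: the paper also simply cites Theorem~\ref{thm:bordered-hochschlid} to identify $\HH_*(\Alg(\PMC,i))$ with $\HFKa$ of the genus-$k$ Borromean knot in $\#^{2k}(S^2\times S^1)$ in Alexander grading $i$, and then invokes Ozsv\'ath--Szab\'o's computation from~\cite[Section 9]{OS04:Knots}. The paper's write-up is terser (two sentences, leaving the topological identification of $(Y^\circ,K)$ and the matching of $i$ with the Alexander grading implicit), while you spell out the self-gluing yielding $F(\PMC)\times S^1$, the surgery producing the open book with identity monodromy, and the grading bookkeeping concerns; these are the right things to be careful about, even though the paper treats them as standard.
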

\begin{proof}
  Suppose that $\PMC$ represents a surface of genus $k$.  By
  Theorem~\ref{thm:bordered-hochschlid}, $\HH_*(\Alg(\PMC,i))$ is the
  knot Floer homology of the $k\th$ Borromean knot (in
  $\#^{2k}(S^2\times S^1)$) in the $i\th$
  Alexander grading. So, the result follows from the computation of
  $\HFKa(B_k)$~\cite[Section 9]{OS04:Knots}.
\end{proof}
\begin{corollary}\label{cor:neutral-good-bord}
  Every neutral $(\Alg(\PMC),\Alg(\PMC))$-bimodule is $\pi$-formal.
\end{corollary}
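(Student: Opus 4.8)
The plan is to reduce Corollary~\ref{cor:neutral-good-bord} to Proposition~\ref{prop:neutral-d2i}: by that proposition it suffices to show that, for the diagonal bimodule, the operations $d^{2i}$ vanish on $\HH_*(\Alg(\PMC))$ for every $i \geq 2$. First I would note that $\Alg(\PMC)$ is homologically smooth and proper, so that the machinery of Section~\ref{subsec:HHTate} applies: it is finite-dimensional (hence proper), and it is a finite direct sum of the homologically smooth algebras $\Alg(\PMC,i)$ (Proposition~\ref{prop:Alg-is-hom-smooth-borsut}), whose direct-sum resolution of the diagonal exhibits $\Alg(\PMC)$ as homologically smooth.

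Next I would exploit the $\SpinC$-splitting $\Alg(\PMC) = \bigoplus_{i=-k}^{k} \Alg(\PMC,i)$ into orthogonal ideals. This splitting is respected by every construction feeding into the operations $d^{2i}$: the diagonal bimodule splits as $\bigoplus_i \Alg(\PMC,i)$, the resolution of the diagonal may be taken to be the direct sum of resolutions of the $\Alg(\PMC,i)$, and hence the derived self-tensor product $\Alg(\PMC) \DTP \Alg(\PMC)$, its Hochschild complex, the involution $\tau$ of Proposition~\ref{prop:Z2action}, the Hochschild-Tate bicomplex, and the associated spectral sequence $\hvE$ all decompose as direct sums indexed by $i$. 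Consequently $d^{2i}$ acts block-diagonally on $\HH_*(\Alg(\PMC)) = \bigoplus_j \HH_*(\Alg(\PMC,j))$, restricting on the $j$-th summand to the corresponding operation for the algebra $\Alg(\PMC,j)$. By Proposition~\ref{prop:Borromean} that summand is supported in a single homological grading, and since $d^{2i}$ strictly lowers the homological grading (by $i \geq 2$) it must vanish on $\HH_*(\Alg(\PMC,j))$ for each $j$. Hence $d^{2i} = 0$ on all of $\HH_*(\Alg(\PMC))$, and Proposition~\ref{prop:neutral-d2i} gives that every neutral $(\Alg(\PMC),\Alg(\PMC))$-bimodule is $\pi$-formal.

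There is no deep obstacle here; the content is carried by Propositions~\ref{prop:Borromean} and~\ref{prop:neutral-d2i}. The one step requiring care is the middle paragraph: verifying that the $\SpinC$-decomposition of the algebra genuinely induces a direct-sum decomposition of the Hochschild-Tate bicomplex and its spectral sequence, so that the single-grading statement of Proposition~\ref{prop:Borromean}---which holds only one $\SpinC$-summand at a time---is enough to annihilate the operations on the whole of $\HH_*(\Alg(\PMC))$. (Equivalently, one could first observe that a neutral $\Alg(\PMC)$-bimodule decomposes compatibly with this splitting and then apply Corollary~\ref{cor:neutral-formal} $\SpinC$-structure by $\SpinC$-structure; the decomposition of the spectral sequence is what makes either formulation go through.)
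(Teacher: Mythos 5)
Your proof is correct and follows the same route as the paper's one-line proof, which simply cites Proposition~\ref{prop:Borromean} and Corollary~\ref{cor:neutral-formal}. The detail you spell out in the middle paragraph is, however, genuinely needed and elided by the paper: $\HH_*(\Alg(\PMC)) = \bigoplus_i \HH_*(\Alg(\PMC,i))$ is \emph{not} supported in a single Maslov grading (the summands land in different gradings, as the Borromean knot computation shows), so Corollary~\ref{cor:neutral-formal} cannot be applied directly to $\Alg(\PMC)$; one must first observe, as you do, that the $\SpinC$-splitting of $\Alg(\PMC)$ induces a compatible splitting of the diagonal bimodule, its resolution, $\HC^\Tate$, and hence the operations $d^{2i}$, and then appeal to Proposition~\ref{prop:Borromean} one $\SpinC$-summand at a time. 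Your reduction to Proposition~\ref{prop:neutral-d2i} (equivalently, applying Corollary~\ref{cor:neutral-formal} summand by summand) is exactly the intended reading, and you correctly identify the decomposition as the one step requiring care.
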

\begin{proof}
  This is immediate from Proposition~\ref{prop:Borromean} and
  Corollary~\ref{cor:neutral-formal}.
\end{proof}

\subsection{Branched double covers of links}\label{sec:br-d-cov}
As an expository point, we explain how
Conjecture~\ref{conj:HF-pi-formal} implies~\cite[Theorem 1.1]{Hendricks12:rank-inequal}:
\begin{theorem}\label{thm:br-d-cov-sseq}
  Let $K\subset S^3$ be a nullhomologous knot and $\pi\co \Sigma(K)\to
  S^3$ the double cover of $S^3$ branched along $K$.  Suppose that
  Conjecture~\ref{conj:HF-pi-formal} holds for some arc diagram $\PMC$
  representing a Seifert surface for $K$. Then there is a spectral
  sequence with $E^1$-page given by $\HFKa(\Sigma(K),\pi^{-1}(K))$
  converging to $\HFKa(S^3,K)$.
\end{theorem}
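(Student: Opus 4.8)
The plan is to realize $\HFKa(S^3,K)$ and $\HFKa(\Sigma(K),\pi^{-1}(K))$ as the two sides of the spectral sequence of Theorem~\ref{thm:hoch-local}. Fix a Seifert surface $F$ for $K$, let $\PMC$ be the arc diagram for $F$ from the hypothesis, and let $Y$ be the arced cobordism from $F(\PMC)$ to $F(\PMC)$ obtained by cutting $S^3$ along a bicollar of $F$, with framed arc a push-off of $K$. By the standard cut-and-paste description of $(S^3,K)$ in terms of a Seifert surface (cf.\ \cite{Hendricks12:rank-inequal,LOT2}), the pair produced from $Y$ by Theorem~\ref{thm:bordered-hochschlid} is $(S^3,K)$, so
\[
\HFKa(S^3,K)\;\cong\;\HH_*\bigl(\CFDAa(Y)\bigr).
\]
Since $\CFDAa(Y)$ is $\Ainf$-homotopy equivalent to an honest \dg bimodule (point~(3) of Section~\ref{sec:bordered-background}), I take $M$ to be such a model. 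Because $K$ is nullhomologous with Seifert surface $F$, the algebra and $M$ split as $\Alg(\PMC)=\bigoplus_i\Alg(\PMC,i)$, $M=\bigoplus_i M_i$, with each Hochschild complex $\HC_*(M_i)$ honestly $\ZZ$-graded by the Maslov grading (point~(6) of Section~\ref{sec:bordered-background}). Each $\Alg(\PMC,i)$ is homologically smooth and proper (Proposition~\ref{prop:Alg-is-hom-smooth}), each $M_i$ is finite-dimensional over $\Field$ hence bounded, and $\Alg(\PMC,i)$ is $\pi$-formal by hypothesis; by Theorem~\ref{thm:representable} this forces every $\Alg(\PMC,i)$-bimodule, in particular $M_i$, to be $\pi$-formal. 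Thus (A-1)--(A-3) all hold for $A=\Alg(\PMC,i)$, $M=M_i$.

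The second, topological, step identifies the derived tensor square. By the pairing theorem (Theorem~\ref{thm:pairing}),
\[
M\DTP_{\Alg(\PMC)}M\;\simeq\;\CFDAa\bigl(Y\cup_{F(\PMC)}Y\bigr),
\]
and applying Theorem~\ref{thm:bordered-hochschlid} to $Y\cup_{F(\PMC)}Y$ gives $\HH_*(M\DTP_{\Alg(\PMC)}M)\cong\HFKa\bigl((Y\cup_{F(\PMC)}Y)^\circ,\gamma\bigr)$, where $\gamma$ is the knot assembled from the two arcs. Now $(Y\cup_{F(\PMC)}Y)^\circ$ is two copies of the Seifert-surface-complement glued cyclically and Dehn-filled; this is exactly the $2$-fold cyclic branched cover $\Sigma(K)$, with $\gamma=\pi^{-1}(K)$. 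Hence $\HH_*(M\DTP_{\Alg(\PMC)}M)\cong\HFKa(\Sigma(K),\pi^{-1}(K))$.

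Putting the two steps together: Theorem~\ref{thm:hoch-local} applied to $\Alg(\PMC,i)$ and $M_i$ produces, for each $i$, a spectral sequence with $E^1$-page $\HH_*(\Alg(\PMC,i),M_i\DTP M_i)$ converging to $\HH_*(\Alg(\PMC,i),M_i)$; summing over $i$ and using the two identifications above yields a spectral sequence with $E^1$-page $\HFKa(\Sigma(K),\pi^{-1}(K))$ converging to $\HFKa(S^3,K)$.

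Granting Theorem~\ref{thm:hoch-local} and the bordered-Floer pairing and self-gluing theorems, the one place needing genuine care is the second step: matching the algebraic doubling $M\mapsto M\DTP_A M$ with the passage to the branched double cover, and in particular checking that the knot produced by the cyclic self-gluing is $\pi^{-1}(K)$ (and, if one wants the Alexander-graded refinement appearing in Theorems~\ref{thm:br-g2} and~\ref{thm:br-extreme}, that the Alexander grading on the cover side is $\pi^*$ of the $\SpinC$-decomposition index). This is essentially the cut-and-paste picture of $\Sigma(K)$ underlying Hendricks's theorem; the genuinely new ingredient is the algebra of Section~\ref{sec:hoch-local}.
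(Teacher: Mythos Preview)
Your proof is correct and follows essentially the same route as the paper's: construct an arced cobordism $Y_1$ from the Seifert surface complement so that self-gluing recovers $(S^3,K)$ and double self-gluing recovers $(\Sigma(K),\pi^{-1}(K))$, then invoke Theorem~\ref{thm:hoch-local} using Proposition~\ref{prop:Alg-is-hom-smooth} and the $\pi$-formality hypothesis. One small imprecision: the framed arc in the arced cobordism is not ``a push-off of $K$'' (which would be a closed curve) but rather the co-core of a $2$-handle attached along a curve parallel to $\partial F$, separating the boundary of $S^3\setminus\nbd(F)$ into two closed surfaces; this is how the paper sets it up, and it is what is needed to apply Theorem~\ref{thm:bordered-hochschlid}.
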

\begin{proof}
  This follows easily from Theorem~\ref{thm:hoch-local} and
  Theorem~\ref{thm:bordered-hochschlid}. Let $F\subset S^3$ be a
  Seifert surface for $K$ and let $Y=S^3\setminus\nbd(F)$. Choose a
  homeomorphism $\phi\co \PunctF(\PMC)\to F$. Let $C\subset \bdy Y$ be a push-off of $\bdy
  F$ and let $Y_1$ be the result of attaching a $3$-dimensional
  $2$-handle (thickened disk) to $Y$ along $C$. The manifold $Y_1$ has
  two boundary components $\bdy_LY_1$ and $\bdy_RY_1$, and the co-core of the new
  $2$-handle gives a framed arc $\arcz$ in $Y_1$ connecting
  $\bdy_LY_1$ and $\bdy_RY_1$. The map $\phi$
  induces homeomorphisms $\phi_L\co {-F(\PMC)}\to \bdy_LY_1$ and
  $\phi_R\co F(\PMC)\to\bdy_R Y_1$. The data
  $(Y_1,\phi_L,\phi_R,\arcz)$ is an arced
  cobordism from $F(\PMC)$ to itself; abusing
  notation, we will denote this arced cobordism by
  $Y_1$.

  It is immediate from the definition that the generalized open book
  $(Y_1^\circ, L)$ induced by $Y_1$ is exactly $(S^3,K)$. Thus, by Theorem~\ref{thm:bordered-hochschlid},
  \[
  \HFKa(S^3,K)\cong\HH_*(\CFDAa(Y_1)).
  \]
  Let $Y_2=Y_1\sos{\bdy_R}{\cup}{\bdy_L}Y_1$. Then the generalized
  open book $(Y_2^\circ,L)$ associated to $Y_2$ is exactly
  $(\Sigma(K),\pi^{-1}(K))$. Thus,
  \[
  \HFKa(\Sigma(K),\pi^{-1}(K))=\HH_*(\CFDAa(Y_1)\DTP_{\Alg(\PMC)}\CFDAa(Y_1)).
  \]
  So, in light of Proposition~\ref{prop:Alg-is-hom-smooth}, the result
  follows from Theorem~\ref{thm:hoch-local}.
\end{proof}

\begin{thmcor}\label{cor:knot-s-seq-g2-S3}
  If $K\subset S^3$ has a Seifert surface of genus $\leq 2$ then there
  is a spectral sequence
  \[
  \HFKa(\Sigma(K),\pi^{-1}(K))\Rightarrow \HFKa(S^3,K).
  \]
\end{thmcor}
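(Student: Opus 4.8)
The plan is to deduce this directly from Theorem~\ref{thm:br-d-cov-sseq} together with Theorem~\ref{thm:HF-pi-formal-small}, so that essentially no new work is required. Suppose $K \subset S^3$ admits a Seifert surface $F$ of genus $g \leq 2$. Since $F$ is connected, oriented, and has exactly one boundary component (namely $K$), as a surface with boundary it is homeomorphic to $\PunctF(\PMC)$ for $\PMC$ the genus-$g$ antipodal pointed matched circle of Figure~\ref{fig:antipodal-pmc}: the surface $F(\PMC)$ is closed orientable of genus $g$, and deleting the distinguished disk yields a genus-$g$ surface with a single boundary circle, hence a copy of $F$.

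Next I would apply Theorem~\ref{thm:HF-pi-formal-small} with $k = g \leq 2$: it tells us that $\Alg(\PMC, i)$ is $\pi$-formal for every $i$, which is precisely the assertion that Conjecture~\ref{conj:HF-pi-formal} holds for the arc diagram $\PMC$. Since $\PMC$ represents a Seifert surface for $K$ in the sense needed by Theorem~\ref{thm:br-d-cov-sseq} (its associated punctured surface $\PunctF(\PMC)$ is identified with $F$ via a homeomorphism $\phi$), that theorem then produces a spectral sequence whose $E^1$-page is $\HFKa(\Sigma(K), \pi^{-1}(K))$ and which converges to $\HFKa(S^3, K)$, which is exactly the conclusion.

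The only point worth a remark --- and the nearest thing to an obstacle --- is that we are entitled to use the antipodal pointed matched circle here: the construction underlying Theorem~\ref{thm:br-d-cov-sseq} requires nothing more than a homeomorphism $\PunctF(\PMC) \to F$, with no finer compatibility imposed, so we may freely choose the parametrization for which $\pi$-formality has been verified rather than whatever pointed matched circle might arise most naturally. (All small-genus cases are subsumed: $g = 0$ is the unknot, for which the statement is trivial, and $g = 1$ is the $k = 1$ instance of Theorem~\ref{thm:HF-pi-formal-small}.)
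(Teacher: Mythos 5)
Your proposal is correct and matches the paper's proof exactly: Corollary~\ref{cor:knot-s-seq-g2-S3} is stated there to be immediate from Theorems~\ref{thm:HF-pi-formal-small} and~\ref{thm:br-d-cov-sseq}, which is precisely the deduction you carry out. The only difference is that you spell out why one may freely take the antipodal pointed matched circle (Theorem~\ref{thm:br-d-cov-sseq} only asks for some homeomorphism $\PunctF(\PMC)\to F$), a point the paper leaves implicit.
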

\begin{proof}
  This is immediate from Theorems~\ref{thm:HF-pi-formal-small}
  and~\ref{thm:br-d-cov-sseq}.
\end{proof}

It is not hard to show that Theorem~\ref{thm:br-d-cov-sseq} respects
the $\SpinC$-structure and Alexander grading as
in~\cite{Hendricks12:rank-inequal}. Rather than spelling this out here,
we turn to a generalization of Theorem~\ref{thm:br-d-cov-sseq}, and
spell out the analogous issues in the generalization. To state the
generalization, we digress briefly to discuss branched double covers
of nullhomologous links in other $3$-manifolds. 

Let $Y$ be a
$3$-manifold and $L\subset Y$ a nullhomologous link. Fix a Seifert
surface $F$ for $L$. Then $F$ is Poincar\'e-Lefschetz dual to an
element of $H^1(Y\setminus L)$, which we can view as a map $\ell_F \co
H_1(Y\setminus L)\to \ZZ$.
The composition 
\[
\pi_1(Y\setminus L)\to H_1(Y\setminus L)\to \ZZ\to \ZZ/2
\]
defines a $2$-fold cover of $p\co \widetilde{Y\setminus L}\to
Y\setminus L$. Write the components of $L$ as $L_1,\dots, L_n$, and
let $\mu_i$ be a meridian of $L_i$. Then each $L_i$ corresponds to a
torus boundary component $T_i$ of $\widetilde{Y\setminus \nbd(L)}$. Fill in
$T_i$ with a solid torus in such a way that $p^{-1}(\mu_i)$ bounds a
disk. The result is a closed $3$-manifold $\Sigma(L)$, the double
cover of $Y$ branched along $L$, and a map $\pi\co \Sigma(L)\to
Y$. While $\pi$ does depend on $F$, through its relative homology class,
we will suppress $F$ from the notation.

We digress briefly to discuss $\SpinC$-structures. Consider
$Y\setminus\nbd(L)$. There is a unique up to isotopy non-vanishing
vector field $v_0$ in $T(\bdy\nbd(L))$ so that $v_0$ is everywhere
transverse to a meridian for (the relevant component of) $L$. A
\emph{relative $\SpinC$-structure} for $(Y,L)$ is a homology class of
vector fields $v$ on $Y\setminus \nbd(L)$ so that
$v|_{\bdy\nbd(L)}=v_0$; compare~\cite[Section 3.2]{OS05:HFL}. Let
$\SpinC(Y,L)$ denote the set of relative $\SpinC$-structures on
$(Y,L)$.  (It is worth noting that the vector field $v_0$ used here
and in~\cite{OS05:HFL} is different from, but isotopic in $TY|_{\bdy\nbd(L)}$ to, the
analogous vector field $v_0$ that arises in sutured Floer
homology~\cite[Section 4]{Juhasz06:Sutured}.)

Since $v_0$ pulls back to $v_0$ under the branched double cover map
$\pi\co \Sigma(L)\to Y$, there is a map $\pi^*\co \SpinC(Y,L)\to
\SpinC(\Sigma(L),\pi^{-1}(L))$. Since $c_1$ is natural, The map
$\pi^*$ sends torsion $\SpinC$ structures---i.e., $\SpinC$ structures
whose first Chern classes are torsion---to torsion $\SpinC$
structures. On a related point, the involution $\tau\co
(\Sigma(L),\pi^{-1}(L))\to(\Sigma(L),\pi^{-1}(L))$ of the branched
double cover induces an involution $\tau_*\co
\SpinC(\Sigma(L),\pi^{-1}(L))\to \SpinC(\Sigma(L),\pi^{-1}(L))$. The
image of $\pi^*\co \SpinC(Y,L)\to \SpinC(\Sigma(L),\pi^{-1}(L))$ is
contained in the fixed set of $\tau_*$.

Recall that $\HFLa(Y,L)$ decomposes as a direct sum 
\[
\HFLa(Y,L)=\bigoplus_{\spinc\in\SpinC(Y, L)}\HFLa(Y,L;\spinc),
\]
Each $\HFLa(Y,L;\spinc)$ has a relative grading by
$\ZZ/\divis(c_1(\spinc))$, where $c_1(\spinc)$ denotes the first Chern
class of the $2$-plane field associated to $\spinc$ and $\divis$
denotes the divisibility of the cohomology class $c_1(\spinc)$, i.e.,
$\divis(a)=\max\{n\in\ZZ\mid \exists b, a=n\cdot b\}$. In particular,
$\HFLa(Y,L;\spinc)$ is relatively $\ZZ$-graded exactly when
$c_1(\spinc)$ is torsion. The relevance of this condition is that Theorem~\ref{thm:hoch-local}
needs the Hochschild chain complex to be $\ZZ$-graded.

Given a Seifert surface $F$ for $L$ there is a corresponding surface
$F^\circ$ inside the $0$-surgery $Y_0(L)$.  Similarly, given a
relative $\SpinC$-structure $\spinc\in \SpinC(Y,L)$ there is a
corresponding $\SpinC$-structure $\spinc^\circ\in \SpinC(Y_0(L))$.
Given an absolute $\SpinC$-structure $\mathfrak{t}\in\SpinC(Y\setminus
L)$, let
\[
\HFLa(Y,L;\spinct,i)=\bigoplus_{\substack{\spinc\in\SpinC(Y,L)\\\spinc|_{Y\setminus
      L}=\spinct\\\langle c_1(\spinc^\circ),
    F^\circ\rangle=2i}}\HFLa(Y,L;\spinc).
\]
Note that, even though it does not appear in the notation,
$\HFLa(Y,L;\spinct,i)$ depends on $F$.

We are now ready for the promised generalization of Theorem~\ref{thm:br-d-cov-sseq}:
\begin{theorem}\label{thm:br-d-cov-sseq-gen}
  Let $Y^3$ be a closed $3$-manifold, $L\subset Y$ a nullhomologous
  link and $\spinc$ a torsion $\SpinC$-structure on $Y\setminus
  L$. Let $F$ be a Seifert surface for $L$. Suppose that
  Conjecture~\ref{conj:HF-pi-formal} holds for a pointed matched
  circle $\PMC$ representing $F$ and an integer $i$.  Then there is a
  spectral sequence with $E^1$-page given by
  $\HFLa(\Sigma(L),\pi^{-1}(L);\pi^*\spinct,i)$ converging to
  $\HFLa(Y,L;\spinct, i)$. The $d^j$ differential in this spectral
  sequence increases the (relative) Maslov grading by $j-1$.
\end{theorem}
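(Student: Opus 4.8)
The plan is to carry out the argument of Theorem~\ref{thm:br-d-cov-sseq} in this more general setting, adding the bookkeeping needed for the $\SpinC$-structure, the Alexander grading, and the Maslov grading. First, exactly as in the proof of Theorem~\ref{thm:br-d-cov-sseq}, set $Y'=Y\setminus\nbd(F)$, let $C\subset\bdy Y'$ be a pushoff of $\bdy F$, let $Y_1$ be the result of attaching a $3$-dimensional $2$-handle to $Y'$ along $C$, and let $\arcz$ be the co-core; a homeomorphism $\PunctF(\PMC)\to F$ makes $Y_1$ into an arced cobordism from $F(\PMC)$ to itself. Because $S^3$ played no special role in the proof of Theorem~\ref{thm:br-d-cov-sseq}, and because $\Sigma(L)$ was constructed from the Seifert surface $F$ precisely so that the argument goes through, the same reasoning identifies the generalized open book $(Y_1^\circ,L)$ with $(Y,L)$ and $(Y_2^\circ,L)$ with $(\Sigma(L),\pi^{-1}(L))$, where $Y_2=Y_1\sos{\bdy_R}{\cup}{\bdy_L}Y_1$. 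By property~(3) of $\CFDAa$, $\CFDAa(Y_1)$ may be taken to be an honest \dg bimodule; it is finite-dimensional over $\Field$, hence bounded; and $\Alg(\PMC,i)$ is homologically smooth and proper by Proposition~\ref{prop:Alg-is-hom-smooth}. So, granting Conjecture~\ref{conj:HF-pi-formal} for $(\PMC,i)$ --- which by Definition~\ref{def:pi-formal} makes \emph{every} $\Alg(\PMC,i)$-bimodule $\pi$-formal, in particular the diagonal summand of $\CFDAa(Y_1)$ --- hypotheses (A-1)--(A-3) of Theorem~\ref{thm:hoch-local} hold, and together with Theorem~\ref{thm:bordered-hochschlid} this produces a spectral sequence from $\HH_*\bigl(\CFDAa(Y_1)\DTP_{\Alg(\PMC)}\CFDAa(Y_1)\bigr)\cong\HFLa(\Sigma(L),\pi^{-1}(L))$ converging to $\HH_*(\CFDAa(Y_1))\cong\HFLa(Y,L)$, before refinements.

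I would then install the refinements. Both $\CFDAa$ and Zarev's bordered-sutured invariants split along relative $\SpinC$-structures, and the pairing theorems (Theorem~\ref{thm:pairing}, Theorem~\ref{thm:sutured-pairing}) and the self-gluing identification (Theorem~\ref{thm:bordered-hochschlid}, Theorem~\ref{thm:sutured-self-pairing}) are $\SpinC$-equivariant; see~\cite{LOT2,Zarev09:BorSut}. Under these, the Alexander grading $i$ corresponds to restricting $\Alg(\PMC)=\bigoplus_j\Alg(\PMC,j)$ to the summand $\Alg(\PMC,i)$ (over which only the diagonal $(j,j)$-pieces of a bimodule contribute to Hochschild homology), and the $\SpinC$-refinement corresponds to the splitting of $\HH_*(\Alg(\PMC,i),\CFDAa(Y_1))$ along those relative $\SpinC$-structures of $Y_1$ that restrict to $\spinct$ on $Y'\cong Y\setminus\nbd(F)$. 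The point to verify is that gluing two copies of $Y_1$ doubles such a $\SpinC$-structure to one that, after closing up, is $\pi^*\spinct$ on $\Sigma(L)\setminus\pi^{-1}(L)$ and sits in the same Alexander grading $i$; I would check this by naturality of $c_1$ under the degree-two map $\pi$ together with the description of $\pi^{-1}(F)$ as two copies of $F$ meeting along the branch locus, essentially the intersection-number computation sketched in the paragraphs preceding the theorem. Applying Theorem~\ref{thm:hoch-local} to the $(\spinct,i)$-summand of $\CFDAa(Y_1)$ then gives the refined spectral sequence with $\vhE^1=\HFLa(\Sigma(L),\pi^{-1}(L);\pi^*\spinct,i)$ converging to $\HFLa(Y,L;\spinct,i)$. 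Here it is essential that $\spinct$ be torsion, so that the relative Maslov grading is $\ZZ$-valued and the Hochschild complex is honestly $\ZZ$-graded, as Theorem~\ref{thm:hoch-local} requires; this is where property~(6) of $\CFDAa$ enters.

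Finally, for the Maslov grading statement, recall that the spectral sequence of Theorem~\ref{thm:hoch-local} is the $\vhE$ spectral sequence of the Hochschild--Tate bicomplex, with $\vhE^1_{p,q}=\HH_q(A,M\DTP M)$ and differentials $\vhd^r\co\vhE^r_{p,q}\to\vhE^r_{p-r,q+r-1}$. The Hochschild differential preserves the internal grading inherited from $\CFDAa$, which is the Maslov grading up to an overall shift, so the $q$-index of $\vhE^1$ is the Maslov grading on $\HFLa(\Sigma(L),\pi^{-1}(L);\pi^*\spinct,i)$, while the periodicity index $p$ carries no Maslov information (it records the power of $\eta$). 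Hence $\vhd^r$ raises the Maslov grading by $r-1$, i.e.\ $d^j$ raises it by $j-1$. I expect the main obstacle to be not the core argument --- which is essentially word-for-word that of Theorem~\ref{thm:br-d-cov-sseq} --- but the second paragraph: pinning down the $\SpinC$-equivariance of the bordered and bordered-sutured pairing theorems in precisely the form needed, and matching the doubling of relative $\SpinC$-structures with $\pi^*$ and with the Alexander grading $i$.
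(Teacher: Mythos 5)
Your proposal runs into a real obstruction in the very first paragraph: you copy the construction from Theorem~\ref{thm:br-d-cov-sseq} verbatim, cutting $Y$ along $F$, attaching a $2$-handle along a pushoff $C$ of $\bdy F$, and packaging $Y_1$ as an \emph{arced cobordism} from $F(\PMC)$ to itself. That works when $L$ is a knot: $\bdy F$ has one component, $C$ is a single circle, and the cocore is a single framed arc. But Theorem~\ref{thm:br-d-cov-sseq-gen} is about a nullhomologous \emph{link}, so $\bdy F$ has $n$ boundary components, $C$ is $n$ circles, and you would need to attach $n$ $2$-handles and keep track of $n$ cocores. An arced cobordism in the sense of bordered Floer theory carries exactly one framed arc, and Theorem~\ref{thm:bordered-hochschlid} identifies Hochschild homology with knot Floer homology of a single knot, not link Floer homology of an $n$-component link. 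This is precisely why the paper's proof abandons the $2$-handle construction and instead works in the bordered-sutured setting: one cuts $Y$ along $F$ directly, makes $\PunctF(\PMC)$ a sutured surface by splitting each component of $\bdy F$ into arcs $S_\pm$, equips $Y_1$ with sutures $R_\pm=S_\pm\times[0,1]$ on the $n$ annular pieces $\amalg_{i=1}^n [0,1]\times S^1$, and invokes Theorem~\ref{thm:sutured-self-pairing} together with Juh\'asz's identification of $\SFH$ of a link complement with $\HFLa$~\cite[Proposition 9.2]{Juhasz06:Sutured}. Without this switch your argument only proves the knot case.

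The second issue is the $\SpinC$ bookkeeping, which you flag as the ``main obstacle'' but then resolve by a topological sketch (naturality of $c_1$ under $\pi$, and $\pi^{-1}(F)$ as two copies of $F$). That sketch explains why $\pi^*$ should send torsion $\SpinC$-structures to torsion $\SpinC$-structures, but it does not show that the spectral sequence of Theorem~\ref{thm:hoch-local} actually decomposes along $(\spinct,i)$. The paper's argument is at the level of the Hochschild--Tate bicomplex: each column is $\CFLa(\Sigma(L),\pi^{-1}(L))$, the vertical (Hochschild) differentials preserve the $\SpinC$-decomposition, and the horizontal ($1+\tau$) differentials preserve only the decomposition into $\tau_*$-orbits; and the identification of $E^3$ with $\HFLa(Y,L)$ respects $\SpinC$ because the squaring map $\x\mapsto\x\otimes\x$ of Proposition~\ref{prop:Frob} sends a generator representing $\spinc$ to one representing $\pi^*\spinc$, which follows directly from how $\SpinC$-structures are attached to generators in~\cite[Section 3.6]{OS05:HFL}. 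You would need to supply something equivalent to this chain-level statement; the $c_1$ argument alone doesn't do it. Your Maslov-grading reasoning, by contrast, matches the paper's and is fine.
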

\begin{proof}[Proof of Theorem~\ref{thm:br-d-cov-sseq-gen}]
  The proof is essentially the same as the proof of
  Theorem~\ref{thm:br-d-cov-sseq}, after replacing $S^3$ by $Y$, so we
  will be brief.  Let $Y_1$ denote the result of cutting $Y$ along a Seifert
  surface $F$ for $L$. The boundary of $Y_1$ is divided naturally into
  three parts: a copy of $F$, a copy of $-F$, and $\amalg_{i=1}^n
  [0,1]\times S^1$. Make $F$ into a sutured surface by dividing each
  boundary component into two connected arcs $S_\pm$, and choose an
  arc diagram $\PMC$ and diffeomorphism $\phi\co \PunctF(\PMC)\to
  F$. Identifying $\{0\}\times S^1$ (respectively $\{1\}\times S^1$)
  with $\bdy F$, let $R_\pm = S_\pm\times[0,1]\subset
  S^1\times[0,1]$. This makes $Y_1$ into a sutured cobordism from
  $\PunctF(\PMC)$ to itself. 
  By Theorem~\ref{thm:sutured-self-pairing}, and the interpretation of
  sutured Floer homology of a link complement as link Floer
  homology~\cite[Proposition 9.2]{Juhasz06:Sutured},
  \begin{align*}
    \HFLa(Y,L)&\cong \HH_*(\BSDAa(Y_1))\\
    \HFLa(\Sigma(L),\pi^{-1}(L))&\cong \HH_*(\BSDAa(Y_1)\DTP\BSDAa(Y_1)).
  \end{align*}

  The behavior of $d^i$ on the relative Maslov grading is obvious from
  the construction of the spectral sequence
  (cf.~Remark~\ref{remark:vh-pictures}).
  So, if we ignore the decomposition into $\SpinC$-structures (and the
  corresponding issues with the $\ZZ$-grading), the result follows
  from Theorem~\ref{thm:hoch-local} (using
  Proposition~\ref{prop:Alg-is-hom-smooth}).

  There are two options for treating the $\SpinC$-structures: either
  we can study carefully the $G$-set valued gradings on $\BSDAa$ and
  in the pairing theorem or we can look back at the proof of
  Theorem~\ref{thm:hoch-local}. We will explain the latter option.

  Let $M$ denote $\BSDAa(Y_1)$ and consider the bicomplex
  $\HC_{*,*}^\Tate(M\DTP M)$. By the self-pairing theorem
  (Theorem~\ref{thm:bordered-hochschlid}), each column in
  $\HC_{*,*}^\Tate(M\DTP M)$ is homotopy equivalent to
  $\CFLa(\Sigma(L),\pi^{-1}(L))$. The vertical differentials in the bicomplex respect
  the decomposition of $\CFLa(\Sigma(L),\pi^{-1}(L))$ into relative
  $\SpinC$-structures. The horizontal differentials do not respect the
  decomposition, but do respect the decomposition into $\tau_*$-orbits
  of relative $\SpinC$-structures,
  \[
  \CFLa(\Sigma(L),\pi^{-1}(L))=\bigoplus_{\spinc\in\SpinC(\Sigma(L),\pi^{-1}(L))/\tau_*}\CFLa(\Sigma(L),\pi^{-1}(L);\spinc)\oplus
  \CFLa(\Sigma(L),\pi^{-1}(L);\tau_*\spinc).
  \]
  It follows that the entire spectral sequence decomposes into
  $\tau_*$-orbits of relative $\SpinC$-structures.
  It remains to verify that the isomorphism $\HFLa(Y,L)\cong
  E^3_{p,*}$ respects relative $\SpinC$-structures, in the sense that
  for each relative $\SpinC$-structure $\spinc$ the isomorphism
  identifies $\HFLa(Y,L;\spinc)$ with $E^3_{p,*}(\pi^*\spinc)$. This,
  in turn, follows from the fact that given a generator $\x$ for
  $\HFLa(Y,L)\cong\HH_*(M)$ representing the $\SpinC$-structure
  $\spinc$, $\x\otimes\x\in \HH_*(M\DTP M)\cong
  \HFLa(\Sigma(L),\pi^{-1}(L))$ represents the $\SpinC$-structure
  $\pi^*\spinc$, which is immediate from how a $\SpinC$-structure
  is associated to a generator (see~\cite[Section 3.6]{OS05:HFL}).
\end{proof}

\begin{proof}[Proof of Theorem~\ref{thm:br-g2}]
  This is immediate from Theorems~\ref{thm:HF-pi-formal-small}
  and~\ref{thm:br-d-cov-sseq-gen}.
\end{proof}

\begin{proof}[Proof of Theorem~\ref{thm:br-extreme}]
  This is immediate from Theorems~\ref{thm:HF-pi-formal-extreme}
  and~\ref{thm:br-d-cov-sseq-gen}.
\end{proof}

\subsection{The tube-cutting piece}\label{sec:TC}
To use Theorem~\ref{thm:hoch-local} to obtain results about the
Heegaard Floer homology of closed $3$-manifolds we need a Hochschild
homology interpretation of $\HFa$ (rather than $\HFKa$). This is
obtained by using a bimodule associated to a particular bordered
Heegaard diagram, which we call the \emph{tube-cutting piece}.

\begin{definition}
  Let $\PMC$ be a pointed matched circle or, more generally, arc diagram. The \emph{tube-cutting
    piece} for $\PMC$, denoted $\TC(\PMC)$, is the bordered-sutured
  Heegaard diagram defined as follows. Let $\Id(\PMC)$ denote the
  standard Heegaard diagram for the identity map of $F(\PMC)$; see
  \cite[Definition 5.35]{LOT2} or Figure~\ref{fig:tube-cutting}. Write
  $\Id(\PMC)=(\Sigma,
  \{\alpha_1^a,\dots,\alpha_{2k}^a\},\{\beta_1,\dots,\beta_k\},\arcz)$. The
  surface $\Sigma$ has two boundary components $\bdy_L\Sigma$ and
  $\bdy_R\Sigma$, and $z$ is an arc connecting $\bdy_L\Sigma$ and
  $\bdy_R\Sigma$. Let $\alpha_1^c$ (respectively $\beta_{k+1}$) be an
  embedded circle in $\Sigma$ disjoint from the $\alpha_i^a$
  (respectively $\beta_i$) and homologous to $\bdy_R\Sigma$. Let
  $z_1=\arcz\cap\bdy_L\Sigma$ and $z_2=\arcz\cap\bdy_R\Sigma$. Then
  $\TC(\PMC)=(\Sigma,\{\alpha_1^a,\dots,\alpha_{2k}^a,\alpha_1^c\},\{\beta_1,\dots,\beta_k,\beta_{k+1}\},\{z_1,z_2\})$.
\end{definition}

We turn next to the topological interpretation of $\TC(\PMC)$.
Recall that $\PMC$
specifies a surface $\PunctF(\PMC)$ with a single boundary
component. Bordered-sutured Floer theory interprets the diagram
$\Id(\PMC)$ as representing $[0,1]\times \PunctF(\PMC)$. The boundary
of $[0,1]\times \PunctF(\PMC)$ is divided into three pieces:
$\{0,1\}\times\PunctF(\PMC)$, which are viewed as bordered boundary
(i.e., boundary that one can glue along) and $[0,1]\times
(\bdy\PunctF)$, which is sutured boundary, with two longitudinal
sutures running along it. The diagram $\TC(\PMC)$ represents the
result of attaching a $2$-handle to $[0,1]\times (\bdy\PunctF)$ along
$\{1/2\}\times (\bdy\PunctF)$, and placing sutures on the result in
the obvious way.

\begin{figure}
  \centering
  %Font is 12 point.
  \includegraphics{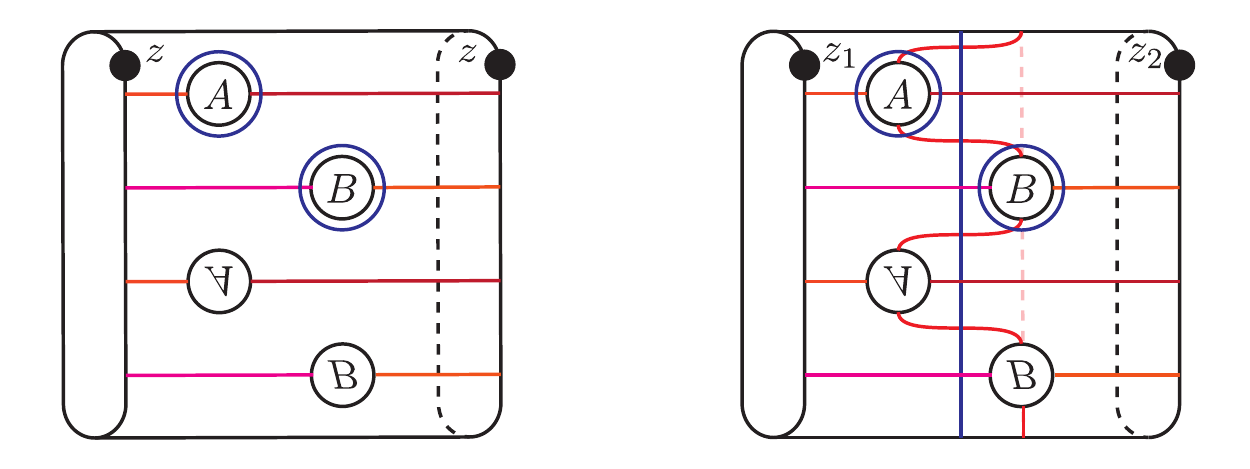}
  \caption{\textbf{The tube-cutting piece.} The diagram illustrates
    the genus $1$ case. Left: the standard bordered Heegaard diagram
    for the identity map of the torus. Right: the bordered Heegaard
    diagram $\TC$.}
  \label{fig:tube-cutting}
\end{figure}

\begin{theorem}\label{thm:hoch-TC}
  Let $\HD$ be a bordered Heegaard diagram for an arced cobordism $Y$
  from $F(\PMC)$ to itself.
  Let $T_Y$ denote the closed $3$-manifold
  obtained by gluing the two boundary components of $Y$ together in
  the obvious way, i.e., 
  \[
  T_Y= Y/\bigl(F(\PMC)\ni x\sim x\in -F(\PMC)\bigr).
  \]
  Then 
  \[
  \HH_*(\BSDAa(\HD\sos{F(\PMC)}{\cup}{-F(\PMC)}\TC(\PMC)))\cong \HFa(T_Y).
  \]
\end{theorem}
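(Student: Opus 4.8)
The plan is to reduce the statement to a topological identification of a glued-up sutured manifold, using the self-gluing theorem (Theorem~\ref{thm:sutured-self-pairing}), and then to invoke Juh\'asz's identification of sutured Floer homology with $\HFa$.

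First I would regard $\HD$ as a bordered-sutured diagram by removing a neighborhood of its arc $\arcz$, so that $\BSDAa(\HD)\simeq\CFDAa(Y)$ (Examples~\ref{eg:sutured-from-arced} and~\ref{eg:bsda-of-arced}); as a bordered-sutured cobordism this represents $Y\setminus\nbd(\arcz)$, a cobordism from $\PunctF(\PMC)$ to $\PunctF(\PMC)$ whose sutured boundary is the annulus $A=\bdy\nbd(\arcz)\cap(Y\setminus\nbd(\arcz))$, carrying two longitudinal sutures. The diagram $\TC(\PMC)$ represents $[0,1]\times\PunctF(\PMC)$ with a $3$-dimensional $2$-handle attached to $[0,1]\times\bdy\PunctF(\PMC)$ along $\{1/2\}\times\bdy\PunctF(\PMC)$; gluing it to one $F(\PMC)$-boundary of $Y\setminus\nbd(\arcz)$ therefore amounts, up to homeomorphism, to attaching to $Y\setminus\nbd(\arcz)$ a $2$-handle along a curve isotopic to the core of $A$ (the extra collar contributes nothing). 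Writing $Z$ for the resulting bordered-sutured cobordism from $\PunctF(\PMC)$ to $\PunctF(\PMC)$, Theorem~\ref{thm:sutured-self-pairing} gives
\[
\HH_*\bigl(\BSDAa(\HD\cup_{F(\PMC)}\TC(\PMC))\bigr)\;\cong\;\HH_*(\BSDAa(Z))\;\cong\;\SFH(Z^\circ),
\]
where $Z^\circ$ is obtained by gluing the two remaining $F(\PMC)$-boundaries of $Z$ together.

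Next I would identify $Z^\circ$ topologically. Closing up $Y\setminus\nbd(\arcz)$ along $F(\PMC)$ produces $T_Y\setminus\nbd(\gamma)$, where $\gamma\subset T_Y$ is the knot obtained by closing up the arc (this is exactly the manifold underlying Theorem~\ref{thm:bordered-hochschlid}); under this gluing $A$ closes up to the torus $\bdy\nbd(\gamma)$ and the core of $A$ becomes a meridian $\mu$ of $\gamma$. Hence $Z^\circ$ is $T_Y\setminus\nbd(\gamma)$ with a $2$-handle attached along $\mu$. Since $\mu$ bounds the meridian disk of $\nbd(\gamma)\cong S^1\times D^2$, this $2$-handle simply reglues a thickened meridian disk, so that $Z^\circ\cong T_Y\setminus B^3$: the ``tube'' that ordinary self-gluing winds around --- and which in Theorem~\ref{thm:bordered-hochschlid} leaves a knot in $T_Y$, producing $\HFKa$ rather than $\HFa$ --- has been cut, replacing the torus boundary by a sphere. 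One checks moreover that the induced sutured structure on $\bdy(T_Y\setminus B^3)=S^2$ is a single connected suture, so that $Z^\circ$ is the sutured manifold $(T_Y)(1)$ obtained from $T_Y$ by deleting an open ball and placing one suture on its boundary sphere. Then $\SFH((T_Y)(1))\cong\HFa(T_Y)$ by~\cite{Juhasz06:Sutured}, and combining this with the displayed isomorphism proves the theorem.

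The step requiring real care, rather than formal manipulation, is the topological bookkeeping just described: identifying the attaching circle of the $2$-handle of $\TC(\PMC)$ with the core of $A$ (hence with a meridian of $\gamma$ after closing up), and --- most delicately --- verifying that the sutures on the boundary sphere of $Z^\circ$ form a single circle, so that Juh\'asz's identification $\SFH((T_Y)(1))\cong\HFa(T_Y)$ applies on the nose. I expect both points to be cleanest at the level of Heegaard diagrams: one should check directly from the definition of $\TC(\PMC)$ that the self-glued diagram $(\HD\cup_{F(\PMC)}\TC(\PMC))^\circ$ is an ordinary pointed Heegaard diagram, with a single basepoint, for $T_Y$, after which $\SFH(Z^\circ)\cong\HFa(T_Y)$ is immediate.
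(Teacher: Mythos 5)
Your proof is correct and follows essentially the same path as the paper's: apply the bordered-sutured self-pairing theorem (Theorem~\ref{thm:sutured-self-pairing}) to identify the Hochschild homology with $\SFH$ of the glued-up diagram, use the topological interpretation of $\TC(\PMC)$ to recognize that manifold as $T_Y$ minus a ball with one suture on the $S^2$ boundary, and invoke Juh\'asz. You unpack the handle-attaching bookkeeping (the $2$-handle of $\TC(\PMC)$ becoming a meridian filling of the knot $\gamma$) more explicitly than the paper, which simply cites the topological interpretation of $\TC(\PMC)$ together with~\cite[Proposition 4.15]{Zarev09:BorSut}, but the underlying argument is the same.
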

\begin{proof}
  Let $\HD'$ be the sutured Heegaard diagram obtained by gluing $\HD$ to the tube-cutting piece $\TC(\PMC)$ along both boundary components.  
  From the self-pairing theorem for bordered-sutured Floer
  homology, Theorem~\ref{thm:sutured-self-pairing},
  \[
  H_*(\BSAa(\HD)\DTP_{\Alg(\PMC)\otimes\Alg(-\PMC)}\BSDa(\TC(\PMC))=\SFH(\HD').  
  \]
  From the topological interpretation of $\TC(\PMC)$ and gluing
  properties of bordered-sutured diagrams \cite[Proposition
  4.15]{Zarev09:BorSut}, $\HD'$ is a sutured Heegaard diagram for
  $T_Y\setminus B^3$ with a single suture on the $S^2$ boundary
  component. Thus,
  \[
  \SFH(\HD')=\HFa(T_Y)
  \]
  (see~\cite[Proposition 9.1]{Juhasz06:Sutured}).
\end{proof}

We will also use a variant of the tube-cutting piece in order to prove
that certain bimodules are neutral
(Definition~\ref{def:neutral}). Consider the Heegaard diagram
$\TC(\PMC)$. Draw an arc $\gamma$ from $z_1$ to $z_2$ in
$
\Sigma\setminus(\alpha_1^a\cup\dots\cup\alpha_{2k}^a\cup\beta_1\cup\dots\cup\beta_k).
$
Choose a point $z_3$ on $\gamma$, dividing $\gamma$ into two subarcs
$\gamma_{13}$ from $z_1$ to $z_3$ and $\gamma_{32}$ from $z_3$ to
$z_2$. Choose $z_3$ so that $\gamma_{13}$ intersects $\beta_{2k+1}$
once and is disjoint from $\alpha_1^c$, while $\gamma_{32}$ intersects
$\alpha_1^c$ once and is disjoint from $\beta_{2k+1}$. (See
Figure~\ref{fig:TCc}. It may be necessary to perturb $\alpha_1^c$,
$\beta_{2k+1}$ and $\gamma$ in order to be able to choose $z_3$ this
way.) Let
\[
\TCc(\PMC)=(\Sigma,\{\alpha_1^a,\dots,\alpha_{2k}^a,\alpha_1^c\},\{\beta_1,\dots,\beta_k,\beta_{k+1}\},\{z_1,z_2,z_3\}).
\]
We can again view $\TCc(\PMC)$ as a bordered-sutured Heegaard diagram,
now representing $[0,1]\times \PunctF(\PMC)$ with sutures on
$[0,1]\times(\bdy\PunctF(\PMC))$ as shown in Figure~\ref{fig:TCc}.

\begin{figure}
  \includegraphics{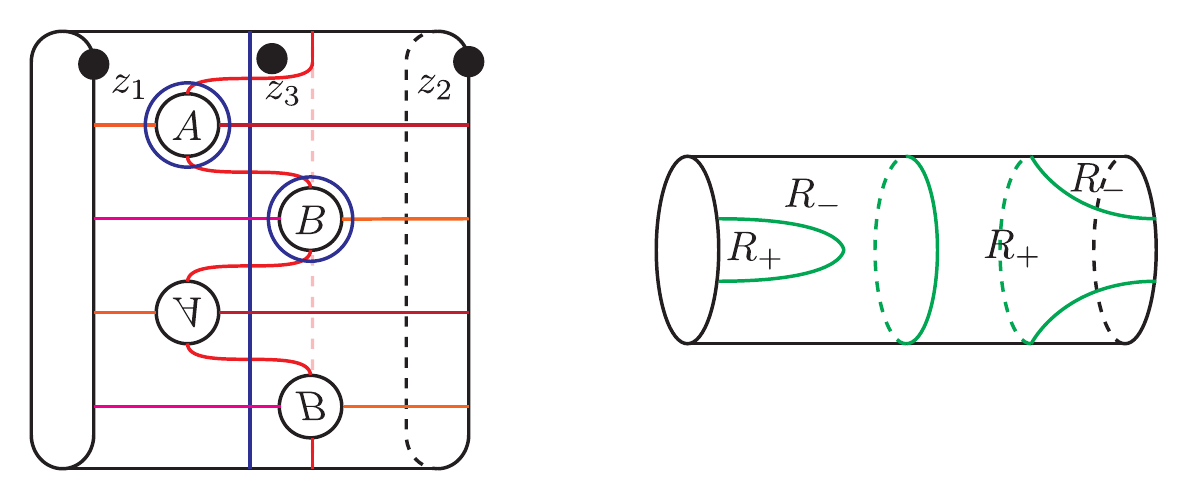}
  \caption{\textbf{The diagram $\TCc(\PMC)$ and the corresponding
      bordered-sutured manifold.} Left: the genus $1$ case of the
    diagram $\TCc(\PMC)$. Right: the corresponding sutures on
    $[0,1]\times S^1\subset \bdy([0,1]\times\PunctF(\PMC))$.}
  \label{fig:TCc}
\end{figure}

We are interested in $\TCc(\PMC)$ because of two key properties. First:
\begin{proposition}\label{prop:DTP-vanish}
  The Heegaard diagram $\TC(\PMC)\cup_{\PMC}\TCc(\PMC)$ has trivial
  bordered-sutured invariants. In particular,
  $\BSDAa(\TC(\PMC))\DTP_{\Alg(\PMC)}\BSDAa(\TCc(\PMC))$ is acyclic.
\end{proposition}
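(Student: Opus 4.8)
The plan is to get the second sentence from the first by Zarev's pairing theorem (Theorem~\ref{thm:sutured-pairing}): gluing $\TC(\PMC)$ to $\TCc(\PMC)$ along $F(\PMC)$ realizes $\BSDAa(\TC(\PMC))\DTP_{\Alg(\PMC)}\BSDAa(\TCc(\PMC))$, so if the glued bordered-sutured Heegaard diagram has vanishing invariant then this derived tensor product is acyclic. Thus it suffices to prove that all the bordered-sutured invariants ($\BSDAa$, $\BSDa$, $\BSAa$, \dots) of $\TC(\PMC)\cup_\PMC\TCc(\PMC)$ vanish.

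The cleanest route I see is diagrammatic. First I would draw the glued diagram explicitly, using that the curves $\alpha_1^c$, $\beta_{k+1}$ of $\TCc(\PMC)$ are both isotopic to the relevant boundary circle, and that $z_3$ lies in the region between them (this is exactly the content of the crossing conditions $\gamma_{13}\cap\beta_{k+1}=1$, $\gamma_{32}\cap\alpha_1^c=1$). After the gluing, this $\alpha$--$\beta$ pair, together with $z_3$, forms a destabilization pair, so the diagram reduces to a smaller bordered-sutured diagram without $z_3$; carrying out the destabilization (and, if needed, the analogous reduction coming from the $\TC(\PMC)$ side) I expect to arrive at a diagram with an $\alpha$-circle disjoint from every $\beta$-curve, hence with no generators at all — so every bordered-sutured complex of the glued diagram is literally zero. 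An alternative, more topological route is to identify the bordered-sutured manifold $W$ represented by the glued diagram: using the descriptions above ($\TC(\PMC)$ is $[0,1]\times\PunctF(\PMC)$ with a $2$-handle on $\{1/2\}\times\bdy\PunctF(\PMC)$, and $\TCc(\PMC)$ is $[0,1]\times\PunctF(\PMC)$ with the sutures of Figure~\ref{fig:TCc}), one checks that in $W$ one of $R_+$, $R_-$ has a disk (compressible) component, so that $W$ capped off in any way yields a non-taut balanced sutured manifold; then $\SFH$ of every closed-up manifold vanishes by Juh\'asz's tautness criterion, and since a nonzero bordered-sutured bimodule must pair nontrivially with some test module, $\BSDAa(W)\simeq 0$.

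The main obstacle in either approach is the bookkeeping of the suture/basepoint data through the gluing: one has to be careful about how the suture configurations of $\TC(\PMC)$ and $\TCc(\PMC)$ fit together, since that is precisely what the placement of $z_3$ relative to $\beta_{k+1}$ and $\alpha_1^c$ is engineered to control, and one must verify either that the reduced diagram genuinely has no generators or that the combined manifold genuinely has the degenerate ($R$-compressible) feature. Once the suture combinatorics near the gluing region is pinned down, the vanishing itself is routine.
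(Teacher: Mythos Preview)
Your overall strategy matches the paper: reduce, via the pairing theorem, to showing that the glued diagram is equivalent to an admissible diagram with no generators. But the specific diagrammatic mechanism you propose is not the right one, and the destabilization step does not work as stated.

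The curves $\alpha_1^c$ and $\beta_{k+1}$ in $\TCc(\PMC)$ are both \emph{parallel} to the boundary circle (that is how they were defined in $\TC$), so they are isotopic to one another in $\Sigma$; they do not meet transversally in a single point on a torus summand, and hence do not constitute a destabilization pair in the usual sense. Consequently the ``reduce by destabilizing, then find an $\alpha$-circle missing all $\beta$-curves'' plan does not go through as you describe it. (Also note that if you really did destabilize both the $\TCc$ and the $\TC$ pairs, there would be no $\alpha$-circles left at all.)

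What the paper actually does is a handleslide argument. The $\beta$-circle $\beta_{k+1}$ coming from $\TCc(\PMC)$ is parallel to the gluing circle. After gluing, that boundary is filled in by the other half of the diagram, and one can slide this $\beta_{k+1}$ over the $\beta$-circles living in the $\TC(\PMC)$ piece (and isotope) until it becomes a small null-homotopic circle encircling the basepoint $z_3$. This small $\beta$-circle is disjoint from every $\alpha$-curve, so the resulting diagram has no generators; the location of $z_3$ is exactly what keeps the diagram admissible after these moves. So the end state is a $\beta$-circle (not an $\alpha$-circle) missing all $\alpha$-curves, produced by handleslides rather than destabilization.

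Your alternative topological route via non-tautness is plausible in spirit, but the final step (``a nonzero bordered-sutured bimodule must pair nontrivially with some test module'') is an extra faithfulness statement you would have to justify; the paper avoids this by arguing directly at the level of diagrams.
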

\begin{proof}
  One can perform a sequence of handleslides of the circle
  $\beta_{2k+1}$ in $\TCc(\PMC)$ over other $\beta$-circles in
  $\TC(\PMC)\cup_{\PMC}\TCc(\PMC)$, followed by an isotopy, so that
  the resulting circle $\beta_{2k+1}'$ is a small circle around $z_3$
  disjoint from the $\alpha$-curves. Moreover, because of the
  placement of the basepoints, this diagram is still admissible. So,
  $\TC(\PMC)\cup_{\PMC}\TCc(\PMC)$ is equivalent to an admissible
  diagram in which there are no generators for the bordered-sutured
  invariants; this implies that the bordered-sutured invariants of
  $\TC(\PMC)\cup_{\PMC}\TCc(\PMC)$ are trivial.
\end{proof}

Let $\tau_\bdy$ denote a positive Dehn twist of $\PunctF(\PMC)$ along
a curve parallel to the boundary (``the boundary Dehn twist'') and
$Y_{\tau_\bdy}$ the mapping cylinder of $\tau_\bdy$. Let
$\tau_\bdy^{-1}$ and $Y_{\tau_\bdy^{-1}}$ denote the negative boundary
Dehn twist and its mapping cylinder.

\begin{figure}
  \centering
  \includegraphics{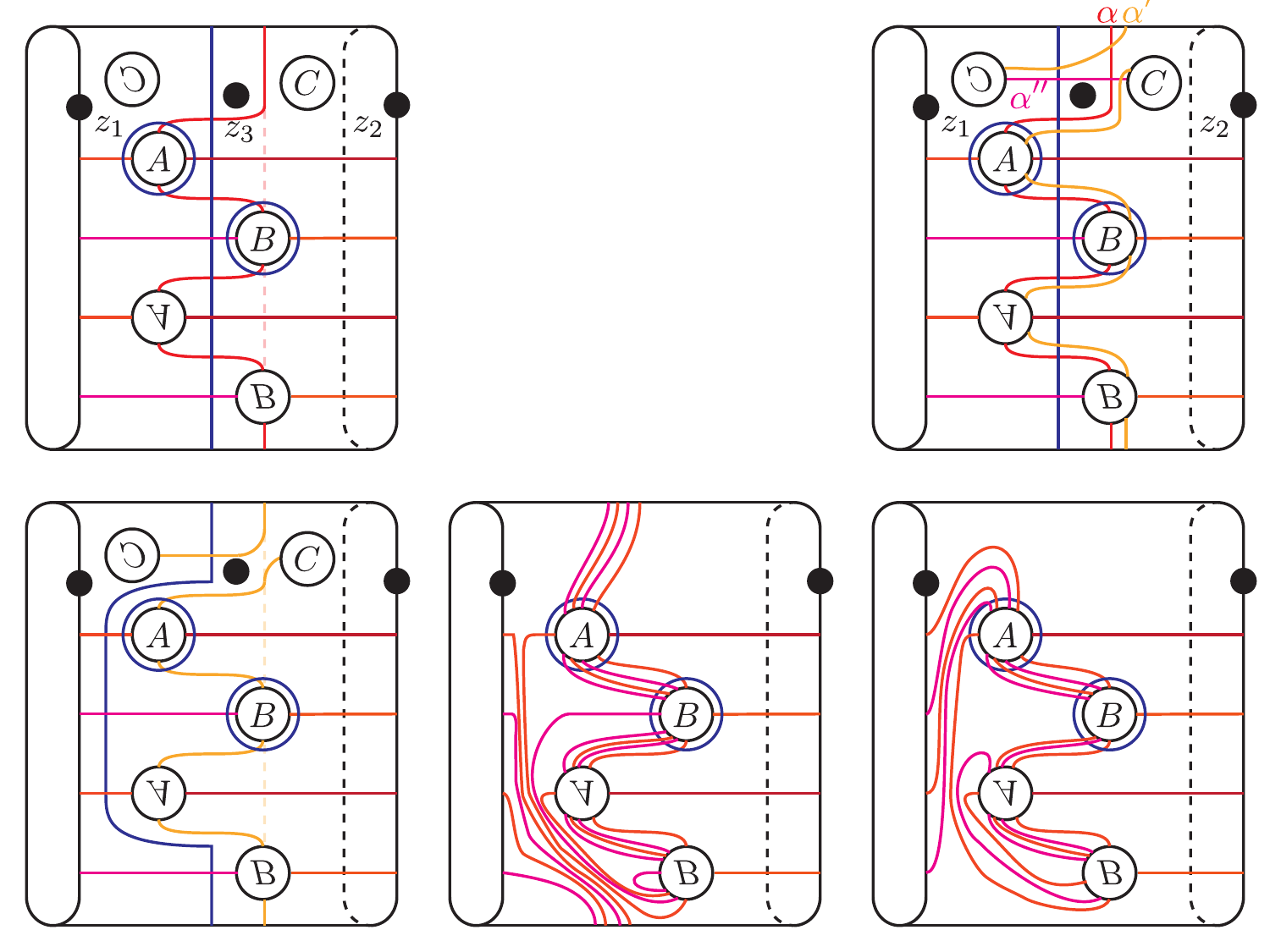}
  \caption{\textbf{Exact triangle for $\TCc(\PMC)$}. Top left: the
    bordered-sutured Heegaard diagram $\TCc(\PMC)$ with an extra
    handle. Top right: the bordered-sutured triple diagram inducing the exact
    triangle in Theorem~\ref{thm:bypass-tri}. Bottom: Identifying the
    third term in the exact triangle with the positive boundary Dehn
    twist. The bottom left and bottom center differ by a sequence of
    handleslides and a destabilization; the bottom right is obtained
    by applying a surface diffeomorphism to the bottom center.}
  \label{fig:TC-exact-tri}
\end{figure}

The second key property of $\TCc(\PMC)$ is:

\begin{theorem}\label{thm:bypass-tri}
  There are exact triangles
  \[
  \xymatrix{
    \BSDAa(\TCc(\PMC)) \ar[rr] & & \BSDAa(Y_{\tau_\bdy})\ar[dl]\\
    & \BSDAa(\Id) \ar[ul]& 
  }
  \]
  and
  \[
  \xymatrix{
    \BSDAa(\TCc(\PMC))\ar[rr] & & \BSDAa(\Id)\ar[dl]\\
    & \BSDAa(Y_{\tau_\bdy^{-1}}). \ar[ul]& 
  }
  \]
\end{theorem}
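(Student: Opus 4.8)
The plan is to realize all three bimodules from a single bordered-sutured Heegaard triple diagram and then apply the surgery (``bypass'') exact triangle in that setting. The overall shape of the argument is recorded in Figure~\ref{fig:TC-exact-tri}.

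First I would stabilize $\TCc(\PMC)$ by adding a handle carrying one new $\beta$-circle, as in the top left of Figure~\ref{fig:TC-exact-tri}; since stabilization is an allowed move, the invariant of this enlarged diagram is still $\BSDAa(\TCc(\PMC))$. Next I would build the bordered-sutured Heegaard triple diagram of the top right of Figure~\ref{fig:TC-exact-tri}: its $\alpha$-curves are those of $\TCc(\PMC)$, and it carries three families of attaching circles $\betas$, $\betas'$, $\betas''$ that agree away from the region of the new handle (and the curves $\beta_{2k+1}$ and $\alpha_1^c$ with which that handle interacts), and which near that region sit in the standard triangle position --- each pair meeting transversally in a single point and cobounding a small embedded triangle, with the usual winding arranged so that all three diagrams, and all of their doubles, are admissible. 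Counting holomorphic triangles then produces an exact triangle relating the bordered-sutured invariants of the three resulting diagrams; this is the literal analogue in the bordered-sutured category of the surgery exact triangle, and can alternatively be deduced from the pairing theorem (Theorem~\ref{thm:sutured-pairing}) together with the ordinary Heegaard Floer surgery exact triangle.

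It then remains to identify the three vertices. By construction the first is (a stabilization of) $\TCc(\PMC)$, so it computes $\BSDAa(\TCc(\PMC))$. For the second, sliding the new $\beta$-circle over the remaining $\beta$-curves and then destabilizing (bottom left to bottom center of Figure~\ref{fig:TC-exact-tri}) converts the diagram to the standard diagram $\Id(\PMC)$ for the identity cobordism; by Zarev's invariance of $\BSDAa$ under handleslides and (de)stabilization~\cite{Zarev09:BorSut}, this vertex computes $\BSDAa(\Id)$. For the third, the analogous handleslides followed by an ambient diffeomorphism of the Heegaard surface (bottom center to bottom right of Figure~\ref{fig:TC-exact-tri}) exhibit the diagram as a bordered-sutured diagram for the mapping cylinder of the positive boundary Dehn twist, so this vertex computes $\BSDAa(Y_{\tau_\bdy})$. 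This yields the first exact triangle; the second is obtained by the same construction with the three $\beta$-families taken in the opposite cyclic order (equivalently, with the triangle region mirrored), which leaves the vertices $\TCc(\PMC)$ and $\Id$ in place and replaces $Y_{\tau_\bdy}$ by $Y_{\tau_\bdy^{-1}}$.

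The main obstacle I anticipate is precisely this last identification of the third vertex: one must track the new $\beta$-circle carefully through the chain of handleslides and recognize the output as $Y_{\tau_\bdy}$ rather than $Y_{\tau_\bdy^{-1}}$ or $\Id$, the sign of the twist being pinned down by the orientation of the triangle chosen in the triple diagram. A secondary, more routine point is checking admissibility of all of the diagrams and their doubles throughout, which the winding in the triangle region is designed to guarantee.
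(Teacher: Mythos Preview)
Your overall strategy---realize the three bimodules in a single triple diagram and invoke the surgery exact triangle---is the paper's strategy as well. Two points of difference are worth flagging.

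First, the paper varies the $\alpha$-circle rather than the $\beta$-circles: it adds a handle with both feet in basepoint regions (so the bordered-sutured manifold changes only by a product/disk decomposition, which is the actual reason the invariant is unchanged---not ``stabilization'' in the usual sense of adding a canceling $\alpha/\beta$ pair), and then considers three circles $\alpha,\alpha',\alpha''$ of slopes $\infty,1,0$ on the resulting punctured torus. Your version, varying $\beta$, could presumably be made to work, but your justification ``stabilization is an allowed move'' for adding a handle with a single new $\beta$-circle is not quite right as stated; you would need the analogue of the basepoint-region argument.

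Second, and more substantively, your derivation of the second triangle is weaker than the paper's. Simply reversing the cyclic order of the three attaching families does not by itself replace $\tau_\bdy$ with $\tau_\bdy^{-1}$; the three slopes are fixed by the construction, and the cyclic order only governs the direction of the maps. To get the inverse twist you would have to change the slope of the middle curve and redo the handleslide identification. The paper avoids this entirely: it tensors the first triangle with $\BSDAa(Y_{\tau_\bdy^{-1}})$ and uses the topological fact that $\TCc(\PMC)\cup_{\PunctF(\PMC)}Y_{\tau_\bdy^{-1}}\cong\TCc(\PMC)$, so the $\TCc$ vertex is fixed while the other two shift by one twist. This is both shorter and sidesteps the sign-of-twist bookkeeping you correctly anticipate as a difficulty.
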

\begin{proof}
  We construct a bordered-sutured quadruple Heegaard diagram
  $(\Sigma,\alphas,\alphas',\alphas'',\betas,\{z_1,z_2,z_3\})$ with the following properties:
  \begin{enumerate}
  \item $\BSDAa(\Sigma,\alphas,\betas,\{z_1,z_2,z_3\})\cong
    \BSDAa(\TCc(\PMC))$. (In fact, the bordered-sutured 3-manifolds
    specified by $(\Sigma,\alphas,\betas,\{z_1,z_2,z_3\})$ and
    $\TCc(\PMC)$ differ by a product decomposition.)
  \item\label{item:is-diag-tou}
    $(\Sigma,\alphas',\betas,\{z_1,z_2,z_3\})$ is a bordered-sutured
    Heegaard diagram for $\tau_\bdy$.
  \item\label{item:is-diag-id}
    $\BSDAa(\Sigma,\alphas'',\betas,\{z_1,z_2,z_3\})\simeq
    \Alg(\PMC)$. (In fact, $(\Sigma,\alphas'',\betas,\{z_1,z_2,z_3\})$
    is a bordered Heegaard diagram for the mapping cylinder of the
    identity map.)
  \item Each of $\alphas$, $\alphas'$ and $\alphas''$ consists of $2k$
    arcs and $1$ circle.
  \item The arcs in $\alphas$, $\alphas'$ and $\alphas''$ are
    the same.
  \item\label{item:local-pic} Let $\alpha$, $\alpha'$ and $\alpha''$
    denote the circles in $\alphas$, $\alphas'$ and $\alphas''$,
    respectively. Then $\alpha$, $\alpha'$ and $\alpha''$ all lie in a
    punctured torus $T$ in $\Sigma$ disjoint from the $\alpha$-arcs,
    and with respect to an appropriate orientation-preserving
    identification of $T$ with $\RR^2/\ZZ^2$, $\alpha$ corresponds to
    the line $x=0$, $\alpha'$ corresponds to the line $y=x$ and
    $\alpha''$ corresponds to the line $x=0$.  (That is, $\alpha$,
    $\alpha'$ and $\alpha''$ have slopes $\infty$, $1$ and $0$,
    respectively.)
  \end{enumerate}
  The first exact triangle then follows from the pairing theorem in
  bordered-sutured Floer homology and the exact triangle of type $D$
  invariants in~\cite[Section 11.2]{LOT1}. (The strange cyclic
  ordering $\infty$--$1$--$0$ comes from the fact that we are varying
  the $\alpha$-circles, not the $\beta$-circles.)

  The quadruple diagram is illustrated in
  Figure~\ref{fig:TC-exact-tri}.  To construct it, start with the
  bordered Heegaard diagram
  $\TCc(\PMC)=(\Sigma',\alphas,\betas,\{z_1,z_2,z_3\})$. Add a new
  handle with one foot near $z_1$ and one foot near $z_3$; call the
  resulting surface $\Sigma$. Since both feet of the new handle are in
  regions containing basepoints,
  $\BSDAa(\Sigma,\alphas,\betas,\{z_1,z_2,z_3\})\cong
  \BSDAa(\TCc(\PMC))$ (and, in fact, the corresponding
  bordered-sutured 3-manifolds differ by a disk decomposition).

  Let $\alpha=\alpha_1^c$ be the unique circle in $\alphas$. Let
  $\alpha''$ be a circle which runs along the new handle in $\Sigma$
  once, intersects $\alpha$ and $\beta_{2k+1}$ once each, and is
  disjoint from the other $\alpha$- and $\beta$-curves. Obtain
  $\alpha'$ from $\alpha''\cup \alpha$ by smoothing the unique
  crossing. There are two ways to perform this smoothing; one of the
  two gives curves satisfying property~(\ref{item:local-pic}).

  It remains to verify properties~(\ref{item:is-diag-tou})
  and~(\ref{item:is-diag-id}). Property~(\ref{item:is-diag-id}) is
  easy: since the only $\beta$-circle that $\alpha''$ intersects is
  $\beta_{2k+1}$, any generator for
  $\BSDAa(\Sigma,\alphas',\betas,\{z_1,z_2,z_3\})$ must contain this
  point. This gives an identification of generators for
  $\BSDAa(\Sigma,\alphas',\betas,\{z_1,z_2,z_3\})$ and $\BSDAa$ of the
  standard Heegaard diagram for the identity cobordism. Moreover, the
  placement of the basepoints means that exactly the same curves are
  counted in the $\Ainf$-structure on the two bimodules. (Alternately,
  one can destabilize $\alpha''$ and $\beta_{2k+1}$ to obtain the
  standard Heegaard diagram for the identity cobordism.)

  For Property~(\ref{item:is-diag-tou}), we manipulate the Heegaard
  diagram. Specifically, after performing a sequence of handleslides
  (two for each $\alpha$-arc on the left-hand side of the diagram,
  say) one can destabilize $\alpha'$ and $\beta_{2k+1}$ to obtain a
  Heegaard diagram for the boundary Dehn twist; see
  Figure~\ref{fig:TC-exact-tri} for the genus $1$ case.  (To be
  convinced of the sign of the Dehn twist, compare with~\cite[Figure
  12]{LOT11:HomPair} and count the number of intersection points on
  each $\alpha$-arc.)

  To obtain the second exact triangle, tensor the first with
  $\BSDAa(Y_{\tau_\bdy^{-1}})$, and note that
  $\TCc(\PMC)\cup_{\PunctF(\PMC)}Y_{\tau_\bdy^{-1}}\cong \TCc(\PMC)$.
\end{proof}

\begin{corollary}\label{cor:induce-iso}
  Let $f\co \BSDAa(\tau_\bdy)\to \BSDAa(\Id)$ be the map from
  Theorem~\ref{thm:bypass-tri}. Then the map 
  \[
  f\otimes\Id\co \BSDAa(Y_{\tau_\bdy})\DTP \BSDAa(\TC(\PMC))\to
  \BSDAa(\Id)\DTP \BSDAa(\TC(\PMC))
  \]
  is a quasi-isomorphism. In particular, for any
  $(\Alg(\PMC),\Alg(\PMC))$-bimodule $M$, $f$ induces an isomorphism
  \[
  f_*\co 
  \HH_*\bigl(\BSDAa(Y_{\tau_\bdy})\DTP \BSDAa(\TC(\PMC))\DTP M\bigr)\stackrel{\cong}{\longrightarrow}\HH_*\bigl(\BSDAa(\TC(\PMC))\DTP
  M\bigr)
  \]
\end{corollary}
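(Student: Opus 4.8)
The plan is to deduce this from the exactness of $\DTP$ together with the acyclicity statement of Proposition~\ref{prop:DTP-vanish}. Recall that $\DTP_{\Alg(\PMC)}$ descends to a triangulated bifunctor on $D(\Bimod{\Alg(\PMC)})$. Applying the functor $(-)\DTP_{\Alg(\PMC)}\BSDAa(\TC(\PMC))$ to the distinguished triangle underlying the first exact triangle of Theorem~\ref{thm:bypass-tri}, i.e.\ to $\BSDAa(\TCc(\PMC)) \to \BSDAa(Y_{\tau_\bdy}) \stackrel{f}{\to} \BSDAa(\Id) \to \Sigma\BSDAa(\TCc(\PMC))$, produces a new distinguished triangle
\[
\BSDAa(\TCc(\PMC)) \DTP \BSDAa(\TC(\PMC)) \to \BSDAa(Y_{\tau_\bdy}) \DTP \BSDAa(\TC(\PMC)) \stackrel{f\otimes\Id}{\to} \BSDAa(\Id) \DTP \BSDAa(\TC(\PMC)) \to
\]
in which, by naturality of the connecting morphism, the second arrow is precisely $f\otimes\Id$.

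First I would check that the initial vertex $\BSDAa(\TCc(\PMC)) \DTP \BSDAa(\TC(\PMC))$ is acyclic. Proposition~\ref{prop:DTP-vanish} asserts this for $\BSDAa(\TC(\PMC)) \DTP \BSDAa(\TCc(\PMC))$; since the handleslide argument in its proof --- sliding $\beta_{2k+1}$ off all of the $\alpha$-curves to a small circle about $z_3$ --- is insensitive to the order in which $\TC(\PMC)$ and $\TCc(\PMC)$ are glued, the same conclusion holds here (equivalently, one checks that the two gluings yield homeomorphic bordered-sutured manifolds). With the first vertex zero in the derived category, the long exact sequence in homology forces $f\otimes\Id$ to be an isomorphism in $D(\Bimod{\Alg(\PMC)})$, that is, a quasi-isomorphism; if one wishes, the target may be identified with $\BSDAa(\TC(\PMC))$ using that $\BSDAa(\Id)=\Alg(\PMC)$ is a unit for $\DTP$.

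For the final assertion I would apply the functor $(-)\DTP M$, which preserves quasi-isomorphisms, to conclude that $f\otimes\Id\otimes\Id\co \BSDAa(Y_{\tau_\bdy})\DTP\BSDAa(\TC(\PMC))\DTP M \to \BSDAa(\Id)\DTP\BSDAa(\TC(\PMC))\DTP M$ is a quasi-isomorphism of $(\Alg(\PMC),\Alg(\PMC))$-bimodules, and then apply $\HH_*$, which carries quasi-isomorphisms to isomorphisms (as recalled after Definition~\ref{def:Hochschild}), noting $\BSDAa(\Id)\DTP\BSDAa(\TC(\PMC))\DTP M \simeq \BSDAa(\TC(\PMC))\DTP M$. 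The only genuinely delicate step is the one matching Proposition~\ref{prop:DTP-vanish} to the gluing order that actually appears after tensoring the triangle on the right; everything else is a formal consequence of the exactness of $\DTP$ and the quasi-isomorphism-invariance of Hochschild homology.
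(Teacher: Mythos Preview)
Your argument is correct and is essentially the paper's own proof, which simply says ``tensor the exact triangle in Theorem~\ref{thm:bypass-tri} with $\BSDAa(\TC(\PMC))$ and apply Proposition~\ref{prop:DTP-vanish} to see that every third term vanishes.'' You have in fact been more careful than the paper on one point: Proposition~\ref{prop:DTP-vanish} is stated for $\BSDAa(\TC(\PMC))\DTP\BSDAa(\TCc(\PMC))$ while tensoring the triangle on the right by $\BSDAa(\TC(\PMC))$ produces $\BSDAa(\TCc(\PMC))\DTP\BSDAa(\TC(\PMC))$, and your observation that the handleslide argument (or the underlying bordered-sutured homeomorphism) is insensitive to the gluing order is the right way to close that gap.
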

\begin{proof}
  Tensor the exact triangle in Theorem~\ref{thm:bypass-tri} with
  $\BSDAa(\TC(\PMC))$ and apply Proposition~\ref{prop:DTP-vanish} to
  see that every third term vanishes.
\end{proof}

Tensoring with the bimodule $\BSDAa(Y_{\tau_\bdy^{-1}})$ is the Serre functor for the derived
category of $\Alg(\PMC)$-bimodules~\cite[Theorem 10]{LOT11:HomPair}. In particular:
\begin{citethm}\label{thm:bord-coho-is-ho}\cite[Corollary 11]{LOT11:HomPair}
  For any $(\Alg(\PMC),\Alg(\PMC))$ bimodule $M$ we have
  \[
  \HH^*(M)\cong \HH_*(\BSDAa(Y_{\tau_\bdy})\DTP M).
  \]
\end{citethm}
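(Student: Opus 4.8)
The plan is to reduce this to the general algebraic fact that, for a homologically smooth and proper \dg algebra $A$, twisting Hochschild homology by the inverse dualizing (``cobar'') bimodule $A^!$ of Proposition~\ref{prop:cobar} produces Hochschild cohomology. Write $A=\Alg(\PMC)$; it is homologically smooth by Proposition~\ref{prop:Alg-is-hom-smooth-borsut} and proper since it is finite-dimensional, so $A^!$ is defined and $A$ is perfect over $A^e=A\otimes A^{\op}$. First I would identify $\BSDAa(Y_{\tau_\bdy})$ with $A^!$ in $D(\Bimod{A})$. Since $Y_{\tau_\bdy}\cup_{\PunctF(\PMC)}Y_{\tau_\bdy^{-1}}$ is the identity mapping cylinder, the pairing theorem (Theorem~\ref{thm:sutured-pairing}) together with $\BSDAa(\Id)\simeq A$ gives $\BSDAa(Y_{\tau_\bdy})\DTP_A\BSDAa(Y_{\tau_\bdy^{-1}})\simeq A$, so these two bimodules are mutually inverse under $\DTP_A$. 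By \cite[Theorem 10]{LOT11:HomPair} the functor of tensoring with $\BSDAa(Y_{\tau_\bdy^{-1}})$ is a Serre functor, which for a smooth proper algebra is identified with tensoring by $A^\vee=\RHom_\Field(A,\Field)$; so by uniqueness of Serre functors $\BSDAa(Y_{\tau_\bdy^{-1}})\simeq A^\vee$, and by van den Bergh duality $A^!\DTP_A A^\vee\simeq A$ the $\DTP_A$-inverse of $A^\vee$ is $A^!$, whence $\BSDAa(Y_{\tau_\bdy})\simeq A^!$.

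The second step is the purely algebraic claim $\HH^*(M)\cong\HH_*(A^!\DTP_A M)$ for every $(A,A)$-bimodule $M$. From the last remark of Definition~\ref{def:Hochschild}, $\HH_*(N)\cong A\DTP_{A^e}N$, and combining this with associativity of the derived tensor product yields, for bimodules $N_1,N_2$, a natural isomorphism $\HH_*(N_1\DTP_A N_2)\cong N_1\DTP_{A^e}N_2$: forming $N_1\DTP_A N_2$ contracts one pair of the $A$-actions and $\HH_*$ contracts the other, so that both $A$-actions of $N_1$ end up paired against both $A$-actions of $N_2$. Taking $N_1=A^!=\RHom_{A^e}(A,A^e)$ and $N_2=M$, and using that $A$ is perfect over $A^e$ so that the evaluation map is a quasi-isomorphism, one obtains
\[
\HH_*(A^!\DTP_A M)\;\cong\;A^!\DTP_{A^e}M\;=\;\RHom_{A^e}(A,A^e)\DTP_{A^e}M\;\stackrel{\sim}{\longrightarrow}\;\RHom_{A^e}(A,M)\;=\;\HH^*(M),
\]
where in fact $\HH_j(A^!\DTP_A M)\cong\HH^{-j}(M)$, the grading reflection implicit in the statement. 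Substituting $\BSDAa(Y_{\tau_\bdy})\simeq A^!$ from the first step completes the argument.

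The hard part is the first step, the identification $\BSDAa(Y_{\tau_\bdy})\simeq A^!$: one must invoke the Serre-functor statement of \cite{LOT11:HomPair} with its precise normalization (and settle which category's Serre functor is meant) to be certain that no spurious suspension appears, and one must keep careful track of the several left/right $A$-module structures entering $A^!\DTP_A M$. In a model case --- for instance $\Alg(\PMC,i)$ for the antipodal genus-$1$ matched circle, where the Koszul resolution of Section~\ref{sec:loc-bord-cobar} gives an explicit small model for $A^!$ --- both the identification and the normalization can be checked directly against the explicitly computed bimodule $\BSDAa(Y_{\tau_\bdy})$. The second step is then routine homological algebra once the bookkeeping of bimodule structures is set up.
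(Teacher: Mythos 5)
The statement you are trying to prove is not actually proved in the paper: it is quoted verbatim from \cite[Corollary 11]{LOT11:HomPair}, and the sentence immediately preceding it cites \cite[Theorem 10]{LOT11:HomPair} for the assertion that tensoring with $\BSDAa(Y_{\tau_\bdy^{-1}})$ is a Serre functor. There is therefore no ``paper proof'' against which to line up your argument; the comparison is really between your reconstruction and what one can infer the cited result rests on.

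Your reconstruction has the right overall shape. The second step --- $\HH^*(M)\cong\HH_*(A^!\DTP_A M)$ for a homologically smooth and proper $A$, via $\HH_*(N_1\DTP_A N_2)\cong N_1\DTP_{A^e}N_2$ and the evaluation quasi-isomorphism $\RHom_{A^e}(A,A^e)\DTP_{A^e}M\to\RHom_{A^e}(A,M)$ afforded by perfectness of $A$ over $A^e$ --- is standard and correct, and is exactly the algebraic content one expects to underlie \cite[Corollary 11]{LOT11:HomPair}. Your first step is where the route diverges from the paper's. You derive $\BSDAa(Y_{\tau_\bdy})\simeq A^!$ from the Serre-functor statement, passing through the identification of the Serre functor with $-\DTP_A A^\vee$ and then through van den Bergh duality $A^!\DTP_A A^\vee\simeq A$. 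The paper itself simply cites $\BSDAa(Y_{\tau_\bdy})\simeq\Alg(\PMC)^!$ directly as \cite[Proposition 5.13]{LOT11:HomPair} (this appears later, in the proof of Corollary~\ref{cor:is-neutral}), so your detour is not needed. Moreover your detour is the weakest link: the paper says ``Serre functor for the derived category of $\Alg(\PMC)$-bimodules,'' and the Serre functor on $D(\Bimod{A})=D(A^e\text{-mod})$ is tensoring over $A^e$ with $(A^e)^\vee$, not tensoring over $A$ with $A^\vee$; the identification you use is the one for $D^b(A\text{-mod})$, a different category. One can repair this (the functor $-\DTP_A\BSDAa(Y_{\tau_\bdy^{-1}})$ does act as the module-category Serre functor, and derived Morita theory makes the bimodule kernel unique), but as written the step conflates the two notions. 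You flag this yourself as the delicate point, which is the right instinct; the cleanest fix is to bypass Serre functors entirely and invoke \cite[Proposition 5.13]{LOT11:HomPair} for the identification $\BSDAa(Y_{\tau_\bdy})\simeq A^!$, exactly as the paper does, after which your step two finishes the argument.
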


\begin{corollary}\label{cor:is-neutral}
  Let $Y$ be a bordered-sutured $3$-manifold with bordered boundary
  $(-\PunctF(\PMC))\amalg\PunctF(\PMC)$. Let $Y'$ be the result of
  gluing $\TC(\PMC)$ to $Y$ along one boundary component. Then
  $\BSDAa(Y')$ is a neutral bimodule.
\end{corollary}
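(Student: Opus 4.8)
The plan is to exhibit an explicit neutralizing element for $M:=\BSDAa(Y')$, using Corollary~\ref{cor:induce-iso} and Theorem~\ref{thm:bord-coho-is-ho}. Write $A=\Alg(\PMC)$; it is homologically smooth and proper by Proposition~\ref{prop:Alg-is-hom-smooth-borsut}, so the cobar bimodule $A^{!}$ of Proposition~\ref{prop:cobar} is available. First, by the pairing theorem (Theorem~\ref{thm:sutured-pairing}), gluing $\TC(\PMC)$ to $Y$ along one boundary component realizes the derived tensor product, so $M$ is quasi-isomorphic to $\BSDAa(\TC(\PMC))\DTP_{A}\BSDAa(Y)$ (if the gluing is along the other boundary component, the cyclic invariance of Hochschild homology reduces us to this case).

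Next, let $f\co\BSDAa(Y_{\tau_{\bdy}})\to\BSDAa(\Id)$ be the map appearing in the first exact triangle of Theorem~\ref{thm:bypass-tri}, as in Corollary~\ref{cor:induce-iso}. Since $\BSDAa(\Id)\simeq A$, and since by Theorem~\ref{thm:bord-coho-is-ho} (i.e.\ \cite[Corollary~11]{LOT11:HomPair}) together with Proposition~\ref{prop:cobar} the bimodule $\BSDAa(Y_{\tau_{\bdy}})$ is quasi-isomorphic to $\Sigma^{d}A^{!}$ for a suitable integer $d$, the map $f$ determines a class in $\Hom(\Sigma^{d}A^{!},A)=\HH_{d}(A)$. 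I claim this class is a neutralizing element for $M$, i.e.\ that the induced map $f^{*}\co\HH^{k}(M)\to\HH_{d-k}(M)$ is an isomorphism for every $k$.

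To verify this, unwind Definition~\ref{def:neutral}: under the identification $\BSDAa(Y_{\tau_{\bdy}})\simeq\Sigma^{d}A^{!}$, Theorem~\ref{thm:bord-coho-is-ho} gives a natural isomorphism $\HH^{k}(M)\cong\HH_{d-k}(\BSDAa(Y_{\tau_{\bdy}})\DTP M)$, and one checks---by a Yoneda-style argument, using that $\BSDAa(Y_{\tau_{\bdy}})\DTP(-)$ is the invertible Serre functor of \cite[Theorem~10]{LOT11:HomPair}---that under this isomorphism $f^{*}$ corresponds to the map on Hochschild homology induced by $f\DTP\Id_{M}\co\BSDAa(Y_{\tau_{\bdy}})\DTP M\to\BSDAa(\Id)\DTP M=M$. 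Substituting $M\simeq\BSDAa(\TC(\PMC))\DTP\BSDAa(Y)$, this is exactly the map that Corollary~\ref{cor:induce-iso}, applied with its bimodule taken to be $\BSDAa(Y)$, asserts to be an isomorphism. Hence $f^{*}$ is an isomorphism for every $k$, and $\BSDAa(Y')$ is $d$-neutral.

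The only real content is the compatibility assertion in the last paragraph: that the ``precompose with $f$'' map $f^{*}$ of Definition~\ref{def:neutral} is carried by the isomorphism of Theorem~\ref{thm:bord-coho-is-ho} to $(f\DTP\Id_{M})_{*}$. This requires keeping track of the grading shift $d$ and of the naturality of \cite[Corollary~11]{LOT11:HomPair} in the bimodule variable; granting that, the corollary is a formal consequence of the pairing theorem and Corollary~\ref{cor:induce-iso}.
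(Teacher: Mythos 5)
Your proof is correct and takes essentially the same route as the paper: both apply Corollary~\ref{cor:induce-iso} (via the pairing theorem) to show the map on Hochschild homology induced by $f$ is an isomorphism, use Theorem~\ref{thm:bord-coho-is-ho} to identify its source with Hochschild cohomology, and invoke $\BSDAa(Y_{\tau_\bdy})\simeq\Alg(\PMC)^!$ to recognize $f$ as an element of $\HH_*(\Alg(\PMC))$. The paper's version is terser and, in particular, does not explicitly flag the compatibility between the precomposition map $f^*$ of Definition~\ref{def:neutral} and the induced map $(f\DTP\Id)_*$, nor the two possible sides of the gluing, both of which you correctly note need a (routine) check.
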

\begin{proof}
  By Corollary~\ref{cor:induce-iso}, the map 
  \[
  f_*\co \HH_*(\BSDAa(Y_{\tau_\bdy})\DTP\BSDAa(Y'))\to \HH_*(\BSDAa(Y'))
  \]
  is an isomorphism. By Theorem~\ref{thm:bord-coho-is-ho}, 
  \[
  \HH_*(\BSDAa(Y_{\tau_\bdy})\DTP \BSDAa(Y'))\cong \HH^*(\BSDAa(Y')).
  \]
  It remains to see that the isomorphism $f_*$ is induced by an
  element of $\HH_*(\Alg(\PMC))$, i.e., a map $\Alg(\PMC)^!\to
  \Alg(\PMC)$. But by~\cite[Proposition 5.13]{LOT11:HomPair},
  $\BSDAa(Y_{\tau_\bdy})\simeq \Alg(\PMC)^!$, so $f$ is indeed a map
  $\Alg(\PMC)^!\to \Alg(\PMC)$.
\end{proof}

\begin{remark}
  Theorem~\ref{thm:bypass-tri} can be seen as a special case of
  Honda's bypass exact triangle (in the bordered-sutured setting).
  Proposition~\ref{prop:DTP-vanish} can be deduced from the fact
  that a particular contact structure near $[0,1]\times
  \bdy\PunctF(\PMC)$ is overtwisted.
\end{remark}
\subsection{Double covers of \texorpdfstring{$3$}{3}-manifolds}\label{subsec:double-cover}
We turn next to a rank inequality for a class of (unbranched) double
covers. To spell out that class, recall that a double cover $\pi\co
\tilde{Y}\to Y$ corresponds to a homomorphism $p\co \pi_1(Y)\to
\ZZ/2$, which we can regard as an element $p\in H^1(Y,\ZZ/2)$. There
is a canonical change-of-coefficient homomorphism $c\co H^1(Y,\ZZ)\to
H^1(Y,\ZZ/2)$.
\begin{definition}\label{def:from-Z-cover}
  If $p$ is in the image of $c$ then we will say that $\pi$ is
  \emph{induced by a $\ZZ$-cover}.
\end{definition}

\begin{lemma}\label{lem:cut-cover}
  Let $Y$ be a closed $3$-manifold and let $\pi\co\tilde{Y}\to Y$ be a
  $\ZZ/2$-cover induced by a $\ZZ$-cover. Then there is a bordered
  $3$-manifold $Y'$ with two boundary components so that:
  \begin{itemize}
  \item $Y=T_{Y'}$, the manifold obtained by gluing the boundary
    components of $Y'$ together
  \item $\tilde{Y}=T_{Y'\cup Y'}$, the manifold obtained by gluing two
    copies of $Y'$ together along their boundary, and
  \item the map $\pi$ is induced by the obvious map $Y'\amalg Y'\to Y'$.
  \end{itemize}
\end{lemma}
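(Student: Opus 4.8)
The plan is to realize $Y'$ as the complement in $Y$ of a surface Poincar\'e dual to an integral lift of $\pi$. First I would reduce to the case that $\pi$ is connected, i.e.\ that the class $p\in H^1(Y;\ZZ/2)$ classifying it is nonzero; the degenerate case $p=0$, where $\tilde Y=Y\amalg Y$, can be handled separately and is not needed for the application. Since $\pi$ is induced by a $\ZZ$-cover, $p=c(\alpha)$ for some $\alpha\in H^1(Y;\ZZ)$. As $Y$ is a closed $3$-manifold, $H^1(Y;\ZZ)$ is free; writing $\alpha=k\beta$ with $\beta$ primitive, the fact that $\ker(c)=2H^1(Y;\ZZ)$ and $p\neq 0$ forces $k$ to be odd, so $p=c(\beta)$ and we may replace $\alpha$ by $\beta$. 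Thus $\alpha$ is primitive. Now represent $\alpha\in[Y,S^1]$ by a smooth map $f\colon Y\to S^1$ transverse to a point $\theta_0$, and let $F_0=f^{-1}(\theta_0)$. A standard tubing argument along arcs in $Y$ --- available because $\alpha$ is primitive (equivalently: a primitive class in $H_2(Y;\ZZ)$ is represented by a \emph{connected} embedded oriented surface) --- lets me modify $f$ within its homotopy class so that $F:=f^{-1}(\theta_0)$ is connected. Then $F$ is a closed oriented surface with a product neighborhood $F\times(-\epsilon,\epsilon)=f^{-1}(\theta_0-\epsilon,\theta_0+\epsilon)$, and $F$ is non-separating since its Poincar\'e dual $\alpha$ is nonzero.

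Next I would set $Y'=Y\setminus\bigl(F\times(-\epsilon,\epsilon)\bigr)$. Since $F$ is non-separating, $Y'$ is connected; its boundary is a disjoint union $\bdy_LY'\amalg\bdy_RY'$ of two parallel copies of $F$, and regluing $\bdy_RY'$ to $\bdy_LY'$ by the identity recovers $Y$, so $Y=T_{Y'}$. Choosing an identification of $F$ with $F(\PMC)$ for a pointed matched circle $\PMC$, together with a framed arc running through $F\times(-\epsilon,\epsilon)$ and joining the two copies of $F$, exhibits $Y'$ as an arced cobordism from $F(\PMC)$ to itself; in particular $Y'$ is a bordered $3$-manifold with two boundary components, and the gluing map defining $T_{Y'}$ is the ``obvious'' one.

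Finally I would let $\hat Y=T_{Y'\cup Y'}$ be obtained from two copies $Y'_1,Y'_2$ of $Y'$ by identifying $\bdy_RY'_1$ with $\bdy_LY'_2$ and $\bdy_RY'_2$ with $\bdy_LY'_1$, each by the identity, and I would check that the map $q\colon\hat Y\to Y=T_{Y'}$ induced by the obvious map $Y'_1\amalg Y'_2\to Y'$ is a double cover isomorphic to $\pi$. That $q$ is a well-defined $2$-to-$1$ local homeomorphism, hence a covering map, is immediate from the compatibility of the gluing identifications in $\hat Y$ and in $T_{Y'}$. To identify $q$ with $\pi$: the map $f$ together with the splitting $\hat Y=Y'_1\cup Y'_2$ identifies $\hat Y$ with the fibre product $Y\times_{S^1,\,z\mapsto z^2}S^1$, the pullback of $f$ along the squaring map of $S^1$; since $z\mapsto z^2$ is classified by the generator of $H^1(S^1;\ZZ/2)$, the double cover $q$ is classified by $f^*$ of that generator, which is precisely $c(\alpha)=p$. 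As double covers of $Y$ are classified by $H^1(Y;\ZZ/2)$, there is a homeomorphism $\hat Y\cong\tilde Y$ over $Y$ intertwining $q$ with $\pi$, which is exactly the conclusion.

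The main obstacle is not deep --- it is bookkeeping. The two points requiring genuine care are: (a) arranging the cutting surface $F$ to be connected and to have a product neighborhood, so that $Y'$ is honestly an arced cobordism from a single surface $F(\PMC)$ to itself (this is where primitivity of $\alpha$ is used); and (b) pinning down the cut-and-reglue double cover $T_{Y'\cup Y'}\to Y$ as \emph{precisely} the cover $\tilde Y$, rather than merely some double cover of $Y$, for which the identification with the squaring-map pullback is the key input.
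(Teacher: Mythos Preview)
Your proposal is correct and follows essentially the same route as the paper: represent an integral lift of $p$ by a smooth map $f\colon Y\to S^1$, take a regular value, and cut along its preimage. The paper's proof is only three sentences and asserts without further comment that the resulting $Y'$ ``has the desired property.'' You supply details the paper omits: arranging for the cutting surface $F$ to be connected (so that $Y'$ genuinely has two boundary components and can be made into an arced cobordism from a single $F(\PMC)$ to itself), and verifying via the squaring-map pullback that $T_{Y'\cup Y'}\to Y$ is the given cover $\pi$ rather than some other double cover. These are exactly the points that need checking, and your treatment of them is correct.
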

\begin{proof}
  With notation as above, suppose that $p=c(q)$. Since $S^1=K(\ZZ,1)$,
  there is a map $f\co Y\to S^1$ so that $q=f^*[S^1]$. Moreover, we
  may assume that $f$ is smooth and that $1\in S^1$ is a regular value
  of $f$. Then the manifold $Y'$ obtained by cutting $Y$ along
  $f^{-1}(1)$ has the desired property.
\end{proof}

\begin{proof}[Proof of Theorem~\ref{thm:honest-dcov}]
  We will suppress the discussion of $\SpinC$-structures, which behave
  similarly to in Theorem~\ref{thm:br-d-cov-sseq-gen}. 

  Let $Y'$ be as in Lemma~\ref{lem:cut-cover} and let $\HD$ be a
  bordered Heegaard diagram for $Y'$, with boundary $-\PMC\amalg
  \PMC$. By Theorem~\ref{thm:hoch-TC},
  \[
  \HFa(\tilde{Y})=\HH_*(\BSDAa(\HD\sos{F(\PMC)}{\cup}{-F(\PMC)}\TC(\PMC))).
  \]
  Let $\widetilde\HD$ denote the result of gluing the boundary
  components of 
  \[
  \HD\sos{F(\PMC)}{\cup}{-F(\PMC)}\TC(\PMC)
  \sos{F(\PMC)}{\cup}{-F(\PMC)}
  \HD\sos{F(\PMC)}{\cup}{-F(\PMC)}\TC(\PMC)
  \]
  together.  On the one hand, the proof of Theorem~\ref{thm:hoch-TC}
  shows that
  \[
  \HH_*\bigl(\BSDAa( \HD\sos{F(\PMC)}{\cup}{-F(\PMC)}\TC(\PMC)
  \sos{F(\PMC)}{\cup}{-F(\PMC)}
  \HD\sos{F(\PMC)}{\cup}{-F(\PMC)}\TC(\PMC)) \bigr)\cong
  \SFH(\widetilde{\HD}).
  \]
  On the other hand, from the topological interpretation of
  $\TC(\PMC)$, $\widetilde{\HD}$ is a sutured Heegaard diagram for
  $\tilde{Y}\setminus (B^3\amalg B^3)$, with one suture on each $S^2$
  boundary component. So, by \cite[Proposition 9.14]{Juhasz06:Sutured},
  \[
  \SFH(\widetilde{\HD})\cong \HFa(\tilde{Y})\otimes H_*(S^1).
  \]
  By Corollary~\ref{cor:is-neutral},
  $\BSDAa(\HD\sos{F(\PMC)}{\cup}{-F(\PMC)}\TC(\PMC))$ is a neutral
  bimodule and so, by   Corollary~\ref{cor:neutral-good-bord},
  $\BSDAa(\HD\sos{F(\PMC)}{\cup}{-F(\PMC)}\TC(\PMC))$ is
  $\pi$-formal. By Proposition~\ref{prop:Alg-is-hom-smooth}, the
  bordered algebras are homologically smooth. So, the result follows
  from Theorem~\ref{thm:hoch-local}.
\end{proof}

\bibliographystyle{hamsalpha}
\bibliography{heegaardfloer}
\end{document}